\title{Dimensions of irreducible  modules over W-algebras and Goldie ranks}
\author{Ivan Losev}
\newcommand\g{{\mathfrak g}}
\newcommand\bfr{\mathfrak{b}}
\newcommand\h{{\mathfrak h}}
\newcommand{\q}{\mathfrak{q}}
\newcommand{\Sk}{\mathcal{S}}
\newcommand\m{\mathfrak m}
\newcommand\Coh{\operatorname{Coh}}
\newcommand\lf{\mathfrak l}
\newcommand\n{\mathfrak n}
\newcommand\z{\mathfrak z}
\newcommand\vf{\mathfrak v}
\renewcommand\t{\mathfrak t}
\newcommand\p{\mathfrak p}
\newcommand\cell{c}
\newcommand\Spec{\operatorname{Spec}}
\newcommand\F{\operatorname{F}}
\newcommand\W{{\mathbb{A}}}
\newcommand\K{\mathbb K}
\newcommand\U{\mathcal U}
\newcommand\Ann{\operatorname{Ann}}
\newcommand\Walg{\mathcal W}
\newcommand\ZZ{\mathbb Z}
\newcommand\A{\mathcal A}
\newcommand\M{\mathcal M}
\newcommand\gr{\operatorname{gr}}
\newcommand\OCat{\mathcal O}
\newcommand\I{\mathcal I}
\newcommand\J{\mathcal J}
\renewcommand\sl{\mathfrak{sl}}
\newcommand\Hom{\operatorname{Hom}}
\newcommand{\ad}{\mathop{\rm ad}\nolimits}
\newcommand{\Ad}{\mathop{\rm Ad}\nolimits}
\newcommand{\HC}{\operatorname{HC}}
\newcommand\Centr{\mathcal Z}
\newcommand\Goldie{\operatorname{Grk}}
\newcommand\mult{\operatorname{mult}}
\newcommand{\VA}{\operatorname{V}}
\newcommand\dcell{\mathbf{c}}
\newcommand\Nil{\mathcal{N}}
\newcommand\Orb{\mathbb{O}}
\newcommand\Irr{\operatorname{Irr}}
\newcommand\Wh{\operatorname{Wh}}
\newcommand\HL{H}
\newcommand\BQ{\mathbb{Q}}
\newcommand\Fi{\operatorname{F}}
\newcommand\Spr{\operatorname{Spr}}
\newcommand\bA{\bar{A}}
\newcommand{\Fun}{\operatorname{Fun}}
\newcommand{\Prim}{\operatorname{Pr}}
\newcommand{\GL}{\operatorname{GL}}
\newcommand{\pr}{\operatorname{pr}}
\newcommand{\so}{\mathfrak{so}}
\renewcommand{\sp}{\mathfrak{sp}}
\newcommand{\gl}{\mathfrak{gl}}
\newcommand{\Ort}{\operatorname{O}}
\newcommand{\gol}{\operatorname{g}}
\newcommand{\Dix}{\mathcal{A}}
\newcommand{\Kfun}{\mathcal{K}}
\renewcommand{\b}{\mathfrak{b}}
\newcommand{\wU}{\,^\wedge\U}
\newcommand{\wW}{\,^\wedge\W}
\newcommand{\wWalg}{\,^\wedge\Walg}
\newcommand{\Vfun}{\mathbb{V}}
\newcommand{\dual}{\mathcal{D}}
\newcommand{\ch}{\operatorname{ch}}
\renewcommand{\v}{\mathfrak{v}}
\renewcommand{\k}{\mathfrak{k}}
\newtheorem{Thm}{Theorem}[section]
\newtheorem{Prop}[Thm]{Proposition}
\newtheorem{Cor}[Thm]{Corollary}
\newtheorem{Lem}[Thm]{Lemma}
\theoremstyle{definition}
\newtheorem{defi}[Thm]{Definition}
\newtheorem{Rem}[Thm]{Remark}
\newtheorem{Conj}[Thm]{Conjecture}
\numberwithin{equation}{section}
\numberwithin{table}{section} \oddsidemargin=0cm
\thanks{Supported by the NSF grants DMS-0900907, DMS-1161584}
\thanks{MSC 2010: 17B35, 16G99}
\address{Department
of Mathematics, Northeastern University, Boston MA 02115 USA}
\email{i.loseu@neu.edu}
\begin{document}
\begin{abstract}
The main goal of this paper is to compute two related numerical invariants of a primitive ideal
in the universal enveloping algebra of a semisimple Lie algebra. The first one, very classical,
is the Goldie rank of an ideal. The second one is the dimension of an irreducible module corresponding to
this ideal over an appropriate finite W-algebra. We concentrate on the integral central character
case. We prove, modulo a conjecture, that in this case the two are equal. Our conjecture asserts that there is a one-dimensional module over the W-algebra with certain additional properties. The conjecture is proved for the classical types. Also, modulo the same
conjecture, we compute certain scale factors introduced by Joseph, this allows to compute the
Goldie ranks of the algebras of locally finite endomorphisms of  simples in the BGG category $\mathcal{O}$.
This completes a program of computing Goldie ranks proposed by Joseph in the 80's (for integral central characters and modulo our conjecture).

We also provide an essentially Kazhdan-Lusztig type formula for computing the characters of the irreducibles
in the Brundan-Goodwin-Kleshchev category $\mathcal{O}$ for a W-algebra again under the assumption that the central
character is integral. In particular, this allows to compute the dimensions of the
finite dimensional irreducible modules.
The formula is based on a certain functor from an appropriate parabolic
category $\mathcal{O}$ to the W-algebra category $\mathcal{O}$. This functor can be regarded as a generalization of functors
previously constructed by Soergel and by Brundan-Kleshchev. We prove a number of properties of this
functor including the quotient property and the double centralizer property.

We develop several side topics related to our generalized Soergel functor. For example, we discuss its analog
for the category of Harish-Chandra modules. We also discuss generalizations to the case of categories
$\mathcal{O}$ over Dixmier algebras. The most interesting example of this situation comes from the theory of quantum
groups: we prove that an algebra that is a mild quotient of Luszitg's form of a quantum group at a root of unity
is a Dixmier algebra. For this we check that the quantum Frobenius epimorphism splits.
\end{abstract}
\maketitle

\begin{center}{\it Dedicated to Tony Joseph, on his 70th birthday.}\end{center}

\tableofcontents
\section{Introduction}
\subsection{Primitive ideals and Goldie ranks}
The study of primitive ideals in universal enveloping algebras
is a classical topic in Lie representation theory (recall that by a primitive ideal
in an associative algebra one means the annihilator of a simple module; our base field is always an algebraically closed
field $\K$ of characteristic 0). The case when the Lie algebra is
solvable is understood well: the classification of primitive ideals is known and relatively easy, see, for example,
\cite[Section 6]{Dixmier}. A nice feature of that case is that all primitive ideals are completely
prime, i.e., the quotients have no zero divisors, see \cite[Section 5]{BGR}.

It seems that the most interesting  case in the study of primitive ideals  is when the Lie algebra
$\g$ under consideration is semisimple (partial results on a reduction of the general case to the semisimple one
can be found in \cite[Section 4]{Joseph_ICM}). There was a lot of work on primitive ideals in the
semisimple case and related topics in 70's and  in 80's
(by Barbasch, Duflo, Joseph, Lusztig, Vogan, to mention a few authors) that resulted,
in particular, in a classification of the primitive ideals, for a survey, see \cite{Joseph_ICM}.
A basic result is a theorem of Duflo
that says that any primitive ideal in the universal enveloping algebra $U(\g)$
has the form $J(\lambda):=\Ann_{U(\g)}L(\lambda)$, where $L(\lambda)$ stands for
the irreducible highest weight module with highest weight $\lambda-\rho$, as usual,
$\rho$ denotes half the sum of all positive roots.

One of the classical problems about primitive ideals is to compute their Goldie
ranks. Let us recall the definition of the Goldie rank. Take a prime noetherian
algebra $\A$. According to the Goldie theorem, there is a full fraction
algebra $\operatorname{Frac}(\A)$ of $\A$. The algebra $\operatorname{Frac}(\A)$
is simple and so is the matrix algebra of rank $r$ over some skew-field. The number
$r$ is called the Goldie rank of $\A$ and is denoted by $\Goldie(\A)$ or $\Goldie \A$. Abusing the notation, by the Goldie rank
of a primitive ideal $\J$ of  the universal enveloping algebra $\U:=U(\g)$ one
means the Goldie rank of the quotient $\U/\J$.

There are many results and constructions  related to the computation of Goldie ranks due to Joseph,
see, in particular, \cite{Joseph_I}-\cite{Joseph_scale}. Some of them will be recalled in
Subsection  \ref{SUBSECTION_Goldie_rem}. Here we only going to recall one important
construction -- Joseph's scale factors. Throughout the paper we basically only consider the
case of integral central characters, i.e., we restrict ourselves to ideals $J(\lambda)$,
where $\lambda$ is in the weight lattice $\Lambda$ of $\g$.

Any integral weight $\lambda$ can be represented in the form $w\varrho$, where $w$ is an element
in the Weyl group, and $\varrho\in \Lambda$ is  dominant. This representation is unique
if we require that $w\alpha<0$ for all roots $\alpha$ with $\langle\varrho,\alpha\rangle=0$
(in this case we say  that $w$ and $\varrho$ are compatible).

Consider the algebra $L(L(w\varrho),L(w\varrho))$ of all $\g$-finite linear endomorphisms of $L(w\varrho)$.
According to Joseph, \cite[2.5]{Joseph_I},  this algebra is prime and noetherian. So one can define its Goldie rank.
It turns out that the ratio $$\frac{\Goldie L(L(w\varrho),L(w\varrho))}{\Goldie(U(\g)/J(w\varrho))}$$
does not depend on $\varrho$ as long as $\varrho$ is a dominant integral weight compatible with $w$, \cite[5.12]{Joseph_I}.
This ratio, Joseph's {\it scale factor}, is denoted by $z_w$.
The number $z_w$ is an integer, \cite[5.12]{Joseph_II}. It has many remarkable properties studied, for example,
in \cite{Joseph_Duflo,Joseph_scale}. One of these properties -- a connection to Lusztig's asymptotic Hecke algebra, \cite[5.8]{Joseph_Duflo},
was an important motivation for the present project.

In \cite[5.5]{Joseph_II} Joseph proposed a program of computing Goldie ranks.
Knowing the scale factors is the first crucial step in this program.
The second one is to prove that there are ``sufficiently many'' completely prime (=of Goldie
rank 1) primitive ideals. For our purposes,   this means that,
for each special nilpotent orbit $\Orb$, there is a completely prime primitive ideal $\J$ with integral
central character and $\VA(\U/\J)=\overline{\Orb}$ (the terminology and notation will be explained in
Subsections \ref{SS_W_irreps},\ref{SS_not}).

There is one relatively easy case in the problem  of computing Goldie ranks: when $\g$ is of type $A$, see \cite{Joseph_Kostant}. Namely, all scale factors
equal $1$, and there are sufficiently many completely prime ideals, so the problem of
computing the Goldie ranks is settled. Also in \cite[8.1,10]{Joseph_Kostant}, Joseph gets
formulas for Goldie ranks. For recent developments  here
see \cite{Brundan} (Theorem 1.6 in {\it loc.cit.} presents a combinatorial formula for the Goldie rank).
In all other types the problem of computing Goldie ranks has been open.

In this paper we are going to state a conjecture, the affirmative answer to which
will complete Joseph's program, i.e., will yield both a formula for scale factors and the existence
of sufficiently many completely prime primitive ideals. Then we will prove the conjecture for the B,C,D types.

Also the affirmative answer will provide a formula for the Goldie ranks.
The formula, see Subsection \ref{SS_W_results}, will compute the dimensions of finite dimensional irreducible modules
over certain associative algebras known as W-algebras. To each primitive ideal, one can assign a collection of irreducible
finite dimensional modules over the corresponding W-algebra, all of the modules have the same dimension.
More details on W-algebras will be given in the next subsection.
In the integral central character case, we will prove that the dimension coincides
with the corresponding Goldie rank.

Let us summarize. We obtain the following results (in the integral character case):
\begin{enumerate}
\item A formula for the scale factors $z_w$ (for classical types).
\item The existence of sufficiently many completely prime primitive ideals (for classical types).
\item A formula for the dimension of an irreducible W-algebra module corresponding to a primitive ideal (for all types).
\item The coincidence of the dimension and the Goldie rank (for classical types).
\end{enumerate}
Thanks to (4), (3) gives a formula for the Goldie rank. (1) and (2) are irrelevant for  (3), but (2) is used to prove (4)
and (1) is proved simultaneously with (4).

\subsection{W-algebras and their finite dimensional irreducible representations}\label{SS_W_irreps}
Now fix a nilpotent orbit $\Orb$. From the data of $\g$ and $\Orb$ one can construct
an associative algebra $\Walg$ called a {\it finite W-algebra} (below we omit the adjective
``finite''). In the full generality this was first done in \cite{Premet}.
The properties of $\Walg$ we need will be recalled in Sections \ref{S_W_alg},\ref{S_Cats}.
For now, we will only need to know two things regarding W-algebras.

One is a result obtained in \cite{HC} and describing a relationship between the sets $\Prim_{\Orb}(\U)$
of all primitive ideals $\J\subset\U$ with associated variety $\VA(\U/\J)$ equal to $\overline{\Orb}$
and the set $\Irr_{fin}(\Walg)$ of all finite dimensional irreducible $\Walg$-modules. The component group $A(=A(e))$
of the centralizer of $e\in \Orb$ in the adjoint group $\operatorname{Ad}(\g)$
acts on $\Irr_{fin}(\Walg)$. This action is induced from a certain reductive group action
on $\Walg$ by algebra automorphisms. It turns out that there is a natural identification
of the orbit space $\Irr_{fin}(\Walg)/A$ with $\Prim_{\Orb}(\U)$, see \cite[1.1]{HC}. Moreover,
in \cite[Theorem 1.1]{W_classif},  the author and V. Ostrik have computed the stabilizers in $A$ of
irreducible $\Walg$-modules with integral central character. We will need some details regarding this computation
to state the main results of the present paper, so let us recall these details now.

Recall, see, for example, \cite[Chapter 5]{orange}, that the Weyl group $W$ of $\g$ splits into the union of subsets called two-sided
cells. There is a bijection between the two-sided cells in $W$ and certain nilpotent orbits in
$\g$ called {\it special}. This bijection sends $\dcell$ to the dense orbit in the associated
variety $\VA(\U/J(w\varrho))$, where $w\in \dcell$ and $\varrho$ is regular dominant, see \cite[14.15]{Jantzen} or \cite[Theorem 10.3.3]{CM}.
Until a further notice we will assume that $\Orb$ is special. To each two-sided cell $\dcell$ Lusztig in \cite[Chapter 13]{orange}
assigned a finite group $\bA$ that is naturally represented as a quotient of the component group $A$ of $\Orb$.

Further, each two-sided cell splits into the union of subsets called left cells
(and also into the union of subsets called right cells, those are obtained
from the left ones by inversion, see \cite[Chapter 5]{orange}). For dominant regular
$\varrho\in \Lambda$, the map $$w\mapsto  J(w\varrho):\dcell\rightarrow \{\text{primitive } \J\text{ with central character }\varrho\text{ and }\VA(\U/\J)=\overline{\Orb}\}$$
is surjective, its fibers are precisely the left cells, see e.g. \cite[14.15]{Jantzen}.
If $\varrho$ is dominant but not regular,  one needs to restrict to the left cells that are
compatible with $\varrho$ in the sense that any -- or equivalently some (see, for example, \cite[6.2]{W_classif}) --
element is compatible with
$\varrho$. In \cite{Lusztig_subgroups}, to each left cell $c\subset \dcell$ Lusztig
assigned a subgroup $H_c\subset \bA$ defined up to conjugacy. The main result
of \cite{W_classif}, Theorem 1.1,  is that the $A$-orbit in  $\Irr_{fin}(\Walg)$ lying over the primitive ideal
$J(w\varrho)$, where $w\in c$ is compatible with dominant $\varrho$, is $\bA/H_c$.

We will need one more construction related to that result. This construction is
a parametrization of elements in $\dcell$ conjectured by Lusztig and established
in \cite{BFO1}. In our language it can be stated as follows. Fix a regular dominant
weight $\varrho$. Let $Y$ be the set of all finite dimensional irreducible
$\Walg$-modules with central character $\varrho$. By the results of
\cite{W_classif}, recalled above, the group $\bA$ acts on this set.
In particular, we can consider the $\bA$-equivariant sheaves of finite dimensional
vector spaces on the square $Y\times Y$. Irreducible sheaves are parameterized by triples
$(x,y,\mathcal{V})$, where $x,y\in Y$ and $\mathcal{V}$ is an irreducible module over the
stabilizer $\bA_{(x,y)}$ of the pair $(x,y)$. These triples are defined up to $\bA$-conjugacy.
Here $x$ is lies in the $\bA$-orbit corresponding to $J(w\varrho)$, while $y$ is in  the orbit
corresponding to $J(w^{-1}\varrho)$.

The identification of $\dcell$ with the set of triples $(x,y,\mathcal{V})$ (considered up to $\bA$-conjugacy) in the previous paragraph
comes from a certain  functor established in \cite{HC} that maps the category
of Harish-Chandra $\U$-bimodules to a suitable category of equivariant bimodules over $\Walg$ (see Subsection
\ref{SS_dagger_functor} for a construction).
Namely, we have a subquotient in the former category corresponding to $\Orb$, and $\dcell$ parameterizes the simples
in the subquotient. The functor is well-defined on the subquotient and defines its embedding into the category
of finite dimensional equivariant $\Walg$-bimodules. The triples naturally parameterize the simples in
the image. This gives a required bijection, as was established in \cite[Section 7]{W_classif}, see, in particular, Remark 7.7 there. We will elaborate on the bijection in
Subsection \ref{SS_HC_semis}.
In type $A$, this bijection reduces to the RSK correspondence. Indeed, in type $A$ (for a regular integral character)
the primitive ideals are parameterized by standard Young tableax, and simple HC bimodules are parameterized by elements
of $S_n$. In this language, the RSK correspondence sends a simple HC bimodule to its left and right  annihilator,
see, for example, \cite[5.22-25,14.15]{Jantzen}.

Another fact about W-algebras we need is that they have categories $\mathcal{O}$ introduced by Brundan, Goodwin and Kleshchev
in \cite{BGK} that are analogous to the BGG categories $\mathcal{O}$ for $\U$. To define such a category
one needs to fix an element  $e\in\Orb$ and, most importantly, an integral
semisimple element $\theta$ centralizing $e$. A resulting category will be denoted
by $\OCat^\theta(\g,e)$. The category $\OCat^\theta(\g,e)$
contains all finite dimensional simple modules with integral central characters
(we restrict  our attention to the integral blocks) and also has
analogs of Verma modules labeled by irreducible modules over a smaller W-algebra (one for $\z_\g(\theta)$
and $e$). One can define the notion of a character
(a graded dimension) for a module $\Nil$ in that category $\mathcal{O}$, to be denoted
by $\operatorname{ch}\Nil$. As usual,
the characters of Verma modules are computable, i.e.,  one can write the character
of a Verma module starting from  its label, see formula (\ref{eq:Verma_W_char}) below.
So to compute the characters of simple modules (and hence dimensions of the finite dimensional ones)
it is enough to determine the multiplicities of simples in Vermas.
We would like to point out that in a relatively easy special case when $e$
is a principal element in some Levi subalgebra, the multiplicities are already
known, see \cite[Section 4]{LOCat}. Our approach in the present paper builds on a construction
from there.

To determine the multiplicities we will define an exact functor
from the integral block of an appropriate parabolic category
$\mathcal{O}$ for $\g$ to the integral block of an ``equivariant
version'' of the W-algebra category $\mathcal{O}$. This functor
may be thought as a generalization of Soergel's functor $\Vfun$,
\cite{Soergel}, and so will be called a generalized Soergel functor.
An analog of Soergel's functor for parabolic categories $\mathcal{O}$
was studied by Stroppel in \cite{Stroppel_ger},\cite{Stroppel_eng}.
In the special case when $\g$ is of type A,  Brundan and Kleshchev, \cite{BK2},
identified the target category for this functor with a subcategory in
the category of modules over a W-algebra. We remark that the functor
we consider, in general, is not isomorphic to Stroppel's.
Our functor will map Verma modules to some variant of Verma modules
and will be a quotient functor onto its image. Using this functor
we will relate the multiplicities in the W-algebra category $\mathcal{O}$
to those in a parabolic category $\mathcal{O}$. We will also study some
other properties of $\Vfun$ such as a relation to dualities and
the double centralizer property.

\subsection{Results on Goldie ranks}\label{SS_Goldie_results}
Now we are going to state the main results and conjectures of the present paper
related to Goldie ranks. We start with a conjecture.

\begin{Conj}\label{Conj:main}
Let $\Orb$ be a special nilpotent orbit and suppose that $\Orb$ is not one of
the following three {\it exceptional} orbits: $A_4+A_1$ in $E_7$, $A_4+A_1, E_6(a_1)+A_1$ in $E_8$ (we use the Bala-Carter
notation, see \cite[8.4]{CM}). Then there exists an $A$-stable 1-dimensional $\Walg$-module with integral
central character. For the three exceptional orbits there is a 1-dimensional $\Walg$-module with integral central character.\footnote{In a recent preprint of Premet, \cite{Premet_new},
he proved that any W-algebra admits an $A$-stable 1-dimensional representation,
but the central character is generally not integral. For example, \cite{W_classif} implies
that, in the case of the three exceptional orbits, there are no $A$-stable finite dimensional
representations with integral central character, so Premet's representations automatically have
non-integral central characters in these cases.}
\end{Conj}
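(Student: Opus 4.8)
The plan is to reformulate the statement in terms of the abelianization of $\Walg$, and then to settle it via the explicit combinatorics available in the classical types. A $1$-dimensional $\Walg$-module is exactly a point of $X:=\Spec\Walg^{\mathrm{ab}}$, where $\Walg^{\mathrm{ab}}:=\Walg/[\Walg,\Walg]\Walg$; the reductive group action on $\Walg$ that induces the $A$-action on $\Irr_{fin}(\Walg)$ descends to $X$, and, being inner on the abelianization, factors through the finite group $A$. Since twisting a module by this action does not change its central character, the composite $Z(\U)\cong Z(\Walg)\to\Walg^{\mathrm{ab}}$ gives an $A$-\emph{invariant} morphism $\nu\colon X\to\Spec Z(\U)$. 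Thus the task is to produce a point of $X^A$ whose $\nu$-image is an integral central character — or, for the three exceptional orbits, merely a point of $X$, since by \cite{W_classif} no $A$-stable finite-dimensional module can have integral central character in those cases.

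It is worth separating the cheap part from the real content. By Premet's computation of commutative quotients in the classical types, $\Walg^{\mathrm{ab}}$ is a polynomial algebra; it carries the Kazhdan grading with generators in positive degrees, so choosing an $A$-stable graded complement to $\m^2$ in the augmentation ideal $\m$ realizes the $A$-action on $X$ as a linear action on an affine space, and $X^A$ is then a non-empty linear subspace. The integrality of the central character is the substantive point. It cannot be deduced by averaging an integral point of $X$ into $X^A$, because $\nu$ is not affine-linear — at the level of associated graded algebras it is the restriction of $G$-invariant functions to a subvariety of the Slodowy slice. One must instead pin down the correct $A$-fixed point directly. Under the correspondence of \cite{HC} between $\Irr_{fin}(\Walg)/A$ and $\Prim_{\Orb}(\U)$, the primitive ideal attached to a $1$-dimensional module is completely prime, so the construction below will simultaneously supply the completely prime primitive ideals of item~(2); moreover, by \cite{W_classif}, the module is $A$-stable precisely when the left cell $c$ attached to its primitive ideal satisfies $H_c=\bA$.

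The heart of the argument is the classical-type case analysis. A special orbit $\Orb$ in $\g$ of type $B$, $C$ or $D$ corresponds to a special partition; I would attach the associated (symmetric) pyramid, make $\Walg^{\mathrm{ab}}$ and $\nu$ explicit via the structure theory of classical W-algebras, and write down the distinguished $1$-dimensional module coming from the canonical, manifestly $A$-invariant datum on the pyramid. Two things must then be verified by a finite uniform computation in each of the families $B_n,C_n,D_n$: that this datum is $A$-invariant, so the point lies in $X^A$, and that the resulting central character lies in the weight lattice $\Lambda$. A convenient way to organize the computation is to reduce, via parabolic induction for W-algebras, to orbits that are distinguished in a Levi subalgebra, where the combinatorics is smallest. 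Type $A$ needs nothing new: there $\bA$ is trivial, so $A$-stability is automatic, and a completely prime primitive ideal with integral central character and the prescribed associated variety is furnished by \cite{Joseph_Kostant}. For the three exceptional orbits $A_4+A_1\subset E_7$ and $A_4+A_1,\ E_6(a_1)+A_1\subset E_8$ one argues case by case: these W-algebras are small, so $\Walg^{\mathrm{ab}}$ and $\nu$ can be computed by hand, producing a $1$-dimensional module with integral central character which, by \cite{W_classif}, is necessarily not $A$-stable.

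The main obstacle is this computational stage — extracting a workable description of $\Walg^{\mathrm{ab}}$ and of the central-character map for types $B$, $C$, $D$, identifying the correct $A$-fixed point, and checking integrality — together with the three exceptional orbits, where no uniform argument is available and each requires a separate hands-on computation. A secondary difficulty is the bookkeeping, via \cite{HC} and \cite{W_classif}, involved in passing between $1$-dimensional $\Walg$-modules, completely prime primitive ideals, and left cells, and in particular in the statement that such a module is $A$-stable exactly when its left cell $c$ satisfies $H_c=\bA$.
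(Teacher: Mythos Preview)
Your proposal outlines a plan rather than a proof, and at the decisive step it diverges from the paper in a way that leaves a genuine gap.

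First, a scope remark: the paper only establishes the conjecture for classical $\g$ (Theorem~\ref{Thm:classical}); the exceptional types remain open there. So the relevant comparison is with the argument of Subsection~\ref{SECTION_main_conj}.

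The paper's strategy is to stay entirely on the $\U$-side. It introduces the notion of a \emph{weakly rigid} special orbit (Definition~\ref{def:weakly_rigid}) and shows, via the parabolic induction functor $\varsigma$ of Subsection~\ref{SS_parab_ind}, that it suffices to treat weakly rigid orbits. The crucial point in this reduction is condition~(2) of Definition~\ref{def:weakly_rigid}: surjectivity of $Z_P(e)\to\bA$ is exactly what is needed to propagate $A$-stability through $\varsigma$. For a weakly rigid orbit in types $B,C,D$ the paper then observes combinatorially that the Barbasch--Vogan--Spaltenstein dual orbit $\Orb^\vee$ is even, so the Arthur--Barbasch--Vogan weight $\frac{1}{2}h^\vee$ is integral, and results of Barbasch--Vogan and McGovern give $\mult_{\Orb}(\U/J(\tfrac{1}{2}h^\vee))=1$. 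Formula~(\ref{eq:mult_equality}) then forces both $d_x=1$ and $|\bA_x|=|\bA|$, yielding the required $A$-stable one-dimensional module. No direct analysis of $\Walg^{\mathrm{ab}}$ is needed.

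Your plan instead attempts to work directly with $\Walg^{\mathrm{ab}}$ and the central-character map $\nu$. This runs into two concrete problems. First, your reduction ``to orbits that are distinguished in a Levi subalgebra'' is not the right one: parabolic induction does not in general preserve $A$-stability, and your plan contains no analogue of condition~(2). The paper's Lemma~\ref{Lem:cond2} is precisely the verification (with explicit matrix computations) that the required surjectivity holds in case~(a) of the induction combinatorics; without it you cannot conclude that the induced module is $A$-stable. Second, the phrase ``the distinguished $1$-dimensional module coming from the canonical, manifestly $A$-invariant datum on the pyramid'' is the entire content of the problem and is never specified. Knowing that $X^A$ is a nonempty linear subspace does not help: you yourself note that $\nu$ is not affine, so the integrality constraint cuts out something that could, a priori, miss $X^A$ entirely. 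The paper bypasses this by producing the point indirectly, via the multiplicity-one primitive ideal $J(\tfrac{1}{2}h^\vee)$, rather than by locating it inside $\Walg^{\mathrm{ab}}$.

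Finally, for the three exceptional orbits your claim that ``these W-algebras are small, so $\Walg^{\mathrm{ab}}$ and $\nu$ can be computed by hand'' is not credible: these are orbits in $E_7$ and $E_8$ of moderate dimension, and no such computation appears in the literature. The paper makes no claim to handle them.
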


We remark that the  primitive ideal
corresponding to a 1-dimensional $\Walg$-module is automatically completely
prime, see \cite[Proposition 3.4.6]{wquant} (in fact, this also easily follows from the previous work of Moeglin, \cite{Moeglin2}).

\begin{Thm}\label{Thm:classical}
Conjecture \ref{Conj:main} holds for all special orbits provided $\g$ is classical.
\end{Thm}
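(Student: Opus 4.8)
The plan is to establish the existence of an $A$-stable one-dimensional $\Walg$-module with integral central character for every special nilpotent orbit $\Orb$ in a classical Lie algebra $\g$, by an explicit construction relying on the Barbasch--Vogan--Lusztig--Spaltenstein duality together with the combinatorics of nilpotent orbits in types $B$, $C$, $D$. The key observation is that a one-dimensional $\Walg$-module with integral central character $\varrho$ should correspond, under the Skryabin-type equivalence and the results of \cite{HC}, to a completely prime primitive ideal $\J$ with $\VA(\U/\J)=\overline{\Orb}$ whose central character is integral; moreover, $A$-stability translates into the statement that the corresponding $A$-orbit in $\Irr_{fin}(\Walg)$ is a single point, i.e., $H_c = \bA$ for the relevant left cell $c$. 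So the first step is to reformulate the desired statement, via \cite[Theorem 1.1]{W_classif} recalled in the excerpt, as the existence of a left cell $c$ in the two-sided cell $\dcell$ attached to $\Orb$ with $H_c=\bA$ and such that the attached primitive ideal is completely prime of Goldie rank one.

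Second, I would produce such a module by induction-type constructions: for classical $\g$, a one-dimensional $\Walg$-module can be built from the trivial (or a sign-twisted) one-dimensional module over a W-algebra attached to a Levi subalgebra $\lf$ and a suitable nilpotent therein, using the parabolic induction functors for W-algebras (the ``restriction'' functors of \cite{HC} and their adjoints). Concretely, one identifies the orbit $\Orb$, described by a partition (with the usual parity and multiplicity constraints for types $B$, $C$, $D$), as arising by induction from a distinguished nilpotent orbit $\Orb_0$ in a Levi $\lf$ for which the W-algebra $\Walg(\lf,\Orb_0)$ is known—by direct computation or by appeal to Premet's and Brundan--Goodwin--Kleshchev's results—to possess an $A$-stable one-dimensional module with integral central character. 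One then transports this module up to $\Walg(\g,\Orb)$ and checks that integrality of the central character and $A$-stability are preserved. The bookkeeping here is the combinatorics of how partitions for classical orbits decompose under induction from Levis, and verifying that one can always reduce to a distinguished orbit while keeping the character integral.

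Third, for the distinguished orbits in classical types one must verify the claim directly. Here the natural tool is the relation between one-dimensional $\Walg$-modules and completely prime primitive ideals together with the explicit lists of such ideals: for $\mathfrak{so}$ and $\mathfrak{sp}$ the completely prime primitive ideals with a given associated variety have been studied (e.g.\ in work of Garfinkle, McGovern, and Moeglin), and in the distinguished case the component group $A$ and its Lusztig quotient $\bA$ are small (often trivial or $(\ZZ/2\ZZ)^k$), so that $A$-stability can be checked by hand. Alternatively, and perhaps more cleanly, one invokes Premet's existence theorem for one-dimensional representations and then argues that for classical $\g$ the central character of Premet's module—computed from the explicit formula for the shifted action in \cite{Premet} in terms of the $\sl_2$-triple—automatically lies in the weight lattice, the point being that in types $B$, $C$, $D$ the relevant half-integers combine to integers (this is exactly what fails for the three exceptional $E_7$, $E_8$ orbits).

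The main obstacle I expect is the $A$-stability: producing \emph{some} one-dimensional module is comparatively soft (Premet), and integrality of the central character in classical types is a finite combinatorial check, but ensuring the module is fixed by the full component group $A$—equivalently that it descends to a genuinely completely prime primitive ideal rather than a proper ``piece'' of one—requires understanding the $A$-action on the set of one-dimensional $\Walg$-modules. I would handle this either by showing that the construction via parabolic induction from a distinguished orbit is manifestly $A$-equivariant (so $A$-stability upstairs follows from $A_0$-stability downstairs, where $A_0$ is the component group in the Levi), or, failing a uniform argument, by a case-by-case analysis of the distinguished orbits in $\mathfrak{so}_n$ and $\mathfrak{sp}_{2n}$ using the explicit description of $A(e)$ in terms of the partition and the known parametrization of completely prime ideals. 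A secondary subtlety is that ``W-algebra'' here is the finite W-algebra for a possibly non-special orbit when one inducts; but since the statement only concerns special $\Orb$ and induction of a special orbit from a Levi can be arranged through special orbits (by Spaltenstein's results on the duality map and induction), this does not cause real trouble.
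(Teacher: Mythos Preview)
Your proposal has the correct architecture---reduce via parabolic induction for W-algebras, then handle a base class directly---and you correctly flag $A$-stability as the main obstacle. But two concrete ingredients that carry the actual proof are missing or misidentified.

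First, the reduction. You propose reducing to \emph{distinguished} orbits, but the paper reduces to a custom-defined class of \emph{weakly rigid} orbits: those that cannot be Lusztig--Spaltenstein induced from a proper Levi $(\underline{\g},\underline{\Orb})$ with $\underline{\Orb}$ special \emph{and} with $Z_P(e)$ surjecting onto the Lusztig quotient $\bA$. That second condition is exactly what guarantees $A$-stability transfers under the W-algebra induction functor $\varsigma$ from \cite{Miura}. Your suggestion that ``the construction via parabolic induction is manifestly $A$-equivariant'' is where the real work hides: one has to prove, by explicit matrix construction (this is the paper's Lemma~\ref{Lem:cond2}), that for induction of type (a) in the collapse dichotomy, one can exhibit elements of $Z_P(e)$ lying in the Levi that generate the full component group. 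This is a hands-on computation with Jordan blocks and orthogonal/symplectic forms, not a formal equivariance statement.

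Second, the base case. For weakly rigid orbits the paper does not invoke Premet's existence theorem or lists of completely prime ideals; instead it uses the Arthur--Barbasch--Vogan weight $\mu=\frac{1}{2}h^\vee$ where $h^\vee$ comes from the BVS-dual orbit $\Orb^\vee$. The combinatorial payoff of the weakly-rigid reduction is that the partition of a weakly rigid classical orbit has every part repeated with positive multiplicity (roughly, form \eqref{eq:weakly_rigid_partn}), and for exactly such partitions the dual orbit $\Orb^\vee$ is \emph{even}, forcing $\frac{1}{2}h^\vee$ to be integral. One then quotes Barbasch--Vogan \cite{BV_unip} for $\VA(\U/J(\frac{1}{2}h^\vee))=\overline{\Orb}$ and McGovern \cite{McGovern} for $\operatorname{mult}_\Orb(\U/J(\frac{1}{2}h^\vee))=1$; formula \eqref{eq:mult_equality} then gives both $d_x=1$ and $\bA_x=\bA$ simultaneously. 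Your remark that ``in types $B$, $C$, $D$ the relevant half-integers combine to integers'' is in the right spirit but attached to the wrong construction: it is the evenness of $\Orb^\vee$, not anything about Premet's module, that delivers integrality, and this evenness genuinely depends on having first reduced to the weakly rigid class rather than to distinguished orbits.
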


Let us state an important corollary of Conjecture \ref{Conj:main}.
Let $\dcell$ be the two-sided cell corresponding to $\Orb$.
Let $Y,\varrho, A$ have the same meaning as in the Subsection \ref{SS_W_irreps}.
Pick $w\in \dcell$ compatible with $\varrho$. To $w$ assign a triple $(x,y,\mathcal{V})$ as explained in
Subsections \ref{SS_W_irreps}, \ref{SS_HC_semis} (for any regular dominant weight).

\begin{Thm}\label{Thm:main_appendix}
Assume that Conjecture \ref{Conj:main} holds for the pair $(\g,\Orb)$.
Then
\begin{enumerate}
\item  The Goldie rank of $\J(w\varrho)$ coincides with the dimension of an irreducible
$\Walg$-module in $Y$ whose $A$-orbit corresponds to $\J(w\varrho)$ (all such modules
differ by outer automorphisms, so  their dimensions are the same).
\item $\displaystyle z_w=\dim \mathcal{V}\frac{|A_y|}{|A_{(x,y)}|}$.
\end{enumerate}
\end{Thm}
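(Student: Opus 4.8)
The plan is to reduce both statements to properties of the "dagger" functor $\dagger\colon \HC$-bimodules $\to$ equivariant $\Walg$-bimodules of \cite{HC}, combined with the existence of the one-dimensional $\Walg$-module supplied by Conjecture \ref{Conj:main}. First I would recall, as in \cite{W_classif} and Subsection \ref{SS_HC_semis}, that $\dagger$ identifies the subquotient of the $\HC$-bimodule category attached to $\Orb$ with a subcategory of finite dimensional equivariant $\Walg$-bimodules, and that under this identification the simple bimodule $\mathcal{L}_w$ labeled by $w\in\dcell$ goes to the simple equivariant bimodule $\End_{\K}(x)\otimes\End_\K(y)$-component carrying the irreducible $\bA_{(x,y)}$-module $\mathcal{V}$; here $x$ sits in the $\bA$-orbit $\bA/H_c$ corresponding to $\J(w\varrho)$ and $y$ in the one corresponding to $\J(w^{-1}\varrho)$. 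The key numerical input is that $\dagger$ is compatible with the appropriate dimension/rank functions: for a $\HC$-bimodule $M$, the rank of $\dagger M$ as an equivariant $\Walg$-bimodule is computed from its multiplicities along $\Orb$ exactly as the Goldie rank of $U(\g)/\J$ is computed from the leading term of its Hilbert polynomial, so that $\Goldie(U(\g)/\J(w\varrho))$ equals $\dim V_x$, the dimension of the irreducible $\Walg$-module indexed by $x$. This is precisely where Conjecture \ref{Conj:main} enters: the $A$-stable one-dimensional module lets us normalize the dimensions inside the $A$-orbit, so that the modules in $Y$ over $\J(w\varrho)$ all have the common dimension $\dim V_x$, which gives part (1).

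For part (2), I would unravel Joseph's scale factor $z_w=\Goldie L(L(w\varrho),L(w\varrho))/\Goldie(U(\g)/\J(w\varrho))$ by applying $\dagger$ to the $\HC$-bimodule $L(L(w\varrho),L(w\varrho))$. The point is that $\dagger$ sends $L(L(w\varrho),L(w\varrho))$ to (an equivariant form of) $\End_{\Walg}$ of the module $V_x$, or more precisely to the full matrix object $\Mat_{\dim V_x}$ twisted by the relevant equivariant structure, while by the previous paragraph it sends $U(\g)/\J(w\varrho)$ to an object whose rank is $\dim V_x$. Taking ranks and tracking the $\bA$-equivariant structure: the equivariant endomorphism algebra of $V_x$ as an object over the groupoid with isotropy $\bA_{(x,y)}$ has rank $\dim V_x\cdot\dim\mathcal{V}\cdot\bigl(|A_y|/|A_{(x,y)}|\bigr)$ relative to the rank $\dim V_x$ of the structure object, where the factor $|A_y|/|A_{(x,y)}|$ is the index accounting for the size of the $\bA$-orbit of $y$ versus the isotropy of the pair, and $\dim\mathcal{V}$ is the multiplicity contributed by the local system. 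Dividing out by $\Goldie(U(\g)/\J(w\varrho))=\dim V_x$ yields $z_w=\dim\mathcal{V}\cdot|A_y|/|A_{(x,y)}|$.

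The main obstacle is the precise bookkeeping of equivariant ranks under $\dagger$: one must show that $\dagger$ is exact and "multiplicativity-preserving" on the relevant subquotient, that it takes Goldie rank to the natural rank function on the finite dimensional equivariant $\Walg$-bimodule category, and that the $\bA$-action on $Y\times Y$ interacts with these ranks exactly through the index $|A_y|/|A_{(x,y)}|$ and the local-system dimension $\dim\mathcal{V}$. This requires the identification of $\dagger L(L(w\varrho),L(w\varrho))$ with the equivariant endomorphism object of $V_x$, which in turn rests on the double centralizer statement for $\dagger$ and on the classification of simple equivariant bimodules by triples $(x,y,\mathcal{V})$ from \cite[Section 7]{W_classif}; checking that no stray scalar is lost in the passage between $U(\g)$-Goldie rank and the $\Walg$-side rank (the content of \cite[5.12]{Joseph_II} guaranteeing integrality of $z_w$ should serve as a consistency check here) is the delicate point. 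Everything else — exactness of $\dagger$, the dimension formula for Verma characters, and the orbit structure $\bA/H_c$ — is available from the cited results and from Conjecture \ref{Conj:main}.
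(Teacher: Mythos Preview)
There is a genuine gap: you conflate \emph{multiplicity} with \emph{Goldie rank}. The functor $\bullet_\dagger$ does not take Goldie ranks to dimensions; what it does (property (iv) in Subsection~\ref{SS_dagger_functor}) is $\dim \M_\dagger=\mult_\Orb\M$. So applying $\bullet_\dagger$ to $\U/\J(w\varrho)$ gives you $\mult_\Orb(\U/\J(w\varrho))=|\bA/H_{c_w}|\,d_x^2$ via (\ref{eq:mult_equality}), not $\Goldie(\U/\J(w\varrho))$. The whole difficulty of the theorem is precisely the discrepancy between $d_x$ and $\Goldie(\U/\J(w\varrho))$, encoded in the ``Premet scale factor'' $\operatorname{pr}_x:=d_x/p_w(\varrho)$. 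Your claim that the conjecture ``lets us normalize the dimensions inside the $A$-orbit'' misses the point: dimensions are automatically constant on an $A$-orbit; what the conjecture provides is a \emph{single} left cell where $\operatorname{pr}=1$, and one then has to propagate this to all cells.

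The paper's route is quite different from yours. First (Proposition~\ref{Thm:scale}) it proves the unconditional formula
\[
z_w=\frac{\operatorname{pr}_x}{\operatorname{pr}_y}\cdot\frac{|\bA_y|}{|\bA_{(x,y)}|}\dim\mathcal{V},
\]
using Joseph's result that $m_w/\gol_w$ depends only on the right cell (Lemma~\ref{Prop:proportionality}, which comes from \cite[12.5]{Jantzen}) together with (\ref{eq:mult_equality}) and the fact that Duflo involutions correspond to triples $(x,x,\operatorname{triv})$. Then (Proposition~\ref{Prop:scale_equal}) it shows $\operatorname{pr}_w(\lambda)$ is independent of $\lambda$, so it makes sense to speak of $\operatorname{pr}_c$ for a left cell $c$. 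Finally, the conjecture supplies one cell $c_0$ with $H_{c_0}=\bA$ and $\operatorname{pr}_{c_0}=1$; for any other $y$ one chooses $w$ with triple $(x_0,y,\K)$ where $\bA_{x_0}=\bA$, so $|\bA_y|=|\bA_{(x_0,y)}|$ and the formula above reads $z_w=\operatorname{pr}_{x_0}/\operatorname{pr}_y=1/\operatorname{pr}_y$. Since $z_w\geqslant 1$ and $\operatorname{pr}_y\geqslant 1$ (the latter from \cite[Proposition~3.4.6]{wquant}), both equal $1$. This gives (1), and substituting $\operatorname{pr}_x=\operatorname{pr}_y=1$ back into the displayed formula gives (2). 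Your attempt to read off $\Goldie L(L(w\varrho),L(w\varrho))$ directly from $L(L(w\varrho),L(w\varrho))_\dagger$ would face the same multiplicity-vs-Goldie-rank obstacle and does not bypass this argument.
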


Thanks to assertion (2) of Theorem \ref{Thm:main_appendix}, Conjecture \ref{Conj:main}
completes one of Joseph's programs of computation of the Goldie ranks (for integral central
character), see Subsection \ref{SUBSECTION_Goldie_rem} for details.

\begin{Rem}\label{Rem:data_comput} For classical Lie algebras, the group $A$ is commutative, moreover, it is the sum of several copies of $\ZZ/2\ZZ$ (see, for example,
\cite[Theorem 6.1.3]{CM})
and so  $A_{(x,y)}=A_x\cap A_y$, and $\dim \mathcal{V}=1$.
In this case, one can recover all numbers appearing in the right hand side in (2) combinatorially starting from
$w$, see \cite[6.9]{W_classif}. We note however that we do not know how to  recover $\mathcal{V}$ itself
(and $\dim \mathcal{V}$) in the case when $A=S_3,S_4,S_5$ combinatorially starting from $w$, we refer
the reader to \cite[6.8]{W_classif} for the list of orbits with these component groups.
\end{Rem}
We also would like to remark that the formula in (2) is compatible with results on $z_w$ from \cite[2.4,2.13]{Joseph_scale}.

Let us complete this subsection by describing previous results relating dimensions of irreducible
$\Walg$-modules to Goldie ranks. The existence of such a relationship was conjectured by Premet
in \cite[Question 5.1]{Premet2}. In \cite[Proposition 3.4.6]{wquant}, the author proved that the Goldie rank
of an arbitrary primitive ideal $\J$ does not exceed the dimension of the corresponding irreducible module.
In \cite{Premet_Goldie} Premet significantly improved that result by showing that the Goldie
rank always divides the dimension. Moreover, he proved that the equality always holds in type $A$.
However, in other classical types the equality does not need to hold (for ideals with non-integral
central character). A counter-example is provided by the ideal $\J(\rho/2)$ in type $C_n$
for $n$ large enough, see \cite[1.3]{Premet_Goldie}, for details. In \cite{Brundan}
Brundan reproved the equality of the Goldie rank and the dimension for type $A$.

\subsection{Results on characters of simple $\Walg$-modules}\label{SS_W_results}
We will need to fix some notation to state the main result that is the existence of a functor
with certain properties.

Fix a nilpotent element $e\in \g$. We include $e$ into a minimal Levi subalgebra $\g_0$
so that $e$ is a distinguished nilpotent element in $\g_0$.
Choose Cartan and Borel subalgebras $\h\subset \b_0\subset \g_0$. Let $\g=\bigoplus_{i\in \ZZ}\g(i)$ stand for the eigendecomposition for $h$. We recall that a distinguished element is always even (see, for example, \cite[Theorem 8.2.3]{CM}) so $\g_0(i):=\g_0\cap \g(i)$ is zero when $i$ is odd.
Form the W-algebra $\Walg$ from $(\g,e)$.

Pick an integral element $\theta\in \z(\g_0)$ such that $\z_\g(\theta)=\g_0$. Consider the eigen-decomposition
$\g=\bigoplus_{i\in \ZZ}\g_i$ for $\theta$. Let $\b:=\b_0\oplus \g_{>0}$, where we set $\g_{>0}:=\bigoplus_{i>0}\g_i$,
this is a Borel subalgebra in $\g$. Further, set $\p:=\g_0(\geqslant 0)\oplus \g_{>0}$. This is a parabolic
subalgebra in $\g$. Let $P$ denote the corresponding parabolic subgroup.

Let $\Lambda$ denote the weight lattice for $\g$. For $\lambda\in \Lambda$ let $L_{00}(\lambda), L_0(\lambda), L(\lambda)$ denote the irreducible modules with highest weight $\lambda-\rho$ for $\g_0(0),\g_0$ and $\g$, respectively.

We consider two categories: the sum of integral blocks of the parabolic category $\mathcal{O}$ for $(\g,\p)$,
to be denoted by $\mathcal{O}^P$, and  the category $\mathcal{O}$ for $\Walg$, denoted by
$\OCat^\theta(\g,e)$. The definition of the latter will be recalled in Subsection \ref{SS_Cats_noncompl}.
Let $\Lambda_{\p}$ denote the subset of all weights $\lambda$ such that $\langle\lambda,\alpha^\vee\rangle>0$
for all simple roots  $\alpha$ of $\g_0(0)$. The simples in $\mathcal{O}^P$ are precisely $L(\lambda),\lambda\in \Lambda_{\p}$.
Let $\Walg^0$ denote the W-algebra for the pair $(\g_0,e)$. The simples in $\OCat^\theta(\g,e)$
are parameterized by the finite dimensional irreducible $\Walg^0$-modules,
the simple corresponding to $\Nil^0$ will be denoted by $L^\theta_{\Walg}(\Nil^0)$.
Further to every finite dimensional $\Walg^0$-module $\Nil^0$  we can assign
a ``Verma module'' $\Delta^\theta_\Walg(\Nil^0)$.

One can compute the character of $\Delta^\theta_{\Walg}(\Nil^0)$ as follows. Consider the action of $\z(\g_0)$
on the centralizer $\z_\g(e)$ of $e$ in $\g$. Let $\mu_1,\ldots,\mu_k$ be all weights (counted with multiplicities)
of this action that are negative on $\theta$. Suppose that $\z(\g_0)$ acts on $\Nil^0$ with a single weight $\mu_0$.
Then the character of $\Delta^\theta_{\Walg}(\Nil^0)$ equals
\begin{equation}\label{eq:Verma_W_char} e^{\mu_0}\dim\Nil^0\prod_{i=1}^k (1-e^{\mu_i})^{-1}.\end{equation}
We remark that we will be dealing with modules $\Nil^0$ for which the dimension is easy to compute, see 1) below,
generally, these modules will be reducible.

We will produce an exact functor $\Vfun: \OCat^P\rightarrow \OCat^\theta(\g,e)$. The character computation
for the simples with integral central character in $\OCat^\theta(\g,e)$ will be based on the following two properties of $\Vfun$.

1) $\Vfun$ maps the parabolic Verma $\Delta_P(\lambda)$ to $\Delta^\theta_\Walg(\Nil^0)$, where
$\Nil^0$ is a $\Walg^0$-module of dimension equal to $\dim L_{00}(\lambda)$. In particular,
we have
\begin{equation}\label{eq:char_Verma} \operatorname{ch}\Vfun(\Delta_P(\lambda))=e^{\lambda-\rho} \dim L_{00}(\lambda)\prod_{i=1}^k (1-e^{\mu_i})^{-1}.\end{equation}

2) One can also describe the image of $L(\lambda)$ under $\Vfun$.   By $\Lambda^{max,0}_{\p}$ we denote the subset of $\Lambda_{\p}$ consisting of all $\lambda$
such that the Gelfand-Kirillov (GK) dimension of  $L_0(\lambda)$  equals to $\dim \g_0(<0)$, this is the maximal
possible GK dimension for an object in the parabolic category $\mathcal{O}$ for $(\g_0, \p\cap \g_0)$. If $\lambda\not\in \Lambda^{max,0}_{\p}$,
then $\Vfun(L(\lambda))=0$.
If $\lambda\in \Lambda^{max,0}_{\p}$, then the associated variety of the ideal $\J_0(\lambda)=\Ann_{U(\g_0)}L_0(\lambda)$ is the closure of $G_0e$. We have $$\Vfun(L(\lambda))=\mathcal{V}\otimes
\bigoplus_{\Nil^0}L_{\Walg}^\theta(\Nil^0).$$ Here the sum is taken over all irreducible $\Walg^0$-modules $\Nil^0$
lying over $\J_0(\lambda)$, the description of the set of such $\Nil^0$ was recalled in
Subsection \ref{SS_W_irreps}, let $n_\lambda$ be the number. When $\g$ is classical, then $\mathcal{V}$ is just $\K$. In general,
$\mathcal{V}$ is described as follows. We represent $\lambda$ in the form $w\varrho_0$, where $\varrho_0\in \Lambda$
is dominant for $\g_0$ and $w\in W_0$ is compatible with $\varrho_0$. Then $\mathcal{V}$ is the vector space from
the triple $(x,y,\mathcal{V})$ assigned to $w$ (viewed as an element in $W_0$) in Subsection \ref{SS_W_irreps},
see also Subsection \ref{SS_HC_semis}.


It is not difficult to show (see Subsection \ref{SS_summary})
that  $\operatorname{ch}L^\theta_{\Walg}(\Nil^0)$ does not depend
on the choice of $\Nil^0$ in the $A_0$-orbit (where $A_0$ denotes the component group for $e$ viewed as an element
in $\g_0$). Let us explain how to compute this character.

Define the integers  $c_{wu}$ by the equality $L(w\varrho_0)=\sum_{u}c_{wu}\Delta_P(u\varrho_0)$
in the Grothendieck group $K_0(\OCat^P_{\varrho_0})$ in the infinitesimal block of $\OCat^P$ with generalized central character corresponding to $\varrho_0$. Recall that $K_0(\OCat^P_{\varrho_0})$ is a free abelian group with basis $\Delta_P(u\varrho_0)$,
where $u$ runs over all elements in $W_0$ compatible with $\varrho_0$ and such that $u\varrho_0\in \Lambda_\p$.
Let us also point out that the numbers $c_{wu}$ are known, they equal $c_{wu}(1)$, where
$c_{wu}(q)$ is a parabolic Kazhdan-Lusztig polynomial.

Then we have
\begin{equation}\label{eq:mult}n_{w\varrho_0}\dim \mathcal{V}\cdot \operatorname{ch}L^\theta_{\Walg}(\Nil^0)=\sum_{u}c_{wu}\operatorname{ch}\Vfun(\Delta_P(u\varrho_0)).\end{equation}
where $\operatorname{ch}\Vfun(\Delta_P(u\varrho_0))$ can be computed by (\ref{eq:char_Verma}) and $n_{w\varrho_0}$
is the number of the irreducible finite dimensional $\Walg^0$-modules lying over $\J_0(w\varrho_0)$.
As in Remark \ref{Rem:data_comput},  when all simple summands of $\g_0$ are classical, we can recover $n_{w\varrho_0}\dim \mathcal{V}=n_{w\varrho_0}$ combinatorially starting from $w$.

To finish this discussion we would like to point out that the functor $\Vfun$ has other nice properties
to be investigated in the present paper. We summarize them in Theorem \ref{Thm:Vfun}.

Let us mention some special cases where the character formulas were known before.
In type A, they follow from the work of Brundan and Kleschev, \cite{BK2}. Somewhat more generally,
if $e$ is principal in a Levi subalgebra, then the character formulas follow from the results of
\cite[Section 4]{LOCat}. We remark that both computations are  based on using essentially opposite
special cases of the functor $\Vfun$.

Another result related to ours is the main result of \cite{BM}. There Bezrukavnikov and Mirkovic deal
with simple non-restricted representations over semisimple Lie algebras
in characteristic $p$. More precisely, let $\mathbb{F}$ be an algebraically
closed field of characteristic $p\gg 0$. Since $p\gg 0$, there is a natural bijection
between the nilpotent orbits in $\g^*$ and in $\g^*_{\mathbb{F}}$, let $\Orb_{\mathbb{F}}$
denote the orbit in $\g_{\mathbb{F}}$ corresponding to $\Orb$.
Bezrukavnikov and Mirkovic consider the category of all
$\g_{\mathbb{F}}$-modules with trivial Harish-Chandra character and $p$-character  $e_{\mathbb{F}}\in\Orb_{\mathbb{F}}$
(we can view $\Orb_{\mathbb{F}}$ as an orbit in the Frobenius twist $\g^{*(1)}$ because $\g^*$ and $\g^{*(1)}$
are naturally identified).
They identify the complexified $K_0$ of this category
with $H^*(\mathcal{B}_e)$, where $\mathcal{B}_e$ stands for the Springer fiber (in characteristic $0$).
Then they consider a deformation  $H^*(\mathcal{B}_e)$ over $\K[q^{\pm 1}]$ given by $H^*_{\K^\times}(\mathcal{B}_e)$
for a suitable $\K^\times$-action on $\mathcal{B}_e$ (so that $q$ becomes the equivariant parameter for the
$\K^\times$-action). Then $H^*_{\K^\times}(\mathcal{B}_e)$ becomes an explicit module over the affine Hecke algebra
$\mathcal{H}$ for $W$. The $\mathcal{H}$-module $H^*_{\K^\times}(\mathcal{B}_e)$ comes equipped with a
$\K[q^{\pm 1}]$-bilinear form and a semilinear bar-involution
that can be written in terms of the $\mathcal{H}$-module structure.
From these data one can define a canonical basis of $H^*_{\K^\times}(\mathcal{B}_e)$
in the sense of Kashiwara (the elements have to be fixed by the bar-involution and orthonormal modulo $q$).
The classes of simple $\g_{\mathbb{F}}$-modules in $H^*(\mathcal{B}_e)$ are then the specializations of the
canonical basis elements at $q=1$. If one knows the canonical basis elements, then one can write the
dimension formulas for the simples. However it is unclear how to write those elements explicitly
(with a possible exception of the case when $e$ is principal in a Levi subalgebra but that case was
not worked out either).

Now let us explain a connection of \cite{BM} to our work.
It follows from \cite[Theorem 1.1]{Premet0} that the central reduction of $U(\g_{\mathbb{F}})$ at the $p$-character $e_{\mathbb{F}}$
is isomorphic to the matrix algebra of rank $p^{\dim \Orb/2}$ over the analogous reduction of the W-algebra.
So \cite{BM} basically describes the classes of the simples over the latter algebra.
For $p$ large enough, one can take an irreducible finite dimensional
representation of $\Walg$, reduce this representation (or rather its integral form; see, for example,
\cite[Section 2]{Premet_Goldie} for technical details on this procedure) modulo $p$ and get an irreducible finite
dimensional representation for the W-algebra in characteristic $p$ with $p$-character $e_{\mathbb{F}}$.
However, a relatively small portion of simples in characteristic $p$ is obtained in this way. So, comparing to
\cite{BM}, we get explicit formulas but for a smaller number of modules.

Finally,  let us point out that we get two ways to compute the Goldie ranks of primitive ideals: one via a more explicit formula
coming from the character formulas in the W-algebra category $\mathcal{O}$ and one implementing
a program of Joseph. It is completely unclear to the author why these formulas give the same result!

\subsection{Organization and content of the paper}
Let us describe the content of this paper, section by section.

Section \ref{S_W_alg} is preliminary and does not contain new results. There we explain (a slight ramification)
of the definition of a W-algebra via a quantum slice construction that first appeared in \cite{wquant} somewhat implicitly
and in \cite{W-prim} in a more refined form. Next, in Subsection \ref{SS_dagger_functor}, we recall the main construction
of \cite{HC}: a functor $\bullet_{\dagger}$ between the categories of Harish-Chandra bimodules for
$\U$ and for $\Walg$. There are several reasons why we need this functor, for example, the functor
$\Vfun$ mentioned above can be defined analogously to $\bullet_{\dagger}$. Also $\bullet_{\dagger}$
plays an important role both in the study of $\Vfun$ and in the computation of Goldie ranks.
In Subsection \ref{SS_W_classif} we recall the classification of finite dimensional irreducible $\Walg$-modules
obtained in \cite{W_classif}. Subsection \ref{SS_HC_semis} describes the images of certain simple
Harish-Chandra $\U$-bimodules under $\bullet_\dagger$. This gives rise to the generalized RSK correspondence mentioned in Subsection \ref{SS_W_irreps} and again plays an important role in the computation of characters and a crucial role in the computation
of Goldie ranks. Subsection \ref{SS_parab_ind} briefly recalls some results on a parabolic
induction for W-algebras obtained in \cite{Miura}. We will need these results to prove Conjecture \ref{Conj:main}
for classical groups in Subsection \ref{SECTION_main_conj}. Finally, large Subsection \ref{SS_iso_comp} deals with various isomorphisms,
mostly of certain completions, that can be deduced from the quantum slice construction. These isomorphisms play an important role
in the construction of $\Vfun$.

In Section \ref{S_Cats} we introduce categories of modules that are involved in our construction and study
of $\Vfun$. In the first subsection we recall some known facts about parabolic categories $\mathcal{O}$
including the Bernstein-Gelfand equivalence. The latter is a crucial tool to transfer the properties of
$\bullet_\dagger$ to those of $\Vfun$. In Subsection  \ref{SS_Cats_noncompl} we recall the definition
of a category $\mathcal{O}$ for $\Walg$ basically following \cite{BGK}. We also recall the construction
of Verma modules and explain how to compute their characters again following \cite{BGK}.  Next, we introduce
the category of Whittaker $\U$-modules basically as it appeared
in \cite{LOCat} and recall an equivalence between W-algebra categories $\OCat$ and the Whittaker
categories.  In the final part of Subsection \ref{SS_Cats_noncompl} we introduce a duality  functor for a W-algebra category $\mathcal{O}$,
the construction is pretty standard. Subsection \ref{SS_cats_compl} is new but is not very original.
There we study the completed versions of $W$-algebra categories $\mathcal{O}$ and of  Whittaker categories.
A completed version of a W-algebra category $\mathcal{O}$ is obtained from the usual one via completion.
This is no longer so for Whittaker categories, a completed version is different from the usual one.
Yet, we show that it still has analogs of Verma modules and is equivalent to a completed version of a category $\mathcal{O}$
for the W-algebra.

Section \ref{S_Vfun} is one of the two central sections of this paper. There we introduce a functor $\Vfun$
and study its properties.   In Subsection \ref{SS_dagger_e} we define a functor denoted by $\bullet_{\dagger,e}$ that generalizes
$\bullet_{\dagger}$. Its source category is an appropriate category of $\U$-modules and the target
category is a  category of $\Walg$-modules. After the functor is constructed we study
some its general properties: we show that it is exact, intertwines tensor products with Harish-Chandra
bimodules and study the question of existence of a right adjoint functor. In Subsection \ref{SS_Vfun_constr}
we construct a functor $\Vfun:\OCat_\nu^{P}\rightarrow \OCat^\theta(\g,e)_\nu^R$ (the notation will
be explained below). We provide three constructions and show that they are equivalent. The first construction
is as $\bullet_{\dagger,e}$, while the other two constructions use the equivalences between the two
versions (usual  and completed) of the Whittaker category and the two versions of the category $\OCat$
for $\Walg$.  In process of proving that the three constructions are equivalent we establish
some important properties of $\Vfun$, for example, its behavior on parabolic Verma modules
and a relation with the duality functors.
The last Subsection \ref{SS_Vfun_prop} establishes some further properties of $\Vfun$. Most importantly,
there we show that $\Vfun$ is a quotient functor onto its image and has the double centralizer property.
The properties of $\Vfun$ including those needed for character formulas are summarized in
Theorem \ref{Thm:Vfun}. Further, under some restrictions on $P$ we show that
$\Vfun$ is 0-faithful, i.e. fully faithful on modules admitting a parabolic Verma filtration.

In Section \ref{S_Goldie} we deal with Goldie ranks. First, we recall some known results on them,
almost entirely due to Joseph. In Subsection \ref{SS_scale}  we use
the results quoted in Subsections \ref{SS_dagger_functor},\ref{SS_HC_semis} to prove some formula for the scale
factors $z_w$. In Subsection \ref{SS_Thm_Goldie_proof} we complete the proof of Theorem \ref{Thm:main_appendix}
modulo Conjecture \ref{Conj:main}. Finally, in Subsection \ref{SECTION_main_conj} we prove Conjecture
\ref{Conj:main} for the classical types. For this we first use a reduction procedure based on the parabolic
induction for W-algebras. Then we deal with the three classical types (we do not consider type A)
one by one, explaining type B in detail and then describing modifications to be made for types C and D.

Section \ref{S_suppl} deals with three topics that are related to the functor $\Vfun$. In Subsection
\ref{SS_HC_mod} we discuss the functor $\bullet_{\dagger,e}$ for Harish-Chandra $(\g,K)$-modules.
In Subsection \ref{SS_parab_top_quot} we study a different version
of the functor $\bullet_{\dagger,e}$ for a parabolic category $\mathcal{O}$, one associated with the Richardson
orbit. This functor was studied by Stroppel in \cite{Stroppel_ger},\cite{Stroppel_eng} but she did not
relate the target category to $W$-algebras.
We show that many properties of the functor $\Vfun$ carry over to this situation. We also analyze
conditions that guarantee the 0-faithfulness of our functor in type A. In the last subsection of Section \ref{S_suppl}
we extend (in a straightforward way) the definition of $\bullet_{\dagger,e}$ to the categories of modules
over Dixmier algebras (i.e., algebras
equipped with a homomorphism from $\U$ turning them into Harish Chandra bimodules).
We are basically interested in two classes of Dixmier algebras: one coming from
Lie superalgebras and the other from quantum groups at roots of unity: we show that the quantum Frobenius
epimorphism splits turning (in fact, some mild quotient of) the Lusztig form of a quantum group into
a Dixmier algebra.

\subsection{Notation and conventions}\label{SS_not}
In this subsection we will list some notation and conventions used in this paper. They will be
duplicated (and explained in more detail)  below.

Our base field is an algebraically closed field $\K$ of characteristic $0$.

\subsubsection{Algebras and groups} Let $G$ be a reductive algebraic group over $\K$ and $\g$ be its Lie algebra.
Fix a nilpotent element $e$ and include it into an $\sl_2$-triple $(e,h,f)$.
Let $\Orb$ denote the $G$-orbit of $e$ (under the adjoint action). Fix a non-degenerate symmetric $G$-invariant
form on $\g$, say $(\cdot,\cdot)$, whose restriction to the rational form of a Cartan
subalgebra is positive definite. Using this form we can identify $\g$ with $\g^*$.
We write $\chi$ for the image of $e$ under this identification. By $Q$ we denote the centralizer of $(e,h,f)$ in
$G$. This is a reductive subgroup of $G$. We write $\g(i)$ for the eigenspace of $\operatorname{ad}h:=[h,\cdot]$
with eigenvalue $i$ and use the notation like $\g(>0)$ for $\bigoplus_{i>0}\g(i)$.

By $\U$ or $U(\g)$ we denote the universal enveloping algebra of $\g$. We consider the Slodowy slice
$S=e+\z_\g(f)\subset \g\cong \g^*$, where  $\z_\g(\bullet)$ stands for the centralizer in $\g$. Inside $\g^*$, the slice
$S$ is realized as $\chi+[\g,f]^\perp$, where the superscript $\perp$ indicates the annihilator. Let $\Walg$ denote the W-algebra
of the pair $(\g,e)$. Sometimes when we want to explicitly indicate the Lie algebra and the nilpotent
element used to produce a W-algebra, we use the notation like $U(\g,e)$ for the W-algebra.

Set $V:=[\g,f]$. This is a symplectic vector space with form $\omega(x,y)=(e,[x,y])$. By $\W$ we denote the Weyl algebra of $V$,
i.e., $\W=T(V)/(u\otimes v-v\otimes u-\omega(u,v))$.

We also will consider the ``homogenizations'' of the algebras above, they will be decorated
with the subscript ``$\hbar$''.

In Subsection \ref{SS_iso_comp}, Section \ref{S_Cats} and Subsections \ref{SS_Vfun_constr},\ref{SS_Vfun_prop}
we will use the following notation. We will fix an integral element $\theta\in \g$ centralizing
$e,h,f$ such that  the element $e$ is even in $\g_0:=\z_\g(\theta)$. By $G_0$ we denote the connected subgroup
of $G$ corresponding to $\g_0$. We will consider the gradings
by eigenspaces of $\operatorname{ad}\theta$: $\g=\bigoplus_{i\in \ZZ}\g_i, V=\bigoplus_i V_i, \U=\bigoplus_{i}\U_i,
\Walg=\bigoplus_i \Walg_i$. We use the notation like $\U_{\geqslant 0}$ similarly to the above.
Further, we consider the algebras $\U^0:=\U_{\geqslant 0}/(\U_{\geqslant 0}\cap \U\U_{>0}),\Walg^0, \W^0$
(to see that these spaces are actually algebras we notice that, for example, $\U_{\geqslant 0}$
is a subalgebra in $\U$, and $\U_{\geqslant 0}\cap \U\U_{>0}$ is a two-sided ideal in $\U_{\geqslant 0}$).

We write $\m$ for $\g_0(<0)\oplus \g_{>0}$, $\bar{\m}$  for $\g_0(<0)\oplus \g_{<0}$, $\p$ for
$\g_{0}(\geqslant 0)\oplus \g_{>0}$, and $\t$ for $\z(\g_0)$. Let $M,\bar{M},P,T$
be the corresponding connected subgroups of $G$ so  that $M,\bar{M}$ are unipotent,
$P$ is parabolic, $T$ is a torus. Further we choose a Cartan subalgebra $\h\subset\g_0$
and a Borel subalgebra $\b\subset\g$ such that $\b_0:=\b\cap\g_0$ is a Borel subalgebra in  $\g_0$.
Let $\Lambda\subset \h^*$ denote the weight lattice of $G$ and $\Lambda^+$ be the subset of dominant weights.
As usual, we write $\rho$ for half the sum of positive roots.

We write $\m^0:=\m\cap \g_0, \bar{\m}^0:=\bar{\m}\cap \g_0$.
We consider the shift $ \m_\chi:=\{x-\langle \chi,x\rangle| x\in \m\}\subset \g\oplus\K$ and
the similar shifts $\bar{\m}_\chi, \m^0_\chi, \bar{\m}^0_\chi$.
Further, we set $\v:=\m\cap V, \bar{\v}:=\bar{\m}\cap V$.

We consider the completions
$\U^{\wedge}:=\varprojlim_{n\rightarrow +\infty} \U/\U\m_\chi^n, \wU:=\varprojlim_{n\rightarrow +\infty} \U/\bar{\m}_\chi^n \U, \W^\wedge:=\varprojlim_{n\rightarrow +\infty} \W/\W\v^n,
\wW:=\varprojlim_{n\rightarrow +\infty} \W/\bar{\v}^n\W, \Walg^\wedge:=\varprojlim_{n\rightarrow +\infty} \Walg/\Walg\Walg_{>0}^n,
\wWalg:=\varprojlim_{n\rightarrow +\infty} \Walg/\Walg_{<0}^n\Walg$ and the similar completions of $\U^0,\Walg^0,\W^0$.

Finally, in this context, for $R$ we take the centralizer of $T\subset Q$ in $Q$. This is a reductive subgroup.

\subsubsection{Categories and functors}
Here we are going to explain some notation used mostly in Sections \ref{S_Cats},\ref{S_Vfun}.

We usually abbreviate ``Harish-Chandra'' as ``HC''. We use the notation $\HC(\U)$ for the category
of HC $\U$-bimodules, and $\HC^Q(\Walg)$ for the category of $Q$-equivariant HC $\Walg$-bimodules.

For an algebraic subgroup $K\subset G$ and a character $\nu$ of its Lie algebra $\k$ we write $\tilde{\OCat}^K_\nu$ for the category
of $(K,\nu)$-equivariant modules. We say that a $(\U,K)$-module $\M$ is $(K,\nu)$-equivariant if the structure map $\U\otimes \M\rightarrow \M$ is $K$-equivariant and the differential of the $K$-action coincides with the $\k$-action given by $(x,m)\mapsto xm-\nu(x)m$.
We write $\OCat^K_\nu$ for the full subcategory in $\tilde{\OCat}^K_\nu$ consisting of finitely generated modules. When $\nu=0$,
we suppress the subscript.

In particular, we are going to consider the parabolic categories $\OCat^P_\nu$ for $(\g,\p)$ and $\OCat^{P_0}_\nu$
for $(\g_0,\p_0)$. The parabolic Verma modules with highest weights $\lambda-\rho$ are denoted by
$\Delta_P(\lambda), \Delta_{P_0}(\lambda)$.  Their simple quotients are denoted by $L(\lambda),L_0(\lambda)$.
We write $L_{00}(\lambda)$ for the irreducible $\g_0(0)$-module with highest weight $\lambda-\rho$.
The Verma modules in the whole BGG category $\mathcal{O}$ are denoted by $\Delta(\lambda)$.

The categories $\OCat$ for $\Walg$ constructed from the element $\theta$ are denoted by $\OCat^\theta(\g,e)_\nu$
(the usual version), $\OCat^\theta(\g,e)_\nu^R$ (the $R$-equivariant version), $\hat{\OCat}^\theta(\g,e)_\nu$
(the completed version). For the Whittaker categories we use the notation like $\Wh^\theta(\g,e)_\nu$, etc.
Again, when $\nu=0$, we suppress it from the notation.

There are various functors between the categories under consideration. We write $\widehat{\bullet}$
for the weight completion functor, it maps a module of the form $\bigoplus_\mu \M_\mu$, where $\M_\mu$
is the $\t$-weight space with weight $\mu$, to $\prod_\mu \M_\mu$. For example, we will have the completion
functor $\widehat{\bullet}: \OCat^\theta(\g,e)_\nu\rightarrow \hat{\OCat}^\theta(\g,e)_\nu$.

We also consider a different type of completions to be denoted by $\bullet^\wedge$. These are $\bar{\m}_\chi$-adic completions:
$\M^\wedge:=\varprojlim_{n\rightarrow +\infty}\M/\bar{\m}_\chi^n \M$, this gives a functor, for instance, from $\OCat^P_\nu$
to $\hat{\Wh}^\theta(\g,e)_\nu^R$. Somewhat dually, we have a functor $\Wh_\nu$ that takes a $\U$-module and
maps it to the sum of all submodules that are objects in $\Wh^\theta(\g,e)_\nu$.

On some categories of interest, we have a ``naive'' duality functor to be denoted by $\bullet^\vee$.
This functor is the composition of taking the restricted dual, to be denoted by $\M\mapsto \M^{(*)}$,
and the twist by an anti-automorphism, say $\tau$, $\M^{(*)}\mapsto \,^\tau\!\M^{(*)}$.
We will also consider the homological duality functor that comes from $\operatorname{RHom}$.
This functor is denoted by $\dual$.

Next, we have ``Verma module'' functors: $\Delta^\theta_{\Walg}: \OCat^\theta(\g_0,e)_\nu\rightarrow
\OCat^\theta(\g,e)_\nu, \OCat^\theta(\g_0,e)^R_\nu\rightarrow \OCat^\theta(\g,e)^R_\nu$, $\Delta^0: \U^0$-$\operatorname{mod}\rightarrow
\U$-$\operatorname{mod}, \Delta_\U^\theta: \OCat^\theta(\g_0,e)_\nu\rightarrow \Wh^\theta(\g,e)_\nu,\hat{\Delta}^\theta_\Walg:
\OCat^\theta(\g_0,e)\rightarrow \hat{\OCat}^\theta(\g,e)$, etc. We will consider the  quotients $L^?_?(\bullet)$
by the maximal submodule that does not intersect the highest weight subspace.

\subsubsection{Cells, primitive ideals, etc.}
This notation is mostly used in Sections \ref{S_Vfun},\ref{S_Goldie}. We write $W$ (resp., $W_0$)
for the Weyl group of $\g$ (resp., $\g_0$) acting on the Cartan  subalgebra $\h$.

We usually denote a two-sided cell by $\dcell$ and left cells by $\cell,\cell_1$, etc.. We write
$\Orb_\dcell$ for the  special orbit corresponding to $\dcell$. Let
$\bA$ denote the Lusztig quotient of the component group $A(\Orb_\dcell)$ (i.e., the component
group of the centralizer of some element of $\Orb_\dcell$ in the adjoint group).
By $H_\cell$ we denote the Lusztig subgroup of $\bA$ corresponding to $\cell$. The corresponding
objects for $\g_0$ are decorated with the subscript ``$0$''.

To an element $w$ we can assign a simple HC bimodule $\M_w$ of $\g$-finite maps $\Delta(\rho)\rightarrow L(w\rho)$.
We write $J(\lambda)$ for the annihilator of $L(\lambda)$ in $\U$. For a HC bimodule $\M$ we write
$\VA(\M)$ for its associated variety that is a subvariety in $\g\cong \g^*$.

By $\J_{P,\nu}$ we denote the annihilator of $\Delta_P(\nu+\rho)$ in $\U$. Here $\nu$ is a character of $\p$.

To a left cell $c$ in $W$ one assigns its {\it cell module}, we denote it by $\mathsf{M}(c)$.

There will be some other notation related to Goldie ranks introduced in Section \ref{S_Goldie}.

\subsubsection{Miscellaneous notation}
Below we present some other notation to be used in the paper.

\setlongtables

\begin{longtable}{p{2.5cm} p{12.5cm}}
$\A^{opp}$& the opposite algebra of $\A$.\\
$\widehat{\otimes}$&the completed tensor product of complete topological vector spaces/ modules.\\
$(a_1,\ldots,a_k)$& the two-sided ideal in an associative algebra generated by  elements $a_1,\ldots,a_k$.\\
 $A^{\wedge_\chi}$&
the completion of a commutative (or ``almost commutative'') algebra $A$ with respect to the maximal ideal
of a point $\chi\in \Spec(A)$.\\
$\Ann_\A(\M)$& the annihilator of an $\A$-module $\M$ in an algebra
$\A$.\\
$D(X)$& the algebra of differential operators on a smooth variety $X$.\\
$G^\circ$& the connected component of unit in an algebraic group $G$.\\
$(G,G)$& the derived subgroup of a group $G$.\\
$G_x$& the stabilizer of $x$ in $G$.\\
$\Goldie(\A)$& the Goldie rank of a prime Noetherian algebra $\A$.\\
$\gr \A$& the associated graded vector space of a filtered
vector space $\A$.\\
$R_\hbar(\A)$&$:=\bigoplus_{i\in
\mathbb{Z}}\hbar^i \F_i\A$ :the Rees $\K[\hbar]$-module of a filtered
vector space $\A$.\\
$S(V)$& the symmetric algebra of a vector space $V$.\\
$\mathfrak{X}(H)$& the group of characters of an algebraic group $H$.
\end{longtable}

\subsection{Acknowledgements}
I would like to thank J. Adams, R. Bezrukavnikov, J. Brundan, I. Gordon, A. Joseph, G. Lusztig, V. Ostrik, A. Premet, C. Stroppel,
D. Vogan and W. Wang for stimulating discussions related to various parts of this paper. Also I would like to thank the referee
for many useful comments that allowed me to improve the exposition.

\section{W-algebras and finite dimensional modules}\label{S_W_alg}
\subsection{W-algebras via a slice construction}\label{SSS_slice}
Let $G,\g, e,h,f, (\cdot,\cdot),\chi:=(e,\cdot), \U,Q,\Orb$ have the same meaning as in Subsection \ref{SS_not}.
Further, let $\tau$ be either an involutive anti-automorphism of
$G$ with $\tau(e)=e, \tau(f)=f, \tau(h)=-h$ or (for formal technical  reasons)
the identity.  We will mostly use $\tau$ described in  \ref{SSS_setting}.

Let us recall the construction of a W-algebra associated to the pair $(\g,e)$ (or, more precisely, to $\g$
and  the $\sl_2$-triple $e,h,f$). Consider the group $\widehat{Q}:=\ZZ/2\ZZ\ltimes (Q\times \K^\times)$, where
the action of a nontrivial element $\varsigma\in \ZZ/2\ZZ$
on $Q$ is induced by $\tau$, while the action on $\K^\times$ is trivial. The group $\widehat{Q}$ acts on
$\g$ and $\g^*$: for $q\in Q, t\in \K^\times,x\in \g, \alpha\in \g^*$ we set: $$q.x=\operatorname{Ad}(q)x,
q.\alpha=\operatorname{Ad}^*(q)\alpha,\quad t.x=t^2 t^h x, t.\alpha=t^{-2}t^h\alpha,
\quad\varsigma.x=\tau(x), \varsigma.\alpha=\tau(\alpha).$$ Here $t\mapsto t^h$ stands for the one-parameter
subgroup of $G$ corresponding to the semisimple element $h$.  We remark that the $\widehat{Q}$-action fixes $\chi$. Also the
{\it Slodowy slice} $S:=\chi+ [\g,f]^\perp\subset \g^*$ is $\widehat{Q}$-stable (under the isomorphism
$\g^*\cong \g$ given by the Killing form the Slodowy slice becomes $e+\z_\g(f)$, we would like to point out that the isomorphism
is not $\widehat{Q}$-equivariant because $\widehat{Q}$ rescales the Killing form). The restriction of the $\widehat{Q}$-action to
$\K^\times$ is often called the {\it Kazhdan action}.

Consider the universal enveloping algebra $\U$ of $\g$. We endow this algebra with a ``doubled'' PBW filtration
$\F_0\U=\K\subset \F_1\U\subset\ldots\subset \U$ such that $\F_{i}\U$ is the span of all monomials of degree
$\leqslant i/2$. Let $\U_\hbar$ stand for the Rees algebra with respect to this filtration,
$\U_\hbar:=\bigoplus_{i=0}^{+\infty} \F_i\U\cdot \hbar^i$. On $\U_\hbar$ we have an action of the group
$\widetilde{Q}:=\ZZ/2\ZZ\times \widehat{Q}$ given as follows: the action of $\ZZ/2\ZZ$ on $\g$ is trivial
and the action of $\widehat{Q}$ on $\g$ is as before, while $\ZZ/2\ZZ\ltimes Q\subset \widehat{Q}$ acts
trivially on $\hbar$,
the other copy of $\ZZ/2\ZZ$ acts on $\hbar$ by changing the sign, and $t.\hbar=t\hbar$ for
$t\in \K^\times$.

We can view $\chi$ as a homomorphism $\U_\hbar\rightarrow \K$ via $\U_\hbar\xrightarrow{/\hbar}S(\g)\xrightarrow{\chi}\K$.
Let  $I_\chi, \tilde{I}_\chi$ denote the kernels of $\chi$ in $S(\g),\U_\hbar$, respectively.  Since $\chi$ is fixed by $\widetilde{Q}$,
we see that $\tilde{I}_\chi$ is $\widetilde{Q}$-stable and  therefore
$\widetilde{Q}$ acts on the completion $\U_\hbar^{\wedge_\chi}:=\varprojlim_{n\rightarrow +\infty}\U_\hbar/\tilde{I}_\chi^n$.

Set $V:=[\g,f]$. We can view $V$ as a subspace in $T^*_\chi \g^*=I_\chi/I_{\chi}^2$
via the map $V\rightarrow I_{\chi}$ sending $v$ to $v-\langle\chi,v\rangle$.
We claim that  there is an embedding $\iota:V\rightarrow \tilde{I}_\chi^{\wedge_\chi}$
with the following properties:
\begin{enumerate}
\item $\iota$ is $\widetilde{Q}$-equivariant.
\item The composition of $\iota$ with a natural projection $\tilde{I}^{\wedge_\chi}_\chi\twoheadrightarrow I_\chi/I_\chi^2$
is the inclusion $V\subset I_\chi/I_\chi^2$.
\item $[\iota(u),\iota(v)]=\hbar^2 \omega(u,v)$ for all $u,v\in V$, where  $\omega$ denotes the Kostant-Kirillov form on $V$,
i.e. $\omega(u,v)=(e,[u,v])$.
\end{enumerate}
See \cite[2.1]{W-prim} for the proof (the construction there formally covers the $\ZZ/2\ZZ\times \K^\times\times Q$-action, but the proof
with $\widetilde{Q}$ is similar).
So $\iota$ extends to a homomorphism (actually an embedding)
$\W_\hbar^{\wedge_0}\hookrightarrow \U_\hbar^{\wedge_\chi}$, where $\W_\hbar:=\W_\hbar(V)$ is the homogenized Weyl algebra
 $$T(V)[\hbar]/(u\otimes v-v\otimes u-\hbar^2\omega(u,v))$$ and the superscript $\wedge_0$
means the completion at $0$, i.e., $\W_\hbar(V)^{\wedge_0}:=\varprojlim_{n\rightarrow +\infty} \W_\hbar(V)/J^n$,
where we write $J$ for the ideal generated by $V$ and $\hbar$.

Then the algebra $\U^{\wedge_\chi}_\hbar$ decomposes into the tensor product
\begin{equation}\label{eq:slice_decomp}\U^{\wedge_\chi}_\hbar=\W_\hbar^{\wedge_0}\widehat{\otimes}_{\K[[\hbar]]}\Walg'_\hbar,\end{equation}
where $\Walg'_\hbar$ stands for the centralizer of $\iota(V)$ in $\U^{\wedge_\chi}_\hbar$.
This is shown analogously to the proof of \cite[Proposition 3.3.1]{HC}.
We remark that $\Walg'_\hbar/(\hbar)$ is naturally identified with $\K[S]^{\wedge_\chi}$: there is
a natural homomorphism $\Walg'_\hbar\rightarrow \K[S]^{\wedge_\chi}=\K[\g^*]^{\wedge_\chi}/(V)$, this homomorphism
factors through  an isomorphism $\Walg'_\hbar/(\hbar)\xrightarrow{\sim} \K[S]^{\wedge_\chi}$ thanks to (\ref{eq:slice_decomp}).

The subalgebra $\Walg_\hbar$ of all {$\K^\times$-finite} elements (i.e., finite sums of semi-invariants) in $\Walg'_\hbar$ (for the Kazhdan action) is dense in $\Walg'_\hbar$ and $\Walg_\hbar/(\hbar)=\K[S]$. Those claims are direct consequences of two observations: that the isomorphism $\Walg_\hbar'/\hbar \Walg_\hbar'\cong \K[S]^{\wedge_\chi}$ from the previous paragraph is $\K^\times$-equivariant (by the construction) and that the $\K^\times$-action on $S$ contracts $S$ to $\chi$.

By definition, $\Walg:=\Walg_\hbar/(\hbar-1)$. This is a filtered algebra that comes with a $Q\times \ZZ/2\ZZ$-action
by automorphisms and an anti-automorphism (or the identity map) $\tau$,
both preserve the filtration. Moreover, similarly to \cite[2.1]{W-prim},
the construction easily implies that there is a $Q$- and $\tau$-equivariant Lie algebra embedding
(a quantum comoment map) $\q\hookrightarrow \Walg$ with image in $\F_{\leqslant 2}\Walg$ such that the adjoint $\q$-action on $\Walg$
coincides with the differential of the $Q$-action. In more detail, we can consider the natural inclusion
$\q\hookrightarrow \U_\hbar^{\wedge_\chi}$, denote it
by $\varphi_\U$.  Also we have a natural $\operatorname{Sp}(V)$-equivariant Lie algebra embedding
$\mathfrak{sp}(V)\hookrightarrow \W_\hbar^{\wedge_0}$
(as a complement of $\K\hbar$ in the degree 2 component of $\W_\hbar$). Composing this with the Lie algebra
homomorphism $\q\rightarrow \mathfrak{sp}(V)$, we get a map $\varphi_\W:\q\rightarrow \W_\hbar^{\wedge_0}$.
As we have seen in \cite[2.1]{W-prim}, under the identification $\U^{\wedge_\chi}_\hbar\cong \W_\hbar^{\wedge_0}
\widehat{\otimes}_{\K[[\hbar]]}\Walg^{\wedge_\chi}_\hbar$, the map $\varphi_\U$ decomposes into the
sum $\varphi_\W+\varphi_\Walg$, with $\varphi_\Walg$ being a $Q$-equivariant map $\q\rightarrow \Walg^{\wedge_\chi}_\hbar$
with image lying in degree 2 component, and, in particular, in $\Walg_\hbar$. The Lie algebra homomorphism $\q\rightarrow \Walg$
is obtained from $\varphi_\Walg$ by taking the quotient by $\hbar-1$. The reason why this map is an embedding is as follows:
the $Q^\circ$-action on $\K[S]$ and hence on $\Walg$ has discrete kernel. Indeed, the kernel of the $Q$-action on $S$
has to act trivially on $f$ and on $\z_\g(e)$ and hence on the whole algebra $\g$.

As in \cite[2.1]{W-prim}, all choices we have made  differ by an automorphism of the form $\exp(\frac{1}{\hbar^2}\ad f)$, where
$f\in \tilde{I}_\chi^3$ and $\frac{1}{\hbar^2}f$ is $\widetilde{Q}$-invariant.
So, as a filtered algebra with a $\ZZ/2\ZZ\times (\ZZ/2\ZZ\ltimes Q)$-action and with a quantum comoment
map $\q\rightarrow \Walg$, the algebra $\Walg$ is independent of the choice of $\iota$ up to an isomorphism.

An important property of $\Walg$ is that its center is naturally identified with the center $\Centr$
of $\U$. This was first proved by Ginzburg and Premet,  see \cite[2.2]{HC} for details.

In fact, in Section \ref{S_Vfun} we will need a somewhat different choice of $\iota$.

\subsection{Functor $\bullet_\dagger$}\label{SS_dagger_functor}
This is a functor from the category of Harish-Chandra $\U$-bimodules to the category of Harish-Chandra
$\Walg$-bimodules introduced in \cite{HC} (a closely related functor was constructed by Ginzburg
in \cite{Ginzburg}). Let us recall the definitions of the categories, first.

By a Harish-Chandra bimodule over $\U$ (relative to $G$), we mean a finitely generated $\U$-bimodule $M$
such that the adjoint action of $\g$, $\ad(x)m=xm-mx$, is locally finite and integrates
to a $G$-action (the last condition is vacuous when the group is semisimple and simply connected).
On such  a bimodule one can introduce a  {\it good filtration}, i.e., a $G$-stable filtration
$\F_i\M$ that is compatible with the algebra filtration $\F_i\U$ and such that the associated
graded $\gr\M$ is a finitely generated $S(\g)$-module. Using this we can define the associated
variety $\VA(\M)$ of $\M$ as the support of $\gr\M$, this is a conical $G$-stable subvariety
in $\g\cong \g^*$ independent of the choice of a good filtration. The category of Harish-Chandra
(HC, for short) bimodules will be denoted by $\HC(\U)$.

By a $Q$-equivariant Harish-Chandra $\Walg$-bimodule we mean a $\Walg$-bimodule $\Nil$ equipped
with a $Q$-action compatible with the $Q$-action on $\Walg$ (in the sense that the structure
map $\Walg\otimes \Nil\otimes \Walg\rightarrow\Nil$ is $Q$-equivariant) and subject to the following conditions:
\begin{itemize}
\item there is a $Q$-stable filtration $\F_i\Nil$ on $\Nil$ that is compatible with the algebra
filtration on $\Walg$,
\item we have $[\F_i\Walg, \F_j\Nil]\subset \F_{i+j-2}\Nil$ for all $i$ and $j$,
\item $\gr\Nil$ is a finitely generated $\K[S]$-module,
\item and the differential of the $Q$-action on $\Nil$ coincides with the the adjoint $\q$-action.
\end{itemize}
The category of $Q$-equivariant HC $\Walg$-bimodules will be denoted by $\HC^Q(\Walg)$.

A functor $\bullet_\dagger:\HC(\U)\rightarrow \HC^Q(\Walg)$ was constructed in \cite[3.3,3.4]{HC} using the same construction
as was used to construct $\Walg$ in the previous subsection. Namely, take $\M\in \HC(\U)$. Form the
Rees bimodule $\M_\hbar$ with respect to some good filtration. We can complete $\M_\hbar$ with respect
to the (left or right, does not matter) $\tilde{I}_\chi$-adic topology. Denote the resulting $\U_\hbar^{\wedge_\chi}$-bimodule
by $\M^{\wedge_\chi}_\hbar$. This bimodule carries a $Q\times \K^\times$-action compatible with a
$Q\times\K^\times$-action on $\U^{\wedge_\chi}_\hbar$. Then one can show that $\M^{\wedge_\chi}_\hbar$
splits into the completed tensor product $\W_\hbar(V)^{\wedge_0}\widehat{\otimes}_{\K[[\hbar]]}\Nil'_\hbar$,
where $\Nil'_\hbar$ is the centralizer of $V$ in $\M^{\wedge_\chi}_\hbar$ and hence is a $\Walg'_\hbar$-bimodule.
Then again the $\K^\times$-finite part $\Nil_\hbar$ of $\Nil'_\hbar$ is dense. We set $\M_{\dagger}:=\Nil_\hbar/(\hbar-1)\Nil_\hbar$.
This can be shown to be  canonically independent of the choice of a good filtration on $\M$. So $\bullet_\dagger$ is a functor.

The functor $\bullet_\dagger: \HC(\U)\rightarrow \HC^Q(\Walg)$ has the following properties, see \cite[Proposition 3.4.1,Theorem 4.4.1]{HC}:
\begin{itemize}
\item[(i)] $\bullet_\dagger$ is exact.
\item[(ii)] With the choice of filtration as above, the sheaf  $\gr \M_\dagger$ on $S$ is the restriction
of the sheaf $\gr \M$ (on $\g^*=\g$) to $S$ (this not explicitly stated in {\it loc.cit.} but is deduced directly from the construction).
\item[(iii)] In particular, consider the full subcategories $\HC_{\partial \Orb}(\U)\subset\HC_{\overline{\Orb}}(\U)\subset \HC(\U)$
consisting of all HC bimodules whose associated varieties are contained in the boundary $\partial \Orb$ and the
closure $\overline{\Orb}$ of $\Orb$. Then $\bullet_\dagger$ annihilates $\HC_{\partial \Orb}(\U)$ and sends
$\HC_{\overline{\Orb}}(\U)$ to finite dimensional $\Walg$-bimodules. In particular, $\bullet_\dagger$ descends
to the quotient category $\HC_{\Orb}(\U):=\HC_{\overline{\Orb}}(\U)/\HC_{\partial \Orb}(\U)$.
\item[(iv)] For $\M\in \HC_{\overline{\Orb}}(\U)$, the dimension of $\M_\dagger$ coincides with the multiplicity
$\mult_{\Orb}\M$ of $\M$ on $\Orb$ (=generic rank of $\gr\M$ on $\Orb$).
\item[(v)] There is a right adjoint functor $\bullet^\dagger:\HC^Q_{fin}(\Walg)\rightarrow \HC_{\overline{\Orb}}(\U)$. Moreover,
its composition with the quotient $\HC_{\overline{\Orb}}(\U)\twoheadrightarrow \HC_{\Orb}(\U)$ is left inverse
to $\bullet_{\dagger}$. In other words, $\bullet_{\dagger}:\HC_{\Orb}(\U)\rightarrow \HC^Q_{fin}(\Walg)$
is an equivalence onto its image.
\item[(vi)] Let $\M\in \HC(\U)$ and $\mathcal{N}$ be a $Q$-stable subbimodule of finite codimension in $\M_{\dagger}$. Then there
is a unique maximal subbimodule $\M'\subset \M$ with the property $\M'_\dagger=\mathcal{N}$. We automatically have
$\M/\M'\in \HC_{\overline{\Orb}}(\U)$.
\item[(vii)] The image of $\HC_{\overline{\Orb}}(\U)$ under $\bullet_\dagger$ is closed
under taking subquotients (this is a direct corollary of (i) and (vi)).
\end{itemize}

\subsection{Classification of finite dimensional irreducible modules}\label{SS_W_classif}
We are  going to recall the classification of finite dimensional irreducible $\Walg$-modules
with integral central characters (this notion makes sense because the centers of $\U$
and $\Walg$ are identified). This classification was obtained in \cite{W_classif}.

We start by  recalling one of  the main results from \cite{HC}.  By the construction of
$\Walg$, the group $Q$ acts on $\Walg$ by algebra automorphisms. This gives rise to
a $Q$-action on the set $\Irr_{fin}(\Walg)$ of isomorphism
classes of finite dimensional irreducible $\Walg$-modules. Clearly, $Z(G)$
acts trivially. Also recall  that the differential of the  $Q$-action on $\Walg$ coincides with the adjoint
action of $\q\subset \Walg$. Therefore $Q^\circ$ acts trivially on $\Irr_{fin}(\Walg)$ and so we get an action of the
component group $A:=Q/(Q^\circ Z(G))$ on that set. The orbit space
$\Irr_{fin}(\Walg)/A$ gets naturally identified with the set $$\Prim_{\Orb}(\U)=\{\text{primitive }\J|\VA(\U/\J)=\overline{\Orb}\},$$
 this is Premet's conjecture, \cite[Conjecture 1.2.2]{HC} that is a corollary of \cite[Theorem 1.2.3]{HC}.
Namely, to an $A$-orbit $N_1,\ldots,N_k$ we, first, assign $\I:=\bigcap_{i=1}^k
\Ann_{\Walg}N_i$. This intersection is a $Q$-stable ideal of finite codimension. Then we can apply property
(vi) to $\M=\U$ (so that $\M_\dagger=\Walg$), and $\Nil:=\I$. The corresponding ideal $\J:=\M'\subset \U$ can be seen
to be primitive, and this is the ideal we need. The identification  $\Irr_{fin}(\Walg)/A\cong
\operatorname{Pr}_{\Orb}(\U)$ preserves the central characters under our identification of the centers of $\U$
and of $\Walg$, this follows from \cite[Theorem 3.3.1]{HC}.

Now let us recall a result from \cite{W_classif} that explains how to compute the $A$-orbit
lying over a primitive ideal $\J$ in the case when $\J$ has integral central character.
Below we use facts recalled in \cite[6.1,6.2]{W_classif}.
The existence of such $\J$ implies that $\Orb$ is special in the sense of Lusztig.
So to $\Orb$ we can assign a two-sided cell, say $\dcell$, that is a subset of $W$.
To $\dcell$ one assigns  a subset $\Irr^{\dcell}(W)$ (called a {\it family}) in the set $\Irr(W)$
of irreducible representations of $W$, where, recall, $W$ denotes the Weyl group of $\g$.

Let $Y^{\Lambda}$ denote the subset of $\Irr_{fin}(\Walg)$ consisting of all modules with integral
central character. The $A$-action on $Y^{\Lambda}$ factors through  a certain quotient
$\bA$ of $A$ introduced by Lusztig in \cite{orange}. To define this quotient consider the
Springer $W\times A$-module $\Spr(\Orb)$. Consider its $W$-submodule $\Spr(\Orb)^{\dcell}$
that is the sum of all irreducible $W$-submodules belonging to $\Irr^{\dcell}(W)$. Of course, this is
also an $A$-submodule. The group $\bA$ is the quotient of $A$  by the kernel of the $A$-action
on $\Spr(\Orb)^{\dcell}$, \cite[13.1.3]{orange}.

Now let us describe the stabilizers. Pick a left cell $c\subset \dcell$ and let $\lambda$ be
a dominant weight compatible with $c$ (i.e., compatible with any $w\in c$, this condition
is independent of the choice of $w$, see \cite[6.2]{W_classif}). We can view $\lambda$ as a point in
$\h/W$ and hence as a central character for $\U$. The set of the left cells in $\dcell$ compatible with
$\lambda$ is in a bijection with the set $\Prim_{\Orb}(\U_\lambda)$ of the primitive ideals $\J$ with central character
$\lambda$ and $\VA(\U/\J)=\overline{\Orb}$: to $\cell$ we assign the ideal $J(w\lambda)$, the annihilator
of the irreducible highest weight module $L(w\lambda)$ with highest weight $w\lambda-\rho$, where
$w\in \cell$.

According to  \cite[Theorem 1.1]{W_classif}, the stabilizer of the orbit over $J(w\lambda), w\in c,$
is the subgroup $H_c\subset \bA$ defined by Lusztig in \cite{Lusztig_subgroups}.
It can be described as follows. Consider the cell module $\mathsf{M}(c)$ associated to $c$. Then $\Hom_W(\mathsf{M}(c),\Spr(\Orb))=
\Hom_W(\mathsf{M}(c),\Spr(\Orb)^{\dcell})$ is an $\bA$-module. It turns out that there is a unique (up to
conjugacy) subgroup $H_c\subset \bA$ such that the $\bA$-modules $\BQ(\bA/H_c)$
and  $\Hom_W(\mathsf{M}(c),\Spr(\Orb))$ are isomorphic. See \cite{W_classif}, Subsections 6.5-6.8, for  explicit computations
of $H_c$ starting from $\mathsf{M}(c)$.

\subsection{Semisimple HC bimodules}\label{SS_HC_semis}
Now fix  a finite set  $\Lambda'$ of dominant weights such that
the pairwise differences of the elements of $\Lambda'$ lie in the root lattice
and such that there is a regular element $\varrho\in \Lambda'$.  Consider
the subcategory $\HC_{\Orb}(\U)^{ss}_{\Lambda'}\subset\HC_{\Orb}(\U)$ consisting of all
semisimple  objects with left and right central characters lying in $\Lambda'$. According to \cite[Theorem 7.4, Remark 7.7]{W_classif}, this category
is isomorphic to the category $\Coh^{\bA}(Y^{\Lambda'}\times Y^{\Lambda'})$
of $\bA$-equivariant sheaves of finite dimensional vector spaces on $Y^{\Lambda'}\times Y^{\Lambda'}$, where $Y^{\Lambda'}$
is the set of finite dimensional irreducible $\Walg$-modules with central character in $\Lambda'$. Irreducible objects in
the latter category are parameterized by the triples $(x,y,\mathcal{V})$, where $x,y\in Y^{\Lambda'}$ and $\mathcal{V}$
is an irreducible $\bA_{(x,y)}$-module, a triple is defined up to an $\bA$-conjugacy. Namely, the support of an
irreducible sheaf is a single orbit and we take $(x,y)$ from this orbit, for $\mathcal{V}$ we take the fiber
of the sheaf at $(x,y)$. So any irreducible Harish-Chandra bimodule in $\HC_{\Orb}(\U)_{\Lambda}$ gets mapped to some triple $(x,y,\mathcal{V})$. We say that this triple corresponds to this bimodule  (or to $(w,\lambda)$
if the bimodule is $\M_{w}(\lambda):=L(\Delta(\varrho),L(w\lambda))$ or just to $w$ if $\lambda=\varrho$; the triple does
not depend on the choice of $\varrho$). \cite[Theorem 1.3.1]{HC}
implies that $x,y$  lie over the left and right annihilators of $\M_w(\lambda)$, the ideals $J(w\lambda),J(w^{-1}\varrho)$, respectively. Moreover, the construction in \cite{W_classif} implies that, being an idempotent object in
$\HC_{\Orb}(\U)^{ss}_{\Lambda'}$, the quotient $\U/J(w\lambda)$ gets mapped to the sheaf supported on
the diagonal of the $\bA$-orbit corresponding to $J(w\lambda)$, whose fiber is the trivial module.
In particular, if $d$ is the Duflo involution in $c_w$, the left cell containing $w$, then the triple corresponding to $d$
has the form $(x,x,\operatorname{triv})$. This is because $\M_d$ coincides with $\U/J(d\varrho)$ in $\HC_{\Orb}(\U)$.
To see that we first recall that, by the definition of $d$ given in \cite[3.3,3.4]{Joseph_I},
$L(d\varrho)$ is the socle in $\Delta(\varrho)/J(w\varrho)\Delta(\varrho)$ and
the GK dimension of $L(d\varrho)$ is bigger than that of $[\Delta(\varrho)/J(w\varrho)\Delta(\varrho)]/L(d\varrho)$.
Under the Bernstein-Gelfand equivalence, $\Delta(\varrho)/J(w\varrho)\Delta(\varrho)$
corresponds precisely to $\U/J(d\varrho)$ and the equality $\M_d=\U/J(d\varrho)$ in $\HC_{\Orb}(\U)$ follows.

A important corollary from \cite{W_classif}  is a formula for the multiplicity
of an irreducible object $\M$ in $\HC_{\Orb}(\U)_{\Lambda}$, see Remark 7.7 and formula (7.1) in {\it loc. cit}.
Namely, let $(x,y,\mathcal{V})$ be a triple corresponding to  $\M$. Then we have
\begin{equation}\label{eq:mult_equality}
\mult_{\Orb}(\M)=d_xd_y\frac{|\bA|}{|\bA_{(x,y)}|}\dim \mathcal{V}.
\end{equation}
Here $d_x,d_y$ are the dimensions of irreducible $\Walg$-modules lying over
the left and the right annihilators of $\M$. This formula will be one of the
crucial tools to relate the Goldie ranks and the dimensions of $\Walg$-irreducibles.

\subsection{Parabolic induction}\label{SS_parab_ind}
Recall the Lusztig-Spaltenstein induction, \cite{LS}. Take a Levi subalgebra $\underline{\g}\subset \g$
and a nilpotent orbit $\underline{\Orb}\subset \underline{\g}$. One can construct a nilpotent orbit $\Orb\subset \g$
(called {\it induced} from $\underline{\Orb}$)
from this pair as follows. Pick a parabolic subalgebra $\p\subset\g$ with Levi subalgebra
$\underline{\g}$. Let $\n$ stand for the maximal nilpotent subalgebra of $\p$.
For $\Orb$ we take a unique dense orbit in $G(\underline{\Orb}+\n)$. It turns
out that $\Orb$ does not depend on the choice of $\p$. The codimension of $\Orb$ in $\g$
coincides with the codimension of $\underline{\Orb}$ in $\underline{\g}$. The intersection
of $\Orb$ with $\underline{\Orb}+\n$ is a single $P$-orbit, see \cite[Theorem 1.3]{LS}.

Let $\underline{\Walg}$ denote the W-algebra
of the pair $(\underline{\g},\underline{\Orb})$. In \cite[Section 6]{Miura} we have constructed
a dimension preserving exact functor $\varsigma:\underline{\Walg}$-$\operatorname{mod}_{fin}\rightarrow
\Walg$-$\operatorname{mod}_{fin}$ between the categories of finite dimensional modules.
This functor depends on the choice of $P$. Namely, see \cite[6.3]{Miura}, there is a completion $\underline{\Walg}'$ of $\underline{\Walg}$ such that any finite dimensional $\underline{\Walg}$-module extends to
$\underline{\Walg}'$ and an embedding $\Xi:\Walg\hookrightarrow \underline{\Walg}'$. The functor
under consideration is just the pull-back from $\underline{\Walg}'$ to $\Walg$. Furthermore,
we can choose $e\in \Orb\cap (\underline{\Orb}+\n)$ in such a way that a reductive part $\underline{Q}$ of the centralizer
$Z_P(e)$ lies in the Levi subgroup $\underline{G}$ of $P$ corresponding to $\underline{\g}$.
The group $\underline{Q}$ acts on $\underline{\Walg}$ by automorphisms, the action extends to $\underline{\Walg}'$,
and the embedding $\Walg\rightarrow \underline{\Walg}'$ is $\underline{Q}$-equivariant (the latter
can be deduced directly from the construction of $\Xi$ in \cite[Theorem 6.3.2]{Miura}).

\subsection{Isomorphisms of completions}\label{SS_iso_comp}
\subsubsection{Setting}\label{SSS_setting}
Let us fix a setting that will be used  until Section \ref{S_Goldie}.

Let $e\in \g$ be a  nilpotent element. We include $e$ into a Levi subalgebra $\g_0$
so that $e$ is an even nilpotent element in $\g_0$. For example, this is always the case when $e$ is distinguished in
$\g_0$, equivalently, $\g_0$ is a minimal Levi subalgebra containing $e$, see, for example, \cite[Theorem 8.2.3]{CM}.
So such $\g_0$ always exists.

Choose an $\sl_2$-triple $(e,h,f)$ in $\g_0$.
Choose  Cartan and Borel subalgebras $\h\subset \b_0\subset \g_0$ in such a way that $h\in \h$ and is a dominant (for $\g_0$)
element there. Let $\g=\bigoplus_{i\in \ZZ}\g(i)$ stand for the eigendecomposition for $h$.

Pick an integral element $\theta\in \z(\g_0)$ such that $\z_\g(\theta)=\g_0$. Consider the eigen-decomposition
$\g=\bigoplus_{i\in \ZZ}\g_i$. Set $\b:=\b_0\oplus \g_{>0}$, where  $\g_{>0}:=\bigoplus_{i>0}\g_i$,
clearly, $\b$ is a Borel subalgebra in $\g$. Further, set $\p:=\g_0(\geqslant 0)\oplus \g_{>0}$. This is a parabolic
subalgebra in $\g$ containing $\b$. Let $P$ denote the corresponding parabolic subgroup of $G$.

Let $\sigma$ be the anti-involution of $\g$ defined as follows: $\sigma|_{\h}=\operatorname{id}, \sigma(e_i)=f_i, \sigma(f_i)=e_i$.
We claim that one can choose $e$ and $f$ (still in the same orbit $\Orb$) in such a way that $h\in \h$ is still dominant for $\g_0$, $\sigma(e)=f$ and
hence $\sigma(f)=e$. We remark that $h$ is  fixed by $\sigma$ because $\sigma$ is the identity on $\h$.
So a result of Antonyan, \cite{Antonyan}, implies that $e$ is conjugate to a $\sigma$-invariant, say $e'$.
The element $e'$ can be included into an $\sl_2$-triple $(e',h',f')$ in $\g_0$ with $\sigma(e')=e', \sigma(f')=f',
\sigma(h')=-h'$. In other words, on the $\sl_2$-subalgebra with standard basis $e',h',f'$ the antiautomorphism $\sigma$
acts as the transposition with respect to the anti-diagonal. It is conjugate to the usual
transposition.  So we can find the $\sl_2$-triple $e'',h'',f''$ in that $\sl_2$-subalgebra with
$\sigma(e'')=f''$. Now we can replace $(e,h,f)$ with $(e'',h'',f'')$. Further, we can
conjugate such $h$ to $\h$ by an element of $\operatorname{Ad}(\g_0^{-\sigma})$.
The Cartan space and the Weyl group of the symmetric pair $(\g_0,\g_0^{-\sigma})$
are just $\h$ and  $W$ (clearly, $\h\subset \g^{\sigma}$ and the claim about the Weyl group can be checked
for $\g=\mathfrak{sl}_2$, where it is straightforward). So we can assume that $h$ is dominant and we are done.
Clearly, $\sigma$ lifts to $G$.

Let $n$ be the image of $\begin{pmatrix}0&i\\i&0\end{pmatrix}$ under the homomorphism
$\operatorname{SL}_2\rightarrow G$ corresponding to the $\sl_2$-triple $(e,h,f)$. The matrix is
symmetric and therefore $\sigma(n)=n$. It follows that $\tau:g\mapsto n\sigma(g)n^{-1}$
is involutive. This is our choice of $\tau$ from now on and until Section \ref{S_suppl}.

Recall that we write $\t$ for the center $\z(\g_0)$ of $\g_0$. Let $T$ stand for the torus in $G$
corresponding to $\mathfrak{t}$ and $R$ be the centralizer of $T$ in $Q$.

In the next two parts we establish certain isomorphisms of various algebras. In the rest of the paper we will
always assume that the algebras are identified as explained below in this subsection.

\subsubsection{Right-handed completions}\label{SSS_compl_right_hand}
Define a subalgebra $\m\subset \g$ by
$\m:=\g_0(<0)\oplus\g_{>0}$ (in \cite{LOCat} this subalgebra
was denoted by $\widetilde{\m}$, while the notation $\m$ was only used
in the case $\g=\g_0$, but we want to simplify the notation here). Let us point out
that $\chi$ is a character of $\m$. Also we consider the shift of $\m$, the  subspace
$\m_\chi:=\{x-\langle\chi,x\rangle, x\in\m\}\subset \g\oplus \K$.

We will need a completion of  $\U$ considered in \cite[Section 5]{LOCat}: $\U^{\wedge}:=\varprojlim_{n\rightarrow +\infty}
\U/\U\m_\chi^n$. This is a topological algebra as explained  in \cite[3.2]{wquant}.

We can decompose $\U^\wedge$ into a completed tensor product as follows.
As we have noticed in \cite[5.1]{LOCat},  $\v:=\m\cap V$ is a lagrangian subspace in $V$
(note that the notation used there was different).
Thanks to the embedding $\q\hookrightarrow\Walg$
we can view $\theta$ as an element of $\Walg$ and consider the eigen-decomposition
$\Walg=\bigoplus_{i\in \ZZ}\Walg_i$. Then we set $\Walg^{\wedge}:=\varprojlim_{n\rightarrow+\infty}\Walg/\Walg \Walg_{>0}^n,
\W^{\wedge}:=\varprojlim_{n\rightarrow +\infty}\W/\W\v^n$.
We have seen in \cite[Section 5]{LOCat} (and, in a bit different setting, in \cite[Sections 3.2,3.3]{wquant}),  the decomposition $\U_\hbar^{\wedge_\chi}=\W_\hbar^{\wedge_\chi}\widehat{\otimes}_{\K[[\hbar]]}\Walg^{\wedge_\chi}_\hbar$ gives rise
to an isomorphism $\U^\wedge\cong \W(\Walg)^{\wedge}$, where
we write $\W(\Walg)^\wedge$ for $\W^\wedge\widehat{\otimes}\Walg^\wedge$, of course, $\W(\Walg)^\wedge$
is the completion of $\W\otimes \Walg$ with respect to the left ideals $\W\otimes \Walg(\v\otimes 1+ 1\otimes \Walg_{>0})^n$.
The isomorphism $\U^\wedge\cong \W(\Walg)^\wedge$ maps the left ideal
$\U^\wedge\m_\chi$ to $\W(\Walg)^{\wedge}(\v\otimes 1+ 1\otimes \Walg_{>0})$.

Let us recall how the isomorphism is constructed, see \cite[Section 5]{LOCat}.
We embed $\K^\times$ into $\K^\times\times Q$ with differential
$(1,-N\theta)$ for $N$ large enough. Then we can consider the subalgebras $$(\U_\hbar^{\wedge_\chi})_{fin},
(\W_\hbar(V)^{\wedge_0}\widehat{\otimes}_{\K[[\hbar]]}\Walg_\hbar^{\wedge_\chi})_{fin}$$ of $\K^\times$-finite
vectors for this copy of $\K^\times$. These algebras are isomorphic because the isomorphism $\U_\hbar^{\wedge_\chi}\cong
\W_\hbar(V)^{\wedge_0}\widehat{\otimes}_{\K[[\hbar]]}\Walg_\hbar^{\wedge_\chi}$ is $\K^\times$-equivariant.
We mod out $\hbar-1$ and get isomorphic algebras \begin{equation}\label{eq:heart}\U^{\heartsuit}:=(\U_\hbar^{\wedge_\chi})_{fin}/(\hbar-1),
\W(\Walg)^{\heartsuit}:=(\W_\hbar(V)^{\wedge_0}\widehat{\otimes}_{\K[[\hbar]]}\Walg_\hbar^{\wedge_\chi})_{fin}/(\hbar-1).\end{equation}
They are embedded into $\U^{\wedge}, \W(\Walg)^{\wedge}$, \cite[Proposition 5.1, Lemma 5.3]{LOCat}. Then the isomorphism $\U^{\heartsuit}\cong \W(\Walg)^{\heartsuit}$ extends by continuity to an isomorphism $\U^\wedge\cong \W(\Walg)^{\wedge}$ that maps
$\U^\wedge \m_{\chi}$ to $\W(\Walg)^{\wedge}(\v\otimes 1+ 1\otimes \Walg_{>0})$, \cite[Lemma 5.3]{LOCat}.

The isomorphism $\U^\wedge\cong \W(\Walg)^\wedge$ induces another isomorphism. Namely, we write $\U^0$
for $U(\g_0)$. Also we set $\Walg_{\geqslant 0}:=\bigoplus_{i\geqslant 0}\Walg_i$ and $\Walg^0:=\Walg_{\geqslant 0}/(\Walg_{\geqslant 0}\cap \Walg\Walg_{>0})=\Walg_0/(\Walg_0\cap \Walg_{<0}\Walg_{>0})$. As we have noticed in \cite[Section 5]{LOCat},
the identification $\U^\wedge\cong \W(\Walg)^{\wedge}$ induces an isomorphism of $\Walg^0$ and  $U(\g_0,e):=[\U^0/\U^0\m^0_\chi]^{M^0}$,
where $\m^0:=\m\cap \g_0$. The algebra $U(\g_0,e)$ (a W-algebra, as defined by Premet in \cite{Premet}) is identified
with the W-algebra for $\g_0$ in our sense via an isomorphism of completions analogous to $\U^\wedge\cong \W(\Walg)^\wedge$
but taken for $\g_0$ and not for $\g$.

The isomorphism $U(\g_0,e)\xrightarrow{\sim} \Walg^0$ is $R$-equivariant  but does not intertwine the quantum comoment maps
$\mathfrak{r}\rightarrow U(\g_0,e),\Walg^0$.
Here the quantum comoment map $\mathfrak{r}\rightarrow U(\g_0,e)$ is obtained as the composition  $\mathfrak{r}\hookrightarrow
\U^0\twoheadrightarrow\U^0/\U^0\m^0_\chi$, while the quantum comoment map $\mathfrak{r}\rightarrow \Walg^0$ is the composition
$\mathfrak{r}\hookrightarrow \Walg\rightarrow \Walg/\Walg\Walg_{>0}$.
Instead the isomorphism induces a shift on $\mathfrak{r}$ by a certain character $\delta$, as in \cite[Remark 5.5]{LOCat},
i.e., it maps $\xi\in \mathfrak{r}\hookrightarrow U(\g_0,e)$ to $\xi-\langle\delta,\xi\rangle$. Our setting is a bit
different from {\it loc. cit.}, as we consider a larger Lie algebra here. So we are going to provide details.

Let us write $\iota_\U,\iota_\Walg,\iota_\W$ for the quantum comoment maps to the corresponding algebras
(this differs a bit from the conventions of Subsection \ref{SSS_slice}) so that under the isomorphism
$\U^\wedge\cong \W(\Walg)^\wedge$, we have $\iota_\U=\iota_\W+\iota_{\Walg}$.
A key observation is that the map $\mathfrak{r}\rightarrow[\W/\W\v]^\v$ induced by $\iota_\W$ is the character
$\delta$ that, by definition, equals a half of the character of $\bigwedge^{top}\v^*$ (see \cite[Remark 5.5]{LOCat}
for a computation). In particular, the restriction of $\delta$ to $\t$ is the same
character as in {\it loc.cit.} (we remark that $\t$ is naturally represented as a direct summand
of $\mathfrak{r}$). So the map $\mathfrak{r}\rightarrow \Walg^\wedge=
[\W(\Walg)^\wedge/ \W(\Walg)^\wedge\v]^\v$ induced by $\iota_\U$ equals $\iota_\Walg+\delta$.
This implies the claim in the previous paragraph.

\subsubsection{Left-handed completions}
Set $\bar{\m}:=\tau(\m)=\g_{<0}\oplus \g_0(<0)$. Define the completion $\wU:=\varprojlim_{n\rightarrow +\infty} \U/\bar{\m}_\chi^n \U$.
Also set $\bar{\v}:=\bar{\m}\cap V$, this is again a lagrangian subspace in $V$. Set $\wW:=\varprojlim_{n\rightarrow +\infty} \W/\bar{\v}^n \W$,
$\wWalg:=\varprojlim_{n\rightarrow +\infty} \Walg/\Walg_{<0}^n \Walg$, where $\Walg_{<0}:=\bigoplus_{i<0}\Walg_i$. Twisting the isomorphism
$\U^\wedge\cong \W(\Walg)^\wedge$ with $\tau$, we get an isomorphism $\wU\cong\wW(\Walg):=\wW\widehat{\otimes}\wWalg$.

Again, below we will need several isomorphisms induced by $\wU\cong \wW(\Walg)$. We can form the analogous completions
$\wU^0$ of $\U^0$, $\wW^0$ of $\W^0:=\W(V^0)$, where $V^0:=\g_0\cap V=[\g_0,f]$. Also we can consider the eigen-spaces
$\wU_i,\wW_i, \wWalg_i$ for the action of $\ad(\theta)$ (in the case of $\wW$ rather of the corresponding one-dimensional
torus). Then we can define the subalgebras $\wU_{\leqslant 0}:=\bigoplus_{i\leqslant 0}\wU_i,
\wW_{\leqslant 0}, \wWalg_{\leqslant 0}$ and their ideals $\wU_{<0},\wW_{<0}, \wWalg_{<0}$ similarly to
$\Walg_{\geqslant 0},\Walg_{>0}$. We claim that $$\wU_{\leqslant 0}/(\wU_{\leqslant 0}\cap \wU_{<0}\wU)=
\wU_0/(\wU_0\cap \wU_{<0}\wU_{>0})$$ is naturally identified with $\wU^0$ and the similar equalities hold for
the other two algebras (in the $\Walg$-case we have, by definition, $\wWalg^0:=\Walg^0$).

We are going to prove the isomorphism in the $\U$-case, the other two cases are similar.
The algebra $\wU$ can be realized ``explicitly'' as follows. Choose a basis $x_1,\ldots,x_a, y_1,\ldots,y_b, $ $ z_1,\ldots,z_c,
w_1,\ldots,w_d$ of $\g_\chi:=\{x-\langle\chi,x\rangle| x\in \g\}$ such that
\begin{itemize} \item $x_1,\ldots,x_a$ are weight vectors for $\theta$ with negative weights,
$y_1,\ldots,y_b, z_1,\ldots,z_c\in \g_0$, while $w_1,\ldots,w_d$ are weight vectors for $\theta$
with positive weights.
\item $y_1,\ldots,y_b$ form a basis in $\m^0_\chi$.
\end{itemize}
Then $\wU$ consists of all infinite sums $\sum_{\alpha,\beta,\gamma,\delta} n_{\alpha\beta\gamma\delta}x^\alpha y^\beta
z^\gamma w^\delta$, where $\alpha=(\alpha_1,\ldots,\alpha_a), x^\alpha:=x_1^{\alpha_1}\ldots x_a^{\alpha_a}$ etc.,
subject to the condition that, for any given $\alpha,\beta$, only finitely many coefficients $n_{\alpha\beta\gamma\delta}$
are nonzero. The product is induced (=extended by continuity) from $\U$.

The quotient $\wU/\wU\wU_{>0}$ consists of the infinite sums of the form $\sum_{\alpha\beta\gamma}n_{\alpha\beta\gamma}
x^\alpha y^\beta z^\gamma$ with the same finiteness condition as above. So $\wU^0$ consists of the sums  $\sum_{\beta\gamma}n_{\beta\gamma}
y^\beta z^\gamma$ (with product induced from $\U$ or, equivalently, $\U^0$) and is naturally identified with $\wU^0$.

The algebra $\wW(\Walg)_0/(\wW(\Walg)_0\cap \wW(\Walg)_{<0}\wW(\Walg)_{>0})$ is naturally identified with $\wW^0(\Walg^0)=
\wW^0\widehat{\otimes}\Walg^0$. It follows that the isomorphism $\wU\cong \wW(\Walg)$ induces an isomorphism
$\wU^0\cong \wW^0(\Walg^0)$. Moreover, it also induces an isomorphism $\wU/\wU\wU_{>0}\xrightarrow{\sim}\wW(\Walg)/\wW(\Walg)\wW(\Walg)_{>0}$
that is linear both over $\wU\cong \wW(\Walg)$ (acting on the left) and over $\wU^0\cong \wW^0(\Walg^0)$ (acting on the right).

Finally, similarly to \ref{SSS_compl_right_hand}, the isomorphism $\wU\cong \wW(\Walg)$ induces an isomorphism of $U(e,\g_0):=[\U^0/\m^0_\chi\U^0]^{M^0}$ and $\Walg^0$. Again, this isomorphism is $R$-equivariant but does not intertwine the embeddings of $\t$. Rather it again induces a shift by $\tau(\delta)$. The elements $\delta$ and $\tau(\delta)$ are different but they agree on $\t$
 (because $\tau$ is the identity on $\mathfrak{t}$) and their difference is the character of $R$ on $\bigwedge^{top}\g_0(<0)^*$.

\section{Categories}\label{S_Cats}
\subsection{Parabolic category $\mathcal{O}$}
\subsubsection{Definition}
Recall that $G$ denotes a connected reductive algebraic group and that we have fixed  a parabolic subgroup  $P\subset G$.
Fix a character $\nu$ of $\p$. Let $\tilde{\OCat}^P_\nu$ denote the full subcategory in the category of $(\g,P)$-modules
consisting of all modules $\M$ where the $\nu$-shifted $\p$-action, i.e., $(x,m)\mapsto xm-\nu(x)m$,
is locally finite and integrates to the action of $P$. We remark that the categories $\tilde{\mathcal{O}}^{P}_\nu$
and $\tilde{\OCat}^P_{\nu'}$ are naturally equivalent provided $\nu'-\nu$ is a character of $P$.

Inside $\tilde{\OCat}^P_\nu$ we consider
the full subcategory $\OCat^P_\nu$ of all modules where all weight spaces
(for $\z(\lf)$, where $\lf$ is a Levi subalgebra of $P$, or, equivalently, for a Cartan subalgebra $\mathfrak{h}$)
are finite dimensional and where the center of $\U$ acts with finitely many eigen-characters.
Equivalently, $\OCat^P_\nu$ consists of all finitely generated modules in
$\tilde{\OCat}^P_\nu$. It is known that all modules in $\OCat^P_\nu$ have finite length.

Consider the category $\OCat^L_\nu$ of all finite dimensional $\mathfrak{l}$- and $L$-modules, where the differential
of the $L$-action coincides with the $\nu$-shifted $\mathfrak{l}$-action. We have the induction
functor $\Delta_P:\OCat^L_\nu\rightarrow \OCat^P_\nu, \M^0\mapsto \U\otimes_{U(\p)}\M^0$. For the
irreducible $\lf$-module $L_{\lf}(\lambda)$ with highest weight $\lambda-\rho$, we write $\Delta_P(\lambda)$ for $\Delta_P(L_{\lf}(\lambda))$,
this is, of course, a parabolic Verma module. It has a unique irreducible quotient to be denoted
by $L(\lambda)$.

\subsubsection{Completed version}
We consider a category $\hat{\OCat}^{P}_\nu$ consisting of all topological $\U$- and $P$-modules $\M$ satisfying the
following conditions:
\begin{enumerate}
\item The weights of $\z(\lf)$ in $\M$ are bounded from above in the sense that
there is an element $\theta$ in $\z(\lf)$ such that $\operatorname{ad}\theta$ has only positive integral eigenvalues
on the nilpotent radical of $\p$ and all eigenvalues of $\theta$ on $\M$ are bounded from above.
\item Any $\z(\lf)$-weight space is finite dimensional and the center of $\U$ acts with finitely many eigen-characters.
\item The $\nu$-shifted $\lf$-action on any weight space coincides with the differential of the  $L$-action.
\item $\M$ (considered as a topological $\U$-module) is the direct product of its  $\z(\lf)$-weight subspaces.
\end{enumerate}
We have a completion functor $\M\mapsto \widehat{\M}:\OCat^P_\nu\rightarrow \hat{\OCat}^P_\nu$ that sends $\M=\bigoplus_{\mu} \M_\mu$,
where $\M_\mu$ is a weight space corresponding to $\mu\in \z(\lf)^*$ to $\prod_\mu \M_\mu$.
This functor is an equivalence of categories.

\subsubsection{Right-handed versions}\label{SSS_right_cat}
We will also consider the analogs $\OCat_\nu^{P,r}, \hat{\OCat}_\nu^{P,r}$ of $\OCat_\nu^P, \hat{\OCat}_\nu^P$
consisting of right modules (we impose the  condition that the $\nu$-shifted right action of $\p$ integrates to a right action
of $P$).

\subsubsection{Duality}
We have a contravariant duality functor $\bullet^\vee: \OCat^P_\nu\rightarrow \OCat^{P}_\nu$.
Namely, recall the anti-automorphism
$\sigma:\g\rightarrow \g$, see \ref{SSS_setting}.
In particular, it sends $P$ to the opposite parabolic subgroup $P^-$. For $\M=\bigoplus_{\mu\in \t^*}\M_\mu$,
the restricted dual $\M^{(*)}:=\bigoplus_{\mu}\M_\mu^*\subset \M_\mu$ is a right $\U$-module that lies in $\OCat_{-\nu}^{P^-,r}$.
The twist $\M^\vee:=\,^\sigma \!\M^{(*)}$ is therefore again an object of $\OCat^P_\nu$. It is known that $L(\lambda)^\vee\cong
L(\lambda)$, while $\Delta_P(\lambda)^\vee$ is a costandard (=dual Verma) module $\nabla_P(\lambda)$.

\subsubsection{Bernstein-Gelfand equivalence}\label{SSS_BG_equiv}
Here we are going to recall the classical Bernstein-Gelfand equivalence relating the sum of suitable blocks of the full  BGG category
$\OCat$ to a certain category $\HC(\U)^\varrho$ of Harish-Chandra bimodules. Then we introduce
its parabolic analog. This analog should be known but we did not find any reference.

Let $\HC(\U)^\varrho$ denote the category of all HC bimodules with right central character $\varrho$.
Assume that $\varrho$ is strictly dominant meaning that $\langle\varrho,\alpha^\vee\rangle\not\in \ZZ_{\leqslant 0}$
for any positive root $\alpha$. Then the functor $X\mapsto X\otimes_\U\Delta(\varrho)$ is an equivalence
$\HC(\U)^\varrho\rightarrow\OCat_\varrho$, see \cite[5.9]{BG}. The quasi-inverse equivalence is given
by $\M\mapsto L(\Delta(\varrho), \M)$, where $L(\bullet,\bullet)$ denotes the space of $\g$-finite maps.

Now let us proceed to a parabolic analog of this. Suppose we are given a parabolic category $\OCat^P_\nu$.
Adding a suitable character of $P$ to $\nu$, we may assume that $\nu+\rho$ is strictly dominant
(we remark that $\nu$ is $0$ on the roots of $\lf$). Let $\J_{P,\nu}$ denote the annihilator of
$\Delta_P(\nu+\rho)$ in $\U$. Consider the subcategory $\HC(\U)^{\J_{P,\nu}}$
of $\HC(\U)^{\nu+\rho}$ consisting of all bimodules annihilated by $\J_{P,\nu}$ on the right.
Since $\Delta_P(\nu+\rho)=\Delta(\nu+\rho)/\J_{P,\nu}\Delta(\nu+\rho)$,
this functor can be written as $X\mapsto X\otimes_\U \Delta_P(\nu+\rho)$.
So the Bernstein-Gelfand equivalence restricts to a functor $\HC(\U)^{\J_{P,\nu}}\rightarrow \OCat^P_\nu$.

For reader's convenience, let us recall the proof of the equality $\Delta_P(\nu+\rho)=\Delta(\nu+\rho)/\J_{P,\nu}\Delta(\nu+\rho)$.
First, we have a natural epimorphism $\Delta(\nu+\rho)\twoheadrightarrow \Delta_P(\nu+\rho)$
that factors through $\Delta(\nu+\rho)/\J_{P,\nu}\Delta(\nu+\rho)$. The latter corresponds to
$\U/\J_{P,\nu}$ under the Bernstein-Gelfand equivalence. The ideal $\J_{P,\nu}$ is primitive.
Indeed, $\Delta_P(\nu+\rho)$ is isomorphic to $U(\g_{<0})$ as a $U(\g_{<0})$-module. So its GK
multiplicity is $1$ and hence the socle of $\Delta_P(\nu+\rho)$ is simple. The latter proves that
$\J_{P,\nu}$ is primitive. Further, any proper quotient of $\U/\J_{P,\nu}$ has GK
dimension smaller than that of $\U/\J_{P,\nu}$ (equal to $\dim\g-\dim \lf$).
It follows that $\Delta(\nu+\rho)/\J_{P,\nu}\Delta(\nu+\rho)$ has simple socle and the quotient by this socle
has GK dimension less then $\frac{1}{2}(\dim\g-\dim\lf)$. Since the GK dimension of $\Delta_P(\nu+\rho)$
equals $\frac{1}{2}(\dim\g-\dim\lf)$, the required equality follows.

Since $\Delta_P(\nu+\rho)$ is the largest quotient of $\Delta(\nu+\rho)$ lying in $\OCat^P_\nu$, we see
that $\M\mapsto L(\Delta(\nu+\rho),\M)=L(\Delta_P(\nu+\rho),\M):\mathcal{O}^P_\nu\rightarrow \operatorname{HC}(\U)^{\J_{P,\nu}}$
is a quasi-inverse functor to $X\mapsto X\otimes_\U \Delta_P(\nu+\rho)$.

Summarizing, the functor $X\mapsto X\otimes_{\U}\Delta_P(\nu+\rho)$ defines an equivalence
$\HC(\U)^{\J_{P,\nu}}\rightarrow \OCat^P_\nu$ with quasi-inverse equivalence $\M\mapsto
L(\Delta_P(\nu+\rho),\M)$.

Now suppose that $\varrho$ is integral dominant, $w$ is compatible with $\varrho$, $L(w\varrho)$ lies $\OCat^P$
and has Gelfand-Kirillov dimension $\frac{1}{2}(\dim\g-\dim \mathfrak{l})$.
Then $w$ lies in the right cell equal to $c^{-1}$, where $c$ is the left
cell corresponding to the primitive ideal $\J_{P,\nu}$. This is a direct corollary of the parabolic Bernstein-Gelfand
equivalence. Conversely, if $w\in c^{-1}$ is compatible with $\varrho$, we see that $L(w\varrho)\in \OCat^P$
and that $L(w\varrho)$ has GK dimension $\frac{1}{2}(\dim \g-\dim\lf)$.

\subsection{Category $\mathcal{O}$ for $\Walg$ and Whittaker modules I: non-completed version}\label{SS_Cats_noncompl}
\subsubsection{Categories $\tilde{\OCat}^\theta(\g,e), \OCat^\theta(\g,e)$}
We will define the full categories $\OCat^\theta(\g,e)_\nu\subset \tilde{\OCat}^\theta(\g,e)_\nu$
in the category of left $\Walg$-modules. Namely, recall that $\q$ and hence $\t=\z(\g_0)\subset \q$
are naturally included into $\Walg$.

Let $\tilde{\OCat}^\theta(\g,e)$ stand for the category of all $\Walg$-modules $\Nil$ such that
\begin{enumerate}
\item $\t$ acts on $\Nil$ diaglonalizably and with  eigenvalues lying in $\mathfrak{X}(P)+\nu+\delta$.
\item The collection of integers $\langle\mu-\nu-\delta,\theta\rangle$, where $\mu$ is a $\t$-weight
of $\Nil$, is bounded from above.
\end{enumerate}

Below we write $\Nil_\mu$ for the weight space with weight $\mu+\delta$.

By definition, the subcategory $\OCat^\theta(\g,e)_\nu\subset \tilde{\OCat}^\theta(\g,e)_\nu$ consists of all modules with
finite dimensional weight spaces and with finitely many eigen-characters for the action of the center of $\Walg$. Equivalently,
$\OCat^\theta(\g,e)_\nu\subset \tilde{\OCat}^\theta(\g,e)_\nu$ consists of all finitely generated modules.
This is, basically, the category that appeared in \cite[4.4]{BGK} (they considered
the case when $\theta$ is generic in $\q$ but the general case is completely
analogous). In particular, every object in $\OCat^\theta(\g,e)_\nu$ has finite length.

To an object $\Nil\in \OCat^\theta(\g,e)_\nu$ we can assign its formal character $\ch(\Nil):=\sum_{\mu}(\dim \Nil_\mu) e^{\mu}$,
where the summation is taken over $\mu\in \t^*$.

\subsubsection{Verma modules and their characters}
Recall (see \ref{SSS_compl_right_hand}) that $\Walg^0$  is identified with the W-algebra $U(\g_0,e)$ for $\g_0$.
The category $\OCat^\theta(\g_0,e)_\nu$ is just the category of finite dimensional
$U(\g_0,e)$-modules, where $\t$ acts diagonalizably with  weights in $\mathfrak{X}(P)+\nu$.

We have the induction functor (to be called a {\it Verma functor})
$\Delta^\theta_{\Walg}: \tilde{\OCat}^\theta(\g_0,e)_\nu\rightarrow \tilde{\OCat}^\theta(\g,e)_\nu$
that maps $\Nil^0\in \tilde{\OCat}^\theta(\g_0,e)_\nu$ to $\Walg\otimes_{\Walg_{\geqslant 0}}\Nil^0$,
where we view $\Nil^0$ as a $\Walg_{\geqslant 0}$-module via the  projection $\Walg_{\geqslant 0}\twoheadrightarrow
\Walg^0$. Of course, $\Delta^\theta_{\Walg}$ maps $\OCat^\theta(\g_0,e)_\nu$ to $\OCat^\theta(\g,e)_\nu$.

The functor $\Delta^\theta_{\Walg}$ is right exact. It has a right adjoint functor that maps $\Nil$
to the annihilator $\Nil^{\Walg_{>0}}$ of $\Walg_{>0}$. It turns out that the functor
$\Delta^\theta_{\Walg}$ is exact and, moreover, one can compute the character of $\Delta_{\Walg}^\theta(\Nil^0)$.

Namely, consider the eigen-decomposition $\z_\g(e)=\bigoplus_{i\in \ZZ}\z_\g(e)_i$ with respect to $\ad\theta$ and set
$\z_\g(e)_{<0}=\bigoplus_{i<0}\z_\g(e)_i$. Pick a basis $f_1,\ldots,f_k$ of $\z_\g(e)_{<0}$
consisting of weight vectors for $\t$. Recall that $\gr\Walg=\K[S]=S(\z_\g(e))$.
Lift the elements $f_1,\ldots,f_k$ to $\t$-weight vectors $\tilde{f}_1,\ldots,\tilde{f}_k$
in $\Walg$.

The following is a straightforward generalization of assertion (1) of \cite[Theorem 4.5]{BGK}.

\begin{Prop}\label{Prop:Verm_basis}
Let $v_1,\ldots,v_m$ be a basis in $\Nil^0$. The elements $\tilde{f}_1^{n_1}\ldots \tilde{f}_k^{n_k} v_i$, where
$i=1,\ldots,m$,  and $n_j\in  \ZZ_{\geqslant 0}$, form a basis in $\Delta^\theta_{\Walg}(\Nil^0)$.
\end{Prop}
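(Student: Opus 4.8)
The plan is to follow the argument for assertion (1) of \cite[Theorem 4.5]{BGK}, which treats the case of generic $\theta$; the only new ingredient is bookkeeping with the $\ad\theta$-grading when $\theta$ is not generic. Write $\mathcal F_\bullet\Walg$ for the Kazhdan filtration, so $\gr\Walg=\K[S]=S(\z_\g(e))$. Since $\theta$ centralizes the $\sl_2$-triple, $\mathcal F_\bullet\Walg$ is stable under the $\ad\theta$-grading, hence $\gr(\Walg_{\geqslant 0})=\bigoplus_{i\geqslant 0}\gr\Walg_i=S(\z_\g(e))^{\theta\geqslant 0}$, the span of monomials of non-negative $\ad\theta$-weight; recall $\z_\g(e)=\z_\g(e)_{<0}\oplus\z_\g(e)_0\oplus\z_\g(e)_{>0}$ with $\z_\g(e)_0=\z_{\g_0}(e)$, and we take the lifts $\tilde f_j$ with principal symbol $f_j$. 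Fix also $\t$-weight lifts $\tilde g_i$ of a basis of $\z_\g(e)_0$ and $\tilde w_s$ of a basis of $\z_\g(e)_{>0}$; by Premet's PBW theorem for $\Walg$, \cite{Premet}, the ordered monomials $\tilde f^{\mathbf a}\tilde g^{\mathbf b}\tilde w^{\mathbf c}$ form a $\K$-basis of $\Walg$.

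\emph{Spanning.} Since $S(\z_\g(e))=\bigoplus_{\mathbf a}f^{\mathbf a}\,S(\z_\g(e)_{\geqslant 0})$ and $S(\z_\g(e)_{\geqslant 0})\subseteq S(\z_\g(e))^{\theta\geqslant 0}=\gr(\Walg_{\geqslant 0})$, a routine leading-term induction on the Kazhdan degree shows: for each $d$, $\mathcal F_d\Walg$ is spanned by the products $\tilde f^{\mathbf a}b$ with $b\in\mathcal F_{d-|\mathbf a|}(\Walg_{\geqslant 0})$ and $|\mathbf a|:=\sum_j a_j\deg f_j\leqslant d$. Equip $\Delta:=\Delta^\theta_\Walg(\Nil^0)=\Walg\otimes_{\Walg_{\geqslant 0}}\Nil^0$ with the filtration $\mathcal F_d\Delta:=\text{image of }\mathcal F_d\Walg\otimes\Nil^0$, where $\Nil^0$ is placed in degree $0$. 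Moving the factors $b$ across the tensor product ($\tilde f^{\mathbf a}b\otimes v=\tilde f^{\mathbf a}\otimes bv$, $bv\in\Nil^0$), we get that $\{\tilde f^{\mathbf n}\otimes v_i:|\mathbf n|\leqslant d\}$ spans $\mathcal F_d\Delta$; in particular the $\tilde f^{\mathbf n}\otimes v_i$ span $\Delta$.

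\emph{Linear independence.} There is the standard surjection of graded $S(\z_\g(e))$-modules $\gr\Walg\otimes_{\gr(\Walg_{\geqslant 0})}\gr\Nil^0\twoheadrightarrow\gr\Delta$. As $\Nil^0$ lies in filtration degree $0$ and all of $\z_\g(e)$ sits in positive Kazhdan degree (so $\gr_0\Walg=\K$), we have $\gr\Nil^0=\Nil^0$ with the augmentation ideal of $\gr(\Walg_{\geqslant 0})=S(\z_\g(e))^{\theta\geqslant 0}$ (its positive-degree part) acting by $0$. This is the one place where genericity of $\theta$ is unavailable: $S(\z_\g(e))^{\theta\geqslant 0}$ is strictly larger than $S(\z_\g(e)_{\geqslant 0})$, so there is no triangular decomposition $\Walg=\Walg_{<0}\otimes\Walg_0\otimes\Walg_{>0}$. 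However, every monomial of non-negative $\ad\theta$-weight and positive degree contains a factor in $\z_\g(e)_{\geqslant 0}$ (a nonconstant monomial in the $f_j$ alone has strictly negative weight), whence $S(\z_\g(e))\cdot(\text{that augmentation ideal})=S(\z_\g(e))\cdot\z_\g(e)_{\geqslant 0}$, and therefore
\[
\gr\Walg\otimes_{\gr(\Walg_{\geqslant 0})}\gr\Nil^0=\bigl(S(\z_\g(e))/S(\z_\g(e))\,\z_\g(e)_{\geqslant 0}\bigr)\otimes\Nil^0=S(\z_\g(e)_{<0})\otimes\Nil^0,
\]
which has basis $\{f^{\mathbf n}\otimes v_i\}$. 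Consequently $\dim\mathcal F_d\Delta\leqslant\#\{\mathbf n:|\mathbf n|\leqslant d\}\cdot\dim\Nil^0$, which is exactly the number of spanning vectors $\tilde f^{\mathbf n}\otimes v_i$ with $|\mathbf n|\leqslant d$ produced above. A spanning set of a vector space whose cardinality does not exceed its dimension is a basis, so $\{\tilde f^{\mathbf n}\otimes v_i:|\mathbf n|\leqslant d\}$ is a basis of $\mathcal F_d\Delta$ for every $d$; letting $d\to\infty$ proves the claim. I expect the main (though routine) obstacle to be this associated-graded computation, i.e.\ verifying that the ``extra'' weight-zero generators of $\gr(\Walg_{\geqslant 0})$ contribute nothing after extension of scalars to $\gr\Walg$, together with the attendant degree bookkeeping in the spanning step.
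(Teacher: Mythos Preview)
Your proof is correct and matches the paper's approach: the paper does not give an explicit argument but simply records the statement as ``a straightforward generalization of assertion (1) of \cite[Theorem 4.5]{BGK},'' and your proposal supplies exactly that generalization, with the associated-graded computation handling the only genuinely new point (that $\gr(\Walg_{\geqslant 0})$ is larger than $S(\z_\g(e)_{\geqslant 0})$ when $\theta$ is not generic, yet generates the same ideal of $S(\z_\g(e))$ once one notices a nonconstant monomial of non-negative $\theta$-weight must involve a factor from $\z_\g(e)_{\geqslant 0}$).
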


\begin{Cor}\label{Cor:Verm_char}
Suppose that $\t$ acts on $\Nil^0$ with a single weight, say $\mu_0$, and let $\mu_1,\ldots,\mu_k$
be the weights of $\tilde{f}_1,\ldots,\tilde{f}_k$, respectively. Then $$\ch\Delta_{\Walg}^\theta(\Nil^0)=e^{\mu_0}\dim \Nil^0\prod_{i=1}^k (1-e^{\mu_i})^{-1}.$$
\end{Cor}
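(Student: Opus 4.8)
The plan is to deduce Corollary \ref{Cor:Verm_char} directly from the PBW-type basis of $\Delta^\theta_\Walg(\Nil^0)$ furnished by Proposition \ref{Prop:Verm_basis}, by summing the formal character monomial by monomial over that basis. The first step is to check that each basis vector $\tilde f_1^{n_1}\cdots\tilde f_k^{n_k}v_i$ is a $\t$-weight vector of weight $\mu_0+\sum_{j=1}^k n_j\mu_j$. Indeed, in the realization $\Delta^\theta_\Walg(\Nil^0)=\Walg\otimes_{\Walg_{\geqslant 0}}\Nil^0$ the element $1\otimes v_i$ has $\t$-weight $\mu_0$: since $\theta\in\z(\g_0)$ centralizes $\t$, one has $\t\subset\Walg_0\subset\Walg_{\geqslant 0}$, so the $\t$-action on $1\otimes v_i$ is the one read off from the $\Walg^0$-module structure of $\Nil^0$. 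Next, $\tilde f_1^{n_1}\cdots\tilde f_k^{n_k}$ is a product of $\ad\t$-weight vectors (the adjoint $\q$-, hence $\t$-, action on $\Walg$ being the commutator with the quantum comoment image), and so is itself an $\ad\t$-weight vector of weight $\sum_{j=1}^k n_j\mu_j$; and left multiplication by an $\ad\t$-weight-$\lambda$ element of $\Walg$ carries a weight-$\mu$ vector of a $\Walg$-module to a weight-$(\lambda+\mu)$ vector. Combining these observations gives the asserted weight.

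Summing the contributions over the basis, i.e. over $i=1,\ldots,\dim\Nil^0$ and $n_1,\ldots,n_k\in\ZZ_{\geqslant 0}$, and factoring the resulting geometric series in each $\tilde f_j$ gives
\begin{align*}
\ch\Delta^\theta_\Walg(\Nil^0)
&=\sum_{i=1}^{\dim\Nil^0}\ \sum_{n_1,\ldots,n_k\geqslant 0}e^{\mu_0+\sum_{j=1}^k n_j\mu_j}\\
&=e^{\mu_0}\dim\Nil^0\prod_{j=1}^k\Bigl(\sum_{n\geqslant 0}e^{n\mu_j}\Bigr)
=e^{\mu_0}\dim\Nil^0\prod_{j=1}^k(1-e^{\mu_j})^{-1},
\end{align*}
which is exactly the claimed identity.

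There is no genuinely hard step here; the only point deserving a remark — and the only place where a hypothesis is used — is the legitimacy of the infinite sums above, i.e. that the right-hand side is a well-defined formal character in a suitable completion of $\ZZ[\t^*]$. This holds because each $f_j$ lies in $\z_\g(e)_{<0}$, so its lift $\tilde f_j$ can be (and is) chosen in a $\t$-weight space contained in $\Walg_{<0}$, whence $\langle\mu_j,\theta\rangle<0$ for every $j$. Consequently only finitely many monomials $\tilde f_1^{n_1}\cdots\tilde f_k^{n_k}v_i$ contribute to each value of $\langle\cdot,\theta\rangle$, the $\theta$-weights of $\Delta^\theta_\Walg(\Nil^0)$ are bounded above (so that this module indeed lies in $\OCat^\theta(\g,e)_\nu$), and both sides of the displayed equation agree termwise after expansion. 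I would also note that the global $\delta$-shift built into the weight conventions of Subsection \ref{SS_Cats_noncompl} affects the two sides identically and therefore plays no role in the computation.
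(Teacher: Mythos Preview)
Your proof is correct and is essentially the same approach the paper takes: the paper states this result as an immediate corollary of Proposition~\ref{Prop:Verm_basis} without giving an explicit argument, and your computation of the weight of each basis vector followed by summing the geometric series is exactly the intended deduction. The remarks you add on well-definedness and the $\delta$-shift are accurate and make explicit what the paper leaves implicit.
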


\begin{Cor}\label{Cor:Verm_exact}
The functor $\Delta_{\Walg}^\theta$ is exact.
\end{Cor}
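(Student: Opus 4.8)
The plan is to deduce exactness of $\Delta_\Walg^\theta$ directly from Proposition \ref{Prop:Verm_basis}. The key point is that the proposition gives, for any finite-dimensional $\Nil^0$, a basis of $\Delta^\theta_\Walg(\Nil^0)$ indexed by $\{(n_1,\dots,n_k,i)\}$ with $n_j \in \ZZ_{\geqslant 0}$ and $i$ running over a basis of $\Nil^0$; in other words, the underlying vector space of $\Delta^\theta_\Walg(\Nil^0)$ is canonically $\K[\tilde f_1,\dots,\tilde f_k] \otimes \Nil^0$, functorially in $\Nil^0$. More precisely, I would observe that the natural map $\K[\tilde f_1,\dots,\tilde f_k]\otimes_\K \Nil^0 \to \Delta^\theta_\Walg(\Nil^0)$, $p\otimes v \mapsto p\cdot v$, is an isomorphism of $\K$-vector spaces by Proposition \ref{Prop:Verm_basis}, and it is evidently natural in the $\Walg^0$-module $\Nil^0$ (both sides are functors in $\Nil^0$ and the map is defined via the $\Walg$-action, which is compatible with $\Walg^0$-module morphisms).

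Granting this, exactness is immediate: if $0 \to \Nil^0_1 \to \Nil^0_2 \to \Nil^0_3 \to 0$ is a short exact sequence in $\tilde\OCat^\theta(\g_0,e)_\nu$, then applying the functor $\K[\tilde f_1,\dots,\tilde f_k]\otimes_\K(-)$ — which is exact since $\K[\tilde f_1,\dots,\tilde f_k]$ is a free, hence flat, $\K$-module (every $\K$-module is flat, $\K$ being a field) — yields a short exact sequence, and via the natural isomorphism above this is exactly the sequence $0 \to \Delta^\theta_\Walg(\Nil^0_1) \to \Delta^\theta_\Walg(\Nil^0_2) \to \Delta^\theta_\Walg(\Nil^0_3) \to 0$. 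Since $\Delta^\theta_\Walg$ is already known to be right exact (it is a left adjoint, being $\Walg \otimes_{\Walg_{\geqslant 0}}(-)$ with $\Nil^0$ inflated along $\Walg_{\geqslant 0} \twoheadrightarrow \Walg^0$), the only thing that needed checking was exactness on the left, i.e. that it preserves monomorphisms; but the displayed argument in fact gives exactness in the middle as well, so we are done. Alternatively, and equivalently, one checks that $\dim \Delta^\theta_\Walg(\Nil^0)_\mu$ (each weight space) depends only on $\ch \Nil^0$ in an additive way — this is precisely the content of Corollary \ref{Cor:Verm_char} extended to the case where $\t$ does not act with a single weight — so that $\ch$ is additive on short exact sequences after applying $\Delta^\theta_\Walg$, which combined with right exactness forces exactness.

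The only genuinely nontrivial input is Proposition \ref{Prop:Verm_basis} itself, which is cited as a straightforward generalization of \cite[Theorem 4.5]{BGK}; modulo that, the corollary is essentially a formal consequence. I do not anticipate a real obstacle here — the main thing to be careful about is the functoriality/naturality of the identification $\Delta^\theta_\Walg(\Nil^0) \cong \K[\tilde f_1,\dots,\tilde f_k]\otimes_\K \Nil^0$, i.e. that the PBW-type basis of Proposition \ref{Prop:Verm_basis} can be chosen uniformly (the lifts $\tilde f_j \in \Walg$ are fixed once and for all, independently of $\Nil^0$, and the isomorphism is then the restriction of the $\Walg$-module structure map), so that a morphism $\Nil^0_1 \to \Nil^0_2$ induces the map $\mathrm{id} \otimes (\cdot)$ on the tensor product side. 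This is transparent from the construction, so I would state it in a sentence and move on.
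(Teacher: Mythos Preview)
Your proposal is correct and is precisely the argument the paper has in mind: the corollary is stated without proof, as an immediate consequence of Proposition~\ref{Prop:Verm_basis}, and your PBW-type identification $\Delta^\theta_\Walg(\Nil^0)\cong \K[\tilde f_1,\ldots,\tilde f_k]\otimes_\K \Nil^0$ (natural in $\Nil^0$, since the $\tilde f_j$ are fixed elements of $\Walg$) is exactly how one unwinds that implication. There is nothing to add.
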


Also we remark that, for $\Nil^0\in \OCat^\theta(\g_0,e)_\nu$, $\Nil^0$ is naturally embedded into  $\Delta^\theta_{\Walg}(\Nil^0)$.
There is the maximal submodule of $\Delta^\theta_{\Walg}(\Nil^0)$ that does not intersect
 $\Nil^0$, the quotient will be denoted by $L^\theta_{\Walg}(\Nil^0)$.
The modules $L^\theta_{\Walg}(\Nil^0)$, for simple $\Nil^0$, form a complete list of simple objects in $\OCat^\theta(\g,e)_\nu$.
Since any object in $\OCat^\theta(\g,e)_\nu$ has finite length, we see that $\OCat^\theta(\g,e)_\nu$
coincides with the Serre subcategory of $\tilde{\OCat}^\theta(\g,e)_\nu$ generated by $\Delta^\theta_\Walg(\Nil^0)$.

\subsubsection{Categories $\tilde{\Wh}^\theta(\g,e),\Wh^\theta(\g,e)$}
Recall the subalgebra $\m=\g_{>0}\oplus \g_0(<0)$ and its character $\chi$. Consider the full
subcategory $\tilde{\Wh}^\theta(\g,e)_\nu$ in the category of left $\U$-modules consisting of all
modules $\M$ such that
\begin{enumerate}
\item The shift $\m_\chi=\{\xi-\langle \xi,\chi\rangle, \xi\in \m\}$ acts locally nilpotently on $\M$.
\item $\t$ acts diagonalizably on $\M$ with weights in $\mathfrak{X}(P)+\nu$.
\end{enumerate}

One can define an analog of a Verma module in $\tilde{\Wh}^\theta(\g,e)$. Namely, take
$\Nil^0\in \tilde{\OCat}^\theta(\g_0,e)_\nu$. Then we have Skryabin's equivalence (see  \ref{SSS_CatO_equi} below)
$\Sk_0: \tilde{\OCat}^\theta(\g_0,e)_\nu\xrightarrow{\sim} \tilde{\Wh}^\theta(\g_0,e)_\nu$. We set $\Delta_\U^\theta(\Nil^0):=\U\otimes_{U(\g_{\geqslant 0})}\Sk_0(\Nil^0)$. The functor
$\Delta^\theta_\U$ admits a right adjoint, the functor $\M\mapsto \M^{\m_\chi}$.

By definition, for $\Wh^\theta(\g_0,e)_\nu$ we take the Serre subcategory of $\tilde{\Wh}^\theta(\g_0,e)_\nu$
generated by the modules $\Delta^\theta_{\U}(\Nil^0)$ with $\Nil^0\in \OCat^\theta(\g_0,e)_\nu$.

\subsubsection{Equivalences}\label{SSS_CatO_equi}
In \cite{LOCat} we have produced an equivalence $\Kfun: \tilde{\Wh}^\theta(\g,e)_\nu\rightarrow \tilde{\OCat}^\theta(\g,e)_\nu$.
In the case when $\g=\g_0$, the equivalence $\Kfun$ becomes an equivalence introduced by Skryabin in an appendix
to \cite{Premet}.

To construct $\Kfun$, recall the isomorphism $\U^\wedge\cong \W(\Walg)^\wedge$ from \ref{SSS_compl_right_hand}.
The category $\tilde{\Wh}^{\theta}(\g,e)_\nu$
is nothing else but the category of topological $\U^\wedge$-modules with respect to the discrete topology,
where $\t$ acts diagonalizably with  weights in $\mathfrak{X}(P)+\nu$. In particular, we can view $\M\in \tilde{\Wh}^\theta(\g,e)_\nu$
as a module over $\W(\Walg)^{\wedge}$. Then we define $\Kfun(\M)$ as $\M^{\v}$, where the lagrangian subspace
$\v\subset V$ was introduced in \ref{SSS_compl_right_hand}. A quasi-inverse functor
is given by $\Nil\mapsto \K[\v]\otimes \Nil$. Here $\W$ acts on $\K[\v]$ via the identification $\W\cong D(\v)$, or equivalently,
$\K[\v]=\W/\W\v$.

According to  \cite[Theorem 4.1]{LOCat}, the functor $\Kfun$ intertwines the functors
$\Delta^\theta_\Walg,\Delta^\theta_\U$.
It follows that $\Kfun$ induces an equivalence of $\Wh^\theta(\g,e)_\nu$ and $\OCat^\theta(\g,e)_\nu$
because both these subcategories are defined  in terms of  $\Delta^\theta_\bullet$.

\subsubsection{Equivariant version}
Set $R:=Q\cap G_0$, this is a reductive subgroup in $G_0$ (and a maximal reductive subgroup
of $Q\cap P$).
We can consider the $R$-equivariant versions of the categories under consideration. For example,
by an $R$-equivairiant object in $\OCat^\theta(\g,e)_\nu$ we mean a module $\Nil\in \OCat^\theta(\g,e)$
equipped with an action of $R$ that makes the structure map $\Walg\times \Nil\rightarrow \Nil$
into an $R$-equivariant map and such that the differential of the $R$-action
coincides with the action of $\mathfrak{r}\subset \Walg$
(shifted by $\nu+\delta$; for Whittaker categories we just consider a shift by $\nu$).
The $R$-equivariant categories will be denoted by $\OCat^\theta(\g,e)^R_\nu$, etc.
Since the isomorphism  $\U^\wedge\cong \W(\Walg)^\wedge$ is $R$-equivariant, we see that
$\Kfun$ upgrades to an equivalence of equivariant categories. Let us also mention that the Verma
module functors are lifted to the equivariant categories, i.e., for, say, $\Nil^0\in \OCat^\theta(\g_0,e)^R_\nu$,
the Verma module $\Delta^\theta_{\Walg}(\Nil^0)$ has a natural $R$-equivariant structure.

\subsubsection{Duality for $\OCat^\theta(\g,e)$}\label{SSS_W_duality}
Here we are going to define a contravariant involutive equivalence $\OCat^\theta(\g,e)_\nu\rightarrow \OCat^\theta(\g,e)_\nu$.
We are going to use conventions of \ref{SSS_setting}. In particular, $\t\subset \h$.

To $\Nil\in \OCat^\theta(\g,e)_\nu$ we can assign its restricted dual $\Nil^{(*)}=\bigoplus_{\mu\in \t^*}\Nil^*_\mu
\subset \Nil^*$. By the construction, the anti-automorphism $\tau$ is the identity on
$\t$. The twist $\Nil^\vee:=\,{^\tau}\!\Nil^{(*)}$ is therefore an object in $\OCat^\theta(\g,e)_\nu$. Since
$\tau^2=1$, we see that  $\bullet^\vee$ is involutive.

Also it follows directly from the construction that $\ch(\Nil)=\ch(\Nil^\vee)$.

A pairing $\Nil_1\times \Nil_2\rightarrow \K$ is called {\it contravariant} (or $\tau$-contravariant)
if $\langle\tau(a) n_1,n_2\rangle=\langle n_1,an_2\rangle$ for all $a\in \Walg,
n_i\in \Nil_i, i=1,2$. For example, there is a natural contravariant pairing
$\Nil^\vee\times \Nil\rightarrow \K$.

The following lemma characterizes the module $\Nil^\vee$ up to an isomorphism.

\begin{Lem}\label{Lem:naive_dual_char}
Let $\Nil\in \OCat^\theta(\g,e)_\nu, \Nil_1\in \tilde{\OCat}^\theta(\g,e)_\nu$.
Suppose that there is a contravariant pairing $\langle\cdot,\cdot\rangle: \Nil\times \Nil_1\rightarrow \K$
with zero left and right kernels. Then there is a unique isomorphism $\Nil^\vee\xrightarrow{\sim} \Nil_1$
that intertwines the pairing $\Nil_1\times \Nil\rightarrow \K$ with a natural pairing
$\Nil^\vee\times \Nil\rightarrow \K$.
\end{Lem}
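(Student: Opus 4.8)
The statement is the standard "abstract characterization of a dual object by a perfect contravariant pairing" lemma, adapted to the category $\OCat^\theta(\g,e)_\nu$, so the strategy is the usual one. A contravariant pairing $\langle\cdot,\cdot\rangle\colon \Nil\times\Nil_1\to\K$ is, by definition, a bilinear map with $\langle \tau(a)n,n_1\rangle=\langle n,an_1\rangle$. First I would observe that such a pairing induces a $\Walg$-module map $\Nil_1\to \Nil^\vee$: indeed, adjunction turns the pairing into a linear map $\Nil_1\to \Nil^*$, $n_1\mapsto \langle\cdot,n_1\rangle$, and the contravariance is exactly the statement that $\langle\cdot,n_1\rangle$ transforms under $a\in\Walg$ the way an element of $\,^\tau\!\Nil^{(*)}$ does. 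The only thing to check here is that the image actually lands in the \emph{restricted} dual $\Nil^{(*)}=\bigoplus_\mu \Nil_\mu^*$ rather than the full dual $\Nil^*$; this follows because both $\Nil$ and $\Nil_1$ lie in $\tilde\OCat^\theta(\g,e)_\nu$, hence are $\t$-semisimple with weights in $\mathfrak X(P)+\nu+\delta$, and the contravariance with respect to the $\t$-action (on which $\tau$ is the identity) forces $\Nil_\mu$ to pair trivially with $(\Nil_1)_{\mu'}$ unless $\mu=\mu'$; since each $(\Nil_1)_\mu$ is finite-dimensional, $\langle\cdot,n_1\rangle$ has finitely many nonzero weight components and so lies in $\Nil^{(*)}$. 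Thus we get a $\Walg$-linear, $\t$-graded map $\phi\colon\Nil_1\to\Nil^\vee$ intertwining the two pairings.

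Next I would show $\phi$ is an isomorphism. Injectivity of $\phi$ is precisely the statement that the right kernel of the pairing is zero, which is a hypothesis. For surjectivity, dualize: the pairing also gives a map $\psi\colon\Nil\to\Nil_1^{(*)}$ (well-defined for the same weight-space reasons, though here I should be slightly careful since $\Nil_1$ is only assumed to be in $\tilde\OCat$, not $\OCat$ — its weight spaces are still finite-dimensional? No: objects of $\tilde\OCat^\theta(\g,e)_\nu$ need not have finite-dimensional weight spaces). So instead I would argue surjectivity of $\phi$ directly via weight spaces: restricting to a fixed weight $\mu$, $\phi$ gives a map $(\Nil_1)_\mu\to (\Nil^\vee)_\mu=(\Nil_\mu)^*$, which is injective (zero right kernel) and whose dual $(\Nil_\mu)\to (\Nil_1)_\mu^*$ is the transpose of the pairing restricted to weight $\mu$; the zero-left-kernel hypothesis says this transpose, i.e. the map $\Nil_\mu\to((\Nil_1)_\mu)^*$, is injective. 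Since $\Nil_\mu$ is finite-dimensional (as $\Nil\in\OCat^\theta(\g,e)_\nu$), injectivity of $\Nil_\mu\hookrightarrow ((\Nil_1)_\mu)^*$ forces $(\Nil_1)_\mu$ to have dimension at least $\dim\Nil_\mu$, and injectivity of $(\Nil_1)_\mu\hookrightarrow(\Nil_\mu)^*$ forces the reverse inequality; hence $\dim (\Nil_1)_\mu=\dim\Nil_\mu<\infty$ for all $\mu$, both maps are isomorphisms on each weight space, and $\phi$ is a bijection. Being a $\Walg$-module map, $\phi$ is then an isomorphism in $\OCat^\theta(\g,e)_\nu$ (in particular this shows a posteriori that $\Nil_1$ lies in $\OCat^\theta(\g,e)_\nu$, not merely $\tilde\OCat$).

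Finally, uniqueness: any two isomorphisms $\Nil^\vee\to\Nil_1$ intertwining the canonical pairing $\Nil^\vee\times\Nil\to\K$ with the given pairing $\Nil_1\times\Nil\to\K$ differ by an automorphism of $\Nil^\vee$ fixing the canonical pairing with $\Nil$; but the canonical pairing $\Nil^\vee\times\Nil\to\K$ has zero left kernel by construction of the restricted dual, so an automorphism of $\Nil^\vee$ preserving it must be the identity. Hence $\phi$ is the unique such isomorphism, which is the assertion of the lemma. The only genuinely delicate point is the bookkeeping with weight spaces that simultaneously establishes finite-dimensionality of $(\Nil_1)_\mu$ and surjectivity of $\phi$; once that is in place, everything else is formal. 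I expect the main obstacle to be handling the asymmetry in the hypotheses ($\Nil\in\OCat$ but $\Nil_1$ only in $\tilde\OCat$), which is exactly why the weight-space argument above is run in that particular order.
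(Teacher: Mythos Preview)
Your proposal is correct and is exactly the kind of straightforward weight-space argument the paper has in mind; the paper itself omits the proof entirely, simply stating ``The proof is straightforward.'' One small cosmetic point: your justification that $\langle\cdot,n_1\rangle$ lands in the restricted dual should appeal to the fact that $n_1$ is a finite sum of $\t$-weight vectors (since $\t$ acts diagonalizably on $\Nil_1$), not to finite-dimensionality of $(\Nil_1)_\mu$, which you only establish later; with that reordering the argument is clean.
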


The proof is straightforward.

Let us finish by noting that $\bullet^\vee$ naturally upgrades to an equivalence of $R$-equivariant categories.

\subsection{Category $\mathcal{O}$ for $\Walg$ and Whittaker modules II: completed version}\label{SS_cats_compl}
\subsubsection{Categories $\widehat{\OCat}^\theta(\g,e), \hat{\OCat}^\theta(\g,e)$}
By definition, the category $\widehat{\OCat}^\theta(\g,e)_\nu$ consists
of all $\Walg$-modules $\M$ satisfying the following conditions:
\begin{enumerate}
\item $\M$ is complete and separated with respect to the $\Walg_{<0}$-adic topology.
\item The eigenvalues of $\t$ on $\M$ are in $\mathfrak{X}(P)+\nu+\delta$ and are bounded from above
(in the sense that $\langle\theta,\mu-\nu-\delta\rangle$ is uniformly bounded for all weights $\mu$
of $\M$).
\item $\M$ (considered as a topological $\Walg$-module with respect to the $\Walg_{<0}$-adic topology)
is the direct product of its  $\t$-weight subspaces (where the latter is considered with the
direct product topology).
\end{enumerate}
Inside $\widehat{\OCat}^\theta(\g,e)_\nu$ we consider the full subcategory $\hat{\OCat}^\theta(\g,e)_\nu$
consisting of all modules with finite dimensional weight spaces and with finitely many eigen-characters for the
action of the center of $\Walg$.
We again have a completion functor $\Nil\mapsto \widehat{\Nil}:\OCat^\theta(\g,e)_\nu\rightarrow \hat{\OCat}^\theta(\g,e)_\nu$
that sends $\Nil=\bigoplus_{\mu} \Nil_\mu$, to $\prod_\mu \Nil_\mu$. The only claim that one needs to check
in order to verify $\widehat{\Nil}\in \hat{\OCat}^\theta(\g,e)_\nu$ is that the $\Walg_{<0}$-adic topology on $\widehat{\Nil}$
is complete and separated. But this is straightforward from the decomposition into the product of weight spaces.

\begin{Lem}\label{Lem:OCatW_compl}
The functor $\widehat{\bullet}$ coincides with the functor of $\Walg_{<0}$-adic completion,
$\Nil\mapsto \varprojlim_{n\rightarrow +\infty}\Nil/\Walg_{<0}^n \Nil$ and is an equivalence of categories.
\end{Lem}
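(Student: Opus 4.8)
The plan is to establish the statement in two stages: first that, on $\OCat^\theta(\g,e)_\nu$, the functor $\widehat{\bullet}$ really does compute the $\Walg_{<0}$-adic completion, and then that the resulting functor $\OCat^\theta(\g,e)_\nu\to\hat{\OCat}^\theta(\g,e)_\nu$ is an equivalence, with quasi-inverse the ``finite part'' functor $\M\mapsto\M_{\mathrm{fin}}:=\bigoplus_\mu\M_\mu$ (the sum of the genuine $\t$-weight spaces sitting inside the product).

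For the first stage, the basic input is a grading estimate. Fix $\Nil\in\OCat^\theta(\g,e)_\nu$; by condition (2) in the definition of $\tilde\OCat^\theta(\g,e)_\nu$ there is a bound $c$ on $\langle\theta,\mu-\nu-\delta\rangle$ over the weights $\mu$ of $\Nil$, and since $\Walg_{<0}=\bigoplus_{i\le-1}\Walg_i$ strictly lowers the $\theta$-grading, $\Walg_{<0}^n\Nil\subseteq\bigoplus_{\langle\theta,\mu-\nu-\delta\rangle\le c-n}\Nil_\mu$ for every $n$. In particular $\bigcap_n\Walg_{<0}^n\Nil=0$, so the $\Walg_{<0}$-adic topology is separated, $\Nil\hookrightarrow\varprojlim_n\Nil/\Walg_{<0}^n\Nil$, and the projections onto the finitely many ``top'' weight spaces assemble in the limit to a natural continuous map $\varprojlim_n\Nil/\Walg_{<0}^n\Nil\to\prod_\mu\Nil_\mu=\widehat\Nil$ through which $\Nil\hookrightarrow\widehat\Nil$ factors, with dense image. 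This map is an isomorphism precisely when the two descending filtrations $\{\Walg_{<0}^n\Nil\}_n$ and $\{\bigoplus_{\langle\theta,\mu-\nu-\delta\rangle\le c-n}\Nil_\mu\}_n$ are cofinal, i.e. when for every $n$ there is $N$ with $\bigoplus_{\langle\theta,\mu-\nu-\delta\rangle\le c-N}\Nil_\mu\subseteq\Walg_{<0}^n\Nil$. Once this cofinality is available, $\widehat\Nil\cong\varprojlim_n\Nil/\Walg_{<0}^n\Nil$ as topological vector spaces, the $\Walg$-module structure being the unique continuous one; this also settles the point flagged just before the lemma, that $\widehat\Nil\in\hat\OCat^\theta(\g,e)_\nu$.

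The cofinality is the technical heart, and I would prove it by dévissage. The property ``for all $n$ there is $N$ with the $\theta$-weight-$\le c-N$ part of $\Nil$ contained in $\Walg_{<0}^n\Nil$'' passes to quotient modules (clear) and to extensions: given $0\to\Nil'\to\Nil\to\Nil''\to0$, a homogeneous weight vector of very low $\theta$-degree maps into $\Walg_{<0}^n\Nil''$, one lifts that expression into $\Walg_{<0}^n\Nil$, and the homogeneous remainder lies in $\Nil'$ and still has very low degree, so the statement for $\Nil'$ finishes it. Every object of $\OCat^\theta(\g,e)_\nu$ has finite length, and its simple subquotients $L^\theta_\Walg(\Nil^0)$ are quotients of the Verma modules $\Delta^\theta_\Walg(\Nil^0)$; so it suffices to prove the statement for the Vermas. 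There Proposition \ref{Prop:Verm_basis} provides the basis $\tilde f_1^{n_1}\cdots\tilde f_k^{n_k}v_i$, where $v_i$ runs over a basis of the finite-dimensional $\Nil^0$ and $\tilde f_j\in\Walg_{d_j}$ with $d_j<0$. A basis vector of $\theta$-degree $\le c-N$ has $\sum_j n_j|d_j|$ at least $N-(c-a)$ (with $a$ a lower bound for the $\theta$-degrees occurring in $\Nil^0$), and since there are only finitely many $\tilde f_j$, say all $|d_j|\le D$, its length $\sum_j n_j$ is then $\ge n$ as soon as $N\ge nD+(c-a)$; hence $\tilde f_1^{n_1}\cdots\tilde f_k^{n_k}\in\Walg_{<0}^{\sum_j n_j}\subseteq\Walg_{<0}^n$ (using that $\Walg_{<0}$ is closed under multiplication, so $\Walg_{<0}^m\subseteq\Walg_{<0}^n$ for $m\ge n$), and the basis vector lies in $\Walg_{<0}^n\Nil^0\subseteq\Walg_{<0}^n\Delta^\theta_\Walg(\Nil^0)$.

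For the second stage, I would check that $\widehat{\bullet}$ is fully faithful and essentially surjective. A morphism $\widehat\Nil_1\to\widehat\Nil_2$ is in particular $\t$-linear, hence preserves weight spaces, hence restricts to a $\Walg$-linear map $\Nil_1=\bigoplus_\mu(\Nil_1)_\mu\to\Nil_2$ whose completion recovers it by density and continuity; injectivity of $\widehat{\bullet}$ on $\Hom$-spaces is clear since $\Nil_i\subseteq\widehat\Nil_i$, so $\widehat{\bullet}$ is fully faithful, and tautologically $(\widehat\Nil)_{\mathrm{fin}}=\Nil$. For essential surjectivity, given $\M\in\hat\OCat^\theta(\g,e)_\nu$, the subspace $\M_{\mathrm{fin}}=\bigoplus_\mu\M_\mu$ is a $\Walg$-submodule because $\Walg$ is $\t$-graded; it has finite-dimensional weight spaces and finitely many generalized central characters, and it has finite length --- if some simple $L^\theta_\Walg(\Nil^0)$ occurred with unbounded multiplicity in finite-length submodules of $\M_{\mathrm{fin}}$, then, picking $\mu$ with $(L^\theta_\Walg(\Nil^0))_\mu\ne0$ and using additivity of weight-space dimensions along composition series, $\dim\M_\mu$ would be infinite. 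Hence $\M_{\mathrm{fin}}\in\OCat^\theta(\g,e)_\nu$, it is dense in $\M$, and the $\Walg_{<0}$-adic topology on $\M$ --- which coincides with the product topology by conditions (1) and (3) in the definition of $\hat\OCat^\theta(\g,e)_\nu$ --- induces the $\Walg_{<0}$-adic topology on $\M_{\mathrm{fin}}$ (again by the cofinality of the first stage), so $\M=\widehat{\M_{\mathrm{fin}}}$. Thus $\M\mapsto\M_{\mathrm{fin}}$ is a two-sided inverse. The main obstacle is exactly the cofinality step of the first stage; everything else is formal once it is in place.
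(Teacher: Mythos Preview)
Your proof is correct. Both stages work as written: the d\'evissage argument for cofinality is sound (the closure of $\Walg_{<0}$ under multiplication indeed gives $\Walg_{<0}^m\subseteq\Walg_{<0}^n$ for $m\ge n$, since a product of $m$ factors can be rebracketed as a product of $n$ factors the last of which is itself in $\Walg_{<0}$), and the equivalence argument via $\bullet_{\mathrm{fin}}$ is fine, though your finite-length check for $\M_{\mathrm{fin}}$ is unnecessary --- by the paper's definition, membership in $\OCat^\theta(\g,e)_\nu$ already follows from finite-dimensional weight spaces together with finitely many central characters.

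The paper's proof takes a shorter route on the key step. For the equivalence it simply names $\Nil'\mapsto\Nil'_{fin}$ as the quasi-inverse and stops. For the coincidence with the $\Walg_{<0}$-adic completion, instead of your d\'evissage to Vermas it makes a single structural observation: any $\Nil\in\OCat^\theta(\g,e)_\nu$ is generated by finitely many weight vectors \emph{as a $\Walg_{\leqslant 0}$-module}, and this already forces the $\Walg_{<0}$-adic filtration on $\Nil$ to be cofinal with the weight filtration (one is essentially reducing the cofinality question from $\Nil$ to the free $\Walg_{\leqslant 0}$-module on the generators). Your approach via Proposition~\ref{Prop:Verm_basis} is more explicit and entirely self-contained, trading a short structural remark for a concrete computation on a PBW-type basis; the paper's approach is terser but leaves the passage from finite generation to cofinality implicit.
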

\begin{proof}
Let us check the claim about an equivalence. For $\Nil'\in \hat{\OCat}^{\theta}(\g,e)_\nu$,
let $\Nil'_{fin}$ denote the subspace of $\t$-finite elements. In other words,
$\Nil'_{fin}=\bigoplus_{\mu}\Nil'_\mu$. Clearly, $\Nil'_{fin}$
is an object of $\OCat^\theta(\g,e)_\nu$ and the functor $\bullet_{fin}$ is quasi-inverse
to $\widehat{\bullet}$.

Now let us check that $\widehat{\bullet}$ coincides with the $\Walg_{<0}$-adic completion functor. Clearly, $\widehat{\Nil}$
is complete in the $\Walg_{<0}$-adic topology so we have a natural map $\varprojlim_{n\rightarrow +\infty}\Nil/\Walg_{<0}^n \Nil
\rightarrow \widehat{\Nil}$. But $\Nil$ is generated by finitely many weight vectors as a $\Walg_{\leqslant 0}$-module.
This implies that the homomorphism is actually an isomorphism.
\end{proof}

We have a Verma functor  $\widehat{\Delta}^\theta_\Walg:\tilde{\OCat}^\theta(\g_0,e)_\nu\rightarrow \widehat{\OCat}^{\theta}(\g,e)_\nu$ given by $\Nil^0\mapsto \widehat{\Delta^\theta_{\Walg}(\Nil^0)}$.
On the other hand, (this follows, for example, from Lemma \ref{Lem:OCatW_compl})
$$\widehat{\Delta}^\theta_{\Walg}(\Nil^0)=[\wWalg/\wWalg\wWalg_{>0}]\widehat{\otimes}_{\Walg^0}\Nil^0.$$
Thanks to the equivalence $\hat{\OCat}^\theta(\g,e)_\nu\cong
\OCat^\theta(\g,e)_\nu$ and the claim that the latter is generated by the Verma modules,
we see that $\hat{\OCat}^\theta(\g,e)_\nu$ coincides with the Serre
subcategory of $\widehat{\OCat}^\theta(\g,e)_\nu$ generated by $\widehat{\Delta}^\theta_{\Walg}(\Nil^0)$
with $\Nil^0\in \OCat^\theta(\g_0,e)_\nu$. We will write $\hat{\Delta}^\theta_{\Walg}(\Nil^0)$ for
$\widehat{\Delta}^\theta_{\Walg}(\Nil^0)$ in the case when $\Nil^0\in \OCat^\theta(\g_0,e)_\nu$.

\subsubsection{Category $\widehat{\Wh}^\theta(\g,e)$ and equivalence $\widehat{\Kfun}$}
Now we are going to define a category $\widehat{\Wh}^{\theta}(\g,e)_\nu$ and show that it
is equivalent to $\widehat{\OCat}^\theta(\g,e)_\nu$.

Recall the subalgebra $\bar{\m}:=\g_{<0}\oplus \g_0(<0)=\tau(\m)$. Let $\widehat{\Wh}^{\theta}(\g,e)_\nu$
consist of all $\wU$-modules $\M$ such that
\begin{enumerate}
\item The $\bar{\m}_{\chi}$-adic  topology on $\M$ is separated and complete.
\item The weights of $\t$ on $\M$ are integral after the $\nu$-shift and are bounded from above.
\item As a topological module, $\M$ is the direct product of its $\t$-weight spaces.
\end{enumerate}

We are going to construct an equivalence $\widehat{\Kfun}:\widehat{\Wh}^\theta(\g,e)_\nu\rightarrow \widehat{\OCat}^\theta(\g,e)_\nu$.
This functor will be produced as a restriction of an equivalence between the categories $\wU$-$\operatorname{csMod}$ of $\wU$-modules
that are complete and separated in the $\bar{\m}_\chi$-adic topology and the category $\wWalg$-$\operatorname{csMod}$ of
$\wWalg$-modules that are complete and separated in the $\Walg_{<0}$-adic topology.

Recall the isomorphism $\wU\cong \wW(\Walg)$. Under this isomorphism,
an equivalence $\widehat{\Kfun}:\wU\operatorname{-csMod}\rightarrow \wWalg$-$\operatorname{csMod}$
is given by taking the $\bar{\v}$-coinvariants.
A quasi-inverse equivalence looks as follows. Choose a lagrangian subspace $\bar{\v}^*\subset V$
complimentary to $\bar{\vf}$. Then $\K[[\bar{\v}^*]]=\wW/\wW\bar{\v}^*$ is a $\wW$-module.
A quasi-inverse equivalence sends $\Nil$ to $\K[[\bar{\v}^*]]\widehat{\otimes}\Nil$.

The claim that the two functors are quasi-inverse to each other reduces to the following lemma.

\begin{Lem}\label{Lem:compl_mod}
Let $U$ be a finite dimensional vector space and $M$ be a module over the algebra
$D(U)$ of differential operators on $U$. Suppose that $M$ is complete and separated
with respect to the $U^*$-adic topology. Then $M=\K[[U]]\widehat{\otimes}M_0$,  where
$M_0$ is a complete separated topological vector space  ($D(U)$ acts on the first
factor).
\end{Lem}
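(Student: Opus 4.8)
The plan is to write down an explicit ``Taylor expansion'' map and check its properties by a direct computation with the Weyl algebra relations. Since the statement is local I would fix a basis $x_1,\dots,x_n$ of $U^*\subset\K[U]\subset D(U)$ together with the dual constant vector fields $\partial_1,\dots,\partial_n\in D(U)$, so that $[\partial_i,x_j]=\delta_{ij}$ and $x_i,\partial_i$ otherwise commute. The $U^*$-adic topology on $M$ is then the $(x)$-adic one, with $(x)^NM=\sum_{|\alpha|=N}x^\alpha M$, and I would set $M_0:=M/(x)M$, which is discrete because $(x)M=(x)^1M$ is open. Writing $\overline{(\cdot)}$ for the image in $M_0$ and $e_j$ for the $j$th unit multi-index, I would define
\[
\Phi\colon M\longrightarrow\K[[U]]\,\widehat{\otimes}\,M_0,\qquad \Phi(m)=\sum_{\alpha}\tfrac{1}{\alpha!}\,x^\alpha\otimes\overline{\partial^\alpha m},
\]
which is a well-defined element of the completed tensor product since $x^\alpha\in(x)^{|\alpha|}\K[[U]]$. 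A first routine point is that $\Phi$ is continuous and filtration-preserving: if $m\in(x)^NM$ then for $|\alpha|<N$ one has $\partial^\alpha m\in(x)^{N-|\alpha|}M$, hence $\overline{\partial^\alpha m}=0$, so only the terms with $|\alpha|\geqslant N$ survive.

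Next I would verify that $\Phi$ intertwines the $D(U)$-actions, where $D(U)$ acts on $\K[[U]]\,\widehat{\otimes}\,M_0$ through the first factor ($x_j$ by multiplication, $\partial_j$ by $\tfrac{\partial}{\partial x_j}$). Both equalities reduce to the Leibniz identity $\partial^\alpha x_j=x_j\partial^\alpha+\alpha_j\partial^{\alpha-e_j}$ together with the vanishing $\overline{x_j\partial^\alpha m}=0$ in $M_0$: for $\Phi(x_jm)=x_j\cdot\Phi(m)$ one reindexes $\alpha\mapsto\alpha+e_j$, and for $\Phi(\partial_jm)=\partial_j\cdot\Phi(m)$ one matches $\sum_\alpha\tfrac1{\alpha!}x^\alpha\otimes\overline{\partial^{\alpha+e_j}m}$ with $\tfrac{\partial}{\partial x_j}$ applied termwise to $\Phi(m)$.

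The main work is to show $\Phi$ is an isomorphism of topological vector spaces. Both $M$ and $\K[[U]]\,\widehat{\otimes}\,M_0$ are complete and separated for their $(x)$-adic filtrations, so by a standard completeness argument it suffices to prove that $\gr\Phi$ is an isomorphism. Since $\gr_N\big(\K[[U]]\,\widehat{\otimes}\,M_0\big)=S^N(U^*)\otimes M_0$ and, by the filtration check above, $\gr_N\Phi$ sends the class of $m\in(x)^NM$ to $\sum_{|\alpha|=N}\tfrac1{\alpha!}x^\alpha\otimes\overline{\partial^\alpha m}$, I would show that the multiplication map $\mu_N\colon S^N(U^*)\otimes M_0\to(x)^NM/(x)^{N+1}M$, $x^\beta\otimes\overline v\mapsto\overline{x^\beta v}$, is a two-sided inverse of $\gr_N\Phi$. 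The single non-formal ingredient is the identity $\overline{\partial^\alpha(x^\beta v)}=\delta_{\alpha\beta}\,\beta!\,\overline v$ for $|\alpha|=|\beta|=N$ (every mixed Leibniz term lies in $(x)M$, and for $\alpha\neq\beta$ some coordinate gets differentiated too often or too little); feeding it into $\gr_N\Phi\circ\mu_N$ and $\mu_N\circ\gr_N\Phi$ yields the identity on each side. Hence $\gr\Phi$ is bijective, $\Phi$ is a topological isomorphism, and taking $M_0$ with its discrete topology this is the asserted decomposition $M=\K[[U]]\,\widehat{\otimes}\,M_0$ with $D(U)$ acting on the first factor.

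I expect this last step to be the only genuine obstacle, and it is exactly where the $D(U)$-module structure, rather than merely a complete $\K[[U]]$-module structure, is needed: without the operators $\partial_i$ the module $M$ could carry $(x)$-torsion and $\gr M$ would fail to be free over $\K[U]$, so the identity $\overline{\partial^\alpha(x^\beta v)}=\delta_{\alpha\beta}\beta!\,\overline v$ and the bookkeeping of how it threads through the graded pieces are the heart of the proof. Everything else — well-definedness of $\Phi$, continuity, and the equivariance computations — is mechanical.
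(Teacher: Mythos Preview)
Your proof is correct and rests on the same underlying idea as the paper's: both are Taylor expansions exploiting the Weyl-algebra relations. The paper packages it a bit differently, though. It reduces by induction to $\dim U=1$, then constructs an explicit projector $r=\sum_{i\geqslant 0}\frac{(-1)^i}{i!}q^ip^i$ in the completed Weyl algebra (so $r(M)=\ker p\subset M$) and verifies the operator identity $\sum_{i\geqslant 0}\frac{1}{i!}q^irp^i=1$ directly. Thus the paper realizes $M_0$ as the \emph{subspace} $\ker p$ of $M$ and checks the decomposition via this idempotent, whereas you realize $M_0$ as the \emph{quotient} $M/(x)M$ and establish bijectivity on associated gradeds. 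Your route avoids the induction and handles all variables at once; the paper's route has the minor advantage of exhibiting $M_0$ inside $M$ and writing the inverse of your map $\Phi$ as an actual element of the completed algebra.
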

The proof is very similar to  (and is a slight generalization
of) the proof of \cite[Lemma 5.14]{W_classif} but we will provide it for reader's convenience.
\begin{proof}
By induction, compare with {\it loc.cit.}, we reduce the proof to the case when $\dim U=1$.
Let $p$ be a basis vector in $U$ and $q\in U^*$ be the dual basis vector so that
$[p,q]=1$. Set $r:=\sum_{i=0}^{\infty} \frac{(-1)^i}{i!}q^i p^i$, this is a well-defined
element of the completion $\,^\wedge\!D(U)$. Moreover,  we have $pr=0, rq=0$ and $\sum_{i=0}^{+\infty}\frac{1}{i!} q^i r p^i=1$
in $\,^\wedge\!D(U)$. It follows that $m=\sum_{i=0}^{+\infty} \frac{1}{i!}q^i r p^i m$ for every $m\in M$.
But this just says that $M= \K[[q]]\widehat{\otimes} r(M)$ and $r(M)$ coincides with the
annihilator of $p$ in $M$ (and is naturally isomorphic to $M/qM$).
\end{proof}

Now let us show that $\widehat{\Kfun}$ restricts to an equivalence $\widehat{\Wh}^\theta(\g,e)\xrightarrow{\sim} \widehat{\OCat}^\theta(\g,e)$.
The functor $\widehat{\Kfun}^{-1}$ is essentially just taking
the tensor product with $\K[[\bar{\v}^*]]$. All eigenvalues of $\theta$ on $\bar{\v}$ are non-positive integers
 by the construction of $\bar{\v}$. So the eigen-values of $\theta$ on $\K[[\bar{\v}^*]]$ are non-positive integers as well.
It follows that properties (1)-(3) in the definitions of the categories are equivalent for $\Nil$
and   $\K[[\bar{\v}^*]]\widehat{\otimes}\Nil$. This shows that $\widehat{\Kfun}^{-1}$
is an equivalence between $\widehat{\OCat}^\theta(\g,e)$ and $\widehat{\Wh}^\theta(\g,e)$.

\subsubsection{Verma functor for $\widehat{\Wh}^\theta(\g,e)$ and definition of $\hat{\Wh}^\theta(\g,e)$}
Once again, we have a Verma functor $\widehat{\Delta}^\theta_\U: \tilde{\OCat}^\theta(\g_0,e)_\nu\rightarrow
\widehat{\Wh}^\theta(\g,e)_\nu$:
$$\widehat{\Delta}^\theta_{\U}(\Nil^0):=\wU/\wU\wU_{>0}\widehat{\otimes}_{\wU^0}\widehat{\Sk}_0(\Nil^0).$$
Here $\widehat{\Sk}_0$ is the $\g_0$-counterpart of  the equivalence $\widehat{\Kfun}^{-1}$.
It is straightforward to check that $\widehat{\Delta}_\U^\theta(\Nil^0)$ is indeed an object of
$\widehat{\Wh}^\theta(\g,e)_\nu$.

We write $\hat{\Delta}^\theta_{\U}(\Nil^0)$ instead of $\widehat{\Delta}^\theta_{\U}(\Nil^0)$
when $\Nil^0\in \OCat^\theta(\g_0,e)_\nu$.
We define the full subcategory $\hat{\Wh}^\theta(\g,e)_\nu\subset \widehat{\Wh}^\theta(\g,e)_\nu$
as the Serre span of $\hat{\Delta}^\theta_\U(\Nil^0)$ with $\Nil^0\in \OCat^\theta(\g_0,e)_\nu$.

We claim that $\widehat{\Kfun}^{-1}$ (and hence $\widehat{\Kfun}$) intertwines the functors $\widehat{\Delta}^\theta_\U$
and $\widehat{\Delta}^\theta_{\Walg}$. We have the following equality of modules over $\wW(\Walg)$,
\begin{align*}
\widehat{\Delta}^\theta_\U(\Nil^0)=&[\wW(\Walg)/\wW(\Walg)\wW(\Walg)_{>0}]\widehat{\otimes}_{\wW^0(\Walg^0)}(\K[[\bar{\v}^{0*}]]\widehat{\otimes}\Nil^0)=\\
&\left(\left[\wW/\wW\wW_{>0}\right]\widehat{\otimes}_{\wW^0}\K[[\bar{\v}^{0*}]]\right)\widehat{\otimes} \left(\left[\wWalg/\wWalg\wWalg_{>0}\right]\widehat{\otimes}_{\Walg^0}\Nil^0\right)=\\
&\K[[\bar{\v}^*]]\widehat{\otimes}\widehat{\Delta}^\theta_\Walg(\Nil^0).
\end{align*}
But this equality precisely means that $\widehat{\Kfun}^{-1}(\widehat{\Delta}^\theta_{\Walg}(\Nil^0))=\widehat{\Delta}^\theta_\U(\Nil^0)$.

In particular, it follows that $\widehat{\Kfun}$ restricts to an equivalence of $\hat{\Wh}^\theta(\g,e)_\nu$
and $\hat{\OCat}^\theta(\g,e)_\nu$, we write $\hat{\Kfun}$ for the restriction.

We remark that we do not know any direct equivalence between $\hat{\Wh}^\theta(\g,e)_\nu$
and $\Wh^\theta(\g,e)_\nu$.

\subsubsection{Equivariant versions}
Again, we can consider the equivariant versions $\hat{\OCat}^\theta(\g,e)_\nu^R,\hat{\Wh}^\theta(\g,e)_\nu^R$
of $\hat{\OCat}^\theta(\g,e)_\nu,\hat{\Wh}^\theta(\g,e)_\nu$  and can upgrade $\hat{\Kfun}$ to an equivalence of these categories.
We remark that in the completed setting the differential of an $R$-action still makes sense so the definition of an $R$-equivariant
object carries over to this setting.

\section{Generalized Soergel functor}\label{S_Vfun}
\subsection{Functor $\bullet_{\dagger,e}$}\label{SS_dagger_e}
\subsubsection{Category $\OCat^K_\nu$}\label{SSS_Cat_OK_def}
Recall that we have fixed an $\sl_2$-triple $e,h,f\in \g$.
 Next, pick a connected algebraic subgroup $K\subset G$  such that
\begin{itemize}
\item $\mathfrak{u}:=\k\cap V$ is a lagrangian subspace in $V:=[\g,f]$ (here $\mathfrak{k}$ denotes the Lie algebra of $K$),
\item $\chi$ vanishes on $\k$
\item and $h\in\k$.
\end{itemize}
Let $R$ denote a maximal reductive subgroup of
the intersection $Q\cap K$.

Let us provide an example of this situation  that is of most interest for us.

Let $\theta, \g=\bigoplus_{i\in \ZZ}\g_i,\p$ be as in \ref{SSS_setting}. Then we set
$K:=P$. Let us check  that our conditions are satisfied. Clearly,
$\chi$ vanishes on $\mathfrak{p}$ and so it remains to prove that $\mathfrak{u}$ is lagrangian.
First, it is easy to see that  $\omega_\chi$ vanishes on $\k$ and hence
on $\k\cap V$. Let us compute the dimension of $\k\cap V$.
We have $\k\cap V=(\g_0(\geqslant 0)\cap [\g_0,f])\oplus [\g_{>0},f]$.
The first summand is isomorphic to $[\g_0,f]/\g_0(<0)$ and so its dimension
is $\dim\g_0(<0)=\frac{1}{2}\dim [\g_0,f]$. The $\sl_2$-modules $\g_{>0}$
and $\g_{<0}$ are dual to each other and hence are isomorphic.
So $\dim [\g_{>0},f]=\frac{1}{2}(\dim [\g,f]-\dim [\g_0,f])$.
We get $\dim \k\cap V=\frac{1}{2}\dim[\g,f]=\frac{1}{2}\dim V$.


Two other examples of $K$ will be introduced in Section \ref{S_suppl}.

\subsubsection{Choice of $\iota: V\rightarrow \tilde{I}_\chi$}\label{SSS_iota_choice}
Now we are going to prove that there is a $\ZZ/2\ZZ\times R\times \K^\times$-equivariant embedding $\iota: V\hookrightarrow \tilde{I}_\chi$ with properties (2)-(3) from Subsection \ref{SSS_slice}  such that the image of $\mathfrak{u}$ lies in $\U^{\wedge_\chi}_\hbar\k$ (let us recall that here $\ZZ/2\ZZ\times \K^\times$ is a factor of $\widetilde{Q}$ and $R$ is viewed as a subgroup of $Q$, see Subsection \ref{SSS_slice}, the $\ZZ/2\ZZ\times R\times \K^\times$-actions are the restrictions of the $\widetilde{Q}$-actions).
Recall that we already have an embedding $V\hookrightarrow \tilde{I}_\chi$ that satisfies (1)-(3) from
Subsection \ref{SSS_slice} but we do not know yet the claim about the image of $\mathfrak{u}$.

Let us choose a basis in $\g_\chi:=\{x-\langle\chi,x\rangle, x\in \g\}$ as follows.
Let $x_1,\ldots,x_n$ be a basis  in $\mathfrak{u}$.
Let $y_1,\ldots,y_k\in V$ be such that $x_1,\ldots,x_n,y_1,\ldots,y_n$ is a Darboux basis, i.e, for the
symplectic form $\omega$ on $V$ (given by $\omega(u,v)=\langle\chi, [u,v]\rangle$) we have $\omega(x_i,x_j)=\omega(y_i,y_j)=0,
\omega(y_i,x_j)=\delta_{ij}$. Further, complete
$x_1,\ldots,x_n,y_1,\ldots,y_n$ to a basis in $\g$ with vectors $z_1,\ldots,z_m\in \z_\g(e)\cap \k, w_1,\ldots,w_{m'}\in \z_\g(e)$. We view $x_i, y_i, z_j, w_k$ as elements of $\g_\chi$ via the isomorphism $x\mapsto x-\langle \chi,x\rangle$.
Being the completion of $\K[\g_\chi]$ at the origin,  $\K[\g^*]^{\wedge_\chi}$ coincides with $\K[[x_1,\ldots,x_n,y_1,\ldots,y_n,z_1,\ldots,z_m,w_1,\ldots,w_{m'}]]$ as an algebra and
$\U^{\wedge_\chi}_\hbar$ coincides with $\K[[x_1,\ldots,x_n,y_1,\ldots,y_n,z_1,\ldots,z_m,w_1,\ldots,w_{m'},\hbar]]$ as a vector space.

\begin{Lem} There are elements $\tilde{x}_i, \tilde{y}_i, \tilde{z}_j, \tilde{w}_k\in \U^{\wedge_\chi}_\hbar$
with $i=1,\ldots,n, j=1,\ldots,m, k= 1,\ldots,m'$ such that
\begin{itemize}
\item[(i)] $\tilde{x}_i-x_i,\tilde{y}_i-y_i, \tilde{z}_j-z_j, \tilde{w}_k-w_k\in \tilde{I}_\chi^2$,
\item[(ii)] $[\tilde{x}_i,\tilde{x}_{i'}]=[\tilde{y}_i,\tilde{y}_{i'}]=0, [\tilde{x}_i,\tilde{y}_{i'}]=\delta_{ii'}\hbar^2$,
and the $\tilde{x}$'s and $\tilde{y}$'s commute with $\tilde{z}$'s and $\tilde{w}$'s.
\item[(iii)] $\tilde{x}_i, \tilde{z}_j\in \U^{\wedge_\chi}_\hbar\k$.
\item[(iv)] The map $\iota: V\rightarrow \tilde{I}_\chi$ given by $x_i\mapsto \tilde{x}_i,y_i\mapsto \tilde{y}_i$ is $\ZZ/2\ZZ\times R\times\K^\times$-equivariant.
\end{itemize}
\end{Lem}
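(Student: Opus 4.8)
The plan is to construct $\tilde{x}_i,\tilde{y}_i,\tilde{z}_j,\tilde{w}_k$ by successive approximation in the $\tilde{I}_\chi$-adic topology, refining the standard construction of $\iota$ recalled in Subsection~\ref{SSS_slice}. First I would invoke that construction to get a $\ZZ/2\ZZ\times R\times\K^\times$-equivariant embedding $\iota_0\colon V\hookrightarrow\tilde{I}_\chi$ with properties (2) and (3) of Subsection~\ref{SSS_slice}; evaluated on the Darboux basis it produces elements $\hat{x}_i,\hat{y}_i$ satisfying (i) and (iv) together with all the bracket relations in (ii) that only involve the $\tilde{x}$'s and $\tilde{y}$'s. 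Extending $\iota_0$ to the Weyl-algebra factor $\W_\hbar^{\wedge_0}$ and lifting coordinate functions on the Slodowy-slice factor $\Walg'_\hbar$ with prescribed linear parts $z_j,w_k$ --- possible because the cotangent directions of $\Walg'_\hbar/(\hbar)=\K[S]^{\wedge_\chi}$ are canonically identified with $\z_\g(e)$, and the lifts can be taken $R$-equivariant and $\K^\times$-semiinvariant by linear reductivity of $R$ and $\K^\times$ --- yields $\hat{z}_j,\hat{w}_k\in\Walg'_\hbar$ commuting with all the $\hat{x}_i,\hat{y}_i$, so that (i) and (ii) hold. What is still missing is (iii): at this stage $\hat{x}_i,\hat{z}_j$ lie in $\U^{\wedge_\chi}_\hbar\k$ only modulo $\tilde{I}_\chi^2$, simply because $x_i\in\mathfrak{u}\subseteq\k$ and $z_j\in\z_\g(e)\cap\k\subseteq\k$ (here $\chi|_\k=0$ is used to place $\k$ inside $\tilde{I}_\chi$).

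The core of the proof is an induction propagating (iii) to all orders without spoiling (i), (ii), (iv). Suppose one has $\ZZ/2\ZZ\times R\times\K^\times$-equivariant elements satisfying (i) and (iv), satisfying (iii) exactly, and satisfying the relations of (ii) modulo $\tilde{I}_\chi^{N+1}$. The errors $[\tilde{x}_i,\tilde{y}_{i'}]-\delta_{ii'}\hbar^2$, $[\tilde{x}_i,\tilde{x}_{i'}]$, $[\tilde{x}_i,\tilde{z}_j]$, etc., lie in $\tilde{I}_\chi^{N+1}$; passing to $\gr$ in degree $N+1$ they become homogeneous functions on $\g^*$ in the variables $\hbar,x_\bullet,y_\bullet,z_\bullet,w_\bullet$, subject to a cocycle-type relation coming from the Jacobi identity. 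I would then look for corrections replacing $\tilde{x}_i$ by $\tilde{x}_i+\delta x_i$ and $\tilde{z}_j$ by $\tilde{z}_j+\delta z_j$ with $\delta x_i,\delta z_j\in\U^{\wedge_\chi}_\hbar\k\cap\tilde{I}_\chi^{N}$, and replacing $\tilde{y}_i,\tilde{w}_k$ by $\tilde{y}_i+\delta y_i,\tilde{w}_k+\delta w_k$ with arbitrary $\delta y_i,\delta w_k\in\tilde{I}_\chi^{N}$, so as to improve the relations of (ii) to hold modulo $\tilde{I}_\chi^{N+2}$ while keeping (i) and (iii). On the level of symbols this is a linear system; solving it with $\delta x_i,\delta z_j$ constrained to the symbol ideal generated by $\k$ is where the hypotheses on $K$ enter --- Lagrangianness of $\mathfrak{u}=\k\cap V$ in $V$ keeps the polarization spanned by the $\tilde{x}_i$ inside that ideal, while $\chi|_\k=0$ allows the transverse (``$z$'') corrections to be taken there as well. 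Having fixed the symbols, I would lift $\delta x_i,\delta z_j$ to genuine elements of $\U^{\wedge_\chi}_\hbar\k$ and $\delta y_i,\delta w_k$ to genuine elements of $\tilde{I}_\chi^{N}$, and then average the correction over the group $\ZZ/2\ZZ\times R\times\K^\times$, which is linearly reductive (a product of the finite group $\ZZ/2\ZZ$, the torus $\K^\times$, and the reductive group $R$), to restore equivariance. This averaging preserves membership in $\U^{\wedge_\chi}_\hbar\k$ because that left ideal is stable under the group: $\ZZ/2\ZZ$ acts trivially on $\g$, $R\subseteq Q\cap K$ normalizes $\k$, and the Kazhdan $\K^\times$ preserves $\k$ since $h\in\k$. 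As the corrections lie in strictly increasing $\tilde{I}_\chi$-adic order, the resulting sequences converge in $\U^{\wedge_\chi}_\hbar$ to elements satisfying (i)--(iv), and setting $\iota(x_i)=\tilde{x}_i$, $\iota(y_i)=\tilde{y}_i$ gives the embedding required in Subsection~\ref{SSS_iota_choice}, with $\iota(\mathfrak{u})=\langle\tilde{x}_1,\ldots,\tilde{x}_n\rangle\subseteq\U^{\wedge_\chi}_\hbar\k$.

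The step I expect to be the main obstacle is the solvability of the symbol-level linear system \emph{under the constraint} that the $\tilde{x}$- and $\tilde{z}$-corrections stay in the ideal generated by $\k$ --- equivalently, that the order-$N$ obstruction cocycle is a coboundary inside the subcomplex adapted to $\k$. This is the formal, quantized analogue of the statement that the coisotropic reduction of $\g^*$ along $\k$ near $\chi$ is smooth and symplectic, and it is exactly the point at which all three hypotheses on $K$ ($\mathfrak{u}$ Lagrangian in $V$, $\chi|_\k=0$, $h\in\k$) must be used together. Once the Darboux basis is chosen as in the statement, I expect this to reduce to a direct, if somewhat tedious, exercise in symplectic linear algebra; the remaining ingredients --- convergence, lifting symbols to elements of $\U^{\wedge_\chi}_\hbar\k$, and the equivariant averaging --- are routine.
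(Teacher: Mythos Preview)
Your inductive framework---successive approximation in the $\tilde{I}_\chi$-adic topology, maintaining (iii) exactly at every stage while improving (ii) order by order, then averaging over $\ZZ/2\ZZ\times R\times\K^\times$ to restore (iv)---matches the paper's. You also correctly identify the crux: making the corrections to the $\tilde{x}$'s and $\tilde{z}$'s land inside $\U^{\wedge_\chi}_\hbar\k$.

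Where you diverge is in how to handle that crux, and this is where your proposal has a gap. You phrase the problem as solving a symbol-level linear system subject to an ideal constraint, invoke a cocycle/coboundary picture, and then defer the actual argument to ``symplectic linear algebra''. The paper does not use any such abstract mechanism; it gives explicit formulas, and the reason the formulas work is a fact you never invoke: since $\k$ is a Lie subalgebra, one has $\frac{1}{\hbar^2}[\U^{\wedge_\chi}_\hbar\k,\U^{\wedge_\chi}_\hbar\k]\subset\U^{\wedge_\chi}_\hbar\k$. Concretely, the paper proceeds one Darboux pair at a time. First it corrects $\tilde{x}_1$ by an ``integration'' trick: writing $a:=\frac{1}{\hbar^2}[\tilde{y}_1,\tilde{x}_1]-1=\sum_i f_i\,(\tilde{x}_1)^i$, it replaces $\tilde{x}_1$ by $\tilde{x}_1-\sum_i\frac{1}{i+1}f_i\,(\tilde{x}_1)^{i+1}$, which visibly remains in $\U^{\wedge_\chi}_\hbar\k$ because each term ends in a positive power of $\tilde{x}_1\in\k$. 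Iterating gives $[\tilde{y}_1,\tilde{x}_1]=\hbar^2$ exactly, so a genuine Weyl factor splits off. Then, to force another element $b\in\U^{\wedge_\chi}_\hbar\k$ (one of the $\tilde{x}_{i'}$ or $\tilde{z}_j$) to commute with $\tilde{x}_1$, the paper expands $b=\sum_i(\tilde{y}_1)^i b_i$ with $b_i$ in the centralizer of $\tilde{x}_1,\tilde{y}_1$, observes $b-b_0\in\tilde{I}_\chi^\ell$, and takes $b_0$ as the new lift. The point is that the operator $E:=-\frac{1}{\hbar^2}\tilde{y}_1[\tilde{x}_1,\cdot]$ preserves $\U^{\wedge_\chi}_\hbar\k$ (by the subalgebra property above) and acts on $\sum_i(\tilde{y}_1)^i b_i$ by $\sum_i i(\tilde{y}_1)^i b_i$; together with the $\tilde{I}_\chi$-adic closedness of $\U^{\wedge_\chi}_\hbar\k$ (cited from \cite[Lemma~2.4.4]{HC}), this forces each $(\tilde{y}_1)^i b_i$, and in particular $b_0$, to lie in $\U^{\wedge_\chi}_\hbar\k$.

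So your outline is sound, but the step you flagged as the main obstacle is not resolved by your proposed mechanism; the missing ingredient is the subalgebra property of $\k$ and the explicit operator argument it enables. The hypotheses you list (Lagrangianness of $\mathfrak{u}$, $\chi|_\k=0$, $h\in\k$) set up the induction correctly, but it is $[\k,\k]\subset\k$ that closes it.
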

\begin{proof}
We will construct such elements order by
order. More precisely, set $\tilde{x}_i^{(1)}:=x_i, \tilde{y}_i^{(1)}:=y_i$ etc.  For each positive integer $\ell>1$, we will produce elements $\tilde{x}^{(\ell)}_i, \tilde{y}^{(\ell)}_i, \tilde{z}^{(\ell)}_j, \tilde{w}^{(\ell)}_k$ with the following properties:
\begin{itemize}
\item[(i$^{(\ell)}$)] $\tilde{x}_i^{(\ell)}-\tilde{x}_i^{(\ell-1)}, \tilde{y}_i^{(\ell)}-\tilde{y}_i^{(\ell-1)}, \tilde{z}_j^{(\ell)}-\tilde{z}_j^{(\ell-1)},
\tilde{w}_k^{(\ell)}-\tilde{w}_k^{(\ell-1)}\in \tilde{I}_\chi^{\ell}$.
\item[(ii$^{(\ell)}$)] The commutation relations for the elements $\tilde{x}_i,\tilde{y}_i, \tilde{z}_j,\tilde{w}_k$ specified above
hold for the elements $\tilde{x}_i^{(\ell)},\tilde{y}_i^{(\ell)}, \tilde{z}_j^{(\ell)},\tilde{w}_k^{(\ell)}$ modulo $\tilde{I}_\chi^\ell$.
\item[(iii$^{(\ell)}$)] $\tilde{x}^{(\ell)}_i, \tilde{z}^{(\ell)}_j\in \U^{\wedge_\chi}_\hbar \k$.
\item[(iv$^{(\ell)}$)] The map $\iota^{(\ell)}: V\rightarrow \tilde{I}_\chi$ given by $x_i\mapsto \tilde{x}_i^{(\ell)},y_i\mapsto \tilde{y}_i^{(\ell)}$ is $\ZZ/2\ZZ\times R\times\K^\times$-equivariant.
\end{itemize}

We remark, modulo (i$^{(\ell-1)}$)-(iv$^{(\ell-1)}$), the conditions (i$^{(\ell)}$)-(iii$^{(\ell)}$)
\begin{itemize}
\item are preserved by averaging over $\ZZ/2\ZZ\times R\times \K^\times$,
\item are preserved by adding summands from $\tilde{I}_\chi^{\ell+1}$ (to $\tilde{y}_i^{(\ell)},\tilde{w}^{(\ell)}_k$)
or summands from $\U_\hbar^{\wedge_\chi}\k\cap \tilde{I}_\chi^{\ell+1}$ (to $\tilde{x}_i^{(\ell)},\tilde{z}^{(\ell)}_j$),
\item and cut an affine subspace in
$$\left(\U_\hbar^{\wedge_\chi}/\tilde{I}_\chi^{\ell+1}\right)^{\oplus n+m'}\oplus \left(\U^{\wedge_\chi}_\hbar\k/[\tilde{I}_\chi^{\ell+1}\cap \U^{\wedge_\chi}_\hbar\k]\right)^{\oplus n+m}.$$
\end{itemize}
and so we can automatically assume that (iv$^{(\ell)}$) holds as well.

Set $y_1^{(\ell)}:=y_1^{(\ell-1)}$.
Construct an element $\tilde{x}^{(\ell)}_1$ as follows. Set $a:=\frac{1}{\hbar^2}[y_1^{(\ell)},x^{(\ell-1)}_1]-1$
so that $a\in \tilde{I}_\chi^{(\ell-1)}$. Expand $a$ in the form
$a=\sum_{i=0}^\infty f_i\cdot (\tilde{x}_1^{(\ell-1)})^i$, where each $f_i$ is a series in $\tilde{x}^{(\ell-1)}_2,\ldots,\tilde{x}^{(\ell-1)}_n,\tilde{y}^{(\ell-1)}_1,\ldots,
\tilde{y}^{(\ell-1)}_n,\tilde{z}^{(\ell-1)}_1,\ldots,\tilde{z}^{(\ell-1)}_m, \tilde{w}^{(\ell-1)}_1,\ldots,\tilde{w}^{(\ell-1)}_{m'},\hbar$
(with the variables written in this order, although this does not really matter). For $i=0,\ldots,\ell-1$
we have $f_i\in \tilde{I}_\chi^{\ell-i-1}$.
Set $\tilde{x}_1^{(\ell)}:=\tilde{x}^{(\ell-1)}_1-\int a \,d\tilde{x}^{(\ell-1)}_1$, where $\int a \,d\tilde{x}^{(\ell-1)}_1:=\sum_{i=0}^\infty \frac{1}{i+1}f_i\cdot (\tilde{x}_1^{(\ell-1)})^{i+1}$.
We remark that $\int a \,d\tilde{x}^{(\ell-1)}_1\in \tilde{I}_\chi^\ell\cap \U_\hbar^{\wedge_\chi}\k$.
We have $$\frac{1}{\hbar^2}[\tilde{y}_1^{(\ell)},\tilde{x}_1^{(\ell)}]=
1+a-\sum_{i=0}^\infty \frac{1}{(i+1)\hbar^2}[\tilde{y}^{(\ell)}_1,f_i](\tilde{x}_1^{(\ell-1)})^{i+1}-
\sum_{i=0}^\infty f_i \frac{1}{(i+1)\hbar^2}[\tilde{y}^{(\ell)}_1,(x_1^{(\ell-1)})^{i+1}]$$
We have $\frac{1}{\hbar^2}[\tilde{y}^{(\ell)}_1,f_i]\in \tilde{I}_\chi^{\ell-1-i}$ because of the form
of $f_i$. So the third summand of the right hand side is in $\tilde{I}_\chi^{\ell}$. On the other hand, $\frac{1}{(i+1)\hbar^2}[\tilde{y}^{(\ell)}_1,(x_1^{(\ell-1)})^{i+1}]$ is congruent to $(\tilde{x}_1^{(\ell-1)})^i$
modulo $\tilde{I}_\chi^\ell$.
From here we deduce  that $\frac{1}{\hbar^2}[\tilde{y}_1^{(\ell)},\tilde{x}_1^{(\ell)}]-1\in \tilde{I}_\chi^\ell$.

We can actually continue the above procedure and get $\frac{1}{\hbar^2}[\tilde{y}_1^{(\ell)}, \tilde{x}_1^{(\ell)}]=1$
(and $x_1^{(\ell)}$ is still in $\U_\hbar^{\wedge_\chi}\k$). We make this choice of $\tilde{x}_1^{(\ell)}$ but this
is not our final choice for $\tilde{x}_1$ because we need to guarantee the equivariance.
We remark that $\K[[\tilde{y}_1^{(\ell)},\tilde{x}^{(\ell)}_1,\hbar]]\widehat{\otimes}_{\K[[\hbar]]}
Z=\U^{\wedge_\chi}_\hbar$, where $Z$ stands for the centralizer of $\tilde{y}^{(\ell)}_1,\tilde{x}^{(\ell)}_1$ in
$\U^{\wedge_\chi}_\hbar$, analogously to the proof of   \cite[Proposition 3.3.1]{HC}.

We can similarly correct other basis elements $b=\tilde{x}_i^{(\ell-1)}, \tilde{y}_i^{(\ell-1)}, i>1,
\tilde{z}_j^{(\ell-1)},\tilde{w}_k^{(\ell-1)}$ by elements from $\tilde{I}_\chi^{\ell}$ so that (i$^{(\ell-1)}$)-(iii$^{(\ell-1)}$) still hold and the bracket of $b$ with $\tilde{y}_1^{(\ell)}$ vanishes.

Our goal now is to show that we can modify the elements $b$ (again by adding  elements from
$\tilde{I}_\chi^\ell$) so that (ii)$^{(\ell)}$ holds for the brackets with $\tilde{y}_1^{(\ell)}, \tilde{x}_1^{(\ell)}$,
(i)$^{(\ell)}$ holds for $b$, and (iii)$^{(\ell)}$ holds if $b=\tilde{x}_i^{(\ell-1)},\tilde{z}_j^{(\ell-1)}$.
If $b=\tilde{y}_i^{(\ell-1)},\tilde{w}_i^{(\ell-1)}$, then $b$ can be corrected
similarly to $\tilde{x}_1^{(\ell-1)}$ above. Now let us consider the case when $b=\tilde{x}_i^{(\ell-1)},\tilde{z}_i^{(\ell-1)}$: a priori it is unclear why the correction of $b$ commuting with $\tilde{x}_1^{(\ell)}$ produced by a construction similar to the above still lies in $\U_\hbar^{\wedge_\chi}\k$.

Since $[\tilde{y}_1^{(\ell)},b]=0$ we have $b=\sum_{i=0}^\infty (\tilde{y}_1^{(\ell)})^i b_i$
with $b_i\in Z$. We also know that $\frac{1}{\hbar^2}[\tilde{x}_1^{(\ell)}, b]\in \tilde{I}_\chi^{\ell-1}$.
This means that $b_i\in \tilde{I}_\chi^{\ell-i}$ for $i=1,\ldots,\ell-1$. In particular, $b-b_0\in \tilde{I}_\chi^{\ell}$.
We can take $b_0$ for a lift of $b$ if we know that $b_0\in \U_\hbar^{\wedge_\chi}\k$. Let us check that
$(\tilde{y}_1^{(\ell)})^i b_i\in \U_\hbar^{\wedge_\chi}\k$ for all $i$.
First of all, let us point out that
$\frac{1}{\hbar^2}[\U_\hbar^{\wedge_\chi}\k, \U_\hbar^{\wedge_\chi}\k]\subset \U_\hbar^{\wedge_\chi}\k$, which follows
from the observation that $\k$ is a subalgebra of $\g$. Consider the operator $E:=-\frac{1}{\hbar^2}\tilde{y}^{(\ell)}_1[\tilde{x}^{(\ell)}_1,\cdot]$.
It preserves the centralizer of $\tilde{y}^{(\ell)}_1$ and also $\U^{\wedge_\chi}_\hbar\k$
and sends an element of the form $\sum_{i=0}^\infty (\tilde{y}_1^{(\ell)})^i b_i$ to $\sum_{i=0}^\infty i (\tilde{y}_1^{(\ell)})^i b_i$.
Also $\U^{\wedge_\chi}_\hbar\k$ is closed in the $\tilde{I}_\chi$-adic topology,
see \cite[Lemma 2.4.4]{HC} for a more general result. Since all elements $E^i b$ are in $\U^{\wedge_\chi}_\hbar \k$,
we see that  indeed $(\tilde{y}_1^{(\ell)})^i b_i\in \U^{\wedge_\chi}_\hbar\k$ for any $i$.

In this way we achieve that (i)$^{(\ell-1)}$-(iii)$^{(\ell-1)}$ are still satisfied and all basis elements
commute with $\tilde{y}_1^{(\ell)},\tilde{x}_1^{(\ell)}$. We can proceed in the same way with modified $\tilde{y}_2^{(\ell-1)},x_2^{(\ell-1)}$, etc. We achieve that (i)$^{(\ell)}$-(iii)$^{(\ell)}$ are satisfied
and then average the map $\iota^{(\ell)}$ with respect to $\ZZ/2\ZZ\times R\times \K^\times$.
\end{proof}

Also if $\k$ is $\tau$-stable,
then, using the same argument as before, we see that $\iota$ can be made, in addition, $\tau$-stable.
One can also show that, although $\k=\p$ is not $\tau$-stable, one can choose $\tau$-stable $\iota$,
but we will not need that result.


In general, it seems that we cannot make $\iota$ both $Q$-equivariant (as in Subsection \ref{SSS_slice}) and mapping $\mathfrak{u}$
to $\U_\hbar^{\wedge_\chi}\mathfrak{k}$. Having such a property is desirable: this would imply that the functor
$\bullet_{\dagger}$ from Subsection \ref{SS_dagger_functor} defined using $\iota$ maps
HC $\U$-bimodules to $Q$-equivariant bimodules (a priori, we only get $R$-equivariance).
However, it turns out that the bimodules in the image of $\bullet_\dagger$ defined using $\iota$
have a $Q$-equivariant structure for our present choice of $\iota$ as well. Let us explain why this is the case.
We still have an action of $\widetilde{Q}$ on $\Walg_\hbar^{\wedge_\chi}$ that restricts to the
$\ZZ/2\ZZ\times R\times \K^\times$-action coming from the splitting constructed from our present
choice of $\iota$. This is because any two $\ZZ/2\ZZ\times R\times \K^\times$-equivariant $\iota$'s
differ by an automorphism of the form $\exp(\frac{1}{\hbar^2}\ad(a))$, where $\frac{1}{\hbar^2}a$ is a $\ZZ/2\ZZ\times R\times \K^\times$-invariant element and $a\in \tilde{I}_\chi^3$ as in Subsection \ref{SSS_slice}. The target category for $\bullet_{\dagger}$ is still $\HC^Q(\Walg)$ because the transformation
$\exp(\frac{1}{\hbar^2}\ad(a))$ acts on $\M_\hbar^{\wedge_\chi}$ for any HC bimodule $\M$.

\subsubsection{Construction of $\bullet_{\dagger,e}$}\label{SSS_dag_constr}
Pick  a module $\M\in \OCat^K_\nu$. We can choose a $K$-stable increasing exhaustive filtration
$\Fi_0 \M\subset \Fi_1 \M\subset\ldots$ on $\M$ such that this filtration is compatible with the filtration
on $\U$ and $\gr \M$ is a finitely generated $S(\g)$-module. In fact, since the filtration is $K$-stable,
$\gr \M$ is a $S(\g/\k)=\K[\k^\perp]$-module. Consider the Rees $\U_\hbar$-module
$\M_\hbar:=\bigoplus_{i=0}^\infty \Fi_i \M\hbar^i$.

The space $\K[[\mathfrak{u},\hbar]]$ has a natural structure of a $\W_\hbar^{\wedge_0}$-module,
 $\K[[\mathfrak{u},\hbar]]=\W^{\wedge_0}_\hbar/\W^{\wedge_0}_\hbar \mathfrak{u}$ (an element $u\in \mathfrak{u}$ acts by $\hbar^2\partial_u$).

\begin{Lem}\label{Lem:N_decomp}
Let $\M'_\hbar$ be the annihilator of $\mathfrak{u}$ in $\M_\hbar^{\wedge_\chi}$.
The natural homomorphism $$\K[[\mathfrak{u},\hbar]]\widehat{\otimes}_{\K[[\hbar]]}\M'_\hbar\rightarrow
\M^{\wedge_\chi}_\hbar$$ is bijective.
\end{Lem}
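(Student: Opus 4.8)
The statement is a ``relative'' version of the slice decomposition (\ref{eq:slice_decomp}): instead of decomposing the algebra $\U_\hbar^{\wedge_\chi}$ along the full Weyl algebra $\W_\hbar^{\wedge_0}$, one decomposes a module $\M_\hbar^{\wedge_\chi}$ along the ``half'' $\K[[\mathfrak{u},\hbar]]=\W_\hbar^{\wedge_0}/\W_\hbar^{\wedge_0}\mathfrak{u}$, i.e.\ one extracts the free part of $\M_\hbar^{\wedge_\chi}$ over the polynomial subalgebra generated by $\iota(\mathfrak{u})$. The plan is to reduce this to a one-variable statement of exactly the type already recorded in Lemma~\ref{Lem:compl_mod}, applied one Darboux pair at a time, and then take completed tensor products. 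Throughout I use the basis $\tilde x_i,\tilde y_i,\tilde z_j,\tilde w_k$ from the Lemma in Subsection~\ref{SSS_iota_choice}, so that $\iota(x_i)=\tilde x_i$ act on $\M_\hbar^{\wedge_\chi}$ as commuting operators with $[\tilde y_i,\tilde x_i]=\hbar^2$, and $\U_\hbar^{\wedge_\chi}=\K[[\tilde x_1,\dots,\tilde x_n,\tilde y_1,\dots,\tilde y_n,\hbar]]\,\widehat\otimes_{\K[[\hbar]]}Z$ with $Z$ the centralizer of all the $\tilde x_i,\tilde y_i$ (this is the statement proved, order by order, at the end of the proof of that Lemma).

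First I would set up the one-variable reduction. Fix $i$ and consider the operators $p:=\hbar^{-2}[\tilde y_i,\,\cdot\,]$ and the multiplication $q:=\tilde x_i$ acting on $\M_\hbar^{\wedge_\chi}$; they satisfy $[p,q]=1$. Since $\M_\hbar^{\wedge_\chi}$ is complete and separated in the $\tilde I_\chi$-adic topology and $\tilde x_i\in\tilde I_\chi$, it is in particular complete and separated in the $(\tilde x_i)$-adic (``$q$-adic'') topology; moreover multiplication by $\hbar^{-2}$ makes sense here only in the graded-by-$\hbar$ sense, but the commutator $\hbar^{-2}[\tilde y_i,\cdot]$ is a well-defined operator because $[\tilde y_i,\tilde I_\chi^{\ell+2}]\subset\hbar^2\tilde I_\chi^{\ell}$ — exactly as in the algebra case. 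So the hypotheses of Lemma~\ref{Lem:compl_mod} (with $U$ one-dimensional, $D(U)=\K\langle p,q\rangle$) are met, and I obtain $\M_\hbar^{\wedge_\chi}=\K[[\tilde x_i]]\,\widehat\otimes\,\ker(p|_{\M_\hbar^{\wedge_\chi}})$, with $\ker(p)=$ the annihilator of $\tilde y_i$. This is the wrong half — it splits off $\tilde x_i$ as a free variable while killing $\tilde y_i$ — so instead I run the symmetric version: use $p':=\hbar^{-2}[\tilde x_i,\,\cdot\,]$ and $q':=\tilde y_i$ (now $[p',q']=-1$), extracting $\M_\hbar^{\wedge_\chi}=\K[[\tilde y_i]]\,\widehat\otimes\,\ker(p'|)$, where $\ker(p')$ is the annihilator of $\tilde x_i$. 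Iterating over $i=1,\dots,n$ (each step legal because the operators for distinct indices commute and preserve the relevant annihilators, just as in the proof of Lemma~\ref{Lem:compl_mod} where one inducts on $\dim U$), I get
\[
\M_\hbar^{\wedge_\chi}=\K[[\tilde y_1,\dots,\tilde y_n,\hbar]]\,\widehat\otimes_{\K[[\hbar]]}\,\M''_\hbar,
\]
where $\M''_\hbar$ is the joint annihilator of $\tilde x_1,\dots,\tilde x_n$, i.e.\ of $\iota(\mathfrak{u})$. Identifying $\K[[\tilde y_1,\dots,\tilde y_n,\hbar]]$ with $\K[[\mathfrak{u},\hbar]]$ via $\tilde y_i\leftrightarrow$ the linear coordinate dual to $x_i$ (under the $\W_\hbar^{\wedge_0}$-action on $\K[[\mathfrak u,\hbar]]$ the $\tilde y_i$ act by the coordinate functions, the $\tilde x_i$ by $\hbar^2\partial$), this is precisely the asserted isomorphism $\K[[\mathfrak{u},\hbar]]\widehat\otimes_{\K[[\hbar]]}\M'_\hbar\xrightarrow{\sim}\M_\hbar^{\wedge_\chi}$ with $\M'_\hbar=\M''_\hbar$.

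Two points need a little care and would be the main technical content. First, one must check that the one-variable splitting Lemma~\ref{Lem:compl_mod} genuinely applies to a module that is only $\tilde I_\chi$-adically (not $(\tilde x_i)$-adically, a priori) complete and separated, and that the ``$r$''-type element $\sum \hbar^{-2i}\frac{(-1)^i}{i!}q^i p^i$ converges in the appropriate operator completion — here the presence of $\hbar$ and the filtration $\tilde I_\chi$ replace the grading used in \cite[Lemma 5.14]{W_classif}; this is essentially the same bookkeeping that appears in the order-by-order construction in Subsection~\ref{SSS_iota_choice} and in the proof of (\ref{eq:slice_decomp}) in \cite[Proposition~3.3.1]{HC}, so I would cite those and indicate the adaptation. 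Second, one must verify that the iteration is independent of the auxiliary completion of $\U_\hbar^{\wedge_\chi}$ along $Z$, i.e.\ that after splitting off the first pair the remaining module is complete/separated in the topology induced from $Z$ so the next step is legitimate — again this mirrors the inductive step in Lemma~\ref{Lem:compl_mod} and the analogous argument for the algebra. I expect the main obstacle to be precisely making this ``$\hbar$-adic plus $\tilde I_\chi$-adic'' convergence rigorous uniformly, rather than any conceptual difficulty: once the one-variable statement is in hand in the correct topological setting, the rest is a finite induction and a reindexing. Finally, the map in the statement is the obvious $\W_\hbar^{\wedge_0}$-linear, $\M'_\hbar$-linear extension of the inclusion $\M'_\hbar\hookrightarrow\M_\hbar^{\wedge_\chi}$, and the construction above shows it is bijective, which is all that is claimed.
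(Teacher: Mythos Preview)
Your proposal has a genuine conceptual gap. The operators $p:=\hbar^{-2}[\tilde y_i,\cdot]$ and $p':=\hbar^{-2}[\tilde x_i,\cdot]$ that you try to use are \emph{adjoint} (commutator) operators; these make sense on the algebra $\U_\hbar^{\wedge_\chi}$ or on a bimodule, but $\M_\hbar^{\wedge_\chi}$ is only a left module, so $[\tilde y_i,m]$ is simply undefined. Your justification ``$[\tilde y_i,\tilde I_\chi^{\ell+2}]\subset\hbar^2\tilde I_\chi^{\ell}$'' is a statement about the algebra and does not transfer. Relatedly, when you write that $\ker(p')$ is ``the annihilator of $\tilde x_i$'', you are conflating the commutant (elements commuting with $\tilde x_i$) with the annihilator (elements killed by left multiplication by $\tilde x_i$); for the lemma you need the latter.

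What is actually needed, and what you never invoke, is the key input coming from the choice of a $K$-stable good filtration on $\M$: since $\k\cdot\F_i\M\subset\F_i\M$, one has $\k\M_\hbar\subset\hbar^2\M_\hbar$, and because $\iota(\mathfrak u)\subset\U_\hbar^{\wedge_\chi}\k$ (this is the whole point of the construction in \ref{SSS_iota_choice}), it follows that $\tilde x_i\,\M_\hbar^{\wedge_\chi}\subset\hbar^2\M_\hbar^{\wedge_\chi}$. This is what lets you form the honest \emph{multiplication} operator $\hbar^{-2}\tilde x_i$ on the module. With $p:=\hbar^{-2}\tilde x_i$ and $q:=\tilde y_i$ (both acting by left multiplication) one has $[p,q]=1$, and now the $r$-trick from Lemma~\ref{Lem:compl_mod} goes through: the projector $\rho_i=\sum_{j\ge 0}\frac{(-1)^j}{j!}\tilde y_i^j(\hbar^{-2}\tilde x_i)^j$ converges on $\M_\hbar^{\wedge_\chi}$ and lands in the annihilator of $\tilde x_i$, giving the decomposition $v=\sum_j \tilde y_i^j v_j$ with $\tilde x_i v_j=0$. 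Iterating over $i$ then yields surjectivity, and injectivity follows as in the bimodule case. This is exactly the paper's argument; your outline becomes correct once you replace the (undefined) adjoint operators by multiplication operators and supply the missing fact $\mathfrak u\,\M_\hbar^{\wedge_\chi}\subset\hbar^2\M_\hbar^{\wedge_\chi}$.
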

\begin{proof} The proof is similar to that of Lemma \ref{Lem:compl_mod} and to other related statements
such as \cite[Proposition 3.3.1]{HC} but we provide a proof for reader's convenience.

The kernel of the homomorphism is an $\hbar$-saturated (meaning that the quotient is $\K[[\hbar]]$-flat)
$\W_\hbar^{\wedge_0}$-submodule in $\K[[\mathfrak{u},\hbar]]\widehat{\otimes}_{\K[[\hbar]]}\M'_\hbar$. Any such submodule can be shown
to intersect $\M'_\hbar$ by an argument similar to \cite[Lemma 3.4.3]{wquant}. It follows that the kernel is zero. It remains to
prove that the homomorphism is surjective.

Recall that, thanks to the choice of a filtration on $\M$, we have
$\k \M_\hbar\subset \hbar^2\M_\hbar$. It follows that  $\mathfrak{u}\M_\hbar^{\wedge_\chi}\subset
\hbar^2 \M_\hbar^{\wedge_\chi}$.

Choose a basis $x_1,\ldots,x_m$ in $\mathfrak{u}$ and vectors $y_1,\ldots,y_m\in V$ such that
$y_1,\ldots,y_m, x_1,\ldots,x_m$ form a Darboux basis of $V$.
For each $i=1,\ldots,m$ and any $v\in \M^{\wedge_\chi}_\hbar$
the sum $\rho_i(v):=\sum_{j=0}^{+\infty} \frac{(-1)^j}{j}y_i^j \frac{1}{\hbar^{2j}}x_i^j v$ converges.
Moreover, this sum is annihilated by $x_i$. Also, by the construction, $v-v_0=y_i v'$, where $v_0:=\rho_i(v)$,
for some $v'\in \M^{\wedge_\chi}_\hbar$.  We can repeat the same argument with $v'$ and get a decomposition
$v-v_0-y_i v_1=y_i^2 v_0''$. Repeating this procedure we represent $v$ as the infinite sum
$\sum_{j=0}^{+\infty} y_i^j v_j$ with $x_i v_j=0$ for all $j$. The element $v_j$ in this expression
 has to be given by $v_j=\rho_i(\frac{1}{j!\hbar^{2j}}x_i^j v)$.
The operator $\rho_i$ commutes with $x_{i'},y_{i'}, \rho_{i'}$ for $i'\neq i$. So we can first decompose
$v$ into the sum  $\sum_{j=0}^{+\infty} y_1^j v_j$, then decompose each $v_j$ into the sum
$\sum_{k=0}^{+\infty} y_2^k v_{jk}$ as above. But now each $v_{jk}$ is annihilated by both
$x_1,x_2$. Proceeding in this way, we get a  decomposition of $v$ as of an element in $\K[[\mathfrak{u},\hbar]]\widehat{\otimes}_{\K[[\hbar]]}\M'_\hbar$.
\end{proof}

With this lemma, we can construct the functor $\bullet_{\dagger,e}$ completely analogously to $\bullet_\dagger$.
Namely, let $\Nil_\hbar$ denote the subspace of $\K^\times$-stable elements in $\M_\hbar'$. Again, similarly to
\cite[Proposition 3.3.1]{HC}, using the fact that $\Walg_\hbar$ is positively graded, one shows that $\Nil_\hbar^{\wedge_\chi}=\M'_\hbar$. Then we set $\M_{\dagger,e}:=\Nil_\hbar/(\hbar-1)\Nil_\hbar$. This is a finitely generated $\Walg$-module that is (canonically) independent
of the choice of a good filtration on $\M$ (as in the proof of a similar claim for
$\bullet_\dagger$ in \cite[3.4]{HC}).

The group $R$ naturally acts on $M_{\dagger,e}$ and this action is rational. However, the differential of the $R$-action coincides with
the action of $\mathfrak{r}\subset \Walg$ only up to a shift by $\nu+\delta_K$, where $\delta_K$
is half the character of $K$ on $\bigwedge^{top}\mathfrak{u}^*$, as in the end of \ref{SSS_compl_right_hand},
for the reason similar to that situation. In particular, when $K=P$, we have $\delta_P=\tau(\delta)$.
This is because $\mathfrak{u}$ and $\bar{\v}$ are complimentary lagrangian subspaces in $V$ so that the characters
of the $R$-action on $\bigwedge^{top}\mathfrak{u}^*\cong \bigwedge^{top}\bar{\v}$ coincide.

We also can consider the right-handed analog $\OCat^{K,r}_\nu$ of $\OCat^K_\nu$ in the category of
right $\U$-modules. A straightforward ramification of the construction above produces a functor
$\bullet_{\dagger,e}^r$ from $ \OCat^{K,r}_\nu$ to the category of $R$-equivariant right $\Walg$-modules.

\subsubsection{General properties of $\bullet_{\dagger,e}$}\label{SSS_dag_prop}
It  follows from the construction that the functor $\bullet_{\dagger,e}:\OCat^K_\nu\rightarrow \Walg$-$\operatorname{mod}^R_{\nu}$ is exact. Here $\Walg$-$\operatorname{mod}^R_{\nu}$ is the category of finitely generated $\Walg$-modules that are
$R$-equivariant after the shift of the $\mathfrak{r}$-action by $\nu+\delta_K$. Indeed, if we have an exact sequence
$0\rightarrow M_1\rightarrow M_2\rightarrow M_3\rightarrow 0$ of modules in $\OCat^K_\nu$, then we can choose
a good filtration on $M_2$ and induce good filtrations on $M_1,M_3$. The sequence $0\rightarrow M_{1\hbar}
\rightarrow M_{2\hbar}\rightarrow M_{3\hbar}\rightarrow 0$ becomes exact. We deduce the claim about exactness
of $\bullet_{\dagger,e}$ from the exactness of the completion functor.

Further, for $\M\in \OCat^K_\nu$ and  a Harish-Chandra
bimodule $X$ we have $X\otimes_{\U}\M\in \OCat^K_\nu$ and we have a natural isomorphism
$(X\otimes_{\U}\M)_{\dagger,e}\xrightarrow{\sim} X_{\dagger}\otimes_{\Walg}\M_{\dagger,e}$.
This is proved by tracking the constructions of $\bullet_{\dagger}$ and $\bullet_{\dagger,e}$,
compare with the proof of \cite[Proposition 3.4.1,(2)]{HC}. In particular, if
$\overline{\Orb}$ is an irreducible component of $\VA(\U/\Ann_{\U}(\M))$, then $\M_{\dagger,e}$ is finite dimensional.
Indeed, in this case $\operatorname{Ann}_{\U}(\M)_\dagger$ has finite codimension in $\Walg$, and
$\operatorname{Ann}_{\U}(\M)_\dagger\M_{\dagger,e}=0$.

Next, since $\Walg$ is a filtered algebra with $\gr\Walg=\K[S]$, one can define the associated variety $\VA(N)$
of a finitely generated $\Walg$-module $N$, this will be  a conical (with respect to the Kazhdan action)
subvariety of $S$. Tracking the construction of $\bullet_{\dagger,e}$, we have $\VA(M_{\dagger,e})=\VA(M)\cap S$.

Further, we claim that there is a functor $\bullet^{\dagger,e}:\Walg$-$\operatorname{mod}^R_{\nu}\rightarrow \tilde{\OCat}^K_\nu$
with the property that $\Hom_\U(\bullet,?^{\dagger,e})\cong\Hom_{\Walg,R}(\bullet_{\dagger,e},?)$.
This functor is constructed similarly to the functor $\bullet^\dagger$ in \cite[3.3,3.4]{HC}. Namely, we pick $\Nil\in \Walg$-$\operatorname{mod}^R_{\nu}$. Choose some good filtration and form the Rees module
$\Nil_\hbar$. Then set $\M_\hbar':=\K[[\mathfrak{u},\hbar]]\widehat{\otimes}_{\K[[\hbar]]}\Nil_\hbar^{\wedge_\chi}$.
Choose the maximal subspace $\M_\hbar$, where $\K^\times$ acts locally finitely and the action of
$\k$ shifted by $\nu$ (i.e., given by $(x,m)\mapsto \frac{1}{\hbar^2}xm-\langle \nu,x\rangle m$)
integrates to a $K$-action. This subspace is $\g$- and $R$-stable and hence we have two $R$-actions:
one obtained by restricting the action from $\M_\hbar'$ and the other restricted from a $K$-action.
Similarly to \cite[3.3]{HC}, the structure map $\g\otimes \M_\hbar\rightarrow \M_\hbar$ is $R$-equivariant
for both actions and the restrictions of the two $R$-actions to $R^\circ$ coincide. As in \cite[3.2,3.3]{HC}, we deduce that the difference
of the two actions is an $R/R^\circ$-action commuting with $\g$. We set $\Nil^{\dagger,e}:=\M_\hbar^{R/R^\circ}/(\hbar-1)$.
This is an object in $\tilde{\OCat}^K_\nu$.  We remark that $\Nil^{\dagger,e}$ comes equipped with a filtration (induced
from the grading on $\M_\hbar^{R/R^\circ}$). By the construction, for any $\mathcal{L}\in \OCat^K_{\nu}$ and any good
filtration on $\mathcal{L}$, we have $\Hom_{\Walg_\hbar}(R_\hbar(\mathcal{L}_{\dagger,e}), \Nil_\hbar)^R\cong
\Hom_{\U_\hbar}(\mathcal{L}_\hbar, \M_\hbar)$ (an isomorphism of $\K[\hbar]$-modules with $\K^\times$-action).
We have $\Hom_{\Walg}(\mathcal{L}_{\dagger,e}, \Nil)^R\cong \Hom_{\Walg_\hbar}(R_\hbar(\mathcal{L}_{\dagger,e}), \Nil_\hbar)^R/(\hbar-1)$
as any homomorphism $\mathcal{L}_{\dagger,e}\rightarrow \Nil$ can be made filtration preserving after a shift of
a filtration on $\mathcal{L}_{\dagger,e}$. Here we only use that the filtration on $\mathcal{L}_{\dagger,e}$ is good.
Similarly,  $\Hom_{\U_\hbar}(\mathcal{L}_\hbar, \M_\hbar)/(\hbar-1)\cong\Hom_\U(\mathcal{L}, \Nil^{\dagger,e})$.
So $\Hom_{\Walg}(\mathcal{L}_{\dagger,e}, \Nil)^R\cong
\Hom_{\U}(\mathcal{L}, \Nil^{\dagger,e})$. So we indeed get a functor $\bullet^{\dagger,e}$ with the properties
required in the beginning of the paragraph.

For a conical $K$-stable
subvariety $Z\subset \k^\perp$ we can form the full subcategory $\OCat^K_{\nu,Z}\subset \OCat^K_\nu$
consisting of all modules $M$ with $\VA(M)\subset Z$.

Until the end of the subsection we make an additional assumption. Namely, we assume that
$\k^\perp\cap \Orb$ has finitely many $K$-orbits. Set $Y:=Ke$. This is an irreducible component
of $\k^\perp\cap \Orb$ and its dimension equals $\frac{1}{2}\dim V$.

Consider the subvariety $Z:=\bigcup_{\Orb'} \Orb'\cap \k^\perp$,
where the union is taken over all nilpotent orbits $\Orb'$ such that $\Orb\not\subset \overline{\Orb'}$. We can form the subcategories
$\OCat^K_{\nu,Z},\OCat^K_{\nu,Z\cup Y}\subset \OCat^K_\nu$ and their quotient $\OCat^K_{\nu,Y}$.
The subcategory $\OCat^K_{\nu,Z}$ is precisely the kernel of $\bullet_\dagger$, while $\OCat^K_{\nu,Z\cup Y}$ is the preimage of
the category of finite dimensional representations.
In particular,  the functor $\bullet_{\dagger,e}$ descends to a functor from $\OCat^K_{\nu,Y}$ to the category
$\Walg$-$\operatorname{mod}^R_{\nu,fin}$ of finite dimensional $\nu$-shifted $R$-equivariant $\Walg$-modules.

Now consider the special case when $\operatorname{codim}_{\overline{Y}}(\partial Y)>1$. We can restrict
$\bullet_{\dagger,e}$ to the subcategory $\OCat^K_{\nu,\overline{Y}}$. Then we can show analogously
to \cite[Proposition 3.3.4]{HC} that $\Nil^{\dagger,e}$ lies in $\OCat^K_{\nu,\overline{Y}}$ for any
$\Nil\in \Walg$-$\operatorname{mod}^R_{\nu,fin}$.

Also under the assumption that $\operatorname{codim}_{\overline{Y}}(\partial Y)>1$ we have an analog of
\cite[Theorem 4.1.1]{HC}. Namely, pick $\M\in \OCat^K$ and let $\Nil\subset \M_{\dagger,e}$
be an $R$-stable submodule of finite codimension. Then there is a unique maximal submodule $\M'\subset \M$
such that $\M'_{\dagger,e}=\Nil$. Moreover, the quotient $\M/\M'$ lies in $\OCat^K_{\overline{Y}}$.
The module $\M'$ coincides with the preimage of $\Nil^{\dagger,e}\subset (\M_{\dagger,e})^{\dagger,e}$
under the adjunction morphism $\M\rightarrow (\M_{\dagger,e})^{\dagger,e}$.
The proof repeats that of \cite[Theorem 4.1.1]{HC}.

We remark however that sometimes the codimension condition is not necessary: for  $\OCat^P_\nu$ the conclusions of the two previous paragraphs are still true, as we will see in Subsection \ref{SS_Vfun_prop}.

\subsection{Functor $\Vfun$}\label{SS_Vfun_constr}
\subsubsection{Three definitions}\label{SSS_Vfun_def}
Our goal is to define a functor $\Vfun:\OCat^P_\nu\rightarrow \OCat^\theta(\g,e)^R_\nu$. We start by giving three
different definitions. Below in this subsection we will see that all three functors are isomorphic.

We claim that  $\M_{\dagger,e}\in \OCat^\theta(\g,e)_\nu$ for $\M\in \OCat^P_\nu$.
By the construction, the action of $\mathfrak{r}\subset R$ integrates to $R$ after the $\nu+\delta$-shift.
The other claims that we need to check (that the weight spaces are finite dimensional
and the weights are bounded by above)  will follow from similar claims about $\gr\M_{\dagger,e}$. In turn, those will
follow if we show that the one-parametric subgroup $\K^\times\rightarrow Q$
with differential $\theta$ contracts $\VA(\M_{\dagger,e})$ to $e$. But $\VA(\M_{\dagger,e})=\VA(\M)\cap S$.
Since $\VA(\M)\subset \p^{\perp}$ and $\p^{\perp}=(\g_0\cap \p^{\perp})\oplus \g_{>0}$, we only need to check
that $(\g_0\cap \p^{\perp})\cap (e+\z_{\g_0}(f))=\{e\}$. Since $\g_0\cap \p^\perp=\g_0(>0)$ and $\z_{\g_0}(f)\subset \g_0(\leqslant 0)$,
our claim follows. So we can set $\Vfun_1(\M):=\M_{\dagger,e}$.

Let us now define $\Vfun_2$.  Pick  $\M\in \OCat^P_\nu$. Set $\M^{\wedge}:=\varprojlim_{n\rightarrow +\infty} \M/\bar{\m}_\chi^n \M$.
We claim that $\M^{\wedge}=\prod_\mu \M_\mu^{\wedge_0}$, where $\bullet^{\wedge_0}$ is the analog of
$\bullet^\wedge$ for $\g_0$. Indeed, $\prod_\mu \M_\mu^{\wedge_0}$ is complete and separated in the
$\bar{\m}_\chi$-adic topology. This gives a  map $\iota:\M^{\wedge}\twoheadrightarrow \prod_\mu \M_\mu^{\wedge_0}$.
Let us point out that the $\U$-module $\prod_\mu \M_\mu$
coincides with the $\g_{<0}$-adic completion on $\M$. So we get a map
$\iota':\prod_\mu \M_\mu^{\wedge_0}\rightarrow \M^\wedge$ that is the inverse of $\iota'$
because $\iota,\iota'$ are the identity on $\M$ that is dense in both modules.

So $\M^\wedge$ is an object in $\widehat{\Wh}^\theta(\g,e)_\nu^R$.
Therefore $\widehat{\Kfun}(\M^{\wedge})_{fin}\in \tilde{\OCat}^\theta(\g,e)^R_\nu$. We set
$\Vfun_2(\M):=\widehat{\Kfun}(\M^{\wedge})_{fin}$ so that $\Vfun_2$ is a functor $\OCat^P_\nu\rightarrow
\tilde{\OCat}^\theta(\g,e)^R_\nu$. Below we will see that the image is actually in $\OCat^\theta(\g,e)_\nu^R$.

Finally, let us define a functor $\Vfun_3$. Again, pick $\M\in \OCat^P_\nu$ and consider the completion  $\widehat{\M}$.
This is an object in $\hat{\OCat}^P_\nu$. Recall the element $n\in G$ defined in \ref{SSS_setting}, it is the image
of $\begin{pmatrix}0&i\\i&0\end{pmatrix}\in \operatorname{SL}_2(\K)$ under the homomorphism $\operatorname{SL}_2(\K)\rightarrow G$ induced
by the $\sl_2$-triple $(e,h,f)$.
We twist the $\g$-action on $\widehat{\M}$ with $\operatorname{Ad}(n^{-1})$,
we get an object $\,^{n^{-1}}\!\widehat{\M}\in \hat{\OCat}^{n^{-1}Pn}_{\nu}$. The nil-radical of the parabolic subgroup
$n^{-1}Pn$ coincides with $\m$.  For a module $\M'\in \hat{\OCat}^{n^{-1}Pn}_{\nu}$ we let $\operatorname{Wh}_\nu(\M')$
be the subspace in $\M'$ spanned by all $\t$-weight vectors that are  nilpotent for the action of
$\m^0_\chi$ (and automatically  nilpotent for $\g_{>0}$). Hence $\Wh_\nu(\M')\in \tilde{\Wh}^\theta(\g,e)^R_\nu$.
So $\Vfun_3(\M):=\Kfun(\Wh_\nu(\,^{n^{-1}}\!\widehat{\M}))$ is an object of  $\tilde{\OCat}^\theta(\g,e)^R_\nu$.

Below in this subsection we will prove that the three functors $\Vfun_i$ are isomorphic. The scheme of a proof
is as follows. First, we show that $\Vfun_1\cong \Vfun_2$, this is quite easy. After that, it remains
to prove that $\Vfun_2\cong\Vfun_3$. We start proving this by  showing that $\Vfun_3$
is dual to $\Vfun_2$, i.e. $\Vfun_3(\bullet)\cong\Vfun_2(\bullet^\vee)^\vee$. Next, we show that $\Vfun_i(X\otimes_\U \M)\cong X_{\dagger}\otimes_{\Walg}\Vfun_i(\M)$ for all $i$ (a bi-functorial isomorphism). After that, we show that
the functors $\Vfun_i$ basically intertwine the Verma module functors. Finally, we use the Bernstein-Gelfand
equivalence to establish an isomorphism $\Vfun_2\cong \Vfun_3$.

\subsubsection{Isomorphism of $\Vfun_1$ and $\Vfun_2$}
We use the identification $\wU\cong \wW(\Walg)$
to view $\M^\wedge=\wU\otimes_{\U}\M$ as a module over $\wW(\Walg)$. This module is isomorphic to
$\K[[\bar{\v}^*]]\widehat{\otimes}\Nil$, where $\Nil$ is a $\wWalg$-module.
Taking $\theta$-finite elements in $\Nil$, we get $\Vfun_2(\M)$. Recall that by $\bar{\v}^*$ we mean
any lagrangian subspace in $V$ complimentary to $\bar{\v}$, in particular, we can take $\bar{\v}^*:=\mathfrak{u}(=V\cap \p)$.

We can define $\bullet_{\dagger,e}$ on the dehomogenized level (we still need
to fix a good filtration on $\M$). Namely, we can set $\M^{\heartsuit}:=\U^{\heartsuit}\otimes_{\U}\M$,
where $\U^{\heartsuit}$ is defined by (\ref{eq:heart}).
Alternatively, $\M^\heartsuit=(\M_\hbar^{\wedge_\chi})_{\K^\times-fin}/(\hbar-1)$. The algebra $\U^\heartsuit$
is naturally included into $\wU$ and we  have
$$\M^\wedge=\wU\otimes_{\U}\M=\wU\otimes_{\U^{\heartsuit}}(\U^\heartsuit\otimes_\U \M)=\wU\otimes_{\U^\heartsuit}\M^\heartsuit.$$ On the other hand, applying the $\heartsuit$-construction (i.e., taking  $\K^\times$-finite elements and modding out $\hbar-1$)
to the decomposition $\M^{\wedge_\chi}_\hbar=
\K[[\mathfrak{u},\hbar]]\widehat{\otimes}_{\K[[\hbar]]}\Nil_\hbar^{\wedge_\chi}$, we get
$$\M^\heartsuit= \W(\Walg)^{\heartsuit}\otimes_{\W(\Walg)}(\K[\mathfrak{u}]\otimes \M_{\dagger,e}).$$ The module $\M_{\dagger,e}$
can be recovered from $\M^{\heartsuit}$ by taking the quotient by $\bar{\v}$ and then taking $\theta$-finite vectors (in fact,
the latter is not necessary).
We get
$$\M^{\wedge}=\wW(\Walg)\otimes_{\W(\Walg)^\heartsuit}\M^\heartsuit=
\wW(\Walg)\otimes_{\W(\Walg)}(S(\bar{\v})\otimes \M_{\dagger,e})=\K[[\mathfrak{u}]]\widehat{\otimes}\widehat{\M_{\dagger,e}}.$$
So $\Nil\cong \widehat{\M_{\dagger,e}}$ and hence $\Vfun_2(\M)\cong\M_{\dagger,e}$. Tracking the construction,
we see that it is functorial.

It remains to prove that $\Vfun_3\cong \Vfun_2$. The proof will occupy the rest of the subsection,
its various parts will also be used later.

\subsubsection{$\Vfun_2,\Vfun_3$ and duality}\label{SSS_V_dual}
Here we are going to show that $\Vfun_2(\bullet^\vee)^\vee\cong \Vfun_3(\bullet)$.
In particular, this will imply that $\Vfun_3$ is exact and its image is in
$\OCat^\theta(\g,e)_\nu^R$.

We are going to show that there is a natural non-degenerate pairing between $\Vfun_3(\M)$
and $\Vfun_2(\M^\vee)$. As we have seen, Lemma \ref{Lem:naive_dual_char}, this implies the existence of a functorial
isomorphism $\Vfun_3(\M)\cong \Vfun_2(\M^\vee)^\vee$.

Recall that we have a $\sigma$-contravariant pairing $\M^\vee\times \widehat{\M}\rightarrow \K$.
It can be regarded a $\tau$-contravariant pairing $\M^\vee\times \,^{n^{-1}}\!\widehat{\M}$  that identifies $\,^{n^{-1}}\!\widehat{\M}$ with the full dual of $\M^\vee$. It follows that there is a $\tau$-contravariant
pairing \begin{equation}\label{eq:pairing}(\M^\vee)^{\wedge}\times \Wh_\nu(\,^{n^{-1}}\!\widehat{\M})\rightarrow \K\end{equation}
that identifies $\Wh_\nu(\,^{n^{-1}}\!\widehat{\M})$ with the continuous dual of $(\M^\vee)^{\wedge}$.
Recall that the identifications $\wU\cong \wW(\Walg)$ and $\U^\wedge\cong \W(\Walg)^\wedge$
are obtained from one another by a $\tau$-twist.
So (\ref{eq:pairing}) gives rise to a non-degenerate (in the sense that both left and right kernels are zero)
contravariant pairing between $(\M^\vee)^{\wedge}/\bar{\v}(\M^\vee)^\wedge$ and $\Wh_\nu(\,^{n^{-1}}\!\widehat{\M})^{\v}$.
The latter module is $\Vfun_3(\M)$. The former module is $\widehat{\Vfun_2(\M^\vee)}$. So we get a natural non-degenerate
contravariant pairing between $\Vfun_2(\M^\vee)$ and $\Vfun_3(\M)$. An isomorphism $\Vfun_3(\bullet)\cong \Vfun_2(\bullet^\vee)^\vee$ is therefore proved.

\subsubsection{Products with Harish-Chandra bimodules}\label{SSS_HC_products}
We claim that all three functors $\Vfun_i$ satisfy $\Vfun_i(X\otimes_{\U} \M)\cong X_{\dagger}\otimes_{\Walg}\Vfun_i(\M)$
(functorially in $X$ and $\M$), where $X$ is a Harish-Chandra bimodule. We have already established this property
for $\Vfun_1$ in \ref{SSS_dag_prop}. For $\Vfun_2$, the property follows from the already proved isomorphism $\Vfun_1\cong \Vfun_2$.
So it remains to prove the functorial isomorphism for $\Vfun_3$.

First of all, we claim that there is a natural transformation $X_{\dagger}\otimes_{\Walg}\Vfun_3(\M)
\rightarrow \Vfun_3(X\otimes_{\U}\M)$. There is a natural homomorphism $X\otimes_{\U} \,^{n^{-1}}\!\widehat{\M}\rightarrow \,^{n^{-1}}\!\widehat{X\otimes_{\U} \M}$
extending $X\otimes_{\U}\,^{n^{-1}}\!\M=\,^{n^{-1}}\!(X\otimes_{\U}\M)\rightarrow \,^{n^{-1}}\!\widehat{X\otimes_{\U}\M}$.
This isomorphism restricts to a functorial homomorphism
\begin{equation}\label{eq:natur_embed} X\otimes_{\U} \Wh_\nu(\,^{n^{-1}}\!\widehat{\M})\rightarrow \Wh_\nu(\,^{n^{-1}}\!(\widehat{X\otimes_\U\M})).\end{equation}
But according to \cite[Theorem 5.11]{W_classif}, $\Kfun(X\otimes_{\U}\M_1)\cong X_{\dagger}\otimes_{\Walg}\Kfun(\M_1)$
for any  $\M_1\in \tilde{\Wh}^\theta(\g,e)_\nu$ and this homomorphism is bifunctorial. Applying this to $\M_1:=\Wh_\nu(\,^{n^{-1}}\!\widehat{\M})$ and using (\ref{eq:natur_embed}),
we get
$$X_{\dagger}\otimes_{\Walg}\Vfun_3(\M)\cong X_{\dagger}\otimes_{\Walg}\Kfun(\M_1)=\Kfun(X\otimes_{\U}\M_1)\rightarrow
\Kfun\circ\Wh_\nu(\,^{n^{-1}}\!(\widehat{X\otimes_\U\M}))\cong \Vfun_3(X\otimes_\U\M).$$
The resulting homomorphism  $X_{\dagger}\otimes_{\Walg}\Vfun_3(\M)\rightarrow \Vfun_3(X\otimes_\U\M)$ is bi-functorial
and provides a natural transformation of interest.

\ref{SSS_V_dual} shows that $\Vfun_3$ is exact. Thanks to the 5-lemma, it is enough to show that the natural transformation is an isomorphism for $X:=L\otimes \U$, where $L$ is a finite dimensional $G$-module (here $\g$ acts on the right in a naive
way, while the left action is given by $x(l\otimes m)=x.l\otimes m+ l\otimes xm$). It is clear that
$\widehat{L\otimes \M}=L\otimes \widehat{\M}$.  Thanks to \cite[Theorem 5.11]{W_classif},
it is enough to show that taking the tensor product with $L$ commutes with taking $\Wh_\nu$.
This easily reduces to the claim that the conditions for $x\in L\otimes \M_1$ being a generalized eigenvector
with zero eigenvalue are equivalent for the following actions:
\begin{itemize}
\item the diagonal $\m_\chi$-action,
\item the $\m\times\m_\chi$-action,
\item the $\m_\chi$-action on the second factor.
\end{itemize}
This observation is a formal corollary of $\dim L<\infty$ and $\m$ acting on $L$ by nilpotent endomorphisms.
Here $\M_1$ is an arbitrary $\g$-module.

So we have proved that $X_{\dagger}\otimes_{\Walg}\Vfun_3(\M)\cong\Vfun_3(X\otimes_{\U}\M)$.

\subsubsection{Images of induced modules}\label{SSS_ind_image}
Let $\Vfun_i^0$ be the functor defined analogously to $\Vfun_i$ for $\g_0$, where $i=2,3$.
Consider the parabolic category $\OCat^{P_0}_\nu$ for $(\g_0,P_0:=P\cap G_0)$. We have the induction
functor $\Delta^0:\U^0$-$\operatorname{Mod}\rightarrow \U$-$\operatorname{Mod}, \Delta^0(\M^0):=\U\otimes_{U(\g_{\geqslant 0})}\M^0$
that maps $\OCat^{P_0}_\nu$ to $\OCat^P_\nu$.
Our goal now is to show that
the functors $\Vfun_i(\Delta^0(\bullet)),\Delta_\Walg^\theta(\Vfun_i^0(\bullet)):\OCat^{P_0}_\nu\rightarrow
\OCat^\theta(\g,e)_\nu^R$ are isomorphic. This boils down to checking an isomorphism of bifunctors
$$\operatorname{Hom}_{\OCat^\theta(\g,e)^R_\nu}(\Vfun_i(\Delta^0(\bullet)),?)\cong \Hom_{\OCat^\theta(\g,e)_\nu^R}(\Delta_{\Walg}^\theta(\Vfun_i^0(\bullet)),?).$$

Consider the case $i=2$ first. We have $\Vfun_2(\M)=\hat{\Kfun}(\M^\wedge)_{fin}$. Both
$\bullet_{fin}:\hat{\OCat}^\theta(\g,e)_\nu^R \rightarrow \OCat^\theta(\g,e)_\nu^R$ and $\hat{\Kfun}:\hat{\Wh}^\theta(\g,e)^R_\nu
\rightarrow \hat{\OCat}^\theta(\g,e)_\nu^R$
are category equivalences.
So, for $\M\in \OCat^P_\nu, \Nil\in \OCat^{\theta}(\g,e)_\nu$, we have a bifunctorial isomorphism
\begin{equation}\label{eq:coinc_Verma1}
\Hom_{\OCat^{\theta}(\g,e)^R_\nu}(\Vfun_2(\M),\Nil)\cong\Hom_{\hat{\Wh}^\theta(\g,e)^R_\nu}(\M^\wedge, \hat{\Kfun}^{-1}(\widehat{\Nil})).
\end{equation}
Clearly, $\Delta^0(\M^0)^{\wedge}=\wU\otimes_{\U}\Delta^0(\M^0)=\wU\otimes_{\U(\g_{\geqslant 0})}\M^0=
[\wU/\wU\wU_{>0}]\widehat{\otimes}_{\wU^0}\M^{0\wedge_0}=\hat{\Delta}^\theta_\U(\Vfun_2^0(\M^0))$.
Recall that $\hat{\Kfun}$ intertwines the functors $\hat{\Delta}^\theta_{\U},\hat{\Delta}^\theta_{\Walg}$,
while $\bullet_{fin}$ intertwines $\hat{\Delta}^\theta_{\Walg}$ and $\Delta^\theta_{\Walg}$.
So
\begin{equation}\label{eq:coinc_Verma2}
\Hom_{\hat{\Wh}^\theta(\g,e)^R_\nu}(\Delta^0(\M^0)^{\wedge},
\hat{\Kfun}^{-1}(\Nil))\cong\Hom_{\OCat^{\theta}(\g,e)^R_\nu}(\Delta^\theta_{\Walg}(\Vfun^0_2(\M^0)),\Nil).
\end{equation}
Combining (\ref{eq:coinc_Verma1}) and (\ref{eq:coinc_Verma2}), we see that $\Vfun_2(\Delta^0(\M^0))\cong
\Delta^\theta_{\Walg}(\Vfun^0_2(\M^0))$.

Let us proceed to $\Vfun_3$. We are going to use a similar argument. We have
\begin{equation}\label{eq:coinc_Verma3}
\Hom_{\OCat^{\theta}(\g,e)^R_\nu}(\Vfun_3(\M),\Nil)\cong\Hom_{\Wh^{\theta}(\g,e)^R_\nu}(\Wh(\,^{n^{-1}}\!\widehat{\M}), \Kfun^{-1}(\Nil)).
\end{equation}
We have a natural map \begin{equation}\label{eq:nat_map}\Delta^0(\Wh^0_\nu(\,^{n^{-1}}\!\widehat{\M^0}))\rightarrow
\Wh_\nu(\widehat{\Delta^0(\,^{n^{-1}}\!\M^0)})
\end{equation}
induced by the inclusion  $\Wh^0_\nu(\,^{n^{-1}}\!\widehat{\M}^0)\hookrightarrow \Wh_\nu(\widehat{\Delta^0(\,^{n^{-1}}\!\M^0)})$
(that comes from the inclusion $\,^{n^{-1}}\!\widehat{\M}^0\subset \widehat{\Delta^0(\,^{n^{-1}}\!\M^0)}$;
here $\Wh^0_\nu$ is an analog of $\Wh_\nu$ for $\g_0$).
Assume for a moment that (\ref{eq:nat_map}) is an isomorphism. The functors $\M^0\mapsto \,^{n^{-1}}\!\Delta^0(\M^0),
\M^0\rightarrow\Delta^0(\,^{n^{-1}}\!\M^0)$ are isomorphic via $a\otimes m\mapsto \Ad(n)a\otimes m$.
As in the case of $\Vfun_2$, $\Delta^0(\Wh_0(\,^{n^{-1}}\!\widehat{\M^0}))\cong\Delta^\theta_\U(\Vfun_3^0(\M^0))$
and, combining (\ref{eq:coinc_Verma3}) with our assumption on (\ref{eq:nat_map}), we see that
$$\Hom_{\OCat^{\theta}(\g,e)_\nu^R}(\Vfun_3(\Delta^0(\M^0)),\Nil)\cong\Hom_{\OCat^{\theta}(\g,e)^R_\nu}(\Delta^\theta_{\Walg}(\Vfun^0_3(\M^0)),\Nil).$$
Hence $\Vfun_3(\Delta^0(\M^0))\cong\Delta^\theta_{\Walg}(\Vfun^0_3(\M^0))$.

So it remains to show that (\ref{eq:nat_map}) is an isomorphism.  We start by showing that it is injective.

The composition of (\ref{eq:nat_map}) with the inclusion $\Wh_\nu\left(\widehat{\Delta^0(\,^{n^{-1}}\!\M^0)}\right)\subset \widehat{\Delta^0(\,^{n^{-1}}\!\M^0)}$
is injective. The resulting map $\Delta^0(\Wh^0_\nu(\,^{n^{-1}}\!\widehat{\M^0}))\rightarrow \widehat{\Delta^0(\,^{n^{-1}}\!\M^0)}$
is the composition of $\Delta^0(\Wh^0_\nu(\,^{n^{-1}}\!\widehat{\M^0}))\rightarrow \Delta^0(\,^{n^{-1}}\!\widehat{\M^0})$
and $\Delta^0(\,^{n^{-1}}\!\widehat{\M^0})\rightarrow \widehat{\Delta^0(\,^{n^{-1}}\!\M^0)}$. Both maps are embeddings.
So (\ref{eq:nat_map}) is injective.

To show that (\ref{eq:nat_map}) is an isomorphism it is enough to check that the characters of the images of both
modules under $\Kfun$ are the same. The argument above shows that the coincidence of characters is equivalent
to saying that the characters of $\Vfun_3(\Delta^0(\M^0))$
and $\Delta^\theta_{\Walg}(\Vfun_3^0(\M^0))$ are the same. Since both functors $\Vfun_3\circ \Delta^0, \Delta_{\Walg}^\theta\circ \Vfun_3^0$
are exact it is enough to consider the case when $\M^0$ is simple. Recall that $\Vfun_3(\bullet)\cong\Vfun_2(\bullet^\vee)^\vee$.
So $\Vfun_3\circ\Delta^0(\M^0)\cong\Vfun_2(\nabla^0(\M^0))^\vee$, where $\nabla^0(\bullet):=\Delta^0(\bullet)^\vee$.
The classes of $\nabla^0(\M^0),\Delta^0(\M^0)$ in the Grothendieck group of $\OCat^P_\nu$ coincide because the duality preserves
the simples. Since $\Vfun_2$ is exact, the characters
of $\Vfun_2\circ\nabla^0(\M^0)$ and of $\Vfun_2\circ \Delta^0(\M^0)$ coincide. Also the duality $\bullet^\vee$
for the W-algebra does not change the character, see \ref{SSS_W_duality}. We see that the characters
of $\Vfun_2(\Delta^0(\M^0))$ and of $\Vfun_3(\Delta^0(\M^0))$ are the same. Now we claim that the characters
of $\Delta^\theta_{\Walg}(\Vfun_2^0(\M^0)),\Delta^\theta_{\Walg}(\Vfun_3^0(\M^0))$ are the same, this will finish the proof.
To show the coincidence of those characters one needs to show that $\Vfun_2^0(\M^0),\Vfun_3^0(\M^0)$
are isomorphic as $\t$-modules. This again follows from $\Vfun_2(\M^{0\vee_0})^{\vee_0}\cong \Vfun_3^0(\M^0)$.
Indeed, by construction, $\bullet^{\vee_0}$ does not change the $\t$-module structure.

\subsubsection{Isomorphism of $\Vfun_2$ and $\Vfun_3$}\label{SSS_V23_iso}
First, we will show that the images of a certain parabolic Verma module under $\Vfun_2$ and
$\Vfun_3$ are isomorphic. Then we will use \ref{SSS_HC_products} and the Bernstein-Gelfand
equivalence recalled in \ref{SSS_BG_equiv} to show that $\Vfun_2\cong \Vfun_3$.

The parabolic Verma module we are going to consider  is $\Delta_P(\nu+\rho)$. Recall that $\J_{P,\nu}$
denotes its annihilator in $\U$.

Clearly, $\Delta_P(\nu+\rho)=\Delta^0(\Delta_{P_0}(\nu+\rho))$.
Thanks to \ref{SSS_ind_image}, it is enough to show that $\Vfun^0_2(\Delta_{P_0}(\nu+\rho))\cong
\Vfun^0_3(\Delta_{P_0}(\nu+\rho))$. So it is enough to assume that $\g=\g_0$.

In this case, the dimension of $\Vfun_2(\Delta_P(\nu+\rho))\cong\Vfun_1(\Delta_P(\nu+\rho))$ equals
to the multiplicity of $\Delta_P(\nu+\rho)$ on $Pe$, the dense orbit of $P$ in $\p^{\perp}$.
As we have recalled in \ref{SSS_BG_equiv}, this multiplicity equals $1$. Since the duality does not change the multiplicity
(it fixes all simples), \ref{SSS_V_dual} implies that $\dim \Vfun_3(\Delta_P(\nu+\rho))=1$.

So to show the isomorphism it remains to prove that the annihilators of both modules
coincide. The ideal $\J:=\J_{P,\nu}$ coincides with the kernel of the epimorphism $\U\rightarrow D_\nu(G/P)$,
where the target algebra is the algebra of $\nu$-twisted differential operators on $G/P$.
So $\J$ is prime (and even completely prime) and hence primitive, and $\VA(\U/\J)=\overline{\Orb}$.
Moreover, the element $e$ is even and hence the morphism $T^*(G/P)\twoheadrightarrow \overline{\Orb}$
is birational. It follows that the multiplicity of $\U/\J=D_\nu(G/P)$ on $\Orb$ is $1$. So $\J_{\dagger}$
is an ideal of codimension $1$ in $\Walg$. It annihilates $\Vfun_2(\Delta_P(\nu+\rho))$ because of
the isomorphism $\Vfun_1\cong \Vfun_2$. Let us show that $\J_\dagger$ annihilates $\Vfun_3(\Delta_P(\nu+\rho))$.
We recall that $\Vfun_3(\Delta_P(\nu+\rho))\cong\Kfun(\operatorname{Wh}_\nu(\,^{n^{-1}}\!\hat{\Delta}_P(\nu+\rho)))$.
The ideal $\J$ annihilates $\,^{n^{-1}}\!\hat{\Delta}_P(\nu+\rho)$ and hence  $\operatorname{Wh}_\nu(\,^{n^{-1}}\!\hat{\Delta}_P(\nu+\rho))$.
Thanks to \cite[Theorem 5.11]{W_classif}, $\Vfun_3(\Delta_P(\nu+\rho))\cong\Kfun(\operatorname{Wh}_\nu(\,^n\!\hat{\Delta}_P(\nu+\rho)))$ is annihilated by $\J_{\dagger}$.

The proof of the isomorphism $\Vfun_3(\Delta_P(\nu+\rho))\cong \Vfun_2(\Delta_P(\nu+\rho))$ is now complete
(for $\g=\g_0$ and hence for general $\g$, as well).

\begin{Rem}\label{Rem:right_cell}
Let $c_{\p}$ be the left cell corresponding to the primitive ideal $\J_{P,0}$. Since $\operatorname{mult}_{\Orb}(\U/\J_{P,0})=1$,
formula (\ref{eq:mult_equality}) together with the observation that $\U/\J_{P,0}$ corresponds to the triple of the form $(x,x,\operatorname{triv})$, see Subsection \ref{SS_HC_semis}, imply that the Lusztig subgroup $H_{c_{\p}}$ coincides with $\bA$.
\end{Rem}

Now we are ready to complete the proof of an isomorphism $\Vfun_2\cong \Vfun_3$. Recall the parabolic
Bernstein-Gelfand equivalence  $X\mapsto X\otimes_{\U}\Delta_P(\nu+\rho): \HC(\U)^{\J_{P,\nu}}\rightarrow \OCat^P_\nu$.
Under the identification of $\OCat^P_\nu$ with $\HC(\U)^{\J_{P,\nu}}$, thanks to \ref{SSS_HC_products},
we have $\Vfun_i(X\otimes_\U \Delta_P(\nu+\rho))\cong X_{\dagger}\otimes_{\Walg}\Vfun_i(\Delta_P(\nu+\rho))$. Since
$\Vfun_3(\Delta_P(\nu+\rho))\cong \Vfun_2(\Delta_P(\nu+\rho))$, we are finally done.

From now on, all three isomorphic functors will be denoted by $\Vfun$.

\subsection{Further properties of $\Vfun$}\label{SS_Vfun_prop}
Here we will show that $\Vfun$ is a quotient onto its image and identify the modules annihilated by $\Vfun$.
The next important property we are going to prove is that $\Vfun$ satisfies the double centralizer property,
i.e., is fully faithful on projective objects. Then we are going to establish a sufficient condition
for $\Vfun$ to be fully faithful on standardly (=parabolic Verma) filtered objects. Finally, we summarize the properties
we have proved in Theorem \ref{Thm:Vfun}.

\subsubsection{Quotient property}\label{SSS_quot}
\begin{Prop}\label{Thm:quot_prop_gen}
The following is true.
\begin{enumerate}
\item The modules killed by $\Vfun$ are precisely those
whose all weight spaces for $\t$ have GK dimension less then $\dim\g_0(<0)$ (the maximal
GK dimension of a module in $\OCat^{P_0}_\nu$). Let $\ker\Vfun$ denote the full subcategory of such modules.
\item $\Vfun:\OCat^{P}_\nu\rightarrow \OCat^\theta(\g,e)_\nu^{R}$ admits a right adjoint functor to be denoted by $\Vfun^*$.
\item $\Vfun^*$ is a left inverse of the functor $\OCat^\theta(\g,e)_\nu^{R}/\ker\Vfun\rightarrow \OCat^\theta(\g,e)_\nu^{R}$
induced by $\Vfun$.
\item The image of $\Vfun$ is closed under taking subquotients. Equivalently, if $N$ is a subobject of $\Vfun(M)$, then
$\Vfun(\Vfun^*(N))=N$.
\end{enumerate}
\end{Prop}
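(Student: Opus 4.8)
The plan is to identify $\Vfun$ with the restriction of $\bullet_{\dagger,e}$ to $\OCat^P_\nu$ (the functor $\Vfun_1$) and to deduce everything from the general properties of $\bullet_{\dagger,e}$ recorded in \ref{SSS_dag_prop} --- exactness, the equality $\VA(\M_{\dagger,e})=\VA(\M)\cap S$, compatibility with Harish--Chandra tensoring, and the adjunction $\Hom_{\U}(\bullet,?^{\dagger,e})\cong\Hom_{\Walg,R}(\bullet_{\dagger,e},?)$ --- together with the parabolic Bernstein--Gelfand equivalence of \ref{SSS_BG_equiv}. Under $X\mapsto X\otimes_{\U}\Delta_P(\nu+\rho)$, which identifies $\OCat^P_\nu$ with $\HC(\U)^{\J_{P,\nu}}$, the functor $\Vfun$ becomes $X\mapsto X_\dagger\otimes_{\Walg}L$ with $L:=\Vfun(\Delta_P(\nu+\rho))$ one-dimensional (shown in \ref{SSS_V23_iso}); since any $X\in\HC(\U)^{\J_{P,\nu}}$ is killed on the right by $\J_{P,\nu}$, the bimodule $X_\dagger$ is a right module over the codimension-one ideal quotient $\Walg/\I$, $\I:=(\J_{P,\nu})_\dagger$, and $L$ is a $\Walg/\I$-module. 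This lets us transport the \emph{unconditional} structural facts (v)--(vii) for $\bullet_\dagger$ from \ref{SS_dagger_functor} to $\Vfun$, bypassing the codimension hypothesis $\codim_{\overline{Pe}}(\partial Pe)>1$ that the general theory of \ref{SSS_dag_prop} would require.

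\emph{Assertion (1).} We have $\Vfun(\M)=0$ iff $\VA(\M)\cap S=\varnothing$, i.e. iff $e\notin\VA(\M)$. For $\M\in\OCat^P_\nu$ one has $\VA(\M)\subseteq\p^\perp$, and $\p^\perp$ decomposes $T$-equivariantly as $(\p^\perp\cap\g_0)\oplus(\p^\perp\cap(\g_{<0}\oplus\g_{>0}))$, where $e$ lies in the first summand and the appropriate $\K^\times\subseteq T$ (with cocharacter $\pm\theta$) contracts the second to $0$; a $P$-stable good filtration makes $\VA(\M)$ stable under this $\K^\times$. Hence $e\in\VA(\M)$ iff $e$ lies in the image of $\VA(\M)$ under the projection onto $\p^\perp\cap\g_0$, and --- choosing the filtration compatibly with the $\t$-grading --- this image is $\bigcup_\mu\VA_{\g_0}(\M_\mu)$, a finite union of closures of $G_0$-orbits. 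Thus $\Vfun(\M)=0$ iff $e\notin\VA_{\g_0}(\M_\mu)$ for every $\t$-weight $\mu$; since $\overline{G_0e}$ is the Richardson orbit of the parabolic $\p\cap\g_0$ of $\g_0$ (as $e$ is even there), this is equivalent to each $\M_\mu$ having GK dimension strictly less than $\dim\g_0(<0)$, the asserted description. That $\ker\Vfun$ is a Serre subcategory is then clear. (This recovers the vanishing half of property 2) from the introduction.)

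\emph{Assertions (2)--(4).} Set $\Vfun^*:=\bullet^{\dagger,e}$. The first step is to show $\Vfun^*$ carries $\OCat^\theta(\g,e)^R_\nu$ into $\OCat^P_\nu$: by construction $\Vfun^*(\Nil)\in\tilde\OCat^P_\nu$, and inspecting the construction of $\bullet^{\dagger,e}$ shows $\gr\Vfun^*(\Nil)$ is supported on a $\theta$-stable cone lying over $\VA(\Nil)\subseteq S$, which together with $P$-stability forces $\VA(\Vfun^*(\Nil))\subseteq\overline{Pe}\subseteq\p^\perp$; this yields finite generation, finite-dimensional $\t$-weight spaces and finitely many central characters. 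Hence $\Vfun^*$ is right adjoint to $\Vfun$ (assertion (2)), with unit $\eta_\M\colon\M\to\Vfun^*\Vfun(\M)=(\M_{\dagger,e})^{\dagger,e}$ and counit $\Vfun\Vfun^*\to\mathrm{id}$. To prove (3) one shows $\ker\eta_\M$ and $\coker\eta_\M$ lie in $\ker\Vfun$; this is the analogue of \cite[Theorem 4.1.1]{HC}, and under Bernstein--Gelfand it becomes the statement that $\bullet_\dagger$ is an equivalence of $\HC_{\Orb}(\U)$ onto its image with left inverse $\bullet^\dagger$ (properties (v), (vi)), restricted to the bimodules annihilated by $\J_{P,\nu}$ on the right. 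Equivalently, the induced $\overline{\Vfun}\colon\OCat^P_\nu/\ker\Vfun\to\OCat^\theta(\g,e)^R_\nu$ is fully faithful, $\Vfun^*$ realizing its one-sided inverse after passing to the quotient. Given this, (4) is formal: $\Vfun$ is exact with Serre kernel and a right adjoint inverting $\overline{\Vfun}$, so it is a Gabriel quotient functor onto its image; for $N\hookrightarrow\Vfun(\M)$, left-exactness of $\Vfun^*$ gives $\Vfun^*(N)\hookrightarrow\Vfun^*\Vfun(\M)$, exactness of $\Vfun$ gives $\Vfun\Vfun^*(N)\hookrightarrow\Vfun\Vfun^*\Vfun(\M)$, and since $\Vfun(\eta_\M)$ is an isomorphism the counit $\Vfun\Vfun^*\Vfun(\M)\to\Vfun(\M)$ is one too; a diagram chase, using that $\Vfun^*$ reflects isomorphisms on the maps in question, then gives $\Vfun\Vfun^*(N)=N$, and closure under quotients follows similarly from exactness.

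The step I expect to be the main obstacle is (3): the maximality/faithfulness statement analogous to \cite[Theorem 4.1.1]{HC} without the codimension hypothesis. The point of routing through Bernstein--Gelfand is precisely that the corresponding bimodule statements (vi) and (vii) hold unconditionally; the delicate part is then the bookkeeping of the condition ``annihilated by $\J_{P,\nu}$ on the right'' --- equivalently, being a right $\Walg/\I$-module --- through the functors $\bullet_\dagger$, $\bullet^\dagger$ and the tensoring by $L$, and matching the candidate adjoint $\bullet^{\dagger,e}$ with the composite of $\bullet^\dagger$ and $\otimes_\U\Delta_P(\nu+\rho)$ (in particular checking that $\Vfun^*(\Nil)$ is genuinely finitely generated, not merely an object of $\tilde\OCat^P_\nu$).
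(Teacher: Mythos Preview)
Your outline is essentially correct for the case $\g=\g_0$ and in that case it matches the paper's approach: under the parabolic Bernstein--Gelfand equivalence, $\Vfun$ becomes $X\mapsto X_\dagger\otimes_\Walg L$ with $L$ one-dimensional, and properties (v)--(vii) of $\bullet_\dagger$ from Subsection~\ref{SS_dagger_functor} transport to (2)--(4), modulo checking that $\J_{P,\nu}=((\J_{P,\nu})_\dagger)^{\dagger_\U}$ so that $\bullet^\dagger$ preserves the right-annihilation condition (the paper does check this, and you flag it correctly as the delicate bookkeeping point).

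There is, however, a genuine gap for general $\g$. Your claim that $L:=\Vfun(\Delta_P(\nu+\rho))$ is one-dimensional (and hence that $\I=(\J_{P,\nu})_\dagger$ has codimension one in $\Walg$) is false unless $\g=\g_0$. In \ref{SSS_V23_iso} the one-dimensionality is proved only after the reduction ``it is enough to assume that $\g=\g_0$''; for general $\g$ one has $\Vfun(\Delta_P(\nu+\rho))=\Delta^\theta_\Walg(\Vfun^0(\Delta_{P_0}(\nu+\rho)))$, an infinite-dimensional Verma-type object (see the character formula in Theorem~\ref{Thm:Vfun}(iv)). More fundamentally, $e$ is not Richardson for $\p$ in $\g$ when $\g\neq\g_0$: the Richardson orbit $\Orb_P$ for $\p$ strictly contains $\Orb=Ge$ in its closure, so $\VA(\U/\J_{P,\nu})=\overline{\Orb_P}\supsetneq\overline{\Orb}$, and the bimodules $X\in\HC(\U)^{\J_{P,\nu}}$ do \emph{not} lie in $\HC_{\overline{\Orb}}(\U)$. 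Consequently $X_\dagger$ is not finite-dimensional and properties (v)--(vii) simply do not apply to the relevant category.

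The paper's missing ingredient is a reduction from general $\g$ to $\g_0$ via the $\t$-weight space decomposition, carried out using the realization $\Vfun=\Vfun_2$. Since $\M^\wedge=\prod_\mu \M_\mu^{\wedge_0}$, the completion functor $\mathcal{F}:\M\mapsto\M^\wedge$ decomposes weight-by-weight into copies of the $\g_0$-functor $\mathcal{F}^0$, and one \emph{constructs} the right adjoint $\mathcal{G}$ by assembling $\mathcal{G}(\Nil):=\bigoplus_\mu\mathcal{G}^0(\Nil_\mu)$ from the already-established $\g_0$-adjoint. Assertions (1), (3), (4) then follow weight-by-weight from their $\g_0$ counterparts. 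Your direct attempt to land $\bullet^{\dagger,e}$ in $\OCat^P_\nu$ (finite generation, etc.) would have to reproduce this, and your argument for (1) via projecting $\VA(\M)$ to $\p^\perp\cap\g_0$ is heuristically right but the identification of that projection with $\bigcup_\mu\VA_{\g_0}(\M_\mu)$ is not justified as stated; the weight-space decomposition of $\Vfun_2$ gives this for free.
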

Let $\operatorname{im} \Vfun\subset \OCat^\theta(\g,e)_\nu^{R}$ be the essential image. By (4), $\operatorname{im}\Vfun$ is an abelian category. Now (3) implies that $\Vfun$
induces an equivalence of $\OCat^P_\nu/\ker\Vfun\xrightarrow{\sim} \operatorname{im} \Vfun$ of abelian categories. In other words,
$\Vfun$ is a quotient functor onto its image.
\begin{proof}
First, let us consider the case $\g=\g_0$.

Recall the identification $\OCat^P_\nu\cong \HC(\U)^{\J}$, where we set $\J:=\J_{P,\nu}$,
and that under this identification the functor $\Vfun$ becomes
$X\mapsto X_{\dagger}\otimes_{\Walg}\Nil$, where $\Nil$ is a unique 1-dimensional $\Walg$-module annihilated
by $\J_\dagger$. The functor $Y\mapsto Y\otimes_{\Walg}\Nil$ is an equivalence between the category
$\HC_{fin}^Q(\Walg)^{\J_\dagger}$ of the bimodules annihilated by $\J_\dagger$ from the right and the category
of $Q$-equivariant finite dimensional $\Walg$-modules.
Under the identification  $\OCat^P_\nu\cong \HC(\U)^{\J}$,
modules with non-maximal GK dimension in $\OCat^P_\nu$ correspond to $\HC(\U)^{\J}\cap \HC_{\partial\Orb}(\U)$
because this identification preserves the left annihilators. That all such modules in $\OCat^P_\nu$ are annihilated by $\Vfun$
follows from (iii) in Subsection \ref{SS_dagger_functor}, that no other module gets killed follows from (iv) there.
This shows (1).

Let us prove (2). This will follow if we show that the functor $\bullet^{\dagger}$ from (v) in Subsection \ref{SS_dagger_functor} maps $\HC^Q_{fin}(\Walg)^{\J_\dagger}$ to $\HC(\U)^{\J}$. Let us, first, show that $\J$ coincides with the kernel $(\J_{\dagger})^{\dagger_\U}$ of $\U\rightarrow (\Walg/\J_{\dagger})^{\dagger}$. For this,  we note  that $\J\subset (\J_{\dagger})^{\dagger_\U}$,
$\VA(\U/\J)=\VA(\U/(\J_{\dagger})^{\dagger_\U})=\overline{\Orb}$ and, by (v) of Subsection \ref{SS_dagger_functor}, $\VA((\J_{\dagger})^{\dagger_\U}/\J)\subset \partial \Orb$. Since $\J$ is primitive, see \ref{SSS_BG_equiv}, this implies the equality $\J=(\J_{\dagger})^{\dagger_\U}$ thanks to \cite[Corollar 3.6]{BK}.

So if $\mathcal{B}\in \HC_{fin}^Q(\Walg)$ is annihilated by $\J_\dagger$ from the right, then, by the construction of $\bullet^{\dagger}$ in \cite[3.3,3.4]{HC},
$\mathcal{B}^{\dagger}$ is annihilated by $(\J_\dagger)^{\dagger_\U}$. So $\bullet^{\dagger}:\HC^Q_{fin}(\Walg)\rightarrow \HC_{\overline{\Orb}}(\U)$
restricts to $\HC^Q_{fin}(\Walg)^{\J_{\dagger}}\rightarrow \HC(\U)^\J$. It follows that
$\Vfun$ possesses a right adjoint functor.

(3) now follows from (v) in Subsection \ref{SS_dagger_functor}, while (4) follows from (vii).

Now proceed to the case of a general $\theta$. Using the realization of $\Vfun$ as $\Vfun_2$, we see that
it is enough to prove the direct analogs of (1)-(4) for
$\mathcal{F}:\M\mapsto \M^{\wedge}:\OCat^P_\nu\rightarrow \hat{\Wh}^{\theta}(\g,e)^R_\nu$. As we have noted in
\ref{SSS_Vfun_def},
$\M^\wedge=\prod_{\mu\in \t^*}\M_\mu^{\wedge_0}$, where $\M_\mu$
is the $\mu$-weight space for the action of $\t$. (1)
follows now from (1) for $\Vfun^0$ already established above in this proof
(an object in $\OCat^{P_0}_\nu$ is annihilated
by $\Vfun_1^0$ if and only if its support does not contain $e$; this is equivalent to
the condition on the GK dimension).

Let us prove (2), i.e., that there is a right adjoint functor $\mathcal{G}$ for $\mathcal{F}$. First recall that
the functor $\mathcal{F}^0:\M^0\mapsto \M^{0\wedge_0}$ has a right
adjoint functor because it becomes $\Vfun^0$ under a suitable category equivalence.
The right adjoint  functor is realized as follows. For $\Nil^0\in \hat{\Wh}^\theta(\g_0,e)^R$
let $\tilde{\mathcal{G}}^0(\Nil^0)$ be the sum of all submodules of $\Nil^0$ belonging to
$\OCat^{P_0}_\nu$. Similarly to \ref{SSS_dag_prop}, on $\tilde{\mathcal{G}}^0(\Nil^0)$ we have two $R$-actions, one restricted
from $\Nil^0$ and one coming from the $P_0$-action. They agree on $R^\circ$ and their difference is an $R/R^\circ$-action
commuting with $\g_0$. Let $\mathcal{G}^0(\Nil^0)$ denote the subspace of $R/R^\circ$-invariants in
$\tilde{\mathcal{G}}^0(\Nil^0)$. From the construction, $$\operatorname{Hom}_{\OCat^{P_0}_\nu}(\mathcal{M}^0, \mathcal{G}^0(\Nil^0))\cong
\operatorname{Hom}_{\tilde{\OCat}^\theta(\g_0,e)^R_\nu}(\mathcal{F}^0(\mathcal{M}^0),\Nil^0).$$
So $\mathcal{G}^0$ is indeed a right adjoint functor to $\mathcal{F}^0$.

Now take $\Nil\in \hat{\Wh}^{\theta}(\g,e)^R_\nu$. Then its weight subspaces $\Nil_\mu$ are objects of
$\hat{\Wh}^\theta(\g_0,e)^R_\nu$. Clearly, $\bigoplus_\mu \Nil_\mu\subset \Nil$ is  $\g$-stable. Further,
it is easy to see that $\bigoplus_\mu \tilde{\mathcal{G}}^0(\Nil_\mu)$ is a $\g$-submodule.
The action map  $\g\times \bigoplus_\mu \tilde{\mathcal{G}}^0(\Nil_\mu)\rightarrow
\bigoplus_\mu \tilde{\mathcal{G}}^0(\Nil_\mu)$ is equivariant with respect to both $R$-actions.
So $\mathcal{G}(\Nil):=\bigoplus_\mu \mathcal{G}^0(\Nil_\mu)$ is a $\g$-submodule in $\Nil$.
It follows easily from the construction that $\Hom_{\U}(\M^\wedge,\Nil)^R\cong\Hom_{\U}(\M, \mathcal{G}(\Nil))$.
Since all $\U^0$-modules $\mathcal{G}(\Nil)_\mu=\mathcal{G}^0(\Nil_\mu)$ are in $\mathcal{O}^{P}_0$ (because
$\mathcal{G}^0$ is right adjoint to $\mathcal{F}^0$), all $\h$-weight spaces in $\mathcal{G}(\Nil)$
are finite dimensional. Since the center of  $\U$ acts on $\Nil$ with finitely many eigen-characters, the same is
true also for $\mathcal{G}(\Nil)$. Therefore $\mathcal{G}(\Nil)\in \OCat^P_\nu$. So $\mathcal{G}$
is a right adjoint functor for $\mathcal{F}$ and (2) is proved.

Let us prove (3) that amounts to showing that the kernel and the cokernel of the adjunction homomorphism $\M\rightarrow
\mathcal{G}\circ \mathcal{F}(\M)$ lie in the kernel of $\mathcal{F}$. But this homomorphism
has the form $\bigoplus_\mu \M_\mu\rightarrow \bigoplus_\mu \mathcal{G}^0(\mathcal{F}^0(\M_\mu))$,
where the maps $\M_\mu\rightarrow \mathcal{G}^0(\mathcal{F}^0(\M_\mu))$ are the adjunction maps.
Our claim follows from (3) in the case $\g=\g_0$ that was proved above.

Let us prove that the image of $\mathcal{F}$ is closed under taking taking subquotients
(equivalently, taking subobjects). Namely, let us take a subobject  $\Nil'\subset \Nil$
(in the category $\hat{\Wh}^\theta(\g,e)_\nu^R$).
It has the form $\prod_{\mu\in \t^*}\Nil'_\mu$. So
$\mathcal{G}(\Nil')=\bigoplus_{\mu}\mathcal{G}^0(\Nil'_\mu)$ and $\mathcal{F}(\mathcal{G}(\Nil'))=
\prod_\mu \mathcal{F}^0(\mathcal{G}^0(\Nil'_\mu))$. But $\mathcal{F}^0(\mathcal{G}^0(\Nil'_\mu))=\Nil'_\mu$
by (4) in the case $\g=\g_0$.
\end{proof}

\subsubsection{$\Vfun$ vs homological duality}
Recall that we have a derived equivalence $\dual_\U: D^b(\OCat^P_\nu)\rightarrow D^b(\OCat_{-\nu}^{P,r})^{opp}$, where
$\OCat_{-\nu}^{P,r}$ was defined in \ref{SSS_right_cat},  given
by $$\dual_\U(\bullet):=\operatorname{RHom}(\bullet,\U)[\dim \p].$$
The proof can be found, for example, in \cite[4.1]{GGOR}, the techniques there can be applied to our
case as explained in \cite[footnote 1]{GGOR}.

Clearly, if $\Nil$ is a finitely generated $R$-equivariant left $\Walg$-module, then
$\Hom_{\Walg}(\Nil,\Walg)$ is also finitely generated and $R$-equivariant (as a right $\Walg$-module).
Therefore we can form a homological duality functor for the W-algebra, $\dual_\Walg: D^b(\Walg\operatorname{-mod}^R_\nu)
\rightarrow D^b(\Walg^{opp}\operatorname{-mod}^R_{-\nu})^{opp},
\dual_{\Walg}(\bullet)=\operatorname{RHom}(\bullet,\Walg)[\dim \z_\g(e)_{\geqslant 0}]$.

We can define the analog of $\Vfun_1(\bullet)=\bullet_{\dagger,e}$ for the categories of right
modules completely analogously to the above, see \ref{SSS_dag_constr}. This functor will be denoted by $\Vfun^r$.

Then we have the following statement.

\begin{Prop}\label{Prop:homol_dual}
The functors  $\Vfun^r(H^i(\dual_\U(\bullet))), H^i(\dual_{\Walg}(\Vfun(\bullet))):\OCat^P_\nu\rightarrow
(\Walg^{opp}\operatorname{-mod}^R_{-\nu})^{opp}$
are isomorphic.
\end{Prop}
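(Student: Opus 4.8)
The plan is to compare the two functors $\Vfun^r(H^i(\dual_\U(\bullet)))$ and $H^i(\dual_{\Walg}(\Vfun(\bullet)))$ by reducing everything, via the realization $\Vfun=\Vfun_1=\bullet_{\dagger,e}$, to a compatibility statement between the functor $\bullet_{\dagger,e}$ and the operations $\operatorname{RHom}(\bullet,\U)$, $\operatorname{RHom}(\bullet,\Walg)$. First I would recall the chain of isomorphisms used throughout Section \ref{S_Vfun} that lets one compute $\bullet_{\dagger,e}$ on the homogenized level: for $\M\in\OCat^P_\nu$ equipped with a good filtration, $R_\hbar(\M_{\dagger,e})^{\wedge_\chi}$ is the $\iota(V)$-centralizer inside $\M_\hbar^{\wedge_\chi}$, and the decomposition $\M_\hbar^{\wedge_\chi}=\K[[\mathfrak u,\hbar]]\widehat\otimes_{\K[[\hbar]]}R_\hbar(\M_{\dagger,e})^{\wedge_\chi}$ of Lemma \ref{Lem:N_decomp} is compatible with the algebra decomposition $\U_\hbar^{\wedge_\chi}=\W_\hbar^{\wedge_0}\widehat\otimes_{\K[[\hbar]]}\Walg_\hbar^{\wedge_\chi}$. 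The key point is that taking the centralizer of $\iota(V)$ (equivalently, the $\W_\hbar^{\wedge_0}$-factor) commutes with $\operatorname{RHom}$: since $\W_\hbar^{\wedge_0}$ has a Koszul resolution as a module over itself through $\mathfrak u$-coinvariants, and since Morita-type arguments for the tensor factor are available, one gets a natural isomorphism of complexes $(\operatorname{RHom}_{\U}(\M,\U))_{\dagger,e}^r\cong \operatorname{RHom}_{\Walg}(\M_{\dagger,e},\Walg)$ up to the appropriate homological shift. This is exactly the $\bullet_{\dagger,e}$-analog of the fact, already implicit in \cite{HC}, that $\bullet_\dagger$ intertwines the homological dualities on the bimodule categories.

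Concretely, the main technical step is: for $\M\in\OCat^P_\nu$, choose a finite length projective-type resolution $P_\bullet\to\M$ in $\OCat^P_\nu$ (or just a resolution by modules induced from $U(\p)$, which is enough since $\U$ is free over $U(\p)$, hence such modules are acyclic for $\operatorname{Hom}_\U(-,\U)$). Applying the exact functor $\bullet_{\dagger,e}$ term by term gives a complex of $R$-equivariant $\Walg$-modules computing $\Vfun(\M)$, and one must identify $\operatorname{Hom}_{\U}(P_\bullet,\U)_{\dagger,e}^r$ with $\operatorname{Hom}_{\Walg}((P_\bullet)_{\dagger,e},\Walg)$. For a single induced module $\U\otimes_{U(\p)}N$ one has $\operatorname{Hom}_\U(\U\otimes_{U(\p)}N,\U)\cong \operatorname{Hom}_{U(\p)}(N,\U)$; passing to the $\hbar$-completion at $\chi$ and taking $\iota(V)$-centralizers, the $\W_\hbar^{\wedge_0}$-part of $\mathfrak u=\k\cap V$ is handled by the Koszul/Morita identification $\operatorname{RHom}_{\W_\hbar^{\wedge_0}}(\K[[\mathfrak u,\hbar]],\W_\hbar^{\wedge_0})\cong\K[[\mathfrak u^*,\hbar]][-\dim\mathfrak u]$, while the $\Walg_\hbar$-part goes through unchanged, producing precisely $\operatorname{Hom}_{\Walg}((\U\otimes_{U(\p)}N)_{\dagger,e},\Walg)$ together with the shift $[\dim\p]-[\dim\mathfrak u]=[\dim\z_\g(e)_{\geqslant 0}]$ (recall $\dim\mathfrak u=\tfrac12\dim V$ from \ref{SSS_Cat_OK_def} and $\dim\p=\dim\z_\g(e)_{\geqslant 0}+\tfrac12\dim V$, which one checks from the $\sl_2$-grading). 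Dehomogenizing ($\hbar\mapsto 1$) and keeping track of $R$-equivariance and the $\nu+\delta_P$ shift then yields the claimed natural isomorphism after passing to cohomology $H^i$.

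The hard part will be making the interchange of $\operatorname{RHom}$ with the completion $\bullet^{\wedge_\chi}$ and with the passage from $\Walg_\hbar'$ to its $\K^\times$-finite part $\Walg_\hbar$ rigorous: one needs flatness of the completions, finiteness of the relevant $\operatorname{Ext}$-modules (so that $R_\hbar(\bullet)^{\wedge_\chi}$ of a $\operatorname{RHom}$ agrees with $\operatorname{RHom}$ of the completions), and a density argument identifying the $\K^\times$-finite part of a $\operatorname{Hom}$ with the $\operatorname{Hom}$ of $\K^\times$-finite parts. These are of the same nature as the arguments in \cite[3.3,3.4]{HC} and in \ref{SS_iso_comp}, so I would quote those and only spell out the Koszul computation for the Weyl algebra factor and the bookkeeping of the homological shift. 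An alternative, possibly cleaner, route for the general-$\theta$ case is to use $\Vfun=\Vfun_2$ and reduce, exactly as in the proof of Proposition \ref{Thm:quot_prop_gen}, to the statement for $\Vfun^0$ (the case $\g=\g_0$, $K=P$), where $\Vfun^0$ is identified with $\bullet_\dagger$ on $\HC(\U)^{\J_{P,\nu}}$ under the Bernstein--Gelfand equivalence and the homological duality on parabolic category $\OCat$ matches the one on Harish--Chandra bimodules; then the needed compatibility is precisely the $\bullet_\dagger$-version from \cite{HC}. I would present the argument via $\Vfun_1$ for transparency and remark that the $\Vfun_2$ reduction gives an independent check.
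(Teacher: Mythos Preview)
Your overall strategy coincides with the paper's: realize $\Vfun$ as $\bullet_{\dagger,e}$, pass to the Rees level, use the tensor decomposition $\M_\hbar^{\wedge_\chi}=\K[[\mathfrak u,\hbar]]\widehat\otimes_{\K[[\hbar]]}\Nil_\hbar^{\wedge_\chi}$ together with the Koszul resolution of $\K[[\mathfrak u,\hbar]]$ as a $\W_\hbar^{\wedge_0}$-module, and invoke flatness of the completion (from \cite[2.4]{HC}) to commute $\operatorname{RHom}$ with $\bullet^{\wedge_\chi}$. The shift bookkeeping $\dim\p=\dim\mathfrak u+\dim\z_\g(e)_{\geqslant 0}$ is also what is implicitly used.

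There is, however, a genuine misstep in your ``main technical step'': induced modules $\U\otimes_{U(\p)}N$ with $N$ finite dimensional are \emph{not} acyclic for $\operatorname{Hom}_\U(-,\U)$. By Shapiro one gets $\operatorname{Ext}^i_\U(\U\otimes_{U(\p)}N,\U)\cong\operatorname{Ext}^i_{U(\p)}(N,\U)$, and since $\U$ is free over $U(\p)$ this is a direct sum of copies of $\operatorname{Ext}^i_{U(\p)}(N,U(\p))$, which for finite dimensional $N$ is concentrated in degree $\dim\p$, not in degree $0$. So such a resolution does not compute $\operatorname{RHom}_\U(\M,\U)$ by naive dualization term by term. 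The paper sidesteps this entirely: it lifts a graded free resolution of $\gr\M$ to a filtered \emph{free} $\U$-resolution $\ldots\to\A^1\to\A^0\to\M$, passes to Rees modules, and completes. It then produces a \emph{second} free resolution of $\M_\hbar^{\wedge_\chi}$ as the tensor product of the Koszul resolution of $\K[[\mathfrak u,\hbar]]$ with a free $\Walg_\hbar$-resolution of $\Nil_\hbar$ (completed at $\chi$). The dual of the first resolution computes, after the standard dehomogenization procedure, $\Vfun^r(H^i(\dual_\U\M))$; the dual of the second computes $H^i(\dual_\Walg(\Vfun\M))$; and the two agree because any two free resolutions of $\M_\hbar^{\wedge_\chi}$ are homotopy equivalent. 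This is cleaner than trying to verify the identity on a specific class of non-free modules. Finally, your proposed alternative via Bernstein--Gelfand does not give an independent check as stated: the compatibility of $\bullet_\dagger$ with homological duality on HC bimodules is not established in \cite{HC}, and the paper itself remarks that a derived-level statement requires the machinery of \cite{BL}.
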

In fact, the functors $\Vfun^r(\dual_\U(\bullet))$ and $\dual_{\Walg}(\Vfun(\bullet))$
(from $D^b(\OCat^P_\nu)$ to  the $(R,-\nu)$-equivariant derived category of $\Walg^{opp}$)
are  isomorphic but we do not want to provide a  proof of this because
we will not need this fact. To prove the isomorphism one needs a derived version of $\bullet_{\dagger,e}$,
see \cite[Remark 5.12]{BL}.
\begin{proof}
Pick $\M\in \OCat^P_\nu$ and fix a good $G_0$-stable filtration of $\M$ so that $\gr \M$ is a $G_0$-equivariant finitely generated $S(\g)$-module.  Then we can pick a graded free $G_0$-equivariant  resolution $\ldots\rightarrow A^1\rightarrow A^0
\rightarrow \gr \M$ and lift it to a free $G_0$-equivariant resolution $\ldots\rightarrow\A^1\rightarrow \A^0\rightarrow \M$
such that $\A^i$ is the sum of several copies of $\U$ each equipped with a shift of the PBW filtration and
all differentials are strictly compatible with filtrations. Then we get a graded resolution
$\ldots\A^1_\hbar\rightarrow \A^0_\hbar\rightarrow \M_\hbar$. Let us tensor the resolution with $\U_\hbar^{\wedge_\chi}$.
Since $\U_\hbar^{\wedge_\chi}$ is a flat $\U_\hbar$-module, see \cite[2.4]{HC}, we get an $R$-equivariant
resolution of $\M_\hbar^{\wedge_\chi}$:
$$\ldots\A^{1\wedge_\chi}_\hbar\rightarrow \A_\hbar^{0\wedge_\chi}\rightarrow \M^{\wedge_\chi}_\hbar.$$
There is also another way to obtain a free resolution of $\M_\hbar^{\wedge_\chi}=\K[[\mathfrak{u},\hbar]]\widehat{\otimes}_{\K[[\hbar]]}\Nil^{\wedge_\chi}_\hbar$, where $\Nil_\hbar$
is the Rees module of $\Vfun(\M)$ (with the filtration induced from $\M$). We can produce an $R$-equivariant graded free resolution
$\ldots\rightarrow\underline{\A}^0_\hbar\rightarrow\Nil_{\hbar}$ as before and then complete at $\chi$.
Also we can take the Koszul resolution for the $\W_\hbar^{\wedge_0}$-module
$\K[[\mathfrak{u},\hbar]]=\W_\hbar^{\wedge_0}/\W_\hbar^{\wedge_0}\mathfrak{u}$. This resolution is obtained
by taking the Koszul resolution $\ldots\rightarrow \K[\mathfrak{u}]\otimes \bigwedge^i \mathfrak{u}\rightarrow \K[\mathfrak{u}]\otimes \bigwedge^{i-1}\mathfrak{u}\rightarrow\ldots$
of $\K[\mathfrak{u}]/(\mathfrak{u})$ and applying the functor $\W_{\hbar}^{\wedge_0}\otimes_{\K[\mathfrak{u}]}\bullet$ to it.
Multiplying the resolutions of $\K[[\mathfrak{u},\hbar]]$ and $\Nil^{\wedge_\chi}_\hbar$, we get another
resolution of $\M^{\wedge_\chi}_\hbar=\K[[\mathfrak{u},\hbar]]\widehat{\otimes}_{\K[[\hbar]]}\Nil^{\wedge_\chi}_\hbar$.

We are going to show that the cohomology of the dual of the first resolution produces $H^i(\Vfun^r(\dual_\U(\M)))$
(in the same way as was used in \ref{SSS_dag_constr} to pass from $\U^{\wedge_\chi}_\hbar$-modules
to $\Walg$-modules: we need to, first, factor out $\K[[\mathfrak{u},\hbar]]$,  take $\K^\times$-finite elements, then take $R/R^\circ$-invariants, and then mod out $\hbar-1$), while the cohomology of the dual of the second resolution  similarly produce
$H^i(\dual_{\Walg}(\Vfun(\M)))$.

Consider the resolution $...\rightarrow \A^2_\hbar\xrightarrow{d_1} \A^1_\hbar\xrightarrow{d_0} \A^0_\hbar$.
The complex \begin{equation}\label{eq:compl_right}\A^0_\hbar\xrightarrow{d_0^*}\A^1_\hbar\xrightarrow{d_1^*}\A^2_\hbar\ldots\end{equation}
of right modules computes
$\operatorname{RHom}(\M_\hbar,\U_\hbar)$. The complex $\A^{0\wedge_\chi}_\hbar\xrightarrow{d_0^*}
\A_\hbar^{1\wedge_\chi}\xrightarrow{d_2^*}\A^{2\wedge_\chi}_\hbar\ldots$ computes
$\operatorname{RHom}(\M_\hbar^{\wedge_\chi}, \U^{\wedge_\chi}_\hbar)$. Its cohomology is obtained
by completing the cohomology of (\ref{eq:compl_right}). This means that the cohomology
of $\Vfun^r(\dual_{\U}(\M))$ is  obtained from the first resolution in the way explained in the previous
paragraph. The proof for the second resolution is similar.

The isomorphisms of the two $H^i$'s we have constructed  do not depend on the choice of a filtration on $\M$
for the same reason as for $\bullet_{\dagger,e}$ to be a functor. Also the construction is functorial in $\M$ for the standard homological
algebra reasons (two free resolutions are homotopic to each other).
\end{proof}

\subsubsection{Double centralizer property}\label{SSS_double_centr}
Our goal here is to prove the double centralizer property: that $\Hom_{\OCat^P_\nu}(\mathcal{P}_1,\mathcal{P}_2)\cong\Hom_{\OCat^\theta(\g,e)_\nu^R}(\Vfun(\mathcal{P}_1),\Vfun(\mathcal{P}_2))$
for any projective objects $\mathcal{P}_1,\mathcal{P}_2\in \OCat^P_\nu$. Our proof closely follows that of \cite[Theorem 5.16]{GGOR}.

The socle of $\Delta_P(\lambda)$ has maximal GK dimension
and hence is not annihilated by $\Vfun=\bullet_{\dagger,e}$. It follows that the natural homomorphism
$\M\rightarrow \Vfun^*(\Vfun(\M))$ is injective for any standardly filtered (=admitting a filtration
such that the subsequent quotients are parabolic Verma modules) module $\M$.
Similarly, a costandard object (=dual Verma) has no simple quotients annihilated by $\bullet_{\dagger,e}$.
Since the quotient $\Vfun^*(\Vfun(\M))/\M$ is annihilated by $\Vfun$ for any $\M$, we see
that the natural morphism $\Hom_{\OCat^P_\nu}(\M_1,\M_2)\rightarrow \Hom_{\OCat^\theta(\g,e)_\nu^R}(\Vfun(\M_1),\Vfun(\M_2))$
is an isomorphism provided $\M_2$ is standardly filtered, while $\M_1$ is costandardly filtered.

Similar results hold for $\OCat^{P,r}_{-\nu}$ and the functor $\Vfun^r$. Pick a projective $\mathcal{P}\in \OCat^P_\nu$.
Then $\dual_\U(\mathcal{P})$ is a tilting object in $(\OCat^{P,r}_{-\nu})^{opp}$ (see \cite[Proposition 4.2]{GGOR})
or, equivalently, in $\OCat^{P,r}_{-\nu}$. In particular, $\dual_\U(\mathcal{P})$ is a standardly filtered object in $\OCat^{P,r}_{-\nu}$. Also if $\M$ is a standardly filtered
object in $\OCat^P_\nu$, then $\dual_\U(\M)$ is a standardly filtered object in $(\OCat^{P,r}_{-\nu})^{opp}$ (again, see
\cite[Proposition 4.2]{GGOR}) and so a costandardly filtered object in $\OCat^{P,r}_{-\nu}$.
So $$\Hom_{\OCat^P_\nu}(\M,\mathcal{P})\cong\Hom_{\OCat^{P,r}_{-\nu}}(\dual_\U(\mathcal{P}),\dual_\U(\M))\cong
\Hom_{\Walg^{opp}\operatorname{-mod}^{R}}(\Vfun\circ \dual_\U(\mathcal{P}), \Vfun\circ \dual_\U(\M)).$$
As we have seen, $H^i\circ\Vfun\circ \dual_\U\cong H^i\circ\dual_\Walg\circ \Vfun$. In particular,
$\dual_\Walg\circ \Vfun(\mathcal{P}), \dual_\Walg\circ\Vfun(\M)$ are objects of $\Walg^{opp}$-$\operatorname{mod}^{R}_{-\nu}$
isomorphic to $\Vfun\circ \dual_\U(\mathcal{P}), \Vfun\circ \dual_\U(\M)$. It follows that
$$\Hom_{\Walg^{opp}\operatorname{-mod}^{R}_{-\nu}}(\Vfun\circ \dual_\U(\mathcal{P}), \Vfun\circ \dual_\U(\M))\cong
\Hom_{\Walg^{opp}\operatorname{-mod}^{R}_{-\nu}}(\dual_\Walg\circ \Vfun(\mathcal{P}), \dual_\Walg\circ\Vfun(\M)).$$
But the right hand side is just $\Hom_{\Walg\operatorname{-mod}^R_\nu}(\Vfun(\M),\Vfun(\mathcal{P}))=\Hom_{\OCat^{\theta}(\g,e)^R_\nu}(\Vfun(\M),\Vfun(\mathcal{P}))$.
From this argument we deduce that the spaces
$\Hom_{\OCat^P_\nu}(\M,\mathcal{P})$ and $\Hom_{\OCat^\theta(\g,e)^R_\nu}(\Vfun(\M),\Vfun(\mathcal{P}))$, at least, have the same dimension.
Since $\M$ is standardly filtered,  the former $\Hom$ is included into the latter, so they coincide.
For $\M$ we can take another projective, since any projective in $\OCat^P_\nu$ is standardly filtered.

\subsubsection{0-faithfulness}
Our goal here is to prove that under a certain assumption on $P$ the functor $\Vfun$ satisfies an even
stronger property that the double centralizer one, namely that $\Vfun$ is $0$-faithful meaning
that $\Hom_{\OCat^P_\nu}(\M_1,\M_2)\cong\Hom_{\OCat^\theta(\g,e)_\nu^R}(\Vfun(\M_1),\Vfun(\M_2))$
for any standardly filtered objects $\M_1,\M_2$. The condition
on $P$ is that the complement to the open orbit of $P_0$ in $\g_0(>0)$ has codimension (in $\g_0(>0)$)
bigger than $1$. We will elaborate on this condition for $\g\cong \sl_n$ in Subsection \ref{SS_parab_top_quot}.

\begin{Prop}\label{Prop:0_faith}
Suppose that $P$ satisfies the condition of the previous paragraph. Then the functor
$\Vfun$ is $0$-faithful.
\end{Prop}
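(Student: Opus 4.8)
The strategy is to reduce to the case $\g=\g_0$ and then to upgrade the double centralizer property of \ref{SSS_double_centr} by feeding in the geometric input that the codimension hypothesis supplies through \ref{SSS_dag_prop}.

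\emph{Reduction to $\g=\g_0$.} I would realize $\Vfun$ as $\Vfun_2$, so that $\Vfun(\M)$ is recovered from $\prod_\mu\M_\mu^{\wedge_0}$ via the equivalences $\hat{\Kfun}$ and $\bullet_{fin}$. A standardly filtered object of $\OCat^P_\nu$ has the form $\Delta^0(\M^0)$ with $\M^0$ standardly filtered in the $(\g_0,\p_0)$-parabolic category (the reverse of $\Delta_P(\lambda)=\Delta^0(\Delta_{P_0}(\lambda))$; exactness of $\Delta^0$ together with the weight computation $\Delta^0(\M^0)^{\g_{>0}}=\M^0$ do it), and by \ref{SSS_ind_image} we have $\Vfun\circ\Delta^0\cong\Delta^\theta_{\Walg}\circ\Vfun^0$. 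Since $\Delta^0$ and $\Delta^\theta_{\Walg}$ are fully faithful on the relevant categories, $0$-faithfulness of $\Vfun$ follows from that of $\Vfun^0$. In the reduced situation $\theta$ plays no role, $\OCat^\theta(\g,e)$ is the category of finite-dimensional $\Walg$-modules, $\Vfun^0=\bullet_{\dagger,e}$ with $K=P_0$, and $Y=P_0e$ is dense in $\overline{Y}=\g_0(>0)=\p_0^{\perp}$ (here $e$ is even, so the moment map $T^*(G_0/P_0)\to\overline{\Orb}$ is birational); the hypothesis becomes exactly $\codim_{\overline{Y}}(\partial Y)>1$, which is the codimension condition needed in \ref{SSS_dag_prop}.

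\emph{The machinery under the hypothesis.} For standardly filtered $\M$ the module $\Vfun(\M)=\M_{\dagger,e}$ is finite-dimensional (its dimension is $\mult_{\Orb}\M$), so the analogs of \cite[Prop.~3.3.4, Thm.~4.1.1]{HC} established in \ref{SSS_dag_prop} apply: the right adjoint $\Vfun^*=\bullet^{\dagger,e}$ satisfies $\Vfun^*\Vfun(\M)=(\M_{\dagger,e})^{\dagger,e}\in\OCat^{P_0}_{\nu,\overline{Y}}$, the unit $\eta_\M\colon\M\to\Vfun^*\Vfun(\M)$ is injective, and its cokernel $C_\M$ lies in $\ker\Vfun$. (Injectivity: any nonzero submodule of $\M$ meets the bottom standard subobject $\Delta_{P_0}(\mu)$, whose socle is simple of maximal GK dimension, hence not in $\ker\Vfun$; since $\Vfun\eta_\M$ is an isomorphism by the triangle identity and $\Vfun\Vfun^*\cong\operatorname{id}$ on $\operatorname{im}\Vfun$, one gets $\ker\eta_\M\in\ker\Vfun$, forcing it to vanish, and $C_\M\in\ker\Vfun$.) Moreover $\Vfun^*\Vfun(\M)$ has no nonzero subobject in $\ker\Vfun$, since $\Hom(L,\Vfun^*\Vfun(\M))\cong\Hom_{\Walg}(\Vfun L,\Vfun\M)=0$ for every simple $L\in\ker\Vfun$.

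\emph{Conclusion and the main obstacle.} For standardly filtered $\M_1,\M_2$, applying $\Hom(\M_1,-)$ to $0\to\M_2\xrightarrow{\eta}\Vfun^*\Vfun(\M_2)\to C_{\M_2}\to0$ and using the adjunction $\Hom(\M_1,\Vfun^*\Vfun(\M_2))\cong\Hom(\Vfun\M_1,\Vfun\M_2)$, the canonical map $\Hom_{\OCat^{P_0}_\nu}(\M_1,\M_2)\to\Hom(\Vfun\M_1,\Vfun\M_2)$ is an isomorphism once $\Hom(\M_1,C_{\M_2})=0$. Given a nonzero $f\colon\M_1\to C_{\M_2}$, its image $N\in\ker\Vfun$ is nonzero; pulling the sequence back along $\Vfun^*\Vfun(\M_2)\twoheadrightarrow C_{\M_2}$ yields $0\to\M_2\to D\to N\to0$ with $D\subseteq\Vfun^*\Vfun(\M_2)$, and if this splits then $N$ embeds in $\Vfun^*\Vfun(\M_2)$, contradicting the previous paragraph. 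Thus $0$-faithfulness reduces, after a dévissage over the composition series of $N$ and the standard filtration of $\M_2$, to the vanishing $\operatorname{Ext}^1_{\OCat^{P_0}_\nu}(L(\lambda),\Delta_{P_0}(\mu))=0$ for every $\mu$ and every simple $L(\lambda)\in\ker\Vfun$. Since such $L(\lambda)$ has $\VA(L(\lambda))\subseteq\partial Y$ of codimension $\ge2$ in $\overline{Y}$ while $\Delta_{P_0}(\mu)$ has associated variety all of $\overline{Y}$, this is where $\codim_{\overline{Y}}(\partial Y)>1$ is used: passing to $\mathcal{D}$-modules on $G_0/P_0$, $\Delta_{P_0}(\mu)$ corresponds to the $!$-extension of a connection from the big cell and $L(\lambda)$ to a module supported on a closed subvariety of codimension $\ge2$, and a local-cohomology computation forces $\operatorname{Ext}^1$ between them to vanish. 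I expect this last step — making the geometric $\operatorname{Ext}^1$-vanishing precise, and in particular pinning down exactly why codimension $>1$ (rather than its generic failure) is what is needed — to be the main obstacle; everything preceding it is formal, resting on the quotient/adjunction formalism of \ref{SSS_quot}, \ref{Thm:quot_prop_gen} and \ref{SSS_dag_prop}. (The elaboration of the hypothesis for $\g\cong\mathfrak{sl}_n$ is deferred to \ref{SS_parab_top_quot}.)
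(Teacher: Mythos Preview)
Your reduction to $\g=\g_0$ does not work. The claim that every standardly filtered object of $\OCat^P_\nu$ is of the form $\Delta^0(\M^0)$ with $\M^0$ standardly filtered is false, and the underlying assertions that $\Delta^0(\M^0)^{\g_{>0}}=\M^0$ and that $\Delta^0$, $\Delta^\theta_{\Walg}$ are fully faithful are false as well. Already for $\g=\sl_2$, $\g_0=\h$, $e=0$ (so $\p=\b$ and $\Delta^0$ is the ordinary Verma functor), the big projective $P(-\rho)$ in the principal block is indecomposable and standardly filtered by $\Delta(\rho)$ and $\Delta(-\rho)$; but any $\M^0\in\OCat^{P_0}_\nu$ is a semisimple $\h$-module, so $\Delta^0(\M^0)$ is a direct sum of Vermas and can never equal $P(-\rho)$. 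The same example shows $\Delta^0$ is not fully faithful (there are nonzero maps $\Delta(\rho)\to\Delta(-\rho)$). The paper reduces differently: it uses $\mathcal{F}(\M)=\prod_\mu\mathcal{F}^0(\M_\mu)$ and $\mathcal{G}(\Nil)=\bigoplus_\mu\mathcal{G}^0(\Nil_\mu)$, so $\mathcal{G}\mathcal{F}(\M)=\bigoplus_\mu\mathcal{G}^0\mathcal{F}^0(\M_\mu)$, and checks that each $\t$-weight space $\M_\mu$ of a standardly filtered $\M$ is standardly filtered in $\OCat^{P_0}_\nu$ (this follows from $\Delta_P(\lambda)_\mu\cong(\text{fin.\ dim.\ }G_0\text{-module})\otimes\Delta_{P_0}(\lambda)$). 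This reduces $0$-faithfulness to showing $\mathcal{G}^0\mathcal{F}^0(\M_\mu)\cong\M_\mu$ for standardly filtered $\M_\mu$, i.e.\ to the $\g_0$ case.

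For the $\g_0$ case your outline is formally correct but leaves the hard step undone, and the paper's route is both simpler and different in kind. You reduce to $\operatorname{Ext}^1_{\OCat^{P_0}_\nu}(L,\Delta_{P_0}(\mu))=0$ for simples $L\in\ker\Vfun$ and gesture at $\mathcal{D}$-modules and local cohomology; this is not a standard vanishing and would need a genuine argument. The paper instead proves $\M\cong\mathcal{G}\mathcal{F}(\M)$ directly for $\M=\Delta_{P_0}(\lambda)$ (enough by the $5$-lemma) by a filtered/Rees computation: one must show $\M_\hbar=(\M_\hbar^{\wedge_\chi})_{fin}$, which reduces modulo $\hbar$ to $M=(M^{\wedge_\chi})_{fin}$ with $M=\K[\g_0(>0)]^{\oplus n}$, and then $(M^{\wedge_\chi})_{P_0\text{-}fin}=\K[P_0e]^{\oplus n}$; the codimension $>1$ hypothesis gives $\K[P_0e]=\K[\g_0(>0)]$ by Hartogs. (The same hypothesis also forces $P_0e$ simply connected, hence $R/R^\circ$ trivial, so no invariants need be taken.) This is where the codimension enters, and it is an elementary normality/Hartogs argument rather than an $\operatorname{Ext}^1$-vanishing.
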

\begin{proof}
The 0-faithfulness condition is equivalent to $\mathcal{G}\circ \mathcal{F}(\M)\cong\M$ for any standardly filtered
$\M\in \OCat^P_\nu$, where  $\mathcal{F}$ is the completion functor
$\M\mapsto \M^\wedge$ and $\mathcal{G}$ is the right adjoint constructed in  the proof of Proposition \ref{Thm:quot_prop_gen}.

First, we reduce to the $\g=\g_0$ case.  Recall
that $\mathcal{F}(\M)=\prod_{\mu\in \t^*}\mathcal{F}^0(\M_\mu), \mathcal{G}(\Nil)=\bigoplus_{\mu} \mathcal{G}^0(\Nil_\mu)$
and therefore \begin{equation}\label{eq:0_faith_eq} \mathcal{G}(\mathcal{F}(\M))=\bigoplus_\mu \mathcal{G}^0(\mathcal{F}^0(\M_\mu)).\end{equation} So it is enough to
check that $\mathcal{G}^0(\mathcal{F}^0(\M_\mu))\cong\M_\mu$ for any $\t$-weight space $\M_\mu$ of $\M$.

We claim that if $\M$ is standardly filtered (in $\OCat^P_\nu$), then any $\t$-weight space $\M_\mu$
is standardly filtered in $\OCat^{P_0}_\nu$. It is enough to check this when $\M$ is a parabolic
Verma module. We have $\Delta_P(\mu)=\Delta^0(\Delta_{P_0}(\mu))$. So as a $\g_0$-module,
$\Delta_P(\mu)$ is $U(\g_{<0})\otimes \Delta_{P_0}(\mu)$. So any $\t$-weight space of $\Delta_P(\mu)$
is the tensor product of $\Delta_{P_0}(\mu)$ with some finite dimensional $G_0$-module and hence
is standardly filtered.

The previous paragraph together with (\ref{eq:0_faith_eq}) reduces the proposition to the case
when  $\g_0=\g$ (and $e$ is even). We consider that case until the end of the proof. We remark that, thanks the 5-lemma,
in order to check the isomorphism $\M\cong \mathcal{G}(\mathcal{F}(\M))$ for any standardly filtered module $\M$, it suffices
to consider the case when $\M$ is a parabolic Verma module.

Let us equip $\M=\Delta_P(\lambda)=\U\otimes_{U(\mathfrak{p})}L_{00}(\lambda)$ with a good filtration induced from the trivial filtration on $L_{00}(\lambda)$.
Using the realization of $\Vfun$ as $\bullet_{\dagger,e}$ and $\Vfun^*$ as $\bullet^{\dagger,e}$,
we see that it is enough to prove $\mathcal{G}_\hbar(\M_\hbar^{\wedge_\chi})=\M_\hbar$,
where $\mathcal{G}_\hbar$  is the composition of
\begin{itemize}
\item the functor $\bullet_{fin}$ of taking the maximal subspace where the $\nu$-shifted $\p$-action integrates
to $P$ and the Kazhdan action of $\K^\times$ is locally finite
\item and the functor of taking $R/R^\circ$-invariants.
\end{itemize}
We remark that the orbit $Pe$ is simply connected because it is the complement of a codimension 2 subvariety in an affine space.
On the other hand, we have a covering $P/Z_P(e)^\circ\twoheadrightarrow Pe$ so $Z_P(e)^\circ=Z_P(e)$ and $R/R^\circ$ is trivial.
So $\mathcal{G}_{\hbar}(\bullet)=\bullet_{fin}$.
 To show
that $\M_\hbar=(\M_\hbar^{\wedge_\chi})_{fin}$ it is enough to check (compare to \cite[3.3]{HC}) the analogous
equality modulo $\hbar$: i.e., that $M=(M^{\wedge_\chi})_{fin}$, where $M:=\M_\hbar/\hbar\M_\hbar$.
Thanks to our choice of a good filtration on  $\M$, we see that $M$ is a free $\K[\g(>0)]$-module, say
$\K[\g(>0)]^{\oplus n}$. Then we can apply results of \cite[3.2]{HC} to see that $(M^{\wedge_\chi})_{P-fin}=\K[Pe]^{\oplus n}$. The codimension condition implies that $\K[Pe]=\K[\g(>0)]$ which yields $M=(M^{\wedge_\chi})_{fin}$.
\end{proof}

\begin{Rem}
The proof of the double centralizer property can also be deduced from results of Stroppel, \cite{Stroppel_ger},\cite[Theorem 10.1]{Stroppel_eng}.
Namely, her results imply that $\Vfun^0$ (that is a quotient functor killing all simples with
non-maximal GK dimension) has double centralizer property. We are going to deal with integral categories.
Similarly to the proof of Proposition
\ref{Prop:0_faith}, it is enough to check that if $\mathcal{P}$ is projective in $\OCat^P$, then all weight
spaces $\mathcal{P}_\mu$ are projectives in $\OCat^{P_0}$. If $\mathcal{P}$ is a dominant Verma, then this is checked
similarly to the parallel part of the proof of Proposition \ref{Prop:0_faith}. In general, $\mathcal{P}$ is a direct
summand in the tensor product of a dominant Verma and a finite dimensional $\g$-module. The claim that
all $\mathcal{P}_\mu$ are projective easily follows.
\end{Rem}

\subsubsection{Summary of properties}\label{SS_summary}
Here is the summary of our results describing the properties of the functor $\Vfun$.

\begin{Thm}\label{Thm:Vfun}
There is an exact functor $\Vfun:\OCat^P_\nu\rightarrow \OCat^\theta(\g,e)_\nu^R$ with the following properties:
\begin{itemize}
\item[(i)] The essential image of $\Vfun$ is closed under taking subquotients and   $\Vfun$ is a quotient onto its image.
The functor $\Vfun$ annihilates precisely
the modules  whose all $\t$-weight spaces have GK dimension less than $\dim\g_0(<0)$.
\item[(ii)] The functor $\Vfun$ intertwines the products with HC bimodules:
$\Vfun(X\otimes_\U \M)\cong X_\dagger\otimes_{\Walg}\Vfun(\M)$.
\item[(iii)] Let $\Delta^0$ be the induction functor $U(\g)\otimes_{U(\g_{\geqslant 0})}\bullet$.
Then $\Vfun(\Delta^0(\M^0))\cong\Delta^\theta_{\Walg}(\Vfun^0(\M^0))$ for $\M^0\in \OCat^{P_0}_\nu$. Here
$\Vfun^0: \OCat^{P_0}_\nu\rightarrow \OCat(\g_0,e)^R_\nu$ is an analog of $\Vfun$ for $(\g_0,e)$.
\item[(iv)] The dimension of $\Vfun^0(\M^0)$ coincides with the multiplicity of $\M_0$ (on $P_0e$).
In particular, the character of $\Vfun(\Delta_P(\mu))$ equals $\dim L_{00}(\mu)e^{\mu-\rho}\prod_{i=1}^k (1-e^{\mu_i})^{-1}$.
Here $\mu_1,\ldots,\mu_k$ are the weights of $\t$ on $\z_{\g}(e)_{<0}$.
\item[(v)] $\Vfun$ commutes with the naive duality: $\Vfun(\M^\vee)\cong\Vfun(\M)^\vee$.
\item[(vi)] The image of the simple $L(\mu)\in \mathcal{O}^P_\nu$ under $\Vfun$ equals $L^\theta_{\Walg}(\Vfun^0(L_0(\mu)))$.
In the case when $\nu=0$, the module $\Vfun^0(L_0(\mu))$ is computed as follows. Let $w$ be the element of the Weyl group $W_0$ corresponding to $\mu$. To $w$ we can assign the subgroup $H_0$ in the Lustzig quotient
$\bA_0$ associated to $(\g_0,e)$ and also an irreducible  $H_0$-module $\mathcal{V}$. Then
$\Vfun^0(L_0(\mu))$ is the homogeneous bundle over $\bA_0/H_0$ with fiber $\mathcal{V}\otimes \Nil^0$,
where $\Nil^0$ is the irreducible $\Walg^0$-module corresponding to the point $H_0\in \bA_0/H_0$.
\item[(vii)] The functor $\Vfun$ has double centralizer property, i.e., is fully faithful on the projective objects.
\item[(viii)] Assume that the codimension of $\g_0(>0)\setminus P_0e$ in $\g_0(>0)$ is bigger than $1$. Then
$\Vfun$ is $0$-faithful.
\end{itemize}
\end{Thm}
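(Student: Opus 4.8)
Theorem \ref{Thm:Vfun} is essentially a compendium of the properties of $\Vfun$ that were established piece by piece in Subsections \ref{SS_Vfun_constr} and \ref{SS_Vfun_prop}, so the plan is to assemble them. The existence of the functor, together with its three equivalent realizations $\Vfun_1=\bullet_{\dagger,e}$, $\Vfun_2=\widehat{\Kfun}(\bullet^\wedge)_{fin}$ and $\Vfun_3=\Kfun(\Wh_\nu(\,^{n^{-1}}\!\widehat{\bullet}))$, is the content of \ref{SSS_Vfun_def}--\ref{SSS_V23_iso}; exactness is immediate from the realization as $\bullet_{\dagger,e}$ (see \ref{SSS_dag_prop}) or from the duality isomorphism of \ref{SSS_V_dual}. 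Property (i), the quotient-onto-image statement and the identification of $\ker\Vfun$ with the modules all of whose $\t$-weight spaces have submaximal GK dimension, is precisely Proposition \ref{Thm:quot_prop_gen} together with the discussion following it. Property (ii), $\Vfun(X\otimes_\U\M)\cong X_\dagger\otimes_\Walg\Vfun(\M)$, was proved for $\Vfun_1$ in \ref{SSS_dag_prop} and transported to $\Vfun_2,\Vfun_3$ in \ref{SSS_HC_products}; property (iii), $\Vfun(\Delta^0(\M^0))\cong\Delta^\theta_\Walg(\Vfun^0(\M^0))$, is the conclusion of \ref{SSS_ind_image}; property (v) follows from the isomorphism $\Vfun_3(\bullet)\cong\Vfun_2(\bullet^\vee)^\vee$ of \ref{SSS_V_dual}, the identification $\Vfun_2\cong\Vfun_3$, and the involutivity of $\bullet^\vee$.

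For (iv) I would argue as follows. Since $\Vfun^0=\bullet_{\dagger,e}$ for $(\g_0,e)$ and, by the computation in \ref{SSS_Vfun_def} applied with $\g=\g_0$, the slice $S$ meets $\g_0(>0)\supseteq\VA(\M^0)$ only in $\{e\}$, the module $\Vfun^0(\M^0)$ is finite dimensional and $\gr\Vfun^0(\M^0)$ is the restriction of $\gr\M^0$ to $S$; hence $\dim\Vfun^0(\M^0)$ equals the generic rank of $\gr\M^0$ along the dense orbit $P_0e\subset\g_0(>0)$, i.e.\ the multiplicity $\mult_{P_0e}(\M^0)$ (this is the $\bullet_{\dagger,e}$-analogue of property (iv) of $\bullet_\dagger$ from Subsection \ref{SS_dagger_functor}). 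Taking $\M^0=\Delta_{P_0}(\mu)$, whose associated graded is $\K[\g_0(>0)]\otimes L_{00}(\mu)$, gives $\dim\Vfun^0(\Delta_{P_0}(\mu))=\dim L_{00}(\mu)$ concentrated in a single $\t$-weight; property (iii) together with the character formula of Corollary \ref{Cor:Verm_char}, applied with the weights $\mu_1,\dots,\mu_k$ of $\t$ on $\z_\g(e)_{<0}$, then yields $\ch\Vfun(\Delta_P(\mu))=\dim L_{00}(\mu)\,e^{\mu-\rho}\prod_{i=1}^k(1-e^{\mu_i})^{-1}$.

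Property (vi) is the one requiring the most care, and I would treat it last. Because $\Vfun$ is a quotient functor onto its image and that image is closed under subquotients, $\Vfun$ carries each simple $L(\mu)\notin\ker\Vfun$ to a simple object of $\OCat^\theta(\g,e)^R_\nu$; the task is to name it. For $\g=\g_0$ (so $\nu=0$) one passes through the parabolic Bernstein--Gelfand equivalence of \ref{SSS_BG_equiv}: writing $L_0(\mu)\cong\M\otimes_\U\Delta_{P_0}(\varrho_0)$ for the simple HC bimodule $\M$ attached to $w$, property (ii) gives $\Vfun^0(L_0(\mu))\cong\M_\dagger\otimes_{\Walg^0}\Vfun^0(\Delta_{P_0}(\varrho_0))$, and since $\Vfun^0(\Delta_{P_0}(\varrho_0))$ is the one-dimensional $\Walg^0$-module killed by the $\dagger$-image of $\J_{P_0,0}$ (as in \ref{SSS_V23_iso}, using Remark \ref{Rem:right_cell} to identify the relevant Lusztig subgroup), the answer is read off from the description of $\bullet_\dagger$ on simple HC bimodules recalled in Subsection \ref{SS_HC_semis}; this produces exactly the homogeneous bundle over $\bA_0/H_0$ with fibre $\mathcal V\otimes\Nil^0$. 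For general $\theta$ one combines (iii) with the exactness and quotient property of $\Vfun$ to conclude that $\Vfun(L(\mu))=L^\theta_\Walg(\Vfun^0(L_0(\mu)))$ for the $\Walg^0$-module just computed. Finally, property (vii) is Subsection \ref{SSS_double_centr} and property (viii) is Proposition \ref{Prop:0_faith}. The only genuinely delicate point in this assembly is the bookkeeping in (vi) — matching the $\bullet_\dagger$-image of the simple HC bimodule with the combinatorial datum $(x,y,\mathcal V)$ attached to $w$; everything else is a direct citation.
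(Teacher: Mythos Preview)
Your proposal is correct and follows essentially the same route as the paper: it too treats Theorem \ref{Thm:Vfun} as a summary, citing Proposition \ref{Thm:quot_prop_gen} for (i), \ref{SSS_HC_products} for (ii), \ref{SSS_ind_image} for (iii), \ref{SSS_V_dual} plus $\Vfun_2\cong\Vfun_3$ for (v), \ref{SSS_double_centr} for (vii), and Proposition \ref{Prop:0_faith} for (viii), while singling out (vi) as the only item needing a fresh argument. Your treatment of (vi) matches the paper's almost exactly --- Bernstein--Gelfand equivalence plus (ii) to get $\Vfun^0(L_0(\mu))\cong X^0_{\dagger^0}\otimes_{\Walg^0}\Vfun^0(\Delta_{P_0}(\rho))\cong X^0_{\dagger^0}$ as a left $\Walg^0$-module, then \cite[Remark 7.7]{W_classif} for the homogeneous-bundle description, and for general $\theta$ the observation that $\Vfun(L(\mu))$ is a simple quotient of $\Delta^\theta_\Walg(\Vfun^0(L_0(\mu)))$, hence its unique simple head.
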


Everything but (vi) has already been proved. Let us prove (vi). The description of $\Vfun^0(L_0(\mu))$
follows from  \cite[Remark 7.7]{W_classif}, (ii), the Bernstein-Gelfand
equivalence, and the special case of (iii) describing  $\Vfun^0(\Delta_{P_0}(\rho))$ from
the proof of Proposition \ref{Thm:quot_prop_gen}. Namely, let $X^0$ be the Harish-Chandra $\U^0$-bimodule
corresponding to $L_0(\mu)$ under the parabolic Bernstein-Gelfand equivalence from \ref{SSS_BG_equiv}.
We have $\Vfun^0(L_0(\mu))\cong X^0_{\dagger^0}\otimes \Vfun^0(\Delta_{P_0}(\rho))$ by \ref{SSS_HC_products}.
But $\Vfun^0(\Delta_{P_0}(\rho))$ is the one-dimensional module $\Walg^0/(\J_{P_0})_{\dagger^0}$ as we have seen
in \ref{SSS_V23_iso}. The object $X^0$ is annihilated by $\J_{P_0}$ from the right and so $\Vfun^0(L_0(\mu))\cong X^0_{\dagger^0}$
as a left $\Walg^0$-module. Now we are in position to use \cite[Remark 7.7]{W_classif} and get the description of
$\Vfun^0(L_0(\mu))$ in (vi).

To get the description of $\Vfun(L(\mu))$
one can argue as follows. The module $\Vfun(L(\mu))$ is a quotient of $\Vfun(\Delta^0(L_0(\mu)))\cong\Delta^\theta_{\Walg}(\Vfun^0(L_0(\mu)))$.
The object $\Vfun(L(\mu))\in \OCat^\theta(\g,e)^R$ is simple thanks to (i). But $\Vfun^0(L_0(\mu))$
is a simple object in $\OCat^\theta(\g_0,e)^R$. So the object   $\Delta^\theta_{\Walg}(\Vfun^0(L_0(\mu)))
\in \OCat^\theta(\g,e)^R$ has simple head, $L^\theta_{\Walg}(\Vfun^0(L_0(\mu)))$, and therefore this head has to be isomorphic to $\Vfun(L(\mu))$.

We recall that it is parts (iv) and (vi) that allow us to compute the characters of the modules
$L^\theta_{\Walg}(\Nil^0)\in \OCat^\theta(\g_0,e)$. We have mentioned already that these characters
do not depend on the choice of $\Nil^0$ in the $A_0(e)=R/R^\circ$-orbit. This is because the
$A_0(e)$-action on the classes of irreducibles in $\OCat^\theta(\g_0,e)_\nu$ does not change the
character and the functor $L^\theta_{\Walg}$ intertwines the $A_0(e)$-actions on the classes of simples
in $\OCat^\theta(\g_0,e)_\nu$ and in $\OCat^\theta(\g,e)_\nu$.

Let us now summarize how the computation of the character $\operatorname{ch}L^\theta_{\Walg}(\Nil^0)$ works, step by step.

1) Let $W_0$ be the Weyl group of $\g_0$. Let $\varrho_0$ be the $\g_0$-dominant element representing the central character of $\Nil^0$ and let $c^0_{l}$ be the left cell in $W_0$ corresponding to $\Nil^0$, see Subsection \ref{SS_W_classif}.
The choice of a parabolic subalgebra $\p_0=\bigoplus_{i\geqslant 0}\g_0(i)$ defines a right cell $c^0_{r}$, see \ref{SSS_BG_equiv}.
Pick $w\in c^0_l\cap c^0_r$.

2)  Decompose the class of $L(w\varrho_0)$ in $K_0(\OCat_\nu^P)$ via the classes of parabolic Verma modules:
\begin{equation}\label{eq:char_par} L(w\varrho_0)=\sum_{u}c_{wu}\Delta^P(u\varrho_0).\end{equation}
Here  $u$ runs
over all elements in $W/W_{\varrho_0}$ such that $u\varrho_0$ is strictly dominant for $\g_0(0)$.
The numbers $c_{wu}$ are the parabolic Kazhdan-Lusztig coefficients.

3) Let $\bA_0$ be the Lusztig quotient for the two-sided cell in $W_0$ containing $c^0_l,c^0_r$ and $H_0$ be the
subgroup of $\bA_0$ corresponding to the left cell $c^0_l$. Then to $w$ viewed as an element in $W_0$
we can assign a pair $(x,\mathcal{V})$ of a point $x\in \bA_0/H_0$ and an irreducible $H_0$-module $\mathcal{V}$, see Subsection \ref{SS_HC_semis} (there we were dealing with triples $(x,y,\mathcal{V})$ but in the present situation $y$
is uniquely determined, see Remark \ref{Rem:right_cell}).

4) Let $\mathfrak{t}$ stand for the center of $\g_0$. Let $\mu_1,\ldots,\mu_k\in \mathfrak{t}^*$
denote the weights of $\mathfrak{t}$ in $\g_{<0}\cap \z_\g(e)$ (counted with multiplicities).
Then we have the following formula for $\operatorname{ch}L^\theta_{\Walg}(\Nil^0)$
$$\operatorname{ch}L^\theta_{\Walg}(\Nil^0)=\frac{|H_0|}{|\bA_0|\dim \mathcal{V}}\left(\sum_{u} c_{wu}e^{u\varrho_0-\rho}\dim L_{00}(u\varrho_0)\right)\prod_{i=1}^k(1-e^{\mu_i})^{-1}.$$
Here the range of summation is the same as in (\ref{eq:char_par}) and $L_{00}(u\varrho_0)$ is the
irreducible finite dimensional $\g_0(0)$-module with highest weight $u\varrho_0-\rho$.

We remark that the module $L^\theta_{\Walg}(\Nil^0)$ is finite dimensional if and only if $w\varrho_0$
has the form $w' \varrho$ for $w'$ lying in the two-sided cell corresponding to $\Orb$ and compatible
with a dominant weight $\varrho$.

\section{Goldie ranks}\label{S_Goldie}
In this section we fix a special orbit $\Orb\subset \g$ and take the W-algebra  $\Walg$ for that orbit.
Let $\dcell$ be the two-sided cell corresponding to $\Orb$. By $c_w$ we denote the left cell in $\dcell$
containing an element $w$.
\subsection{Reminders on Goldie ranks}\label{SUBSECTION_Goldie_rem}
In this subsection we will recall a few classical facts about Goldie ranks proved by Joseph.

First of all, following Joseph, we will consider some new algebras.
Namely, consider the algebra $L(L(w  \lambda),L(w \lambda))$ of $\g$-finite linear endomorphisms of
$L(w\lambda)$. It is known, see \cite[2.5]{Joseph_I} that
this algebra is prime and noetherian, so  has Goldie rank.
For a dominant integral weight $\lambda$ and compatible $w\in W$,  let $\gol_w(\lambda)$
denote the Goldie rank of $L(L(w\lambda),L(w\lambda))$.

Each left cell $c$ has a unique distinguished involution $d_c$ called a Duflo involution.
As Joseph proved in \cite[3.4]{Joseph_I}, for each dominant $\lambda$ compatible with $d_c$,
we have \begin{equation}\label{eq:Goldie_Duflo}\Goldie(\U/J(d_c \lambda))=\gol_{d_c}(\lambda).\end{equation} So the collection $\gol_w(\lambda)$
contains all Goldie ranks that we have originally wanted to compute.

According to \cite[Corollary 5.12]{Joseph_I} there is a polynomial
$q_w(\lambda)$ such that $\gol_w(\lambda)=q_w(\lambda)$ for all dominant $\lambda$ compatible with $w$
(we remark that our notation here is different from Joseph's; we write $q_w(\lambda)$ for what Joseph in
\cite{Joseph_II} would denote $\tilde{q}_w(\lambda)$; for Joseph $q_w(\lambda)=\gol_w(w^{-1}\lambda)$).
In particular, we have $\Goldie(\U/J(w\lambda))=p_w(\lambda)$, where $p_w(\lambda):=q_{d_c}(\lambda), w\in c$. The polynomials $p_w(\lambda)$ are called {\it Goldie rank polynomials}.
Furthermore, the quotient $q_w(\lambda)/p_w(\lambda)$
is a positive integer independent of $\lambda$. This positive integer is known as
Joseph's {\it scale factor} and is denoted by $z_w$. In fact, below we will only need
to know that $z_w$ is a number bigger than or equal to $1$ (the equality $\Goldie(\U/J(w\lambda))\leqslant \gol_w(\lambda)$
is a consequence of the inclusion $\U/J(w\lambda)\subset L(L(w\lambda),L(w\lambda))$
that is provided by the $\U$-action on $L(w\lambda)$ -- such an inclusion implies the inequality, see
\cite{Warfield}; but the claim that the ratio
is independent of $\lambda$ is not so easily seen). A crucial property of the polynomials
$p_w(\lambda)$ with $w\in \dcell$ is that their span is an irreducible $W$-submodule of $\K[\h^*]$,
see \cite[Theorem 5.5]{Joseph_II}. This submodule  is isomorphic to the special $W$-module
in $\Irr(W)^\dcell$, \cite[14.15]{Jantzen}. Moreover, if we choose elements $w_1,\ldots,w_k$, one in each left cell of $\dcell$,
then the polynomials $p_{w_1},\ldots, p_{w_k}$ form a basis in the submodule.

In \cite[Theorem 5.1]{Joseph_II}  Joseph determined the polynomial $p_w$ up to a scalar multiple.
So to complete the Goldie ranks computation one needs to determine a collection of scalars, say $s_c$, one for each
left cell $c$. As Joseph, basically, pointed out in \cite[Remark 1 in 5.5]{Joseph_II}, if one knows the scale factors
$z_w$ for all $w\in \dcell$, then one can, in principle, determine the scalars $s_c$ for all left cells $c\subset \dcell$
up to a common scalar multiple. Let us explain how this works.

In \cite[5.5, Remark 1]{Joseph_II}, Joseph finds a formula expressing $y.q_w, y\in W, w\in \dcell$ as a linear
combination of elements $q_{w'}$ with $w'\in \dcell$. The coefficients are expressed in terms of the multiplicities
in the BGG category $\mathcal{O}$ and so are known. If one knows the coefficients $z_w$ for all $w\in\dcell$,
then one can express $y.p_{w_i}$ in terms of the $p_{w_j}, j=1,\ldots,k$, say
\begin{equation}\label{eq:lin_comb}
y.p_{w_i}=\sum_{j=1}^k b_j^i(y) p_{w_j}.
\end{equation}
But the elements $p_{w_j}$ are linearly independent and their span is an irreducible $W$-module
so the basis $(p_{w_i})_{i=1}^k$ is determined uniquely from (\ref{eq:lin_comb}) up to a scalar multiple.

That single multiple can be determined uniquely  if one knows that there is $w\in \dcell$ such that
$p_w(\lambda)=1$ for some integral weight $\lambda$. The latter happens if there are compatible $w\in \dcell$
and dominant integral $\lambda$  such that  $J(w\lambda)$ is completely prime, i.e.,  $\Goldie(\U/J(w\lambda))=1$.
In fact, there is a conjecture of Joseph saying that for each $w\in \dcell$ there is $\lambda$
with $p_w(\lambda)=1$ (of course, generally, $\lambda$ will not be dominant).

To finish this subsection let us mention that Joseph also had a conjecture computing the scale factors
$z_w$, see \cite[5.3]{Joseph_Duflo}.  It is unclear to us whether his conjecture is compatible
with Theorem \ref{Thm:main_appendix}.

\subsection{Scale factors: Joseph vs Premet}\label{SS_scale}
Let $w\in \dcell$. The goal of this subsection is to provide a formula for Joseph's scale factors $z_w$, see Subsection \ref{SUBSECTION_Goldie_rem},
in  terms of the triple $x,y,\mathcal{V}$ corresponding to $w$, see Subsection \ref{SS_HC_semis},
and certain numbers that we call {\it Premet's scale factors}\footnote{The attribution to Premet
is made because of his beautiful result saying that this scale factor is always integral.
The first version of our proof below used that result. In fact --
as was communicated to the author by Premet -- one does not need  that
in the proof, it is enough to use the fact that the scale factor is bigger or equal
than 1, which was established by the author. It is pleasant when somebody else
understands your work better than you do...} that are given by
$$\operatorname{pr}_w(\lambda)=\frac{d_w(\lambda)}{p_w(\lambda)}.$$
Here $\lambda$ is a dominant weight compatible with $w$, $p_w(\lambda)$ is the Goldie rank of $J(w\lambda)$,
and $d_w(\lambda)$ is the dimension of the irreducible $\Walg$-module lying over $J(w\lambda)$.
Of course, $\operatorname{pr}_w(\lambda)$ depends only on the left cell $c_w$ containing $w$ and on $\lambda$
(below we will see that it is actually independent of $\lambda$).

Pick a regular element $\varrho\in \Lambda^+$.
Recall that we represent $w\in \dcell$ as a triple $(x,y,\mathcal{V})$, see Subsection \ref{SS_HC_semis}.
Below we write $\K$ for the trivial $\bA_{(x,y)}$-module.
Let $Y$ denote the subset of $Y^{\Lambda}$ consisting of all irreducibles with central character $\varrho$.
Below for $x\in Y$ lying over a primitive ideal $\J=J(w\varrho)$ we write $d_x:=d_w(\varrho),\gol_x:=p_w(\varrho),
\operatorname{pr}_x:=\operatorname{pr}_w(\varrho)$.

\begin{Prop}\label{Thm:scale}
We have $$z_w=\frac{\operatorname{pr}_x}{\operatorname{pr}_y}\cdot\frac{|\bA_y|}{|\bA_{x,y}|}\dim \mathcal{V}.$$
\end{Prop}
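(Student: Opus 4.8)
The plan is to compute the two Goldie ranks whose ratio is $z_w$. By the definition recalled in Subsection \ref{SUBSECTION_Goldie_rem}, $z_w=\Goldie L(L(w\varrho),L(w\varrho))/\gol_x$, where $\gol_x=\Goldie(\U/J(w\varrho))$. Write $A_w:=L(L(w\varrho),L(w\varrho))$; by \cite[2.5]{Joseph_I} it is a prime Noetherian Harish--Chandra $\U$-bimodule which is moreover an algebra, with $\VA(A_w)=\overline{\Orb}$ and both annihilators equal to $J(w\varrho)$. The idea is to push the computation of Goldie ranks to the W-algebra via $\bullet_\dagger$ and to read the answer off the combinatorics of \cite{W_classif}.

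The key auxiliary fact is a comparison of Goldie ranks across $\bullet_\dagger$. Let $\M$ be a prime Harish--Chandra $\U$-bimodule with $\VA(\M)=\overline{\Orb}$ which is an algebra. Then, by the properties of $\bullet_\dagger$ recalled in Subsection \ref{SS_dagger_functor}, $\M_\dagger$ is a finite dimensional $Q$-equivariant $\K$-algebra, $Q$-prime since $\M$ is prime, hence (being finite dimensional over $\K$, and semisimple by the structure theory of reductions of such quotients) of the form $\bigoplus_{z'\in\bA z}\Mat_{M}(\K)$ for a single $\bA$-orbit of some $z\in Y$ and a common block size $M$; and $\Goldie(\M)=M/\operatorname{pr}_z$, where $\operatorname{pr}_z$ is the Premet scale factor of the irreducible $\Walg$-module $E_z$. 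This is because $\bullet_\dagger$ is, by the quantum slice construction of Subsection \ref{SSS_slice}, a microlocalization at $\chi$ followed by restriction to the slice, so the generic localization of $\M$ along $\Orb$ is a matrix algebra over a division ring Morita equivalent to $\operatorname{End}_\K(E_z)$, and comparing the ``column'' of $\M$ with the column $E_z$ of $\M_\dagger$ the discrepancy between Goldie rank and slice dimension is exactly $\operatorname{pr}_z$. Applied to $\M=\U/J(w\varrho)\cong\M_d$ (the Duflo object, triple $(x,x,\operatorname{triv})$, Subsection \ref{SS_HC_semis}) this recovers $\gol_x=d_x/\operatorname{pr}_x$, consistent with the definition of $\operatorname{pr}_x$.

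It remains to identify $(A_w)_\dagger$. Under the Bernstein--Gelfand equivalence of \ref{SSS_BG_equiv}, $L(w\varrho)$ corresponds to $\M_w$, so $A_w=\operatorname{End}_{\g\text{-fin}}(L(w\varrho))$ corresponds, in $\HC_\Orb(\U)$, to the internal endomorphism object of $\M_w$ for the $\otimes_\U$ (convolution) product, the inner index being the right index of $\M_w$, that is $y$. Since $\bullet_\dagger$ is exact, kills $\HC_{\partial\Orb}(\U)$ and intertwines $\otimes_\U$ with $\otimes_\Walg$, it carries this to the internal endomorphism object, in the convolution category $\Coh^{\bA}(Y\times Y)$ of \cite{W_classif}, of the irreducible sheaf $\mathcal{F}_w$ attached to the triple $(x,y,\mathcal{V})$. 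A direct computation gives this: $\mathcal{F}_w$ lives on the $\bA$-orbit of $(x,y)$ with fibre $\mathcal{V}$, so as a bimodule it is $\bigoplus_{(x',y')\in\bA(x,y)}\Hom_\K(E_{y'},E_{x'})\otimes\mathcal{V}$; over a fixed $y'$ there are $|\bA_y|/|\bA_{(x,y)}|$ admissible $x'$, each giving $d_x\dim\mathcal{V}$ copies of the simple right $\operatorname{End}(E_{y'})$-module, so taking endomorphisms of $\mathcal{F}_w$ over the right algebra yields $(A_w)_\dagger\cong\bigoplus_{y'\in\bA y}\Mat_{M}(\K)$ with block index $z=y$ and $M=\tfrac{|\bA_y|}{|\bA_{(x,y)}|}d_x\dim\mathcal{V}$. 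The key fact then gives $\Goldie(A_w)=M/\operatorname{pr}_y$, and dividing by $\gol_x=d_x/\operatorname{pr}_x$ gives $z_w=\tfrac{\operatorname{pr}_x}{\operatorname{pr}_y}\tfrac{|\bA_y|}{|\bA_{(x,y)}|}\dim\mathcal{V}$. Running the same computation with an arbitrary compatible dominant $\lambda$ in place of $\varrho$, together with Joseph's $\lambda$-independence of $z_w$, also shows the ratios $\operatorname{pr}_{c}(\lambda)/\operatorname{pr}_{c'}(\lambda)$ over left cells of $\dcell$ to be independent of $\lambda$.

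The step I expect to be the main obstacle is the key fact of the second paragraph — making rigorous the identity ``Goldie rank of $\M$'' $=$ ``block size of $\M_\dagger$'' $/\operatorname{pr}$. It requires controlling the generic localization of a Harish--Chandra bimodule along $\Orb$ through the slice construction and identifying the relevant division ring up to Morita equivalence, and the delicate point is that the scale factor is governed by the block index of $\M_\dagger$ rather than by the $\U$-annihilator of $\M$ — these coincide for $\U/J$ but differ for $A_w$ (block index $y$, annihilator index $x$), which is precisely what produces the ratio $\operatorname{pr}_x/\operatorname{pr}_y$ instead of a single $\operatorname{pr}$. A secondary technical point is the $\bA$- (equivalently $Q$-) equivariant bookkeeping in the convolution computation of $(A_w)_\dagger$: verifying the asserted block index and block size, and that the identification of $A_w$ with the internal endomorphism object of $\M_w$ is valid modulo $\HC_{\partial\Orb}(\U)$.
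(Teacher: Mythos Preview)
Your approach has a genuine gap: the ``key fact'' of the second paragraph is not proved, and the sketch you give via microlocalization does not establish it. The assertion is that for a prime HC algebra $\M$ with $\VA(\M)=\overline{\Orb}$ one has $\Goldie(\M)=M/\operatorname{pr}_z$, where $M$ is the block size of $\M_\dagger$ and $z$ the block index. But $\operatorname{pr}_z$ is \emph{defined} as $d_z/\gol_z$ for the \emph{primitive quotient} $\U/J$, and there is no a priori reason why it should control the Goldie rank of an arbitrary prime HC algebra with a different block index. Your microlocalization sketch (``the generic localization of $\M$ along $\Orb$ is a matrix algebra over a division ring Morita equivalent to $\End_\K(E_z)$'') does not make this precise: it does not identify the division ring, nor explain why the discrepancy between $\Goldie(\M)$ and $M$ is governed by $\operatorname{pr}_z$ rather than by some other invariant. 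In fact, if you unwind what your key fact says, it is equivalent to the statement that $m_w/\gol_w$ depends only on the right cell of $w$ --- which is exactly the content of the lemma the paper uses, and which you are therefore assuming rather than proving. You yourself flag this as the main obstacle, and correctly so.

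The paper's proof bypasses all of this. Instead of trying to compute $(A_w)_\dagger$ and relate its block structure to $\Goldie(A_w)$ directly, it uses a classical result of Joseph (recorded in \cite[12.5]{Jantzen}): the ratio $m_w(\lambda)/\gol_w(\lambda)$ of the multiplicity of $\M_w(\lambda)$ on $\Orb$ to the Goldie rank of $L(L(w\lambda),L(w\lambda))$ depends only on the right cell containing $w$. Given this, one writes down $m_{x,y,\mathcal{V}}/\gol_{x,y,\mathcal{V}}=m_{y,y,\K}/\gol_{y,y,\K}$, substitutes the multiplicity formula (\ref{eq:mult_equality}) on both sides, uses $\gol_{y,y,\K}=\gol_y$ (Duflo involution) and $z_w=\gol_{x,y,\mathcal{V}}/\gol_x$, and solves. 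No computation of $(A_w)_\dagger$, no semisimplicity of $A_w$ in the subquotient, no internal-Hom identification is needed. The entire burden is carried by Joseph's lemma plus (\ref{eq:mult_equality}).

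A secondary issue: even granting your key fact, the computation of $(A_w)_\dagger$ as the internal endomorphism object of $\mathcal{F}_w$ in $\Coh^{\bA}(Y\times Y)$ presupposes that $A_w$ maps to a semisimple object under $\bullet_\dagger$, and that passing to $\HC_\Orb(\U)$ does not alter its Goldie rank. Neither is obvious.
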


\begin{proof}
Set $$\M_w(\lambda):=L(\Delta(\varrho),L(w\lambda)),
m_w(\lambda):=\operatorname{mult}_{\Orb}\M_{w}(\lambda),$$
and $\gol_w(\lambda):=\Goldie(L(L(w\lambda), L(w\lambda)))$.
Below we only consider $\lambda$ compatible with $w$.

\begin{Lem}\label{Prop:proportionality}
The ratio
$\frac{m_w(\lambda)}{\gol_w(\lambda)}$ depends only on the right cell
containing $w$ (equivalently, on $c_{w^{-1}}$).
\end{Lem}
\begin{proof}[Proof of Lemma \ref{Prop:proportionality}]
We have $m_w(\lambda)=e(\M_w(\lambda))/e(\K[\Orb])$, where $e$ is the Gelfand-Kirillov multiplicity,
for the definition of $e(\bullet)$ see, say, \cite[Kapitel 8]{Jantzen}. Now the result follows
from \cite[12.5]{Jantzen}.
\end{proof}

Below we write $m_w,\gol_w$
for $m_w(\varrho),\gol_w(\varrho)$. We write $z_{x,y,\mathcal{V}}, m_{x,y,\mathcal{V}}$, etc.,
instead of $z_w,m_w$ etc.  if $w$ corresponds to a triple $(x,y, \mathcal{V})$.

By the definition of $z_\bullet$ we have
\begin{equation}\label{eq:z}
z_{x,y,\mathcal{V}}=\frac{\gol_{x,y,\mathcal{V}}}{\gol_x}.
\end{equation}
(\ref{eq:Goldie_Duflo}) together with the observation that a Duflo involution
corresponds to a triple of the form $(x,x,\K)$, see Subsection \ref{SS_HC_semis},
imply $\gol_x=\gol_{(x,x,\K)}$.
By Lemma \ref{Prop:proportionality}, the ratio $\frac{m_{x,y,\mathcal{V}}}{\gol_{x,y,\mathcal{V}}}$ depends only on the left cell corresponding to $y$. In particular,
\begin{equation}\label{eq:proportion}
\frac{m_{x,y,\mathcal{V}}}{\gol_{x,y,\mathcal{V}}}=\frac{m_{y,y,\K}}{\gol_{y,y,\K}}.
\end{equation}
Plugging (\ref{eq:mult_equality}),(\ref{eq:z}) into (\ref{eq:proportion}) we get
\begin{align*}
&\frac{d_x d_y |\bA|\dim \mathcal{V}}{z_{x,y,\mathcal{V}}\gol_x|\bA_{x,y}|}=\frac{d_y^2 |\bA|}{\gol_y|\bA_y|}\Rightarrow\\
&z_{x,y,\mathcal{V}}=\frac{d_x d_y |\bA|\dim \mathcal{V} \gol_y |\bA_y|}{d_y^2 |\bA|\gol_x|\bA_{x,y}|}=
\frac{\operatorname{pr}_x}{\operatorname{pr}_y}\frac{|\bA_y|}{|\bA_{x,y}|}\dim \mathcal{V}.
\end{align*}

\end{proof}

\subsection{Proof of Theorem \ref{Thm:main_appendix} modulo Conjecture \ref{Conj:main}}\label{SS_Thm_Goldie_proof}

\begin{Prop}\label{Prop:scale_equal}
Let $w,\lambda$ be compatible.
Then  $\pr_w(\lambda)$ depends only on the left cell containing $w$.
\end{Prop}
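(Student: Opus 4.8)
The plan is to combine the multiplicity formula (\ref{eq:mult_equality}) for simple Harish-Chandra bimodules with Lemma \ref{Prop:proportionality} (established inside the proof of Proposition \ref{Thm:scale}), and then to solve the resulting identity for $\pr_w(\lambda)$.

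First I would fix a regular dominant weight $\varrho$ and, for $\lambda$ dominant compatible with $w$, consider the simple Harish-Chandra bimodule $\M_w(\lambda):=L(\Delta(\varrho),L(w\lambda))$ from the proof of Proposition \ref{Thm:scale}; its left and right annihilators are $J(w\lambda)$ and $J(w^{-1}\varrho)$. Let $(x,y,\mathcal{V})$ be the triple attached to $\M_w(\lambda)$ as in Subsection \ref{SS_HC_semis}, so that $x$ lies over $J(w\lambda)$ and $y$ over $J(w^{-1}\varrho)$; then the dimensions of the irreducible $\Walg$-modules over these two ideals are $d_w(\lambda)$ and $d_{w^{-1}}(\varrho)$. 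Writing $m_w(\lambda):=\mult_{\Orb}\M_w(\lambda)$ and $\gol_w(\lambda):=\Goldie(L(L(w\lambda),L(w\lambda)))$, formula (\ref{eq:mult_equality}) gives
\[
m_w(\lambda)=d_w(\lambda)\,d_{w^{-1}}(\varrho)\,\frac{|\bA|}{|\bA_{(x,y)}|}\dim\mathcal{V},
\]
whereas $\gol_w(\lambda)=z_w\,p_w(\lambda)$ by the definition of Joseph's scale factor $z_w$. Since $\pr_w(\lambda)=d_w(\lambda)/p_w(\lambda)$, dividing the first relation by the second yields
\[
\frac{m_w(\lambda)}{\gol_w(\lambda)}=\pr_w(\lambda)\cdot K_w,\qquad K_w:=\frac{d_{w^{-1}}(\varrho)\,|\bA|\,\dim\mathcal{V}}{z_w\,|\bA_{(x,y)}|}.
\]
Now $K_w$ does not depend on $\lambda$: the number $d_{w^{-1}}(\varrho)$ is fixed once $\varrho$ is chosen, $|\bA|$ depends only on $\Orb$, $z_w$ is $\lambda$-independent by Joseph's theorem, and $\dim\mathcal{V}$, $|\bA_{(x,y)}|$ are the combinatorial invariants attached to $w\in\dcell$. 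On the other hand, by Lemma \ref{Prop:proportionality} the left-hand side $m_w(\lambda)/\gol_w(\lambda)$ depends only on the right cell containing $w$; in particular, with $w$ fixed it is constant as $\lambda$ varies. Hence $\pr_w(\lambda)$ is independent of $\lambda$. Combined with the elementary fact that, for a fixed $\lambda$, the number $\pr_w(\lambda)=d_w(\lambda)/p_w(\lambda)$ is entirely determined by the primitive ideal $J(w\lambda)$ and hence only by the left cell $c_w$, this proves that $\pr_w(\lambda)$ depends only on $c_w$.

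The step I expect to require the most care is the claim that the discrete invariants $\dim\mathcal{V}$ and $|\bA_{(x,y)}|$ of the triple attached to $\M_w(\lambda)$ are genuinely independent of $\lambda$: this is transparent when $\lambda$ is regular (they coincide with the invariants for $\lambda=\varrho$ recalled in Subsection \ref{SS_HC_semis}), but for non-regular $\lambda$ one must invoke the cell-combinatorial description of these invariants from Subsections \ref{SS_W_classif} and \ref{SS_HC_semis} together with the compatibility conventions for non-regular central characters. Everything else reduces to results already available: Joseph's independence of $z_w$ from the weight, formula (\ref{eq:mult_equality}), and Lemma \ref{Prop:proportionality}.
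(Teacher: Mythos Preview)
Your approach is different from the paper's and, as you yourself flag, has a real gap.

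The paper does not go through $\M_w(\lambda)$ and Lemma~\ref{Prop:proportionality} at all. Instead it applies (\ref{eq:mult_equality}) directly to the \emph{diagonal} Harish--Chandra bimodule $\U/J(w\lambda)$, whose associated triple is $(x,x,\operatorname{triv})$ with $|\bA_x|=|H_{c_w}|$; this gives $\mult_\Orb(\U/J(w\lambda))=|\bA/H_{c_w}|\,d_w(\lambda)^2$. It then invokes Joseph's result \cite[12.7]{Jantzen} that $\mult_\Orb(\U/J(w\lambda))$, as a function of $\lambda$, is a polynomial proportional to $p_w(\lambda)^2$. Comparing, $d_w(\lambda)$ is a polynomial proportional to $p_w(\lambda)$, so the ratio $\pr_w(\lambda)$ is a constant depending only on $c_w$ (since $|H_{c_w}|$ depends only on $c_w$). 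This is two lines and uses nothing about non-diagonal triples.

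Your route requires knowing that $\dim\mathcal{V}$ and $|\bA_{(x,y)}|$ for the triple attached to $\M_w(\lambda)$ are independent of $\lambda$, and you are right to be worried: for singular $\lambda$ this is not contained in Subsections~\ref{SS_W_classif}--\ref{SS_HC_semis}. Those subsections establish the triple for $(w,\lambda)$ with $\lambda\in\Lambda'$, and assert independence of the choice of the \emph{regular} auxiliary weight $\varrho$, but they do not assert that the triple for $(w,\lambda)$ coincides with the triple for $(w,\varrho)$ when $\lambda$ is singular. Filling this would require an additional translation-to-the-wall argument that is not in the paper. By contrast, in the paper's argument the only stabilizer that appears is $|\bA_x|=|H_{c_w}|$, whose $\lambda$-independence \emph{is} the content of \cite[Theorem~1.1]{W_classif} recalled in Subsection~\ref{SS_W_classif}.

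In short: your argument can probably be completed, but the paper's argument is both shorter and avoids the point you are worried about by replacing $\M_w(\lambda)$ with $\U/J(w\lambda)$ and \cite[12.5]{Jantzen} with \cite[12.7]{Jantzen}.
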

\begin{proof}
The multiplicity $\operatorname{mult}_{\Orb}(\U/J(w\lambda))$
is equal to  some polynomial $P_w(\lambda)$ and this polynomial is proportional to $p_w(\lambda)^2$, see \cite[12.7]{Jantzen}.
But, on the other hand, according to (\ref{eq:mult_equality}), we have $\operatorname{mult}_{\Orb}(\U/J(w\lambda))=
|\bA/\HL_{\cell_w}|d_w(\lambda)^2$. So $d_w(\lambda)=\tilde{p}_w(\lambda)$
for some polynomial $\tilde{p}_w(\lambda)$ proportional to $p_w(\lambda)$. This implies the claim.
\end{proof}

For a cell $\cell:=\cell_{w}$ we will write $\operatorname{pr}_{\cell}:=\pr_w(\lambda)$.

\begin{proof}[Proof of Theorem \ref{Thm:main_appendix} modulo Conjecture \ref{Conj:main}]
Consider the case when the orbit $\Orb$ is not one of the three exceptional orbits.

Suppose that $x,y\in Y$ are such that
$\bA_x\supset \bA_y$ and let  $\dim \mathcal{V}=1$. We have $|\bA_y|=|\bA_{x,y}|$ and so Proposition \ref{Thm:scale} implies $z_w=\frac{\operatorname{pr}_x}{\operatorname{pr}_y}$. In particular,
since $z_w\geqslant 1$, we have
$\operatorname{pr}_x\geqslant \operatorname{pr}_y$. According to \cite[Proposition 3.4.6]{wquant},
$\operatorname{pr}_y\geqslant 1$. So if $\pr_x=1$ and $\bA_x=\bA$, then $\pr_y=1$
for all $y\in Y$.

Thanks to Conjecture \ref{Conj:main}, there is a (possibly singular) dominant integral weight $\lambda$ and $w\in \dcell$ as
in Theorem \ref{Thm:main_appendix} such that $d_w(\lambda)=1$ (and hence $\operatorname{pr}_w(\lambda)=1$) and the
$\bA$-orbit corresponding to $J(w\lambda)$ is a single point. Then the  $\bA$-orbit corresponding to $J(w\varrho)$
is also a single point. Let $\cell$ be the left cell containing $w$. Then, thanks
to Proposition \ref{Prop:scale_equal}, $\operatorname{pr}_{\cell}=\operatorname{pr}_w(\lambda)=1$. Now it remains to use the result
of the previous paragraph.

If $\Orb$ is one of the three exceptional orbits, then $\bA_x=\{1\}$ for all $x\in Y$,
this follows from \cite[6.7,Theorem 1.1]{W_classif}.
One carries over the argument above to this case without any noticeable modifications.
\end{proof}

\subsection{Proof of Theorem \ref{Thm:classical}}\label{SECTION_main_conj}
\subsubsection{Reduction to weakly rigid orbits}\label{SUBSECTION_weak_reduction}
We start the proof of Theorem \ref{Thm:classical} by introducing a certain induction
procedure related to the Lusztig-Spaltenstein induction.

Let us  define a certain class of special nilpotent orbits:
{\it weakly rigid} ones\footnote{These are close to  so called {\it birationally
rigid} orbits.}. Then we will show that it is enough to prove Conjecture
\ref{Conj:main} for weakly rigid orbits only.

Recall that $\Orb$ is called {\it rigid} if it cannot be induced from a nilpotent orbit in
a proper Levi subalgebra.

\begin{defi}\label{def:weakly_rigid}
We say that a special orbit $\Orb$ is {\it strongly induced} if there is a proper parabolic subalgebra
$\p\subset \g$, Levi subalgebra $\underline{\g}\subset\p$ and a nilpotent orbit $\underline{\Orb}\subset \underline{\g}$ such that
$\Orb$ is induced from $\underline{\Orb}$ in the sense of Lusztig and Spaltenstein and moreover:
\begin{enumerate}
\item $\underline{\Orb}$ is special in $\underline{\g}$.
\item Pick $e\in \Orb\cap(\underline{\Orb}+\n)$. Then the projection
of $Z_P(e)$ to the Lusztig quotient $\bA$ (independent of the choice of $e$) for $\Orb$ is surjective.
\end{enumerate}
We say that $\Orb$ is {\it weakly rigid}, if it is not strongly induced.
\end{defi}

We are going to use the parabolic induction for W-algebras recalled in Subsection \ref{SS_parab_ind}.
In particular, let $\underline{N}$ be a 1-dimensional $\underline{\Walg}$-module with integral
central character. It follows from \cite[Corollary 6.3.3]{Miura} that $\varsigma(\underline{N})$ also
has integral central character.  Now assume in addition that $\underline{N}$ is
$A(\underline{e})$-stable, where $\underline{e}\in \underline{\Orb}$
is the projection of $e$ along $\n$ and $A(\underline{e})$ stands for the component
group of the centralizer of $\underline{e}$ in $\underline{G}$.
Then both  $\underline{N},\varsigma(\underline{N})$ are $\underline{Q}$-stable. So
we see that if condition (2) holds, then $\varsigma(\underline{N})$ is $A(e)$-stable.

Summarizing, we see that   it is enough to prove Conjecture \ref{Conj:main} only for weakly rigid orbits.
We will provide a complete proof only in type $B$. Then we explain modification necessary
for types $C,D$.

\subsubsection{Type $B$}
We start by recalling a few standard facts about nilpotent orbits in $\so_{2n+1}$.

First of all, nilpotent orbits in $\so_{2n+1}$ are parameterized by partitions $\lambda$
of $2n+1$ having type $B$, i.e.,  such that every even part appears with even multiplicity, see, for example, \cite[5.1]{CM}.
In general, for any partition $\lambda$ of $2n+1$ there is the largest (with respect
to the dominance) partition $\lambda_B$ of type $B$ smaller than or equal to $\lambda$. The partition
$\lambda_B$ is called the {\it $B$-collapse} of $\lambda$.

A partition $\lambda$ corresponds to a special orbit if and only if the transposed partition $\lambda^t$ is again of type $B$,
see \cite[Proposition 6.3.7]{CM}.
Explicitly, this means that there is an even number of odd parts between any two consecutive
even parts or smaller than the smallest even part, but there is an odd number of odd parts
larger than the largest even part.

Now let us recall what the Lusztig-Spaltenstein induction does on the level of partitions, see \cite[7.3]{CM}.
Any Levi subalgebra in $\so_{2n+1}$ has the form $\gl_{n_k}\times \gl_{n_{k-1}}\times\ldots \gl_{n_1}\times \so_{2n_0+1}$ with $\sum_{i=0}^k n_i=n$. The induction procedure is associative so it is enough to see what happens when we induce from an orbit $\underline{\Orb}=(0,\Orb_0)\subset \gl_{m}\times \so_{2(n-m)+1}$. Let $\mu=(\mu_1,\ldots,\mu_l)$ with $l\geqslant m+1$ be the partition of $\Orb_0$ (we add zero parts
if necessary). Then the partition $\lambda$ corresponding to the induced orbit $\Orb$  is
\begin{itemize}
\item[(a)]
either $(\mu_1+2,\ldots, \mu_{m}+2,\mu_{m+1},\ldots,\mu_{l})$ if the latter orbit is of type $B$,
\item[(b)]
or $(\mu_1+2,\ldots,\mu_{m-1}+2,\mu_{m}+1,\mu_{m+1}+1,\mu_{m+2},\ldots,\mu_l)$ otherwise.
\end{itemize}
In other words, $\lambda$ is always the $B$-collapse of the partition in (a).

\begin{Lem}\label{Lem:cond2}
Suppose that the partition of $\Orb$ is obtained as in (a). Then the conditions of Definition
\ref{def:weakly_rigid} are satisfied.
\end{Lem}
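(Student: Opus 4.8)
Lemma \ref{Lem:cond2} asks us to verify, for an orbit $\Orb$ in $\so_{2n+1}$ whose partition is obtained via case (a) from $\underline{\Orb}=(0,\Orb_0)\subset\gl_m\times\so_{2(n-m)+1}$, the two conditions of Definition \ref{def:weakly_rigid}. The plan is to check them one at a time, the first being essentially immediate and the second being the real content. For condition (1), I would simply observe that $\underline{\Orb}$, being of the form $(0,\Orb_0)$, is special in $\gl_m\times\so_{2(n-m)+1}$ precisely when $\Orb_0$ is special in $\so_{2(n-m)+1}$; so I would need to record that the hypothesis of the lemma (the chain of reductions in Subsection \ref{SUBSECTION_weak_reduction} and the standing assumption that $\Orb$ is special weakly rigid) forces $\Orb_0$ to be special. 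Actually the cleanest route is to argue that if $\Orb$ is special with partition as in (a), then the ``uncollapsed'' partition $(\mu_1+2,\ldots,\mu_m+2,\mu_{m+1},\ldots,\mu_l)$ already being of type $B$ (that is what case (a) means) together with the specialness criterion ($\lambda^t$ of type $B$) translates into the corresponding criterion for $\mu$ in $\so_{2(n-m)+1}$. So condition (1) reduces to a combinatorial bookkeeping on odd/even parts: adding $2$ to the top $m$ parts does not change the parities of parts, nor the relative order, so the ``odd number of odd parts above the largest even part / even number between consecutive even parts'' condition for $\lambda$ is equivalent to the same condition for $\mu=(\mu_1,\ldots,\mu_l)$, i.e. for $\Orb_0$.

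For condition (2), I need to show that the projection $Z_P(e)\to\bA$ is surjective, where $e\in\Orb\cap(\underline{\Orb}+\n)$, $P$ is the parabolic with Levi $\underline{\g}=\gl_m\times\so_{2(n-m)+1}$, and $\bA$ is the Lusztig quotient of $A(e)=A(\Orb)$. The idea is to use the known description of component groups $A(\Orb)$ for classical Lie algebras (sum of $\ZZ/2\ZZ$'s indexed, roughly, by the distinct odd parts of $\lambda$, see \cite[Chapter 6]{CM}) together with the description of the Lusztig quotient $\bA$. I would pass to a concrete representative: $e$ can be taken of the form ``$e_0$ shifted up'' inside $\gl_m\times\so_{2(n-m)+1}$ plus a nilpotent in $\n$, and one has the covering $P/Z_P(e)^\circ\twoheadrightarrow Pe\hookrightarrow \underline{\Orb}+\n$, so $Z_P(e)$ surjects onto a subgroup of $A(e)$ that is detected by which connected components of the fiber are realized. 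The key combinatorial point is: going from $\mu$ (partition of $\Orb_0$) to $\lambda=(\mu_1+2,\ldots,\mu_m+2,\mu_{m+1},\ldots)$ in case (a) \emph{does not create new odd parts} — every odd part of $\lambda$ is an odd part of $\mu$ (possibly with altered multiplicity, but multiplicities only matter mod parity of the number-of-odd-parts-in-a-window which is what governs specialness, not $A$). Hence $A(e)$ for $\Orb$ ``comes from'' $A(\underline e)$ for $\underline\Orb$, and the reductive part $\underline Q$ of $Z_P(e)$ — which by the setup of Subsection \ref{SS_parab_ind} lies in $\underline G$ — already surjects onto the relevant quotient. I would make this precise by comparing generators: each $\ZZ/2\ZZ$-generator of $\bA$ associated to an odd part $d$ of $\lambda$ can be realized by an element supported on a $\so$-block of the $\sl_2$-subalgebra attached to a Jordan block of size $d$, and such a block already sits inside the $\so_{2(n-m)+1}$-factor (since the size-$d$ blocks of $\lambda$ that survive into odd-part count trace back to size-$d$ or size-$(d-2)$ blocks of $\mu$, all inside the $\so$-factor); therefore it lifts to $Z_P(e)\cap\underline G$.

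The step I expect to be the main obstacle is the careful matching in condition (2) between the $\ZZ/2\ZZ$-generators of the Lusztig quotient $\bA(\Orb)$ and elements of $Z_P(e)$ lying in the Levi $\underline G$: I must be sure that passing to the \emph{Lusztig quotient} (rather than the full component group) does not ruin surjectivity, and that case (a) — as opposed to case (b), where a $B$-collapse genuinely merges blocks and can create a new odd part from two even parts — is exactly the regime where no ``new'' generator of $\bA$ appears that would fail to be hit. So the proof will hinge on: (i) recalling from \cite{CM} and \cite{orange}/\cite{W_classif} the explicit description of $A(\Orb)$ and $\bA$ for type $B$ in terms of the partition, (ii) checking that in case (a) the set of odd parts of $\lambda$ (with the relevant parity data) is ``inherited'' from $\mu$ with no collapse-induced changes, and (iii) invoking the $\underline Q$-equivariance of the Miura embedding $\Xi:\Walg\hookrightarrow\underline\Walg'$ from Subsection \ref{SS_parab_ind} to conclude that the image of $Z_P(e)$ in $\bA$ contains the image of $\underline Q$, which by (i)--(ii) is all of $\bA$. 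Once condition (2) is settled, Lemma \ref{Lem:cond2} follows immediately from Definition \ref{def:weakly_rigid}, and combined with Subsection \ref{SUBSECTION_weak_reduction} this reduces Conjecture \ref{Conj:main} in type $B$ to the weakly rigid orbits, which are then handled directly.
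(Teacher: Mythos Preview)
Your treatment of condition (1) is fine and matches the paper's one-line dismissal: adding $2$ to the top $m$ parts preserves all parities, so the specialness criterion transfers from $\lambda$ to $\mu$.

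For condition (2), however, there is a genuine gap. Your key combinatorial claim --- that in case (a) ``every odd part of $\lambda$ is an odd part of $\mu$'' --- is simply false. If $\mu_i$ is odd with $i\leqslant m$, then $\lambda_i=\mu_i+2$ is an odd part of $\lambda$ that need not occur in $\mu$ at all (e.g.\ $\mu=(3,3,1)$, $m=2$ gives $\lambda=(5,5,1)$). So the sets of distinct odd parts indexing $A(\Orb_0)$ and $A(\Orb)$ differ, and there is no natural surjection $A(\underline e)\twoheadrightarrow A(e)$ to exploit. Relatedly, the Jordan block of $e$ of size $\mu_i+2$ does \emph{not} sit inside the $\so_{2(n-m)+1}$-factor: it is built from a size-$\mu_i$ block of $e_0$ extended by two extra basis vectors coming from the $\gl_m$-piece and from $\n$. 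Hence the involution ``$-1$ on that block'' is not an element of $\Ort_{2(n-m)+1}$ alone, and your attempt to realize the $\ZZ/2\ZZ$-generators of $\bA$ inside the orthogonal factor of the Levi fails. The appeal to the $\underline Q$-equivariance of the Miura map from Subsection \ref{SS_parab_ind} does not rescue this: that equivariance goes the wrong way (it lets you push $\underline Q$-stability from $\underline\Walg$-modules to $\Walg$-modules, but says nothing about producing generators of $A(e)$).

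The paper's proof takes a completely concrete route and in fact proves the stronger statement that $Z_P(e)$ surjects onto the full component group $A(e)$, bypassing $\bA$ entirely. One writes down an explicit orthogonal decomposition of $\K^{2n+1}$ into pieces $\widetilde V_i$, $\widetilde U_j\oplus\widetilde U_j^*$, $V'_i$, $U'_j\oplus U_j'^*$, chooses $e_0$ and $e$ block by block so that $e\in e_0+\n$, and then exhibits explicit elements $g_i=-\operatorname{id}_{\widetilde V_i}$ and $h_j$ (swapping $\widetilde U_j\leftrightarrow\widetilde U_j^*$ when $\dim U_j$ is odd) that lie in the Levi $\GL_m\times\Ort_{2(n-m)+1}$ and centralize $e$. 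The point is that $-\operatorname{id}_{\widetilde V_i}$ lies in the Levi because it acts by $-1$ on both the $V_i$-part (inside $\Ort_{2(n-m)+1}$) and the added $\K^2$ (inside $\GL_m$); this is exactly the subtlety your structural argument misses. One then checks directly against the standard description of $A(e)$ that these elements generate it.
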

\begin{proof}
The claim that (1) is satisfied is straightforward from the combinatorial description of special orbits.

We will specify the choice of a parabolic $P$ and then choose an element $e$ as in (2) to compute
$Z_P(e)$ and see that actually $Z_P(e)$ projects surjectively to $A(e)$.

We represent  $\so_{2n+1}$ as the Lie algebra of matrices that are skew-symmetric with
respect to the main anti-diagonal (so that the symmetric form used to define $\so_{2n+1}$
is $(x,y)=\sum_{i=1}^{2n+1}x_i y_{2n+2-i}$). Choose the parabolic subalgebra $\p$ with Levi
subalgebra $\gl_m\times \so_{2(n-m)+1}$ in a standard way, i.e., $\p$ is the stabilizer of the span
of the first $m$ basis elements.

Let us specify an element $e_0\in \Orb_0$. Consider the numbers $\mu_1,\ldots,\mu_m$ and split them
into $q$ pairs of equal numbers and $p$ pairwise different numbers. Then take the remaining
numbers $\mu_{m+1},\ldots, \mu_l$ and do the same getting $q'$ pairs and $p'$ pairwise different numbers.
E.g., for $m=6, \mu=(5^3,4^2,3^4,2^2,1^2)$ (as usual the superscripts are the multiplicities) we have $q=2$ (with pairs $(5,5),(4,4)$), $p=2$ (with numbers $5$ and $3$),
$q'=3$ (the pairs $(3,3),(2,2)$ and $(1,1)$) and $p'=1$ (corresponding to $3$).

Take the subspace $\K^{2(n-m)+1}\subset \K^{2n+1}$, where $\so_{2(n-m)+1}$ acts, and represent it as a direct sum of subspaces $$\bigoplus_{i=1}^p V_i\oplus \bigoplus_{i=1}^q(U_i\oplus U_i^*)\oplus \bigoplus_{i=1}^{p'}V'_i\oplus \bigoplus_{i=1}^{q'}(U'_i\oplus U_i'^*),$$ where $V_1,\ldots, V_p, V_1',\ldots, V'_{p'}$ are orthogonal subspaces of dimensions equal to single $\mu_i$'s ($5,3,3$ in our example), and $U_1,\ldots, U_q, U_1',\ldots, U'_{q'}$ are isotropic subspaces
of dimensions equal to  $\mu_i$'s from pairs ($5,4,3,2,1$ in our example), $U_1^*,\ldots, U_q^*, U_1'^*,\ldots,
U'^*_{q'}$ are dual isotropic subspaces. Below we denote by $v_1(i),\ldots,v_{2d+1}(i)$ a basis
in  $V_i$, where the form is written as above, by $u_1(j),\ldots,u_{d}(j)$ a basis in $U_j$, and by
$u^*_1(j),\ldots, u^*_d(j)$ the dual basis in $U_j^*$. By $v_{m}^n(i)$ we denote the matrix unit sending $v_n(i)$
to $v_m(i)$. The notation $u_m^n(i), u_m^{*n}(i)$ has a similar meaning.

For $e_0$ we take $$\sum_{k=1}^p e_{0}(k)+\sum_{k=1}^q f_{0}(k)+
\sum_{k=1}^{p'} e'_{0}(k)+\sum_{k=1}^{q'} f'_{0}(k).$$ The element $e_{0}(k)$ is given
by the matrix  $\sum_{i=1}^{d} (v_i^{i+1}(k)-v_{i+d}^{i+d+1}(k))$, where $\dim V_k=2d+1$.
The element $f_{0}(k)$ is given by the matrix $\sum_{i=1}^d u_{i}^{i+1}(k)-\sum_{i=1}^d {u^*}_{i}^{i+1}(k)$, where $d=\dim U_i$. The operators
$e_{0}'(k)\in \mathfrak{so}(V_k'),f_{0}'(k)\in \mathfrak{so}(U_k'\oplus U_k'^*)$ are defined similarly. It is clear from the construction that $e_0\in \underline{\Orb}$.

Now let us specify $e$. Set $\widetilde{V}_i:=V_i\oplus \K^2$, where $\K^2$ is viewed as an orthogonal
space with isotropic basis $v_0(i),v_{2d+2}(i)$, and $\widetilde{U}_i:=U_i\oplus \K^2$, where $\K^2$ is viewed as an isotropic space with basis $u_0(i),u_{d+1}(i)$.
Then, of course $$\K^{2n}=\bigoplus_{k=1}^p \widetilde{V}_k\oplus \bigoplus_{k=1}^q (\widetilde{U}_k\oplus \widetilde{U}_k^*)\oplus \bigoplus_{k=1}^{p'}V_k'\oplus \bigoplus_{k=1}^{q'}(U_k'\oplus U_k'^*).$$
Now we set $$e:=\sum_{k=1}^p e(k)+\sum_{k=1}^q f(k)+
\sum_{k=1}^{p'} e'_{0}(k)+\sum_{k=1}^{q'} f'_{0}(k),$$
where the matrices $e(k),f(k)$ are defined as follows. We set $e(k):=\sum_{i=0}^{d}(v_{i}^{i+1}(k)-v_{i+d+1}^{i+d+2}(k))$. Further, $f(k)=\sum_{i=0}^{d+1}u_{i}^{i+1}(k)-\sum_{i=0}^{d+1}{u^*}_{i}^{i+1}(k)$. It is clear that $e\in \Orb$.

Let $P$ be the maximal parabolic subgroup in $\operatorname{O}(2n+1)$ (note that we take a disconnected group)
stabilizing the $m$-dimensional subspace spanned by all basis vectors $v_0(k),u_0(k),u^*_0(k)$.
With this choice of $P$ we have $e\in e_0+\n$.  Now let us produce elements in the Levi subgroup $\operatorname{GL}_m
\times \operatorname{O}_{2(n-m)+1}$ centralizing $e$ whose images span the component group of $Z_{\operatorname{O}_{2n+1}}(e)$
(this will imply condition (2)). This component group is the sum of several copies of
$\ZZ/2\ZZ$, one for  each different odd part of $\lambda$, see, for example, \cite[Theorem 6.1.3]{CM}. More precisely, let $\lambda=(\lambda_1^{n_1},\lambda_2^{n_2},\ldots, \lambda_s^{n_s})$. Then the reductive part of $Z_{\Ort_n}(e)$ is $\prod_{i=1}^k G_{n_i}$,
where $G_{n_i}$ means $\Ort_{n_i}$ if $\lambda_i$ is odd, and $\operatorname{Sp}_{n_i}$ if $\lambda_i$
(and then automatically $n_i$) is even. We have one involution in the basis of the component group for each
$G_{n_i}=\Ort_{n_i}$.

Let us produce elements $g_i\in
\Ort(\widetilde{V}_i)\cap (\GL_m\times \Ort_{2(n-m)+1})$ centralizing $e(i)$, and
$h_j\in \Ort(\widetilde{U}_j\oplus \widetilde{U}^*_j)\cap (\GL_m\times \Ort_{2(n-m)+1})$ centralizing
$f(j)$. For $g_i$ we just take $-\operatorname{id}_{\widetilde{V}_i}$. If $\dim U_j$ is even, then
we set $h_j=\operatorname{id}_{\widetilde{U}_j}$. Finally, suppose $\dim U_j$ is odd.
Then define $h_j$ by $h_j(u_k(j)):=\sqrt{-1}u^*_k(j), h_j(u^*_k(j))=-\sqrt{-1}u_k(j)$.
It is easy to see that $h_j$ defined in this way lies in the required subgroup and centralizes $f(j)$.

From the description of the component group given above we see that
the elements $g_i,h_j$ generate the component group.
\end{proof}

From Lemma \ref{Lem:cond2} we deduce that the partition of a weakly rigid orbit
has the form \begin{equation}\label{eq:weakly_rigid_partn}((2k+1)^{2d_{2k+1}-1},(2k)^{2d_{2k}},(2k-1)^{2d_{2k-1}},\ldots,2^{2d_2}, 1^{2d_1}),\end{equation}
where $d_{2k+1},d_{2k},\ldots,d_1$ are positive integers. It seems that any partition like
this indeed corresponds to a weakly rigid orbit, but we will not need this.

Let us produce an explicit  $\mu\in \Lambda$ such that $\operatorname{mult}_{\Orb}\U/J(\mu)=1$.
According to (\ref{eq:mult_equality}) for a corresponding irreducible $\Walg$-module $x$
we will have $\bA_x=\bA, d_x=1$ as needed in Conjecture \ref{Conj:main}.
This will be a so called Arthur-Barbasch-Vogan  weight, see, for example,
\cite[(1.15)]{BV_unip}.  Let us recall how this weight is constructed in general.

There is a duality for nilpotent orbits in $\g$ and in the Langlands dual Lie algebra $\g^\vee$
($B_n$ and $C_n$ are dual to each other, while all other simple algebras are self-dual). This duality (called Barbasch-Vogan-Spaltenstein duality) is an order reversing bijection between the sets of
special orbits in $\g$ and in $\g^\vee$. Take a special orbit  $\Orb\subset\g$ and let $\Orb^\vee\subset\g^\vee$ be the corresponding
dual orbit. Let $(e^\vee,h^\vee,f^\vee)$ be the corresponding $\sl_2$-triple.
Recall that we have fixed Cartan and Borel subalgebras $\h\subset\bfr\subset\g$. Then we can
take $\h^\vee:=\h^*$ for a Cartan subalgebra in $\g^\vee$ and also we have a preferred choice of a Borel
subalgebra $\bfr^\vee\subset \g^\vee$. Conjugating $h^\vee$
we may assume that $h^\vee$ is dominant in $\h^*$. This determines $h^\vee$ uniquely.
The weight of interest is $\mu:=\frac{1}{2}h^\vee$.

For the classical Lie algebras the duality between nilpotent orbits can be described
on the level of partitions, see, for example, \cite[6.3]{CM}. For example, take $\Orb\subset \so_{2n+1}$ and let $\lambda$
be the corresponding partition. For a partition $\mu$ of $2n+1$, let $l(\mu)$ denote the
partition of $2n$ obtained from $\mu$ by decreasing the smallest part of $\mu$ by $1$.
Recall that the nilpotent orbits in $\sp_{2n}$ are parameterized by partitions of type $C$,
i.e., such that the multiplicity of each odd part is even. For any partition $\mu'$ of $2n$
we can define its $C$-collapse $\mu'_C$ that is a partition of type $C$ similarly to
the $B$-collapse. Now the duality sends the orbit $\Orb$ with partition $\lambda$
to the orbit $\Orb^\vee$ with partition $[l(\lambda^t)]_C$, see, for instance, \cite[Theorem 5.1]{McGovern},
and the discussion after it. It is easy to see that, for $\lambda$ of the form (\ref{eq:weakly_rigid_partn}),
the partition $[l(\lambda^t)]_C$ consists
of even parts and so the orbit $\Orb^\vee$ is even, meaning that all eigenvalues of
$\operatorname{ad}(h^\vee)$ are even. In particular, $\frac{1}{2}h^\vee\in \Lambda^+$.

According to \cite[Proposition 5.10]{BV_unip}, $\VA(\U/J(\frac{1}{2}h^\vee))=\overline{\Orb}$.
Further, \cite[Corollary 5.19]{McGovern} implies that the multiplicity of $\U/J(\frac{1}{2}h^\vee)$
on $\Orb$ is 1.

\subsubsection{Type $C$}
The proofs of Conjecture \ref{Conj:main} in types $C$ and $D$ are very similar. So we will only explain
the necessary modifications.

As we have already mentioned, the nilpotent orbits in type $C$ are parameterized by partitions of type $C$.
A partition $\lambda$ corresponds to a special orbit if and only if $\lambda^t$ is again of type $C$.
Explicitly this means that there is an even number of odd parts between two consecutive even parts
and also an even number of even parts larger than the largest odd part. On the level of partitions
the Lusztig-Spaltenstein induction is described completely analogously to type $B$. Also
one can prove a direct analog of Lemma \ref{Lem:cond2} and the proof basically repeats
the original one. From here we see that the partition of a weakly rigid orbit has the form
$(n^{d_n},(n-1)^{d_{n-1}},\ldots 1^{d_1})$, where $d_1,\ldots,d_n$ are positive even integers.

On the level of partitions the duality is described as follows. For a partition $\lambda$ of $2n$
let $r(\lambda)$ be the partition of $2n+1$ obtained from $\lambda$ by increasing the largest part by $1$.
Now a partition $\lambda$ of a special orbit $\Orb\subset \sp_{2n}$ is sent to $[r(\lambda^t)]_B$,
where, recall, the subscript
$B$ means the $B$-collapse. It is easy to see that all parts of $[r(\lambda^t)]_B$ are odd, meaning
that the corresponding orbit is even. So it remains to take the ideal $J(\frac{1}{2}h^\vee)$ and use the results of Barbasch-Vogan and of McGovern, exactly as in type B.

\subsubsection{Type $D$}
Nilpotent orbits in $\so_{2n}$ are parameterized by partitions of $2n$ of type $D$:
each even part occurs even number of times (in fact, this is only true for the $\Ort_{2n}$-action,
each partition with all parts even produces exactly two $\operatorname{SO}_{2n}$-orbits,
while all other partitions correspond to a single $\operatorname{SO}_{2n}$-orbit).
A partition $\lambda$ corresponds to a special orbit  if and only if $\lambda^t$ is of type $C$ (not $D$!), i.e., there is an even number of odd parts between any two consecutive even parts, larger than the largest even part, and smaller than
the smallest even part.

The Lusztig-Spaltenstein induction is described in the same way as above, and an analog of Lemma
\ref{Lem:cond2} with the same proof holds. From here we deduce that the partition of a weakly
rigid orbit has the form $(n^{d_n},(n-1)^{d_{n-1}},\ldots 1^{d_1})$, where $d_1,\ldots,d_n$
are positive even integers.

The duality sends an orbit with partition $\lambda$ to the orbit with partition $[\lambda^t]_D$, where
the subscript $D$ means the $D$-collapse, i.e., the largest partition of type $D$ smaller
than a given one. It is easy to see that for $\lambda$ as specified in the previous paragraph,
all parts of $[\lambda^t]_D$ are odd. In particular, the dual orbit is even, the weight
$\frac{1}{2}h^\vee$ is integral, and we are again done.

\subsubsection{A few remarks towards the exceptional cases}
We hope that for the exceptional Lie algebras the same strategy as above should work.
Namely, one should be able to describe all weakly rigid orbits. Then, hopefully,
the dual of a weakly rigid orbit will be even (this is true for all rigid orbits,
as was checked in \cite[Proposition I]{BV_unip})\footnote{As I learned from Jeffrey Adams there
is only one weakly rigid orbit, where this is not true: it appears in type $E_8$}. Then one can hope that $\U/J(\frac{1}{2}h^\vee)$
will have multiplicity 1 on $\Orb$.

In \cite{Miura} we have developed some techniques to classify one-dimensional
representations of W-algebras under certain conditions on the nilpotent element
$e$. The most important condition is that the algebra $\q=\mathfrak{z}_{\g}(e,h,f)$
is semisimple. This condition is satisfied for all rigid orbits but not for all weakly rigid
ones (it is peculiar that, in the classical types, a weakly rigid orbit is rigid
precisely when $\q$ is semisimple, this can be deduced from the classification of
rigid orbits provided, for instance, in \cite[7.3]{CM}). Another condition on $e$ that significantly
simplifies the classification is that $e$ is principal in some Levi. This holds
for all weakly rigid (special) orbits in classical algebras.

The classification result obtained in \cite[Section 5]{Miura} (for elements $e$ subject to the conditions
explained in the previous paragraph) takes the following form. It establishes a family $X$ of
elements in $\h^*$ (given by several conditions, the most implicit being that
$\VA(\U/J(\lambda))=\overline{\Orb}$ for each $\lambda\in X$) such that $X$ is in bijection with 1-dimensional
$\Walg$-modules in such a way that the primitive ideal corresponding to the 1-dimensional module
attached to $\lambda\in X$ is $J(\lambda)$. For several rigid special elements the element $\frac{1}{2}h^\vee$
is in $X$, but for some it is not, which, however, does not mean that $J(\frac{1}{2}h^\vee)$ does not
correspond to a 1-dimensional $\Walg$-module simply because the weight cannot be recovered
from an ideal uniquely.

Finally, let us remark that even if $\q$ is not semisimple, it is still  sometimes possible to get some ramification
of the classification result that will classify $A(e)$-stable 1-dimensional modules, see \cite[Corollary 5.2.2]{Miura}.

\section{Supplements}\label{S_suppl}
\subsection{Functor $\bullet_{\dagger,e}$ for Harish-Chandra modules}\label{SS_HC_mod}
\subsubsection{Setting}\label{SSS_HC_mod_setting}
Our setting in this subsection is very different from the one in the main body of the paper.
We fix an involutive antiautomorphism $\tau$ of $G$ (so that $x\mapsto \tau(x)^{-1}$ is an involutive
automorphism). We set $K:=\{g\in G| \tau(g)=g^{-1}\}^\circ$, this is a reductive subgroup of $G$.
Further set $\mathfrak{s}:=\g^{\tau}$ so that we have $\g=\k\oplus\mathfrak{s}$.

We pick $e\in \mathfrak{s}$. Then  we can choose $h,f$ forming an $\sl_2$-triple with $e$ in such a way
that $h\in \k$ and $f\in \mathfrak{s}$. It is known (and easy to check) that the intersection of $Ge$
with $\mathfrak{s}$ is a union of finitely many $K$-orbits, each being a lagrangian subvariety in
$Ge$. As before, we take $V=[\g,f]$. Then $\mathfrak{u}:=\k\cap V$ coincides with $[\mathfrak{s},f]$ and is a lagrangian
subspace in $V$. Further, $\chi=(e,\cdot)$ clearly vanishes on $\k$. So the pair $(K,V)$
does satisfy the assumptions of \ref{SSS_Cat_OK_def}.

So we can get a functor $\bullet_{\dagger,e}:\OCat^K\rightarrow \Walg$-$\operatorname{mod}^{R}$. In fact,
the functor is easier to construct then in the general case and we also can get some restriction on
its image, it consists of ``HC-modules for $(\Walg,\tau)$''. We are going to define those in the next
part in a more general context.

\subsubsection{Harish-Chandra modules for almost commutative algebras}
Let $\A$ be a unital associative algebra equipped with an exhaustive algebra
filtration $\K=\A_{\leqslant 0}\subset \A_{\leqslant 1}\subset\ldots$. Assume that
$[\A_{\leqslant i},\A_{\leqslant j}]\subset \A_{\leqslant i+j-d}$ for some positive
integer $d$. Finally, assume that $\gr\A$ is a finitely generated algebra.

Suppose that $\A$ is equipped with an involutive antiautomorphism $\tau$ that preserves the filtration.
By definition, a HC $(\A,\tau)$-module is an $\A$-module $\M$ that can be equipped
with an increasing exhaustive filtration $\M_{\leqslant 0}\subset \M_{\leqslant 1}\subset\ldots$ that is
compatible with the filtration on $\A$ (in the sense that $\A_{\leqslant i}\M_{\leqslant j}\subset \M_{\leqslant i+j}$)
satisfying the following two additional conditions:
\begin{itemize}
\item[(i)] $\gr\M$ is a finitely generated $\gr\A$-module.
\item[(ii)] If $a\in \A_{\leqslant i}$ satisfies $\tau(a)=-a$, then $a\M_{\leqslant j}\subset \M_{i+j-d}$.
\end{itemize}

Let us consider two examples supporting this definition.

First, let $\A=\U$ and choose $\tau$ as in \ref{SSS_HC_mod_setting}. In this example, we take the PBW filtration and so $d=1$.
We claim that a HC $(\A,\tau)$-module is the same as a HC $(\g,\k)$-module, i.e., a finitely generated module
with locally finite action of $\k$. Let $\U^{\tau}$, resp. $\U^{-\tau}$, denote the subspace of the elements $u\in \U$
such that $\tau(u)=u$, resp. $\tau(u)=-u$. Then it is easy to see that $\U_{\leqslant i}^{-\tau}\subset\U_{\leqslant i-1}\k\oplus
\U_{\leqslant i-1}$. It follows that a HC $(\g,\k)$-module satisfies (ii). Now (i) becomes a well-known claim that
one has a good $\k$-stable filtration on any HC $(\g,\k)$-module. The claim that a HC $(\A,\tau)$-module is
HC as a $(\g,\k)$-module is easy.

Let us proceed to the second special case. Let $\mathcal{B}$ be a filtered algebra with an almost-commutativity condition
analogous to the condition on $\A$ above.  Set $\A:=\mathcal{B}\otimes \mathcal{B}^{opp}$ and let $\tau$ be
defined by $\tau(b_1\otimes b_2):=b_2\otimes b_1$. Of course, an $\A$-module is just a $\mathcal{B}$-bimodule. We claim
that a HC $(\A,\tau)$-module is the same thing as a HC $\mathcal{B}$-bimodule in the sense of \cite[2.5]{HC}, i.e., a
filtered bimodule $\M$ with $[\mathcal{B}_{\leqslant i}, \M_{\leqslant j}]\subset \M_{i+j-d}$ such that $\gr\M$ is a
finitely generated $\gr\mathcal{B}$-module.

First, since $b\otimes 1-1\otimes b$ lies in $\A^{-\tau}$, we see that a HC $(\A,\tau)$-module $\M$ is a HC $\mathcal{B}$-bimodule
(condition (ii) just says that $\gr\M$ is finitely generated as a $\gr\mathcal{B}$-bimodule but thanks to (i)
the left and right actions of $\gr\mathcal{B}$ on $\gr\M$ coincide).  On the other hand, the space $\A^{-\tau}$ is the linear span
of the elements of the form $b_1\otimes b_2-b_2\otimes b_1$. Using this it is easy to see that a HC $\mathcal{B}$-bimodule
is a HC $(\A,\tau)$-module.

\subsubsection{Functor $\bullet_{\dagger,e}$}
As we have mentioned, the construction of $\bullet_{\dagger,e}$ is easier than in the general case:
we do not need to fix $\iota: V\rightarrow \tilde{I}_\chi$ as in \ref{SSS_iota_choice}: any $R\times\K^\times\times\ZZ/2\ZZ$
and also $\tau$-equivariant $\iota$ works. The  functor $\bullet_{\dagger,e}$ is constructed by complete
analogy with \cite[3.3,3.4]{HC}, a crucial thing to notice is that $(\W_\hbar^{\wedge_0})^{-\tau}\M^{\wedge_\chi}_\hbar
\subset \hbar^2\M^{\wedge_\chi}_\hbar$ because $(\U_\hbar^{\wedge_\chi})^{-\tau}\M^{\wedge_\chi}_\hbar
\subset \hbar^2\M^{\wedge_\chi}_\hbar$. It follows that $[\mathfrak{s},f]\M^{\wedge_\chi}_\hbar\subset \hbar^2\M^{\wedge_\chi}$.

Also this description shows that, for a HC $(\g,\mathfrak{k})$-module $\M$, the image of $\M$ under $\bullet_{\dagger,e}$
is an $R$-equivariant HC $(\Walg,\tau)$-module.

\subsection{Another functor $\bullet_\dagger$ for $\OCat^P$}\label{SS_parab_top_quot}
\subsubsection{$\bullet_{\dagger,e}$ for a general parabolic category $\mathcal{O}$}
Now we again change our setting. Let $P$ be an arbitrary parabolic in $G$. Let $e$
be a Richardson element of $\p^\perp$  meaning that $Pe$ is dense in $\p^\perp$. Then we can choose $h,f$ forming an $\sl_2$-triple
in such a way that $h\in \p$, see \cite[Lemma 6.1.3]{Miura}. Let $V=[\g,f]$. Since $\z_\g(e)\subset \p$, see \cite[Theorem 1.3]{LS}, it is easy to see that $V\cap\p$ is  a lagrangian subspace in $V$. So we can construct 
a functor $\bullet_{\dagger,e}$ from the parabolic category $\OCat^P_\nu$
to the category $\Walg$-$\operatorname{mod}^R_\nu$, where, recall, $R$ is a maximal reductive subgroup
of $Z_P(e)$. The image of this functor consists of finite dimensional modules. We also would
like to point out that $Q^\circ\subset P$ and so  $Q^\circ\subset R$.

We remark that this functor $\bullet_{\dagger,e}$ is a generalization of
$\Vfun$ in the case when $\g=\g_0$.
It is not difficult  to show that all results mentioned in Theorem \ref{Thm:Vfun}
with a possible exception of (v)  still hold for
$\bullet_{\dagger,e}$.

\subsubsection{The orbit codimension condition in type A}
We would like to analyze the condition $\operatorname{codim}_{\p^\perp}\p^\perp\setminus Pe>1$ in the case
when $\g=\sl_n$. In this case a version of  $\bullet_{\dagger,e}$ was previously considered by Brundan
and Kleshchev in \cite{BK2},\cite{BK3}. A special feature of type $A$ is that any nilpotent element
admits a so called {\it good even grading}. Then one can replace $\g(i)$ with the $i$th graded component
with respect to this grading. All constructions of Sections \ref{S_Cats},\ref{S_Vfun} work for that modification.
Brundan and Kleshchev studied the Whittaker coinvariant functor, which is just $\Vfun_2$ in this special case.
So our approach recovers many results from \cite{BK2},\cite{BK3}.

Parabolic subalgebras in $\sl_n$ are parameterized by compositions of $n$, i.e., ordered collections of
positive integers summing to $n$. So let $\p$ correspond to a composition $(s_1,\ldots,s_{\ell})$.
Brundan and Kleshchev checked that the Whittaker coinvariants functor is $0$-faithful provided
$s_1>s_2>\ldots>s_\ell$. The following proposition together with the previous subsection generalizes that.

\begin{Prop}\label{Prop:codim_cond}
Let $e$ be a Richardson element for $P$. The condition
$\operatorname{codim}_{\p^\perp}\p^\perp\setminus Pe>1$ holds provided all $s$'s are distinct.
\end{Prop}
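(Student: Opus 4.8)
The claim is about the Richardson orbit closure for a parabolic $P\subset\operatorname{SL}_n$ with block sizes $(s_1,\dots,s_\ell)$, and the assertion is that when the $s_i$ are pairwise distinct the complement $\p^\perp\setminus Pe$ has codimension $>1$ in $\p^\perp$. The plan is to reduce this to a purely combinatorial statement about nilpotent orbits in $\gl_n$ labeled by partitions. First I would recall that $\p^\perp$ is the space of nilpotent matrices compatible with the flag of type $(s_1,\dots,s_\ell)$, that $Pe$ is dense in $\p^\perp$, and that $\overline{Pe}=\p^\perp$ since $Pe$ is an orbit of the connected group $P$ acting on the irreducible variety $\p^\perp$; hence $\p^\perp\setminus Pe$ is a proper closed subvariety, automatically of codimension $\geqslant1$. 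The content is to rule out codimension exactly $1$.

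The key step is to analyze $G\cdot(\p^\perp\setminus Pe)$. By the Lusztig--Spaltenstein picture, the partition $\lambda$ of the Richardson orbit $\Orb=Ge$ is the transpose of the partition $(s_1,\dots,s_\ell)^{sort}$ (sorted in decreasing order), and $\p^\perp\cap\overline{\Orb}$ is irreducible with $\p^\perp\cap\Orb=Pe$ a single $P$-orbit, see \cite[Theorem 1.3]{LS}. Any point of $\p^\perp\setminus Pe$ lies in some $\overline{\Orb'}$ with $\Orb'\subsetneq\overline{\Orb}$, i.e. $\Orb'$ corresponds to a partition $\lambda'<\lambda$. So I would write $\p^\perp\setminus Pe=\bigcup_{\lambda'<\lambda}(\p^\perp\cap\Orb')$ and estimate $\dim(\p^\perp\cap\Orb')$ for $\lambda'$ a partition covered by $\lambda$ in the dominance order (it suffices to treat these, since all smaller ones are contained in their closures). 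For such $\lambda'$, the variety $\p^\perp\cap\Orb'$ is a union of finitely many $P$-orbits; each such orbit has dimension at most $\dim\p^\perp-(\dim\Orb-\dim\Orb')/?$ — more precisely I would use that over $\Orb'$ the map $G\times_P\p^\perp\to\g$ restricts to something whose fibers I can bound, or, cleaner, use the theorem of Spaltenstein that $\dim(\p^\perp\cap\Orb')\leqslant\dim\p^\perp-\tfrac12\operatorname{codim}_{\overline\Orb}\Orb'$ together with the fact that $\p^\perp\cap\overline\Orb$ is irreducible of dimension $\dim\p^\perp=\tfrac12\dim\Orb$. Combining, $\operatorname{codim}_{\p^\perp}(\p^\perp\cap\Orb')\geqslant\tfrac12\operatorname{codim}_{\overline\Orb}\Orb'$, so the only danger is $\operatorname{codim}_{\overline\Orb}\Orb'=2$, which for partitions of $n$ means $\lambda$ covers $\lambda'$ by moving a single box between two rows that differ by exactly $2$ in length (or equivalently the minimal degeneration is of type $A_1$ with a $2$-dimensional minimal singularity).

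Thus the main obstacle — and the place where the hypothesis ``all $s_i$ distinct'' enters — is to show that for such a minimal degeneration $\lambda'$ one actually has $\operatorname{codim}_{\p^\perp}(\p^\perp\cap\Orb')\geqslant2$ strictly, i.e. the inequality above is not tight. Here I would argue that tightness would force $\p^\perp\cap\Orb'$ to be (the closure of) a single $P$-orbit of the expected dimension, and then analyze which $P$-orbits in $\p^\perp$ meet $\Orb'$ using the combinatorics of the flag type. Since $\lambda=(s_1,\dots,s_\ell)^{sort,t}$ and the $s_i$ are distinct, $\lambda$ has $\ell$ distinct column lengths, which strongly constrains the possible minimal degenerations $\lambda'<\lambda$ and the possible $P$-orbit types over them; concretely I expect that distinctness of the $s_i$ forces any codimension-$1$ piece to be absent by a direct count of Jordan-type data for $P$ acting on matrices respecting the flag. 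I would carry this out by the explicit normal-form description of $P$-orbits on $\p^\perp$ (à la the rank conditions / oriented link patterns used by Brundan--Kleshchev in \cite{BK2},\cite{BK3}), reducing to the case $\ell=2$ with $s_1\neq s_2$ by a standard ``Levi'' argument, and checking that in the two-block case with unequal blocks $\p^\perp\setminus Pe$ is cut out by (at least) two independent conditions. The two-block base case is where I expect the real work to sit; everything else is bookkeeping with dominance order and Spaltenstein's dimension estimate.
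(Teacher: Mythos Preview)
Your setup through the Spaltenstein-type bound $\operatorname{codim}_{\p^\perp}(\p^\perp\cap\Orb')\geqslant\tfrac12\operatorname{codim}_{\overline{\Orb}}\Orb'$ is exactly what the paper does (the paper writes it as $\dim(\Orb'\cap\p^\perp)\leqslant\tfrac12\dim\Orb'$, using $\dim(\Orb'\cap\bfr^\perp)=\tfrac12\dim\Orb'$). But from that point on you make the problem much harder than it is, and the hard part you postpone is never carried out.

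The key point you are missing is that the hypothesis ``all $s_i$ distinct'' already forces $\operatorname{codim}_{\overline{\Orb}}\Orb'\geqslant 4$ for \emph{every} $\Orb'\subsetneq\overline{\Orb}$, so the ``dangerous'' codimension-$2$ case never occurs. Write $\lambda$ for the partition with parts $s_1,\dots,s_\ell$ sorted decreasingly; then the Jordan type of $\Orb$ is $\lambda^t$ and $\dim\Orb=n^2-\sum_i\lambda_i^2$. Any $\Orb'\subsetneq\overline{\Orb}$ corresponds to $\mu^t$ with $\mu>\lambda$ in dominance, and it suffices to treat $\mu$ obtained from $\lambda$ by moving one box from row $q$ to a longer row $p$. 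Then
\[
\dim\Orb-\dim\Orb'=\sum_i\mu_i^2-\sum_i\lambda_i^2=(\lambda_p+1)^2-\lambda_p^2+(\lambda_q-1)^2-\lambda_q^2=2(\lambda_p-\lambda_q)+2\geqslant 4,
\]
since the parts of $\lambda$ are pairwise distinct. Plugging this into the bound above finishes the proof immediately.

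Two further remarks. First, your description of the codimension-$2$ case (``two rows that differ by exactly $2$ in length'') is not right: e.g.\ $(5,1)\gtrdot(4,2)$ is a codimension-$2$ degeneration with rows differing by $4$. The correct statement, visible from the displayed computation with $\lambda$ and $\mu$ swapped, is that codimension $2$ occurs precisely when two \emph{columns} of the Jordan type (equivalently, two rows of $\lambda$) are equal---which is exactly what ``$s_i$ distinct'' excludes. Second, the reduction to $\ell=2$ via a ``Levi argument'' is not obviously valid for this codimension question, so even the sketched fallback is shaky; fortunately it is unnecessary.
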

\begin{proof}
Suppose that all $s_i$'s are distinct. We are going to prove that for any nilpotent orbit $\Orb'\subset \overline{\Orb}$
(this condition is necessary for $\Orb'$ to intersect $\p^\perp$), we have $\dim \Orb'<\dim \Orb-2$.
Since $\dim \Orb'\cap \p^\perp\leqslant \dim \Orb'\cap \b^\perp= \frac{1}{2}\dim \Orb'$,
see, for instance, \cite[Theorem 3.3.7]{CG} for the last equality, the inequality $\dim \Orb'<\dim \Orb-2$ implies $\dim \Orb'\cap\p^\perp<\dim \p^\perp-1$. The number of nilpotent orbits is finite and so the previous inequality implies the required codimension condition.

Let $\lambda$ be the Young diagram with rows of lengths $\lambda_1:=s_1,\ldots,\lambda_\ell:=s_\ell$ ordered in the decreasing order.
Then $\Orb$ is the orbit corresponding to the transposed diagram $\lambda^t$. We have $\dim \Orb=n^2-\sum_{i} \lambda_i^2$.
Now let $\mu$ be a Young diagram such that $\mu^t$ corresponds to $\Orb'$. According to \cite[6.2]{CM}, the
condition $\Orb'\subset \overline{\Orb}$
is equivalent to $\lambda\leqslant \mu$ in the dominance ordering, i.e., $\sum_{i=1}^k\lambda_i\leqslant \sum_{i=1}^k \mu_i$
for all $k$. In other words, there is a sequence $\lambda^0:=\lambda,\lambda^1,\ldots,\lambda^m:=\mu$
such that $\lambda^i$ is obtained from $\lambda^{i-1}$ by moving a box from a shorter row to a longer row.
It is therefore enough to consider the case when $\mu=\lambda^1$. We need to show that $\sum_i \mu_i^2\geqslant \sum_i \lambda_i^2+4$.
There are indexes $p<q$ such that $\mu_i=\lambda_i$ if $i\neq p,q$, $\mu_p=\lambda_p+1, \mu_q=\lambda_q-1$.
We have
$$
\sum_{i}(\mu_i^2-\lambda_i^2)=(\lambda_p+1)^2-\lambda_p^2+ (\lambda_q-1)^2-\lambda_q^2=2(\lambda_p-\lambda_q)+2.
$$
Since $\lambda_p>\lambda_q$ by our assumption, we see that $2(\lambda_p-\lambda_q)+2\geqslant 4$, and we are done.
\end{proof}

The 0-faithfulness of the Brundan-Kleshchev functor was a crucial ingredient in the proof of an equivalence
between (the sum of certain blocks) of $\OCat^P$ and a category $\mathcal{O}$ for a suitable cyclotomic Rational
Cherednik algebra in \cite[Theorem 6.9.1]{GL} (under the restriction that $s_1>\ldots>s_\ell$). Using
Proposition \ref{Prop:codim_cond} one should be able to remove that restriction but we are not going to elaborate
on this.

\subsection{Dixmier algebras}\label{SS_Dixmier}
\subsubsection{Functor $\bullet_{\dagger,e}$}
By a Dixmier algebra one means a $G$-algebra
$\A$ equipped with a $G$-equivariant homomorphism $\U\rightarrow \A$ that makes $\A$ into a Harish-Chandra bimodule
such that the differential of the $G$-action on $\A$ coincides with the adjoint $\g$-action. The algebra $\U$ itself
as well as any quotient of $\U$ serve as  examples of  Dixmier algebras. Two more examples come from
Lie superalgebras and from quantum groups at roots of $1$, they are considered below. It is basically
those two families of examples that motivate us to consider arbitrary Dixmier algebras.

Let $K$ be as in \ref{SSS_Cat_OK_def}. Let $\OCat^K_\nu(\Dix)$ denote the category of $\Dix$-modules
that lie in $\OCat^K_\nu$ as $\U$-modules.

Fix a good algebra filtration $\Fi_\bullet \Dix$ on $\Dix$, where, recall, {\it good} means that all $\Fi_i\Dix$
are $G$-stable and $\gr \Dix$ is a finitely generated $S(\g)=\gr\U$-module. The existence of a good {\it algebra}
filtration is checked, for example, in \cite[Proof of Proposition 3.4.5]{HC}. In addition, we can assume that
$\Fi_0\Dix=\Fi_1\Dix=\K$ and that $\F_2\Dix$ contains the image of $\g$. This insures that we have
a filtered algebra homomorphism $U(\g)\rightarrow \Dix $. Consider the corresponding Rees algebra
$\Dix_\hbar:=\bigoplus_{i=0}^\infty (\Fi_i\Dix) \hbar^i$. This is a graded algebra, where $\hbar$
has degree $1$. The corresponding $\Walg$-bimodule $\Dix_\dagger$ has a natural algebra structure, see
\cite[3.4]{HC}.

Pick $\M\in \OCat_\nu^K(\Dix)$.
The $R$-equivariant $\Walg$-module $\M_{\dagger,e}$ has a natural $\Dix_\dagger$-module structure.
So we get an exact functor $\bullet_{\dagger,e}: \OCat^K_\nu(\Dix)\rightarrow \Dix_{\dagger}$-$\operatorname{mod}_\nu^R$.
We have the forgetful functors $\Fun_\U:\OCat^K_\nu(\Dix)\rightarrow \OCat^K_\nu$, $\Fun_{\Walg}:\Dix_\dagger$-$\operatorname{mod}^R\rightarrow \Walg$-$\operatorname{mod}^R$ and they
intertwine the functors $\bullet_{\dagger,e}$, i.e., $\Fun_\Walg(\bullet_{\dagger,e})=\Fun_\U(\bullet)_{\dagger,e}$.

Let us make an observation that will be used later. There is a functor $\bullet^{\dagger,e}:\Dix_{\dagger}$-$\operatorname{mod}^R\rightarrow
\tilde{\OCat}^K_\nu(\Dix)$, where $\tilde{\OCat}^K_\nu(\Dix)$ is a category of not necessarily finitely generated
$\Dix$-modules that are inductive limits of objects in $\OCat^K_\nu(\Dix)$. This functor satisfies $\Hom_{\Dix}(\M,\Nil^{\dagger,e})=
\Hom_{\Dix_\dagger,R}(\M_{\dagger,e},\Nil)$. The functor is constructed
completely analogously to $\bullet^{\dagger,e}$ in \ref{SSS_dag_prop} and, in particular, does not depend on
$\Dix$ in the sense that the functors $\bullet^{\dagger,e}$ for $\Dix$ and for $\U$ are again intertwined
by the forgetful functors.

\subsubsection{Enveloping algebras of Lie superalgebras}
Here is an interesting example of a Dixmier algebra $\Dix$. Let $\tilde{\g}$ be a simple classical Lie  superalgebra or a queer Lie superalgebra. In particular, the even part, denote it by $\g$, is a reductive Lie algebra. So the universal enveloping (super)algebra
$\Dix:=U(\tilde{\g})$ is a Dixmier algebra over $\U=U(\g)$. We also remark that $\Dix$ is a free left (or right)
module over $\U$.

The algebra $\Dix_\dagger$ may be thought as a W-algebra for the Lie superalgebra $\tilde{\g}$. It should not be difficult
to check that $\Dix_\dagger$ is isomorphic to the Clifford algebra over the super W-algebra studied previously,
see, for example, \cite{Zhao},\cite{BBG}. But we are not going to prove this here.

\subsubsection{Quantum groups at roots of 1}
Here we will show that the algebra that is basically the Lusztig form of a quantum group at a root of unity
is a Dixmier algebra. More precisely we will show that the quantum Frobenius epimorphism splits.

Let us recall some generalities on quantum groups. We follow \cite{Lusztig_qgrp_1}.

We fix a Cartan matrix $A=(a_{ij})_{i,j=1}^n$ of finite type. Let $\g$ be the corresponding
finite dimensional semisimple Lie algebra. Let $D=\operatorname{diag}(d_1,\ldots,d_n)$
be the matrix with coprime entries $d_i\in \{1,2,3\}$ such that $DA$ is symmetric. Let $v$
be an indeterminate. We write $[n]_{d_i}$ for the quantum integer $\displaystyle \frac{v^{d_in}-v^{-d_in}}{v^{d_i}-v^{-d_i}}$
and $[n]_{d_i}!$ for the corresponding quantum factorial. We then consider the quantum group $U_v(\g)$ that is
a $\K(v)$-algebra generated by $E_i,F_i, K_i^{\pm 1}$ in a standard way, see, e.g., \cite[Section 1]{Lusztig_qgrp_1}.
Inside we consider the $\K[v^{\pm 1}]$-subalgebra $\dot{U}_v(\g)$ generated by the divided powers $E_i^{(N)}:=\frac{E_i^N}{[N]_{d_i}!}$
and $F_i^{(N)}:=\frac{F_i^N}{[N]_{d_i}!}$ and also $K_i^{\pm 1}$. It contains elements $$\binom{K_i;c}{t}:=\prod_{i=1}^t \frac{K_i v^{d_i(c-s+1)}-K_i^{-1} v^{-d_i(c-s+1)}}{v^{d_i s}-v^{-d_is}},$$
see \cite[Section 6]{Lusztig_qgrp_1}. Inside $\dot{U}_v(\g)$ we consider subalgebras $\dot{U}_v^+$, (resp., $\dot{U}_v^-$ and $\dot{U}_v^0$)
generated by the elements $E_i^{(N)}$ (resp., $F_i^{(N)}$, and $K_i^{\pm 1}, \binom{K_i;c}{t}$).
We have the triangular decomposition $\dot{U}_v(\g)=\dot{U}^+_v\otimes \dot{U}^0_v\otimes \dot{U}^-_v$.

Pick an integer $\ell$ that is odd and, in the case when $\g$ has a component of type $G_2$,
coprime to $3$.  Let $\epsilon$ be an $\ell$th primitive root of $1$. Let $\dot{U}_\epsilon(\g), \dot{U}^+_\epsilon$ etc. denote the
specializations of the corresponding algebras at $v=\epsilon$.
The elements $K_i^{\ell}$ are central in $\dot{U}_\epsilon(\g)$ and, moreover, one can show that
$K_i^{2\ell}=1$ in $\dot{U}^0_\epsilon$. Let $\Dix$ denote the quotient of $\dot{U}_\epsilon(\g)$ by $K_i^{\ell}-1$
for $i=1,\ldots,n$. We still have the triangular decomposition $\Dix=\Dix^-\otimes \Dix^0\otimes \Dix^+$, and
$\Dix^-=\dot{U}_\epsilon^+(\g), \Dix^+=\dot{U}_\epsilon^-(\g)$. 

Also we have an epimorphism $\operatorname{Fr}^*:\Dix\rightarrow \U$ called the {\it quantum Frobenius epimorphism}, see
\cite[Section 8]{Lusztig_qgrp_1}. By construction, it maps $E_i^{(N)},F_i^{(N)}$ to $e_i^{(N/\ell)}, f_i^{(N/\ell)}$
if $N$ is divisible by $\ell$ and to $0$ else, and it sends $K_i$ to $1$. Further, it maps $\binom{K_i;0}{\ell}$
to $h_i$.

Our goal is to prove the following proposition.

\begin{Prop}\label{Prop:Fr_section}
There is a monomorphism $\iota:\U\rightarrow \Dix$ that is a section of $\operatorname{Fr}$, meaning that
$\operatorname{Fr}\circ \iota=\operatorname{id}$. Moreover, $\Dix$ becomes a Dixmier algebra
with respect to this homomorphism. Finally, $\Dix$ is free over $\iota(\U)$.
\end{Prop}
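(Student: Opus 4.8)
The strategy is to build the section $\iota$ on each of the three triangular pieces of $\Dix$ separately and then to verify that these partial maps assemble into an algebra homomorphism. For the positive part, recall that $\Dix^+=\dot U_\epsilon^+(\g)$ carries the divided powers $E_i^{(N)}$; inside $\dot U_\epsilon^+$ the subalgebra generated by the elements $E_i^{(\ell)}$ is, by Lusztig's results (the quantum Frobenius for the nilpotent parts, \cite[Section 8]{Lusztig_qgrp_1}), isomorphic to $U(\n^+)$, and $\operatorname{Fr}$ restricted to it is precisely this isomorphism. So I would set $\iota(e_i^{(m)}):= (E_i^{(\ell)})$-monomial image of the PBW basis of $U(\n^+)$, i.e. define $\iota$ on $U(\n^+)$ to be the inverse of $\operatorname{Fr}|_{\langle E_i^{(\ell)}\rangle}$. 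Symmetrically on $U(\n^-)$ using the $F_i^{(\ell)}$. For the Cartan part, the subalgebra of $\Dix^0$ generated by the elements $\binom{K_i;0}{\ell}$ maps under $\operatorname{Fr}$ onto $U(\h)=S(\h)$, and one checks that these elements commute and are algebraically independent in $\Dix^0$ (this can be read off from the explicit description of $\dot U^0$ in \cite[Section 6]{Lusztig_qgrp_1}), so $\operatorname{Fr}$ restricts to an isomorphism there as well, giving $\iota$ on $U(\h)$.

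The next step is to show that the three maps $\iota|_{U(\n^-)},\iota|_{U(\h)},\iota|_{U(\n^+)}$ glue to a homomorphism $\iota:\U\to\Dix$. Using the triangular decomposition $\U=U(\n^-)\otimes U(\h)\otimes U(\n^+)$ and $\Dix=\Dix^-\otimes\Dix^0\otimes\Dix^+$, define $\iota$ to be the composite of the three partial maps followed by multiplication in $\Dix$. To see this is an algebra map it suffices to check the defining relations of $\U$ (the Chevalley relations among $e_i,f_i,h_i$, and the Serre relations) hold for the images. The Serre relations and the $[e_i,f_j]$-relations are relations between the generators $E_i^{(\ell)},F_i^{(\ell)},\binom{K_i;0}{\ell}$ that hold in $\Dix$ because $\operatorname{Fr}$ is a homomorphism and is injective on each piece — more precisely, any relation $r(\iota(\text{generators}))$ that is sent to $0$ by $\operatorname{Fr}$ and that lies in one of the three triangular subalgebras (or, for mixed relations like $[e_i,f_i]=h_i$, in the relevant $\Dix^-\Dix^0\Dix^+$ component) must itself vanish, since $\operatorname{Fr}$ is injective on each of $\Dix^\pm,\Dix^0$ and the triangular decomposition is compatible with $\operatorname{Fr}$. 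Once $\iota$ is a homomorphism, $\operatorname{Fr}\circ\iota=\operatorname{id}$ holds because it holds on generators, and injectivity of $\iota$ is then automatic.

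For the Dixmier algebra claim I must exhibit a $G$-action on $\Dix$ whose differential is the adjoint action of $\g$ via $\iota$, and show $\Dix$ is a Harish-Chandra bimodule over $\U$. The adjoint $\g$-action through $\iota$ integrates: one notes that $\Dix$ is a locally finite module over $\U$ (via $\operatorname{ad}\circ\iota$) because $\Dix$ is a union of finite-dimensional $\dot U_\epsilon$-submodules stable under the adjoint action (this follows from the integrability of $\dot U_\epsilon$-modules and the fact that the quantum adjoint action descends), and for $\g$ semisimple local finiteness plus integrality of weights gives integration to $G$; alternatively one invokes that $\Dix$, being a quotient of the Lusztig form, already carries a compatible $G$-action coming from the classical limit, and that $\iota$ is $G$-equivariant by construction since the generators $E_i^{(\ell)}$ etc. transform appropriately. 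Finiteness of $\Dix$ as a $\U$-bimodule (hence the HC condition, using any good filtration as in \cite[Proof of Proposition 3.4.5]{HC}) follows from $\Dix$ being a finite module over the central subalgebra $\operatorname{Fr}^*(\Centr)$ together with the PBW-type basis of $\Dix$ over $\iota(\U)$. Finally, freeness of $\Dix$ over $\iota(\U)$: from the triangular decompositions and the known $\dot U_\epsilon^{\pm}$ being free over $U(\n^{\pm})$ (the "small quantum group tensor divided powers" decomposition, \cite[Section 8]{Lusztig_qgrp_1}) and $\Dix^0$ being free over $\iota(U(\h))$, one multiplies the three freeness statements to get a basis of $\Dix$ over $\iota(\U)$, namely monomials in the small-quantum-group generators in the three pieces. \textbf{The main obstacle} I anticipate is the gluing step — verifying that the partial sections really satisfy the mixed Chevalley relation $[\iota(e_i),\iota(f_j)]=\delta_{ij}\iota(h_i)$ in $\Dix$, since this requires a genuine computation with the commutation of $E_i^{(\ell)}$ and $F_i^{(\ell)}$ inside $\dot U_\epsilon$, controlling the lower-order corrections (terms involving $\binom{K_i;c}{t}$ with $t<\ell$ and $K_i$-powers) and showing they are killed in $\Dix$ or absorbed correctly; here one leans on Lusztig's commutation formula \cite[Section 6]{Lusztig_qgrp_1} together with $K_i^\ell=1$ in $\Dix$ and the vanishing under $\operatorname{Fr}$ argument to conclude the corrections vanish.
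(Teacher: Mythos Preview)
Your approach has a genuine gap at precisely the point you flag as the main obstacle. The ``naive'' section sending $e_i\mapsto E_i^{(\ell)}$, $f_i\mapsto F_i^{(\ell)}$, $h_i\mapsto\binom{K_i;0}{\ell}$ is \emph{not} an algebra homomorphism: Lusztig's commutation formula for $E_i^{(\ell)}F_i^{(\ell)}$ produces lower-order terms of the form $F_i^{(\ell-t)}\binom{K_i;c}{t}E_i^{(\ell-t)}$ with $0<t<\ell$, and these lie in the small quantum group $\mathbf{u}$ but do \emph{not} vanish in $\Dix$ (even after imposing $K_i^\ell=1$). Your proposed fix, that ``vanishing under $\operatorname{Fr}$'' forces the corrections to be zero, fails because $\operatorname{Fr}$ is far from injective on $\Dix$: its kernel contains the augmentation ideal of $\mathbf{u}$, and the mixed relation $[E_i^{(\ell)},F_i^{(\ell)}]-\binom{K_i;0}{\ell}$ sits squarely in that kernel without being zero. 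The injectivity of $\operatorname{Fr}$ on the individual triangular pieces generated by the $\ell$th divided powers is of no help here because the offending terms do not lie in any single such piece.

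The paper circumvents this by a different mechanism. It observes that the naive assignment does give a Lie algebra homomorphism $\g\to\mathfrak{n}/\mathbf{u}$, where $\mathfrak{n}$ is the normalizer of $\mathbf{u}$ in $\Dix$; this is exactly the statement that the obstructing corrections land in $\mathbf{u}$. Then semisimplicity of $\g$ (vanishing of $H^2(\g,\mathbf{u})$) guarantees a lift $\g\to\mathfrak{n}$, which now sends $e_i$ to $E_i^{(\ell)}+(\text{correction in }\mathbf{u})$ rather than to $E_i^{(\ell)}$ on the nose. The section property $\operatorname{Fr}\circ\iota=\operatorname{id}$ still holds because $\operatorname{Fr}$ kills $\mathbf{u}$ and $\g$ has no nontrivial characters. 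The Dixmier and freeness claims then follow because $\iota(\g)$ still normalizes $\mathbf{u}$ and $\Dix$ is generated over $\iota(\U)$ by $\mathbf{u}$, which is finite-dimensional. So the key idea you are missing is that one must allow the section to be corrected by elements of $\mathbf{u}$, and the existence of such a correction is a cohomological (Levi-type) lifting argument rather than a direct verification of relations.
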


The proof of this proposition occupies the remainder of this subsection. First of all,
let ${\bf u}$ denote the small quantum group, the subalgebra of $\Dix$ generated by $K_i, E_i,F_i,
i=1,\ldots,n$. For $u\in {\bf u}$ set $\delta_i(u):=E_i^{(\ell)} u-u E_i^{(\ell)},
\delta'_i(u):=F_i^{(\ell)}u-u F_i^{(\ell)}$. It turns out, see \cite[Section 8]{Lusztig_qgrp_1},
that $\delta_i(u),\delta_i'(u)\in {\bf u}$ and so $\delta_i, \delta_i'$ are derivations
of ${\bf u}$.

Also  Lusztig in {\it loc.cit.}  proved that the map $e_i\mapsto E_i^{(\ell)}$ extends to an algebra
homomorphism $\U^+\rightarrow \A^+$ and a similar claim is true for $\U^-$ and $\A^-$.

Consider the Lie subalgebra $\mathfrak{n}$ in $\Dix$ consisting of all elements $x\in \Dix$
with $[x,{\bf u}]\subset {\bf u}$. Of course, ${\bf u}$ is an ideal in $\mathfrak{n}$,
and $E_i^{(\ell)}, F_i^{(\ell)}\in \mathfrak{n}$.

The following lemma is crucial in the proof of Proposition \ref{Prop:Fr_section}.

\begin{Lem}\label{Lem:Lie_alg_hom}
The assignment $e_i\mapsto E_i^{(\ell)}, f_i\mapsto F_i^{(\ell)}, h_i\mapsto \binom{K_i;0}{\ell}$ extends
to a Lie algebra homomorphism $\g\rightarrow \mathfrak{n}/{\bf u}$.
\end{Lem}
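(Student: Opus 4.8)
The plan is to verify that the three Chevalley relations hold for the images of $e_i,f_i,h_i$ modulo $\mathbf{u}$, and that the Serre relations hold as well, so that by Serre's presentation of $\g$ the assignment extends to a Lie algebra homomorphism $\g\rightarrow \mathfrak{n}/\mathbf{u}$. First I would record that $E_i^{(\ell)},F_i^{(\ell)}\in\mathfrak{n}$ was observed above, and that $\binom{K_i;0}{\ell}\in\mathfrak{n}$ because $K_i$ normalizes $\mathbf{u}$ and the bracket of $\binom{K_i;0}{\ell}$ with the standard generators $E_j,F_j,K_j$ of $\mathbf{u}$ stays in $\mathbf{u}$ (this is a direct computation with the commutation formulas in \cite[Section 6]{Lusztig_qgrp_1}). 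Then I would check the Cartan relations: $[\binom{K_i;0}{\ell},\binom{K_j;0}{\ell}]=0$ since $\dot{U}^0_\epsilon$ is commutative, and $[\binom{K_i;0}{\ell},E_j^{(\ell)}]\equiv a_{ij}E_j^{(\ell)}$, $[\binom{K_i;0}{\ell},F_j^{(\ell)}]\equiv -a_{ij}F_j^{(\ell)}$ modulo $\mathbf{u}$ — these follow by specializing at $v=\epsilon$ the identities that Lusztig uses to define $\operatorname{Fr}^*$, together with the fact that the ``lower order'' terms (coming from the $\binom{K_i;c}{t}$ with $t<\ell$ and the monomials in $E_j,F_j$ of $\mathbf{u}$-degree) land in $\mathbf{u}$.

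Next I would handle the key relation $[E_i^{(\ell)},F_j^{(\ell)}]\equiv \delta_{ij}\,\binom{K_i;0}{\ell}$ modulo $\mathbf{u}$. For $i\neq j$ this is immediate since $E_i^{(\ell)}$ and $F_j^{(\ell)}$ lie in the images of $\dot U^+$ and $\dot U^-$ whose bracket in degree $(\ell,\ell)$ is controlled by the quantum Serre/commutation relations and produces only terms in $\mathbf{u}$ after reduction. For $i=j$ one uses the explicit commutation formula for $E_i^{(a)}F_i^{(b)}$ in $\dot U_v(\g)$ (a sum over $t$ of $F_i^{(b-t)}\binom{K_i;2t-a-b}{t}E_i^{(a-t)}$), applied with $a=b=\ell$ and specialized at $v=\epsilon$; the top term ($t=\ell$) gives $\binom{K_i;0}{\ell}$ and all intermediate terms $0<t<\ell$ contribute elements of $\mathbf{u}$ because $E_i^{(\ell-t)},F_i^{(\ell-t)}\in\mathbf{u}$ for $0<t<\ell$. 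This is exactly the computation underlying the compatibility of $\operatorname{Fr}^*$ with the relations, so I expect it to go through by citing and specializing \cite[Section 6, Section 8]{Lusztig_qgrp_1}. Finally, the Serre relations $\sum_{k=0}^{1-a_{ij}}(-1)^k (E_i^{(\ell)})^{?}\cdots$ — more precisely the classical Serre relations $\operatorname{ad}(e_i)^{1-a_{ij}}(e_j)=0$ — hold modulo $\mathbf{u}$ because $e_i\mapsto E_i^{(\ell)}$ already extends to an algebra homomorphism $\U^+\rightarrow \Dix^+$ by Lusztig's result quoted above, so the Serre relator maps to $0$ on the nose, a fortiori modulo $\mathbf{u}$; similarly for the $f_i$.

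Having the homomorphism $\g\rightarrow\mathfrak{n}/\mathbf{u}$, I would then note that this is what is needed to finish Proposition \ref{Prop:Fr_section}: composing with the adjoint action of $\mathfrak{n}/\mathbf{u}$ on $\mathbf{u}$ and using that $\mathbf{u}$ is finite dimensional, one integrates to get the $G$-structure, and the section $\iota:\U\rightarrow\Dix$ is built from the PBW-type triangular decomposition of $\Dix$ over $\mathbf{u}$ together with the algebra homomorphisms $\U^{\pm}\rightarrow\Dix^{\pm}$ and the lift $h_i\mapsto\binom{K_i;0}{\ell}$; freeness of $\Dix$ over $\iota(\U)$ follows from the analogous freeness statements for $\dot U^{\pm}_\epsilon$ over the images of $\U^{\pm}$ and for $\Dix^0$ over the subalgebra generated by the $\binom{K_i;0}{\ell}$ (these are standard PBW-basis arguments in \cite{Lusztig_qgrp_1}). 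The main obstacle I anticipate is the bookkeeping in the $i=j$ case of $[E_i^{(\ell)},F_i^{(\ell)}]$: one must show carefully that every intermediate term in the quantum commutation formula, after specialization at the root of unity $\epsilon$, actually lies in the small quantum group $\mathbf{u}$ rather than merely in $\Dix$ — this rests on the precise vanishing/divisibility properties of the quantum binomial coefficients $\binom{K_i;c}{t}$ at $v=\epsilon$ for $0<t<\ell$, which is exactly the content of Lusztig's Frobenius construction, so the work is to transcribe it in the Lie-algebra-homomorphism language rather than to prove anything genuinely new.
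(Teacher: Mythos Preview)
Your approach is correct and is exactly what the paper has in mind: the paper's own ``proof'' reads in its entirety ``The proof is a computation and we omit it.'' Verifying the Serre presentation relations modulo $\mathbf{u}$ --- the commutativity of the Cartan part, the weight relations $[\binom{K_i;0}{\ell},E_j^{(\ell)}]\equiv a_{ij}E_j^{(\ell)}$, the key identity $[E_i^{(\ell)},F_j^{(\ell)}]\equiv\delta_{ij}\binom{K_i;0}{\ell}$ via Lusztig's commutation formula, and the Serre relations via the already-established algebra maps $\U^\pm\to\Dix^\pm$ --- is the natural and essentially only route.

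Two small remarks. First, for $i\neq j$ the relation $[E_i^{(\ell)},F_j^{(\ell)}]=0$ holds on the nose (since $E_i$ and $F_j$ commute in $\dot U_v$), so there is no need to argue that the bracket lands in $\mathbf{u}$. Second, the part you flag as the ``main obstacle'' (the $i=j$ commutator) is actually quite clean once you observe that for $0<t<\ell$ all three factors $F_i^{(\ell-t)},\binom{K_i;2t-2\ell}{t},E_i^{(\ell-t)}$ lie in $\mathbf{u}$ (the divided powers because $[\ell-t]_{d_i}!\neq 0$ at $\epsilon$, the binomial because its denominators $\epsilon^{d_is}-\epsilon^{-d_is}$ with $0<s<\ell$ are nonzero, leaving a Laurent polynomial in $K_i$). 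The step that needs equal care is showing $\binom{K_i;0}{\ell}\in\mathfrak{n}$ and the weight relations: from $\binom{K_i;c}{t}E_j=E_j\binom{K_i;c+a_{ij}}{t}$ one needs that $\binom{K_i;a_{ij}}{\ell}-\binom{K_i;0}{\ell}$ specializes to an element of $\mathbf{u}^0$, which follows from a Pascal-type identity reducing the difference to terms involving $\binom{K_i;c}{t}$ with $t<\ell$.

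Your final paragraph about lifting to $\iota$ and proving freeness belongs to the proof of Proposition~\ref{Prop:Fr_section}, not to this lemma; the paper handles that separately (and differently --- it uses semisimplicity of $\g$ to lift the homomorphism $\g\to\mathfrak{n}/\mathbf{u}$ to $\g\to\mathfrak{n}$, rather than assembling $\iota$ from the triangular pieces).
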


The proof is a computation and we omit it.

Since $\g$ is semisimple, any homomorphism $\g\rightarrow \mathfrak{n}/{\bf u}$ lifts to a Lie algebra homomorphism
$\g\rightarrow \mathfrak{n}$ and so to an algebra homomorphism $\iota: \U\rightarrow \Dix$. We have $\operatorname{Fr}\circ \iota(x)-x\in \K$
for $x=e_i,f_i, h_i$, which implies that the difference is actually $0$ (again, because $\g$ is semisimple). Also, since $\iota(e_i)-E_i^{(\ell)}, \iota(f_i)-F_i^{(\ell)},
\iota(h_i)-\binom{K_i;0}{\ell}\in {\bf u}$ and $E_i^{(\ell)},F_i^{(\ell)},\binom{K_i;0}{\ell}$ normalize ${\bf u}$, we see that
$\Dix$ is generated by ${\bf u}$ as a left (or right) $\U$-module, while the adjoint $\g$-action on $\Dix$
is locally finite. So $\Dix$ is a Dixmier algebra. The same argument shows that $\Dix$ is free over $\iota(\U)$.

\subsubsection{Category $\OCat^P(\Dix)$ and  functor $\Vfun$}
Here we consider a Dixmier algebra $\Dix$ that is free over $\U$. In both examples we have considered above
the algebra $\Dix$ has this property. Further, we take $P,e,h,f$ as in the main body of the paper
(but we also can take them as in Subsection \ref{SS_parab_top_quot}). Then we have the functor $\bullet_{\dagger,e}:\OCat^P_\nu(\Dix)
\rightarrow \OCat^\theta(\Dix,e)^R_\nu$, where the target category is the category of all $R$-equivariant
$\Dix_\dagger$-modules that lie in $\OCat^\theta(\g,e)^R_\nu$.

Recall that the forgetful functors $\mathcal{F}_{\U}:\OCat^P_\nu(\Dix)\rightarrow  \OCat^P_\nu$
and $\mathcal{F}_\Walg:\OCat^\theta(\Dix,e)^R_\nu\rightarrow \OCat^\theta(\g,e)^R_\nu$
intertwine both $\bullet_{\dagger,e}$ and its right adjoint functor $\bullet^{\dagger,e}$, i.e.
$$\mathcal{F}_\Walg(\bullet_{\dagger,e})=\mathcal{F}_{\U}(\bullet)_{\dagger,e}, \mathcal{F}_\U(\bullet^{\dagger,e})=\mathcal{F}_{\Walg}(\bullet)^{\dagger,e}.$$
In particular, we have
\begin{equation}\label{eq:adj_forget}
\mathcal{F}_\U\left((\bullet_{\dagger,e})^{\dagger,e}\right)=(\mathcal{F}_\U(\bullet)_{\dagger,e})^{\dagger,e}.
\end{equation}
It follows that $\bullet_{\dagger,e}$ is still a quotient functor onto its image annihilating precisely the modules
with non-maximal GK dimension. Also the image is closed under taking subquotients.

Now let us prove that $\bullet_{\dagger,e}$ has the double centralizer property.
The right adjoint functor to $\mathcal{F}_\U$, that is $\Hom_\U(\Dix,\bullet)$, is exact
because, by our assumption, $\Dix$ is a free $\U$-module. So if $P$ is a projective
object in $\OCat^P_\nu(\Dix)$, then $\mathcal{F}_\U(P)$ is  projective in $\OCat^P_\nu$. Thanks to (\ref{eq:adj_forget}),
the morphism $P\rightarrow (P_{\dagger,e})^{\dagger,e}$ does not depend on whether we consider
$P$ as a $\Dix$-module or as a $\U$-module. Thanks to the double centralizer property for $\U$, we
see that the natural homomorphism $P\rightarrow (P_{\dagger,e})^{\dagger,e}$ is an isomorphism,
and this implies the double centralizer property for $\Dix$.

\end{document}